\documentclass{amsart}
\usepackage[top=3cm, bottom=3cm, left=2cm, right=2cm]{geometry}
\usepackage{graphicx} 
\usepackage[utf8]{inputenc}
\usepackage{amsmath, amssymb, mathrsfs}
\usepackage{bbm}
\usepackage{mathtools}
\usepackage{csquotes}
\usepackage{tikz-cd}
\usepackage{cite}
\usepackage{amsthm}
\usepackage{thmtools}
\usepackage[all]{xy}
\usepackage{ stmaryrd }

\usepackage{tikz}
\usetikzlibrary{decorations.markings,positioning,calc}
\usetikzlibrary{decorations.pathmorphing} 

\usetikzlibrary{babel}
\usepackage{longtable}
\usepackage{stmaryrd}
\usepackage{comment}
\usepackage{float}
\newfloat{myquote}{t}{}

\usepackage{imakeidx}
\usepackage[initsep=0pt]{idxlayout}
\makeindex
\usepackage{hyperref}
\usepackage{cleveref}

\hypersetup{
	colorlinks=true,
	linkcolor=blue,
	filecolor=magenta,      
	urlcolor=cyan,
}

\newcommand{\bgm}{B\mathbb{G}_m}

\newcommand{\bA}{\mathbb{A}}

\newcommand{\bG}{\mathbb{G}}

\newcommand{\bP}{\mathbb{P}}

\newcommand{\oH}{\operatorname{H}}

\newcommand{\cA}{\mathcal{A}}
\newcommand{\cB}{\mathcal{B}}
\newcommand{\cC}{\mathcal{C}}
\newcommand{\cD}{\mathcal{D}}
\newcommand{\cE}{\mathcal{E}}

\newcommand{\cH}{\mathcal{H}}
\newcommand{\cI}{\mathcal{I}}

\newcommand{\cM}{\mathcal{M}}

\newcommand{\cO}{\mathcal{O}}
\newcommand{\cP}{\mathcal{P}}

\newcommand{\cU}{\mathcal{U}}

\newcommand{\cX}{\mathcal{X}}

\newcommand{\cZ}{\mathcal{Z}}

\newcommand{\car}{\operatorname{char }}

\def\Spec{\operatorname{Spec}}

\newcommand{\spec}{\operatorname{Spec}}

\newcommand{\Sym}{\operatorname{Sym}}
\newcommand{\Hom}{\operatorname{Hom}}

\newcommand{\Ga}{\bG_{{\rm a}}}
\newcommand{\GL}{\operatorname{GL}}

\newcommand{\Aut}{\operatorname{Aut}}
\newcommand{\CAut}{\operatorname{Aut^{\circ}}}
\newcommand{\PGL}{\operatorname{PGL}}
\newcommand{\im}{\operatorname{im}}

\newcommand{\Ext}{\rm Ext}

\newcommand{\Def}{{\rm Def}}
\newcommand{\cHom}{\mathcal{H}\kern -.5pt om}
\newcommand{\cExt}{\mathcal{E}\kern -.5pt xt}

\newcommand{\ST}{\overline{\operatorname{ST}}}
\newcommand{\Gm}{\mathbb{G}_m}

\newcommand{\Map}{{\rm Map}}

\newtheoremstyle{thmcite}
  {}
  {}
  {\itshape}
  {}
  {\bfseries}
  {.}
  { }
  {\thmname{#1}\thmnumber{ #2}\thmnote{ \normalfont\checkcite{#3}}}

\newtheoremstyle{defcite}
  {} 
  {} 
  {\normalfont} 
  {} 
  {\bfseries} 
  {.} 
  { } 
  {\thmname{#1}\thmnumber{ #2}\thmnote{ \normalfont\checkcite{#3}}}

\ExplSyntaxOn
\NewDocumentCommand{\checkcite}{m}
 {
  \regex_if_match:nnTF { \A \cB\{* \c{cite} [^\c{cite}]* \Z } {#1}
   {
    #1
   }
   {
    (#1)
   }
 }
\ExplSyntaxOff

\theoremstyle{thmcite}
\newtheorem{theorem}{Theorem}[section]
\newtheorem*{theorem*}{Theorem}

\newtheorem{lemma}[theorem]{Lemma}
\newtheorem{corollary}[theorem]{Corollary}
\newtheorem{proposition}[theorem]{Proposition}
\newtheorem{theoremalpha}{Theorem}

\theoremstyle{defcite}
\newtheorem{remark}[theorem]{Remark}
\newtheorem{example}[theorem]{Example}
\newtheorem{definition}[theorem]{Definition}

\theoremstyle{definition}

\let\oldtocsection=\tocsection

\let\oldtocsubsection=\tocsubsection

\let\oldtocsubsubsection=\tocsubsubsection

\renewcommand{\tocsection}[2]{\hspace{0em}\oldtocsection{#1}{#2}}
\renewcommand{\tocsubsection}[2]{\hspace{1em}\oldtocsubsection{#1}{#2}}
\renewcommand{\tocsubsubsection}[2]{\hspace{2em}\oldtocsubsubsection{#1}{#2}}

\DeclareRobustCommand{\SkipTocEntry}[5]{}

\title{The local geometry of the stack of $A_r$-stable curves}

\subjclass[2020]{14B07, 14D23, 14H10}

\keywords{moduli of curves, good moduli spaces, algebraic stacks, deformations of singularities}

\author{Davide Gori}
\address{Universität Duisburg-Essen, Essen, Germany}
\email{davide.gori@uni-due.de}
\author{Ludvig Modin}
\address{Leibniz Universität Hannover, Institut für Algebraische Geometrie, Welfengarten 1, 30167 Hannover}
\email{modin@math.uni-hannover.de}
\author{Michele Pernice}
\address{Department of Mathematics, University of Washington, 4110 E Stevens Way NE, Seattle, WA 98195}
\email{mpernice@uw.edu}
\begin{document}

\begin{abstract}
    In this paper we study the local geometry of the stack of pointed $A_r$-stable curves. In particular, we analyze the deformation theory of $A_r$-stable curves and their automorphism groups in order to study the combinatorics of families of curves over $[\mathbb{A}^1/\mathbb{G}_m]$, and use this to classify all closed points of the stack of \(A_r\)-stable curves. As a byproduct, we also classify all open substacks of the moduli stack of degree \(2\) cyclic covers of $\bP^1$ that admit a separated good moduli space. This is the first in a series of three papers aimed at studying obstructions for the existence of good moduli spaces for stacks of curves with $A$-type singularities, and using these to find an open substack of the stack of \(A_r\)-stable curves that admits a proper non-projective good moduli space when \(r=5\).
\end{abstract}

\maketitle
\tableofcontents

\section{Introduction}

The moduli space of smooth curves is one of the most studied moduli spaces in algebraic geometry, going back to Riemann. In \cite{DeligneMumford}, Deligne and Mumford constructed the first modular compactification using algebraic methods. In later years, new modular compactifications appeared in the literature. Among the most prominent examples are the Schubert compactification \cite{PseudostableSchubert} and those arising in the Hassett--Keel program, which investigates the modularity of the log-Minimal Model Program (log-MMP) of $\overline{\operatorname{M}}_{g,n}$ (see, for instance, \cite{hassett2009log, hassett2013log, AlpFedSmyWyck, AlpFedSmyExistence, AlpFedSmyProjectivity}). All of these involve curves with $A$-type singularities. Further interplay between the log-MMP of $\overline{\operatorname{M}}_{g,n}$ and alternative compactifications can be found in \cite{Viviani, codogni2021some, AltCompClustAlg}.

The first compactifications were constructed using Geometric Invariant Theory (GIT), via an analysis of stability in the Hilbert and Chow schemes. The series of papers \cite{AlpFedSmyWyck, AlpFedSmyExistence, AlpFedSmyProjectivity} introduces a more intrinsic approach, using what is called local Variation of GIT (VGIT). Later compactifications constructed in \cite{Viviani} use the same technique.

This paper is the first in a series dedicated to the problem of constructing modular compactifications using intrinsic techniques within the framework of the ``Beyond GIT'' program, introduced in \cite{StructureOfInstability}.
Namely, we aim to use the criteria introduced in \cite{ExistenceOfModuli} and verify $\Theta$- and $\textsf{S}$-completeness to construct new modular compactifications that admit a proper good moduli space that is not projective. This modular compactification will be contained inside the moduli stack $\cM_{g,n}^r$ parametrizing $A_r$-stable curves, i.e., curves with at worst $A_r$-singularities and whose log-dualizing line bundle is ample. We work over a field of characteristic $0$; the results generalize to \emph{characteristic big enough with respect to $(g,n)$}, similar to what is done in \cite[Definition~4.1.1]{AltCompClustAlg}.

This paper aims to provide a \emph{combinatorial classification of isotrivial degenerations} of $A_r$-stable curves, or, equivalently, morphisms $\Theta \coloneqq [\bA^1/\bG_m] \rightarrow \cM_{g,n}^r$. This is achieved by:
\begin{itemize}
    \item[1)] classifying all the curves with positive-dimensional stabilizers (see \Cref{cor:gm-decom-aut});
    \item[2)] studying their equivariant deformation theory, which allows us to control how the combinatorics of the curve changes along $\Gm$-equivariant deformations (see \Cref{prop:defor-rational-chain} and \Cref{prop:iso-deg-rosary});
    \item[3)] globalizing the classification, which allows us to determine whether an isotrivial degeneration between two curves exists or not, based only on the combinatorics of the curves (see the results in \Cref{sub:describing-attractors}).
\end{itemize}

An important byproduct of the study of isotrivial degenerations is \Cref{thm:closed_points_Mgnr}, where we describe the closed points of $\cM_{g,n}^r$. As a corollary, we obtain the following result.

\begin{theoremalpha}[{\Cref{cor:stable-gms}}]\label{theo:A}
    Let \(\cM_{g,n}\) be the stack of smooth curves and \(\cM_{g,n} \to \operatorname{M}_{g,n}\) its coarse moduli space.
    Assume \(r<2g\) or \(n>2\), and that \(\cM_{g,n}\subset \mathcal{U}\subset \cM_{g,n}^r\) is a sequence of open embeddings such that \(\cU\) admits a good moduli space \(\pi:\cU\to \operatorname{U}\). Then \(\cM_{g,n}\subset \cU\) is saturated with respect to \(\pi\); in particular, the induced map \(\operatorname{M}_{g,n}\to \operatorname{U}\) is an open embedding.
\end{theoremalpha}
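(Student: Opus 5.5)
Saturation of $\cM_{g,n}\subset\cU$ with respect to $\pi$ means $\pi^{-1}(\pi(\cM_{g,n}))=\cM_{g,n}$. Fibers of a good moduli space contain a unique closed point, and two points of $\cU$ have the same image exactly when their closures meet. So it is enough to prove the following: whenever $[C]\in\cM_{g,n}$ and $[C']\in\cU$ satisfy $\overline{\{[C]\}}\cap\overline{\{[C']\}}\neq\emptyset$ in $\cU$, we have $C'\cong C$. Once saturation holds, $\pi(\cM_{g,n})$ is open in $\operatorname{U}$. Moreover, $\pi|_{\cM_{g,n}}\colon\cM_{g,n}\to\pi(\cM_{g,n})$ is again a good moduli space, since good moduli spaces are stable under base change along open immersions. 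A Deligne--Mumford stack with finite inertia has its coarse space as its good moduli space, so uniqueness of good moduli spaces identifies $\pi(\cM_{g,n})$ with $\operatorname{M}_{g,n}$. This gives the open embedding $\operatorname{M}_{g,n}\to\operatorname{U}$.

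The key step is to show that every smooth curve $[C]$ is a closed point of $\cU$ and that no other point of $\cU$ degenerates to it. I would invoke \Cref{thm:closed_points_Mgnr} together with the classification of curves with positive-dimensional stabilizers (\Cref{cor:gm-decom-aut}). Any point of $\cU$ degenerates to a unique closed point of $\cU$, and closed points of $\cU$ have linearly reductive stabilizers.

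First, a smooth $A_r$-stable curve has finite automorphism group. Such a curve is Deligne--Mumford stable, since the log-dualizing sheaf is ample. Hence the orbit of $[C]$ in any chart is closed in its fiber, and nothing smooth degenerates nontrivially. A nontrivial specialization $[C]\rightsquigarrow[C_0]$ in $\cU$ with $[C_0]$ closed would, by the local structure theorem, be realized by a morphism $\Theta\to\cU$ with generic fiber $C$. Its special fiber $C_0$ then carries a $\Gm$-action. Using \Cref{prop:defor-rational-chain} and \Cref{prop:iso-deg-rosary}, I would argue that a curve with a $\Gm$-action has a non-smooth locus consisting of rational chains or rosaries with $A_k$-singularities. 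Its $\Gm$-equivariant smoothings toward the attracting direction only partially smooth these, and always leave a singular, non-smooth curve. So the generic fiber of an isotrivial family through such a $C_0$ can never be smooth, hence no smooth curve isotrivially degenerates. By symmetry, a singular $C'$ with $\overline{\{C'\}}\ni[C]$ cannot occur, since that would make $[C]$ a $\Theta$-limit of $C'$ with $\Gm$ acting on $C$, contradicting finite automorphisms.

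The main obstacle is the hypothesis $r<2g$ or $n>2$. When it fails (for instance $n\le 2$ and $r\ge 2g$), there exist closed points of $\cM^r_{g,n}$ such as $\Gm$-fixed curves with an $A_{2g}$ or $A_{2g+1}$ singularity (a rational curve with one large cusp-type singularity). These are exactly the curves whose $\Gm$-equivariant versal deformation has smooth fibers: a smooth hyperelliptic curve degenerates isotrivially to $y^2=x^{2g+1}$. There, saturation genuinely fails. I would therefore use \Cref{thm:closed_points_Mgnr} to check that under the hypothesis, every closed point with positive-dimensional stabilizer has a singularity whose $\Gm$-equivariant deformations cannot smooth the whole curve, because of a positive-genus or marked-point component outside the chain. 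This rules out a smooth generic fiber, and verifying this case by case from the theorem is where the real work lies.
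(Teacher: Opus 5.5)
Your reduction is sound and is the same as the paper's. Saturation, and then the open embedding $\operatorname{M}_{g,n}\to\operatorname{U}$, follow once every smooth $C\in\cM_{g,n}(k)$ is a closed point of $\cU_k$, since $\cM_{g,n}$ is open and its $k$-points are closed in it. The gap is that you never establish this key step.

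\begin{itemize}
\item The one concrete argument you give is false. You argue that a smooth curve has finite automorphism group, hence closed orbit, so that ``nothing smooth degenerates nontrivially''. But finite, even trivial, stabilizers do not make a point closed: think of $1\in[\bA^1/\Gm]$. Your own example of a smooth hyperelliptic curve degenerating to the even atom contradicts it.
\item Your next claim, that $\Gm$-equivariant smoothings of a curve with $\Gm$-action always leave a singular curve, is also false without the hypothesis. \Cref{prop:hyp-global-descr} and the last assertion of \Cref{prop:iso-deg-chain} give smooth general fibers over atoms. The case-by-case check that would make it true under $r<2g$ or $n>2$ is exactly what you defer.
\item Your limit $C_0$ is only known to be closed in $\cU$. \Cref{thm:closed_points_Mgnr} classifies closed points of $\cM_{g,n}^r$, so you cannot use it to enumerate such limits without further work.
\end{itemize}

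The missing idea is to apply the converse direction of \Cref{thm:closed_points_Mgnr} to $C$ itself rather than analysing its possible limits. The theorem has already done the deformation-theoretic case analysis, via \Cref{lem:iso-triv-special-degen-keeps-node}.

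\begin{itemize}
\item The hypothesis gives $r\le 2g$ or $n\geq 1$, so special curves are closed in $\cM_{g,n}^r\times_\kappa\Spec k$.
\item For smooth $C$, condition (1) of \Cref{def:special-curve} is vacuous. Since the only connected subcurve is $C$, conditions (2) and (3) can fail only if $C$ is hyperelliptic and one of the following holds: $n=0$ and $r\geq 2g$; $n=1$ and $r\geq 2g$ ($C$ is an $A_0$-attached tail or dangling bridge); or $n=2$ and $r\geq 2g+1$ ($C$ is an $A_0/A_0$-attached bridge).
\item This is \Cref{cor:closed-smooth-curves}, and all these cases are excluded by $r<2g$ or $n>2$.
\end{itemize}

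Hence $C$ is closed in $\cM^r_{g,n,k}$, and a fortiori in the open $\cU_k$, which is exactly what your reduction needs.

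If you prefer to keep your route of ruling out limits directly, it amounts to reproving \Cref{lem:iso-triv-special-degen-keeps-node}. By \Cref{rem:alternating-deformations}, a closed limit of a smooth curve with nontrivial cocharacter must be a single pointed atom of genus $g$. That requires $r\geq 2g$ and $n\leq 2$, which the hypothesis excludes.
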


In other words, if \(g\) is sufficiently large relative to \(r\), then the good moduli space of our modular compactification descends to a compactification of the coarse moduli space of smooth curves. Essentially, the restriction on \(r\) comes from smooth hyperelliptic curves admitting nontrivial isotrivial degenerations to curves with one \(A_{2g}\)-singularity (respectively, one \(A_{2g+1}\)-singularity).

The combinatorial classification of isotrivial degenerations will also be crucial for dealing with $\Theta$-completeness and $\textsf{S}$-completeness in future papers in this series, and it relies heavily on the strong connection between $A$-singularities and (reduced) $2\!:\!1$ covers of the projective line, which we call honestly hyperelliptic curves (following \cite{Cat}). More precisely, given a generically smooth family of curves over a discrete valuation ring (dvr) whose special fiber contains an $A$-singularity, we can locally modify the family by replacing this singularity with a hyperelliptic curve (up to a dvr extension). This process is called semistabilization and can be done explicitly for $ADE$ singularities; see, for instance, \cite{Fedor} and \cite{CasmarLaza}. In the case of an $A_r$-singularity, the semistabilization procedure can be described depending on the parity of $r$:

\begin{itemize}
    \item[(e)] if $r$ is even, then a blowup of a smoothing family of the singularity replaces the singularity with an honestly hyperelliptic curve of genus $r/2$ (with at worst $A_{r-1}$-singularities) attached to the rest of the curve at a Weierstrass point;
    \item[(o)] if $r$ is odd, then an appropriate blowup (described in loc.~cit.) of a smoothing family of the singularity replaces the singularity with an honestly hyperelliptic curve of genus $(r-1)/2$ (with at worst $A_{r-1}$-singularities) attached to the rest of the curve at two points exchanged by the hyperelliptic involution.
\end{itemize}
The semistable replacement can be obtained by repeating the process until only nodal singularities remain. This strong connection is also reflected in the deformation space of $A_r$-singularities with a nontrivial $\Gm$-action, which we call \emph{hyperelliptic} $A$-singularities (see \Cref{def:hyp-even-sing}). Informally, the $\Gm$-action induces a $\Gm$-equivariant decomposition of the deformation space of the singularity: the positively weighted isotypic component corresponds to deformations that preserve the singularity but change the crimping datum, whereas the negatively weighted isotypic component corresponds to deformations that deform the singularity but produce an honestly hyperelliptic component as a result (similar to how it occurs in the semistabilization procedure described above). For the precise statement, see \Cref{cor:gm-decom-tang}.

Thus, a key step in our study of general isotrivial degenerations is to analyze the moduli stack $\cH_{g}^{2g+1,\circ}$ of honestly hyperelliptic curves of genus $g$. This moduli stack is directly related to the moduli stack parametrizing a divisor of degree $2g+2$; namely, there exists a morphism
\[
\cH_{g}^{2g+1,\circ} \to \left[ \bP(\Sym^{2g+2}E)/\PGL_2 \right]
\]
which is a $\mu_2$-gerbe. For more details, see \Cref{sub:hyper-A-stable}.
As a byproduct, we classify all open substacks of $\cH_g^{2g+1,\circ}$ that admit a separated good moduli space. Let $\cH_{g}^{r,\circ}$ denote the open substack of \(\cH_{g}^{2g+1,\circ}\) parametrizing honestly hyperelliptic curves that are $A_r$-stable. We obtain the following characterization:

\begin{theoremalpha}[{\Cref{theo:classify-opens-hyp}}]
Let $\cU \subset \cH_{g}^{2g+1,\circ}$ be any open substack. Then $\cU$ admits a separated good moduli space if and only if one of the following holds:
\begin{itemize}
    \item[(i)] $\cU \subset \cH_g^{g-1,\circ}$;
    \item[(ii)] $\cU \subset \cH_{g}^{g,\circ}$ and $\cH_{g}^{g,\circ}\setminus \cH_g^{g-1, \circ} \subset \cU$.
\end{itemize}
\end{theoremalpha}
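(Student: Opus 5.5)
Via the $\mu_2$-gerbe $\cH_{g}^{2g+1,\circ} \to [\bP(\Sym^{2g+2}E)/\PGL_2]$, good moduli spaces of open substacks on both sides correspond: good moduli spaces are insensitive to gerbes banded by linearly reductive groups, and open substacks correspond bijectively. So I would reduce to the classical situation of binary forms of degree $2g+2$ modulo $\PGL_2$, i.e. divisors $D$ on $\bP^1$ of degree $2g+2$. The dictionary is the one of \Cref{sub:hyper-A-stable}. A root of multiplicity $m$ in $D$ gives an $A_{m-1}$-singularity of the double cover. Ampleness of the (log-)dualizing bundle should exclude non-reduced fibres, so every multiplicity is at most $g+1$. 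Hence $\cH_g^{r,\circ}$ is the locus of divisors with all multiplicities $\le r+1$. In particular $\cH_g^{g-1,\circ}$ is the GIT-stable locus (all multiplicities $< g+1 = \deg/2$), and $\cH_g^{g,\circ}$ is the semistable locus (multiplicities $\le g+1$).

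\emph{Sufficiency.} For (ii), $\cH_g^{g,\circ}\setminus\cH_g^{g-1,\circ}$ consists of the strictly semistable divisors. The unique closed orbit among them is $(g+1)[0]+(g+1)[\infty]$, which has $\Gm$ stabilizer. The GIT quotient of the semistable locus is a projective good moduli space for $\cH_g^{g,\circ}$. If $\cU$ contains all strictly semistable points, then $\cU$ is saturated: the complement of $\cU$ is a closed set of stable points, and stable points are closed with finite stabilizer and form their own fibres. Hence $\cU$ is the preimage of an open subset of the GIT quotient, which gives a separated good moduli space. For (i), $\cU\subset\cH_g^{g-1,\circ}$ is open in the stable locus. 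That locus is a separated Deligne--Mumford stack with finite inertia, so every open substack has a separated coarse space, which is a good moduli space in characteristic $0$ by Keel--Mori.

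\emph{Necessity.} This is the main step. Suppose $\cU$ admits a separated good moduli space. I would argue by contradiction in three cases.

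\begin{itemize}
\item[(a)] \emph{$\cU$ contains a point with a root of multiplicity $m\ge g+2$.} Note $m\le 2g+1$, since $D$ is reduced enough for $A$-stability. Using the $\Gm$ fixing that point and $\infty$, the point isotrivially degenerates within the unstable stratum. I would take the limit $t\to 0$ along the $\Gm$ contracting all other roots to $\infty$. For an open $\cU$ containing the point, the closure of the orbit contains a point with positive-dimensional stabilizer of the form $m[0]+(2g+2-m)[\infty]$. Its automorphism group contains $\Gm$, and Matsushima-type constraints show that $\Theta$-limits land at the same point from different directions. Concretely, I would exhibit two non-isomorphic curves in $\cU$ whose limits give a failure of separatedness (S-completeness). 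Alternatively: a stable point $D$ and the unstable $D'$ contracted toward the same boundary give two dvr limits violating S-completeness, since the elementary modification would require an object with multiplicity $>2g+2$. I expect this case to be the hardest to make rigorous. I would rely on the combinatorial classification of isotrivial degenerations from \Cref{sub:describing-attractors} and the closed-point description, checking that the stabilizer $\Gm$ has weights of both signs forcing non-separatedness.

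\item[(b)] \emph{$\cU\subset\cH_g^{g,\circ}$ but $\cU$ misses some strictly semistable point $x$ while meeting the strictly semistable locus.} The strictly semistable points all S-equivalently degenerate to the polystable point $p=(g+1)[0]+(g+1)[\infty]$.
  \begin{itemize}
  \item If $p\notin\cU$, then two non-S-equivalent semistable points in $\cU$ that are both limits of families of stable curves violate S-completeness, or equivalently separatedness of the image.
  \item If $p\in\cU$, then $\cU$ must be saturated around $p$ by Luna étale slice / $\Theta$-reductivity. Every point degenerating to $p$ lies in the fibre of $p$ and must lie in $\cU$. Since all strictly semistable points degenerate to $p$, they all lie in $\cU$.
  \end{itemize}
\end{itemize}

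Combining these cases yields (i) or (ii).
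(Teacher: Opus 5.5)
Your sufficiency direction is fine. The saturation argument for (ii) is a valid alternative to the paper's appeal to \Cref{prop:local-Theta-complete}, which uses that the complement of $\cU$ in the semistable locus has only finite stabilizers.

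The necessity direction has a genuine gap, and it begins with a factual error. Multiplicity $2g+2$ \emph{does} occur: $\cH_g^{2g+1,\circ}$ corresponds to all nonzero forms. The form $\ell^{2g+2}$ gives the odd atom, two copies of $\bP^1$ meeting in an $A_{2g+1}$-singularity. This is a closed point with stabilizer $\Gm\ltimes\Ga$. It must be excluded first, using that closed points of a stack with a good moduli space have linearly reductive stabilizers. Removing it is also what makes \Cref{prop:local-Theta-complete} applicable afterwards, to the embedding $\cU\subset\cU_{2g+1}$.

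The main gap is case (a), which is not an argument. You correctly note that a form $y\in\cU$ with a root of multiplicity $m\geq g+2$ degenerates to $f_m=x_0^mx_1^{2g+2-m}$. But openness of $\cU$ only propagates membership to generizations, so this tells you nothing about whether $f_m\in\cU$, and no contradiction follows. Two separate ingredients are needed.

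\emph{First}, show $f_h\notin\cU$ for every $h\neq g+1$. For $h<g+1$ the paper uses the finite morphism
\[
[\bP(E)/\Gm]\to\cU,\qquad \ell\mapsto\ell^{2(g+1-h)}(x_0x_1)^h .
\]
It lands in $\cU$ because its image consists of $f_h$ and its generizations, while its source has no good moduli space. This is the precise form of your ``limits from different directions'': the generic such form degenerates to $f_h$ along both $\lambda$ and $\lambda^{-1}$.

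\emph{Second}, with $f_h$ in the complement $\cZ$, apply \textsf{S}-completeness and \Cref{prop:local-Theta-complete} at $f_h$. This forces $\cZ$ to contain one of the two summands of $\Def_{f_h}=V_h\oplus V_{2g+2-h}$, on which $\Gm$ acts with weights of opposite signs. Every form with a root of multiplicity exactly $h$ (resp.\ $2g+2-h$) isotrivially degenerates to $f_h$. Hence $\cZ$ contains the whole stratum $\cZ_h$ or $\cZ_{2g+2-h}$ of such forms. Since $\cZ_h\subset\overline{\cZ_{2g+2-h}}$ for $h>g+1$, in both cases $\cU\cap\cZ_h=\emptyset$. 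Neither Matsushima-type constraints nor an ``elementary modification requiring multiplicity $>2g+2$'' substitutes for these steps.

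Your sub-case $p\notin\cU$ of (b) has a similar defect. Any two strictly semistable points are S-equivalent, so ``two non-S-equivalent semistable points'' produce no contradiction. The correct argument is again the local criterion at $p=f_{g+1}$: either summand of $\Def_p$ sweeps out all of $\cZ_{g+1}$, which gives $\cU\subset\cH_g^{g-1,\circ}$. The sub-case $p\in\cU$ is correct and needs only openness, not Luna slices.
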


The techniques used in the proof also allow us to answer negatively a question posed in \cite[Remark~3.11]{codogni2021some} (see \Cref{prop:Viviani-answer}). This question can alternatively be resolved by a direct inspection of dual graphs, as done in \cite[Remark~3.8.8]{AltCompClustAlg}. The classification obtained in \Cref{theo:classify-opens-hyp} shows that all the open substacks of $\cH_{g}^{2g+1,\circ}$ with separated good moduli spaces can be obtained as $\mu_2$-gerbes over some open substack of the GIT-semistable locus in \(\left[ \bP(\Sym^{2g+2}E)/\PGL_2 \right]\). In contrast, the open substacks with separated good moduli spaces in \(\cM_{g,n}^r\) cannot all be obtained by GIT methods, as we will show in upcoming papers in this series.

\addtocontents{toc}{\SkipTocEntry}
\subsection*{Outline of the paper}

In \Cref{sec:linearization}, we recall basic notions concerning good moduli spaces, with the goal of introducing a criterion useful for establishing $\textsf{S}$- and $\Theta$-completeness for open embeddings (see \Cref{prop:affine-lin-GIT}).

The first part of \Cref{sec:notation} collects general results about $A_r$-stable curves and their moduli stack (many of which are recalled from \cite{Per1} and \cite{VanDerWyck}). The second part of the section is devoted to understanding the link between $A$-singularities and $2\!:\!1$ covers of the projective line. Finally, we prove \Cref{theo:classify-opens-hyp}.

In \Cref{sec:pos-dim-stab}, we study families of $A_r$-stable curves over $\Theta$ using a three-step procedure. First, \Cref{prop:stab-no-out} and \Cref{cor:gm-decom-aut} characterize curves with positive-dimensional stabilizers (the possible central fibers). Next, the study of equivariant deformation spaces (see, for instance, \Cref{prop:def-hyp-rat-sing}) yields combinatorial constraints on the generic fiber (see \Cref{rem:alternating-deformations}). In \Cref{sub:describing-attractors}, we prove that these conditions are not only necessary but also sufficient. Finally, in \Cref{thm:closed_points_Mgnr}, we classify the closed points of $\cM_{g,n}^r$ and deduce \Cref{theo:A}.

\addtocontents{toc}{\SkipTocEntry}
\subsection*{Acknowledgments}

The authors are tremendously indebted to Jarod Alper, Luca Battistella, Andrea Di Lorenzo, Andres Fernandez Herrero, Jochen Heinloth, Giovanni Inchiostro, David Rydh, Filippo Viviani, and Dario Wei{\ss}mann for helpful discussions along the way. We further want to thank Jochen Heinloth for the opportunity to work in person for a week on the project during a research visit of the first and third authors in Essen. The first-named author was partially supported by the INdAM group GNSAGA. The second-named author was supported by DFG-Research Training Group 2553 at Universität Duisburg-Essen during the first phase of this project and is currently a member of the Institut für Algebraische Geometrie in Leibniz Universität Hannover. The third-named author was supported by the Knut and Alice Wallenberg Foundation 2021.0291.
\section{Good moduli spaces and linearization}\label{sec:linearization}
In this section, we collect some results about how one can use local quotient descriptions of a stack \(\mathcal{X}\) to study maps \(\Theta_k,\Theta_R,\ST_R\to \cX\). In this paper we use the maps to reduce the study of isotrivial degenerations to the \(\mathbb{G}_m\)-equivariant geometry of the deformation spaces of \(A_r\)-stable curves. In the next papers of the series, \Cref{prop:local-Theta-complete} will play a key role in the proof of \(\Theta\)- and \textsf{S}-completeness of our choice of open substack of \(\cM_{g,n}^r\).

\subsection{Local structure and good moduli spaces}\label{sub:gms}

Let us start by recalling the notion of good moduli spaces and the \'etale local structure for algebraic stacks.

Good moduli spaces were introduced in \cite{Alp} as a characterization of the key properties of the morphism \([\Spec A/G]\to \Spec A/\!/G\), where \(G\) is a linearly reductive group acting on \(\Spec A\) and \(\Spec A/\!/G\) is the quotient constructed by geometric invariant theory (i.e. the spectrum of the \(G\)-invariants of \(A\)). The definition that captures these properties is surprisingly sparse.
\begin{definition}
    A quasi-compact quasi-separated morphism \(\pi:\mathcal{X}\to X\) to an algebraic space \(X\) is a good moduli space if the following holds.
    \begin{enumerate}
        \item the canonical morphism \(\mathcal{O}_X\to \pi_*\mathcal{O}_{\mathcal{X}}\) is an isomorphism,
        \item the functor \(\pi_*:\operatorname{QCoh}(\mathcal{X})\to \operatorname{QCoh}(X)\)
        is exact.
    \end{enumerate}
\end{definition}
Other than geometric invariant theory quotients, a key example to keep in mind is that a classifying stack \(BG\) admits a good moduli space if and only if \(G\) is a linearly reductive group scheme. Furthermore, if \(\mathcal{X}\) admits a good moduli space then so does every closed substack of \(\mathcal{X}\), which has the consequence that the automorphism group schemes of the closed points of \(\mathcal{X}\) are linearly reductive.

When a stack is not constructed as a quotient of a quasi-projective scheme by a reductive action, one cannot use the methods of geometric invariant theory directly to study whether it admits a good moduli space: there are however strong local structure results that allow one to do this \'etale locally. We state the first version of the theorem that appeared, which is for stacks locally of finite type over an algebraically closed field; there are also versions over general bases with weaker assumptions than here, namely \cite[Theorem~A]{AlpHalRyd} and \cite[Theorems~1.3,1.4,1.5 and 1.6]{AlpHalHalLeiRyd}.

\begin{theorem}[\cite{LunaetaleSliceStacks}]
Let \(\mathcal{X}\) be an algebraic stack, locally of finite type and quasi-separated over an algebraically closed field \(\kappa\), with affine stabilizers. Let \(x\in \mathcal{X}(\kappa)\) be a point and \(H\subset G_x\) a subgroup of the stabilizer of \(x\) such that \(H\) is linearly reductive and \(G_x/H\) is \'etale. Then there exists an affine scheme \(\Spec A\) with an action of \(H\), a \(\kappa\)-point \(w\in \Spec A\) fixed by \(H\) and an \'etale morphism of pointed stacks
\[f:([\Spec A /H],w)\to (\mathcal{X},x)\]
such that \(f^{-1}(\cB G_x)=\cB H\), in particular \(f\) induces the given inclusion \(H\subset G_x\) on stabilizer group schemes at \(w\). Moreover, if \(\cX\) has affine diagonal, then \(f\) can be arranged to be affine.
\end{theorem}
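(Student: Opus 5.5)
The statement is the Luna étale slice theorem for stacks of Alper--Hall--Rydh, which the paper cites rather than proves. I would follow their strategy. The approach is to build the desired quotient presentation formally around \(x\), algebraize it, and then spread it out to an étale neighborhood.

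\textbf{Step 1 (reduction).} Replace \(\mathcal{X}\) by a quasi-compact open neighborhood of \(x\). The residual gerbe at \(x\) is then \(\cB G_x\), a closed substack. Since \(H\subset G_x\) with \(G_x/H\) étale, the map \(\cB H\to \cB G_x\) is finite étale. Using the topological invariance of the étale site, I would deform it along the infinitesimal thickenings \(\mathcal{X}_n\) of \(\cB G_x\) in \(\mathcal{X}\). This reduces everything to the case \(H=G_x\) for a stack \(\mathcal{X}'\) étale over \(\mathcal{X}\), in which \(x\) has stabilizer exactly \(H\).

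\textbf{Step 2 (formal slice).} Let \(\mathcal{X}_n\) be the \(n\)-th nilpotent thickening of \(\cB H\). Because \(H\) is linearly reductive, the obstructions to the relevant deformations vanish: they lie in \(\oH^{i}(\cB H,-)=0\) for \(i>0\). Hence each \(\mathcal{X}_n\) is of the form \([\Spec A_n/H]\), compatibly in \(n\), and taking the limit gives a complete local \(H\)-algebra \(\widehat A\) with \(\mathcal{X}_n=[\Spec (\widehat A/\mathfrak m^{n+1})/H]\). Concretely, \(\widehat A\) is a quotient of the completed symmetric algebra on the dual of the tangent space \(T_x\), which is an \(H\)-representation.

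\textbf{Step 3 (effectivity and algebraization).} This is the main obstacle. First I must show that the system \(\{\mathcal{X}_n\}\) is effective, i.e.\ that \(\mathcal{X}\) is coherently complete along \(\cB H\) in the formal neighborhood. This uses the coherent completeness of \([\Spec \widehat A/H]\) with respect to the invariants \(\widehat A^H\), proved by Tannaka duality for stacks with affine stabilizers, and it yields a map \([\Spec \widehat A/H]\to\mathcal{X}\). Then I would apply Artin approximation, in the equivariant form over the invariant ring \(\widehat A^H\), which is of finite type over \(\kappa\) up to completion. This produces a finite-type affine \(H\)-scheme \(\Spec A\), a fixed point \(w\), and a morphism \([\Spec A/H]\to\mathcal{X}\) that agrees with the formal one to first order. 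The maps then induce isomorphisms on all \(\mathcal{X}_n\), so the morphism is formally étale at \(w\), hence étale in a neighborhood.

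\textbf{Step 4 (saturated shrinking).} Étaleness holds on an open neighborhood of \(w\). I would shrink to an \(H\)-saturated affine open, using that closed points of \(\Spec A/\!/H\) correspond to closed orbits, to keep the quotient presentation. Then I would shrink further so that \(f^{-1}(\cB G_x)=\cB H\) and stabilizers are preserved at \(w\).

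Finally, if \(\mathcal{X}\) has affine diagonal, the morphism \([\Spec A/H]\to\mathcal{X}\) is affine after this shrinking: the target of \(\Spec A\to \mathcal{X}\) is affine up to the affine diagonal, and \(\cB H\to \Spec\kappa\) is cohomologically affine, so Serre's criterion for affineness applies.
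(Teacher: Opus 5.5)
The paper does not prove this statement. It is quoted from Alper--Hall--Rydh, and your outline reproduces the architecture of their argument: a quotient presentation of the thickenings by deformation theory, effectivity via coherent completeness and Hall--Rydh Tannaka duality, equivariant algebraization, and saturated shrinking. However, Step~3 does not work as written.

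First, you cannot run Artin approximation over \(\widehat A^H\). A priori it is only a complete local noetherian ring, a quotient of the completion of \((\Sym T_x^\vee)^H\) by an ideal you do not control. Its algebraizability is part of what has to be proved.

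Second, and more seriously, the inference ``agrees with the formal one to first order, hence induces isomorphisms on all \(\mathcal X_n\)'' is false. Take \(\mathcal X=\bA^1\), \(H\) trivial and \(x=0\). The formal object \(\Spec\kappa[[t]]\to\bA^1\) and the approximation \(\Spec\kappa[t]/(t^3)\to\bA^1\) agree to first order, yet the latter is not \'etale. Agreement to any fixed order of an uncontrolled approximation gives no information on the higher thickenings.

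The missing idea is formal versality together with Artin \emph{algebraization}, not approximation. The morphism \(\widehat\eta\colon[\Spec\widehat A/H]\to\mathcal X\) is formally versal, since its thickenings are \'etale over the \(\mathcal X_n\). Artin's algebraization theorem, in the equivariant form that Alper--Hall--Rydh prove for exactly this purpose, then does the work. Effectivity is supplied by coherent completeness and Tannaka duality. The theorem approximates the defining equations together with their relations to an order dictated by Artin--Rees. It returns a pointed morphism \([\Spec A/H]\to\mathcal X\) that is formally versal at \(w\) and whose completion at \(w\) is isomorphic to \([\Spec\widehat A/H]\). This is what yields \'etaleness at \(w\).

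There are also smaller inaccuracies.

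In Step~1, deforming \(\cB H\to\cB G_x\) along the thickenings only produces a compatible system of \'etale maps \(\mathcal H_n\to\mathcal X_n\). It does not produce a stack \(\mathcal X'\) \'etale over \(\mathcal X\); such an \(\mathcal X'\) is part of the conclusion. You should carry \(\{\mathcal H_n\}\) through Steps~2--3, which is what your Step~2 actually does.

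In Step~2, \(\widehat A\) is neither local nor the naive inverse limit of the \(A_n\). For instance, \(\kappa[[t]]\) carries no rational \(\Gm\)-action extending the weight-one action on \(\kappa[t]\). So the limit must be taken isotypic component by isotypic component; for \([\bA^1/\Gm]\) at \(0\) this gives \(\widehat A=\kappa[t]\). Only \(\widehat A^H\) is complete local.

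Also in Step~3, you attribute coherent completeness to Tannaka duality. Coherent completeness of \([\Spec\widehat A/H]\) is proved directly from linear reductivity. Tannaka duality, where the affine-stabilizer hypothesis enters, is the separate input that turns the compatible maps \(\mathcal H_n\to\mathcal X\) into \(\widehat\eta\).

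Finally, Serre's criterion applies only to representable morphisms. \(f\) is cohomologically affine because \([\Spec A/H]\) is cohomologically affine and \(\mathcal X\) has affine diagonal. You must still arrange that \(f\) is representable on a saturated neighbourhood of \(w\), which your sketch does not address.
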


In particular, this implies that every point of a stack with linearly reductive stabilizer has an \'etale neighborhood that admits a good moduli space, namely \([\Spec A/G_x]\to \Spec A^{G_x}\). With this in hand, one would like to glue these local good moduli spaces to a global one: for instance, the linearization of an action on a quasi-projective variety plays this gluing role in geometric invariant theory. In the general case, it turns out that this boils down to two stacky valuative criteria, which also guarantee that the resulting good moduli space will be separated.

To introduce these, we need two stacks associated to any dvr \(R\) together with associated punctured versions (playing the role of the spectrum of the fraction field in classical valuative criteria). To introduce these stacks, let \((m,n)\in\{(1,0),(1,-1)\} \) and let \(\mathbb{G}_m \) act on \(\mathbb{A}^2\) linearly with weights \((m,n)\). Then the map \([\mathbb{A}^2/\mathbb{G}_m]\to \mathbb{A}^1\) induced respectively by \(t\mapsto y\) and \(t\mapsto xy\) on the coordinate rings \(k[t]\) and \(k[x,y]\) of \(\mathbb{A}^1\) and \(\mathbb{A}^2\) are good moduli spaces. For any dvr \(R\) with uniformizer \(\pi\), we define the stacks \(\Theta_R\) and \(\ST_R\) by the cartesian diagrams
\[\begin{tikzcd}
	{\Theta_R} & {[\mathbb{A}^2/\mathbb{G}_m]} \\
	{\Spec R} & {\mathbb{A}^1}
	\arrow[from=1-1, to=1-2]
	\arrow[from=1-1, to=2-1]
	\arrow["{y\mapsfrom t}", from=1-2, to=2-2]
	\arrow["{\pi\mapsfrom t}"', from=2-1, to=2-2]
\end{tikzcd}\]
with \((m,n)=(1,0)\) and
\[\begin{tikzcd}
	{\ST_R} & {[\mathbb{A}^2/\mathbb{G}_m]} \\
	{\Spec R} & {\mathbb{A}^1}
	\arrow[from=1-1, to=1-2]
	\arrow[from=1-1, to=2-1]
	\arrow["{xy\mapsfrom t}", from=1-2, to=2-2]
	\arrow["{\pi\mapsfrom t}"', from=2-1, to=2-2]
\end{tikzcd}\]
with \((m,n)=(1,-1)\). We also respectively denote by \(\Theta_R\setminus 0\) and \(\operatorname{ST}_R\) the stacks obtained for the analogous cartesian diagrams with \([\mathbb{A}^2/\mathbb{G}_m]\) replaced with \([(\mathbb{A}^2\setminus \{0\})/\mathbb{G}_m]\), inducing open immersions with codimension \(2\) complements \(\iota:\Theta_R\setminus 0\subset \Theta\) and \(\iota:\operatorname{ST}_R\subset \ST_R\).

\begin{definition}[\cite{ExistenceOfModuli}]\label{Theta- and S-completeness}
    Let \(f:\mathcal{X}\to \mathcal{Y}\) be a morphism of locally noetherian algebraic stacks. We say that \(f\) is \(\Theta\)-complete if for all dvr's \(R\) and all commutative solid diagrams
    \[\begin{tikzcd}
	{\Theta_R\setminus 0} & {\mathcal{X}} \\
	{\Theta_R} & {\mathcal{Y}}
	\arrow[from=1-1, to=1-2]
	\arrow["\iota"',from=1-1, to=2-1]
	\arrow["f", from=1-2, to=2-2]
	\arrow[dotted, from=2-1, to=1-2]
	\arrow[from=2-1, to=2-2]
\end{tikzcd}\]
there is a unique dotted arrow making the diagram commute. We say that \(f\) is \textsf{S}-complete if for all dvr's \(R\) and all commutative solid diagrams
    \[\begin{tikzcd}
	{\operatorname{ST}_R\setminus 0} & {\mathcal{X}} \\
	{\ST_R} & {\mathcal{Y}}
	\arrow[from=1-1, to=1-2]
	\arrow["\iota"',from=1-1, to=2-1]
	\arrow["f", from=1-2, to=2-2]
	\arrow[dotted, from=2-1, to=1-2]
	\arrow[from=2-1, to=2-2]
\end{tikzcd}\]
there is a unique dotted arrow making the diagram commute. If \(\mathcal{X}\to \Spec \kappa\), the structure morphism of \(\mathcal{X}\) is \(\Theta\)- or \textsf{S}-complete, then we say that \(\mathcal{X}\) is  respectively $\Theta$-complete or \textsf{S}-complete.
\end{definition}

When first encountering these valuative criteria, it is a good idea to keep the example of \(f:\mathcal{X}\to \mathcal{Y}\) being a morphism of schemes in mind: in this case \(S\)-completeness amounts to a reformulation of the valuative criterion of separatedness, which can be seen by noting that \(\operatorname{ST}_R\cong \Spec R\cup\Spec R\) glued along the generic point and the good moduli space of \(\ST_R\) is \(\Spec R\).

We can now state the existence theorem for separated good moduli spaces, \cite[Theorem~A]{ExistenceOfModuli}.

\begin{theorem}[\cite{ExistenceOfModuli}]
    Let \(\mathcal{X}\) be a stack of finite type  with affine stabilizers and separated diagonal over \(\Spec \kappa\). Then \(\mathcal{X}\) admits a separated good moduli space \(X\) if and only if it is \(\Theta\)- and \textsf{S}-complete.
    Moreover, \(X\) is proper if and only if \(\mathcal{X}\) satisfies the existence part of the valuative criterion for properness.
\end{theorem}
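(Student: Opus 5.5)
This statement is \cite[Theorem~A]{ExistenceOfModuli}, and in the paper it is quoted rather than reproved; what follows is how I would reconstruct its proof. The plan is to prove the two implications of the first sentence separately, reducing everything to quotient stacks \([\Spec A/G]\) via the local structure theorem of \cite{LunaetaleSliceStacks} recalled above. The properness statement is handled at the end.

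\emph{Necessity.} Suppose \(\pi:\mathcal{X}\to X\) is a separated good moduli space. Since \(\Theta_R\) and \(\ST_R\) both have good moduli space \(\Spec R\), and the punctured versions have the same ring of global functions (Hartogs, as the complements have codimension \(2\)), a map from the punctured stack induces \(\Spec R\to X\). I then base change along \(\Spec R\to X\). Étale locally on \(X\), \(\mathcal{X}\) is a quotient \([\Spec A/G]\) with \(G\) linearly reductive and \(\mathcal{X}\to X\) is affine. For a quotient of an affine scheme by a linearly reductive group, the extension problem becomes the extension of a \(G\)-equivariant map to the affine \(\Spec A\). This holds because functions on the codimension-\(2\) complement extend. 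Uniqueness follows from separatedness of the diagonal together with the open immersion being schematically dense.

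\emph{Sufficiency.} This is the substantive direction, in four steps.
\begin{enumerate}
\item Show that S-completeness forces the stabilizers of closed points to be reductive, hence linearly reductive in characteristic \(0\). The idea is that \(\ST_R\)-extensions applied to \(B G_x\) and to the affine quotient \(\GL_n/G_x\) give the extension property that characterizes reductive groups (\(\GL_n/G\) affine).
\item Apply the local structure theorem at each closed point \(x\). This gives an étale map \(f:[\Spec A/G_x]\to\mathcal{X}\) which I may take affine, since the diagonal is affine after the previous step.
\item Prove that \(f\) can be shrunk to be \emph{strongly étale}, meaning stabilizer-preserving at closed points and sending closed points to closed points. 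Theta-completeness is what controls closed points: a point of the chart that fails to map to a closed point would give a map \(\Theta\to\mathcal{X}\) that cannot be lifted compatibly. S-completeness controls the stabilizers, by comparing two \(\ST_R\)-families with the same generic fibre.
\item Once the charts are strongly étale on saturated open neighbourhoods, apply Alper's descent/gluing criterion (the stacky Luna fundamental lemma). This glues the local quotients \(\Spec A^{G_x}\) into a good moduli space \(X\). Separatedness of \(X\) then follows from S-completeness, since \(\ST_R\) reformulates the valuative criterion for separatedness.
\end{enumerate}

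\emph{Properness.} Good moduli space morphisms are universally closed and surjective. Hence \(X\) satisfies the existence part of the valuative criterion exactly when \(\mathcal{X}\) does, after allowing finite extensions of \(R\). Combined with \(X\) being separated and of finite type, this gives the equivalence with properness of \(X\).

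The main obstacle is step 3 of sufficiency: arranging the étale charts to be strongly étale on a saturated neighbourhood. The difficulty is showing that closed points go to closed points. My first approach would be to take a non-closed image point, use Hilbert–Mumford in the chart to produce a one-parameter subgroup degeneration, and use Theta-completeness to lift this degeneration to \(\mathcal{X}\), contradicting étaleness and stabilizer preservation.
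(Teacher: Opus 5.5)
The paper does not prove this statement; it quotes \cite[Theorem~A]{ExistenceOfModuli}. Your outline follows the architecture of the proof there, but its crux, steps 3--4, fails as stated. The properties you aim for (the chart $f\colon\mathcal W\to\cX$ preserves stabilizers at closed points and sends closed points to closed points) do not suffice to glue the quotients $\Spec A^{G_x}$.

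\emph{A counterexample to step 4.} Take $\cX=[\bP^1/\Gm]$ with the weight-one action, and let $\mathcal W=U_0\sqcup U_\infty$, where $U_0,U_\infty\simeq[\bA^1/\Gm]$ are the two standard open charts around $0$ and $\infty$. Then $f$ is étale, surjective, an open immersion on each piece, $\mathcal W$ has a good moduli space, and both closed points go to closed points. Yet $\cX$ has no good moduli space, since the open orbit has two closed points in its closure. The failure shows up in $\mathcal W\times_{\cX}\mathcal W$: its component $U_0\times_{\cX}U_\infty=[\Gm/\Gm]$ is a closed point, and the first projection sends it to the non-closed open point of $U_0$. Being closed-to-closed is not stable under base change. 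Descending the good moduli spaces, however, requires both projections $\mathcal W\times_{\cX}\mathcal W\rightrightarrows\mathcal W$ to have the properties you impose on $f$.

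\emph{The missing idea.} What \cite{ExistenceOfModuli} aims for instead is $\Theta$-surjectivity of $f$: every $\Theta_k\to\cX$ whose generic point lifts to $\mathcal W$ lifts, after enlarging $k$. This property is stable under base change. It implies closed-to-closed once closed points of the target have linearly reductive stabilizers, because specializations to closed points can then be realized via $\Theta$ as in \cite[Lemma~3.24]{ExistenceOfModuli}. $\Theta$-completeness of $\cX$, i.e.\ the valuative criterion for $\operatorname{ev}_1\colon\Map(\Theta,\cX)\to\cX$, is what is used to shrink $f$ to a $\Theta$-surjective chart over a saturated neighbourhood of $w$. In the example this fails: the map $\Theta_R\setminus 0\to[\bP^1/\Gm]$ induced by $x\mapsto[x:\pi]$ does not extend. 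So the example contradicts only your step 4, not the theorem.

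\emph{Step 3 points the wrong way.} A degeneration inside the chart maps to $\cX$ by composition and needs no lifting. The problematic points are closed points $w'$ of the chart, and these have no degenerations there. What must be lifted, from $\cX$ to $\mathcal W$, is the degeneration of $f(w')$ to a closed point of $\cX$. The real work is showing that such lifts exist over an entire saturated open neighbourhood of $w$.

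There are smaller gaps elsewhere; the separatedness and properness arguments at the end are fine.
\begin{itemize}
\item In step 2, reductivity of closed stabilizers does not make the diagonal affine. The theorem only assumes separated diagonal, yet affineness of $f$ is needed in the gluing: it makes $\mathcal W\times_{\cX}\mathcal W\to\mathcal W$ affine, so the fibre product has a good moduli space. It must therefore be derived from the hypotheses separately.
\item Step 1, as phrased, presupposes that $\GL_n/G_x$ is affine, which is equivalent to the reductivity you want. A non-circular argument: $BG_x\subset\cX$ is closed, hence \textsf{S}-complete. If $G_x$ were not reductive, then $B\Ga\to BR_u(G_x)\to BG_x$ would be affine, so $B\Ga$ would be \textsf{S}-complete. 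That is false: $\Ga$-torsors on the cohomologically affine $\ST_R$ are trivial, while those on $\ST_R\setminus 0$ are classified by $Q/R\neq 0$.
\item Your necessity argument never uses that $X$ is separated, but it must: the line with doubled origin is its own good moduli space and is not \textsf{S}-complete. Hartogs controls functions, not maps to a non-affine $X$. A map from $\ST_R\setminus 0$, which is $\Spec R$ with a doubled closed point, factors through $\Spec R$ precisely because $X$ is separated; for $\Theta_R\setminus 0$ the factorization is automatic.
\item Also in necessity, on a chart $[\Spec A/G]$ you must first extend the $G$-torsor across the codimension-two point, and only then the equivariant map to $\Spec A$. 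To do so, extend the associated vector bundle by pushforward on the regular two-dimensional stack, then extend the reduction to $G$ using that $\GL_n/G$ is affine.
\end{itemize}
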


Our study of obstructions to open substacks of \(\cM_{g,n}^r\) builds on the following specialization diagrams describing \(\Theta_R\) and \(\ST_R\) respectively:
    $$
    \begin{tikzcd}
     & \Spec k \arrow[ld, dashed]     &                                                                    \\\cB \bG_{m,k} &                                 & \Spec Q\arrow[lu] \arrow[ld, dashed] \\ & \cB\bG_{m,Q} \arrow[lu] &
    \end{tikzcd}
    $$
    and
    $$
    \begin{tikzcd}
     & \Spec k \arrow[ld, dashed]     &                                                                    \\\cB \bG_{m,k} &                                 & \Spec Q\arrow[lu] \arrow[ld] \\ & \Spec k \arrow[lu, dashed] &
    \end{tikzcd}
    $$

where \(Q\) is the fraction field and \(k\) the residue field of \(R\), and the solid arrows represent an inclusion of \(\Spec R\) or \(\cB \bG_{m,R}\) and the dotted arrows and inclusion of \(\Theta_Q\) or \(\Theta_k\). We often call these dotted specialization arrows \emph{isotrivial degenerations}: we can think of them as families of objects over the ``$0$-dimensional dvr'' $\Theta$. We say that an isotrivial degeneration is degenerate if the closed and open topological points of $\Theta$ are sent to the same topological point of $\cX$. The punctured versions are described by removing the \(\cB \bG_{m,k}\) from these diagrams.

As can be seen from these diagrams, a key component for the application of the existence theorem is to understand families over \(\Theta_k\), see \Cref{sec:pos-dim-stab}, in other words we need to understand maps \(\Theta_k\to \cX\). These maps are parametrized by an algebraic stack \(\operatorname{Filt}(\mathcal{X}):=\Map(\Theta, \cX)\), as is shown in \cite[Proposition 1.1.2]{StructureOfInstability}.
Following \cite{hassett2013log} and \cite[Definition 3.5.2]{AltCompClustAlg}, we introduce the definition of \emph{basin of attraction}.
With the purpose to study isotrivial degenerations, we define:
\begin{definition}[Basin of attraction]
    Let $\cX$ be a quasi-compact and quasi-separated algebraic stack with affine stabilizers over a quasi-separated algebraic space $S$. A point $x \in |\Map_S(\bgm, \cX)|$ is given by a point $p \in |\cX|$ and a cocharacter $\rho \colon \Gm \to \Aut_{\cX}(p)$. We have a diagram
    \[
    \begin{tikzcd}
        \Map_S(\Theta, \cX) \dar{ev_1} \rar{ev_0}& \Map_S(\cB\Gm, \cX) \\
        \cX\\
    \end{tikzcd},
    \]
    where $ev_0$ (resp. $ev_1$) is the restriction of a map to the closed substack $B\Gm$ (resp. to the open point). We define the \emph{basin of attraction} of $x$ with respect to $\rho$ as $|ev_1(ev_0^{-1}(x))| \subset |\cX|$.
\end{definition}

The study mentioned above can be done via local structure theorems, and thus we need to study the \(\bG_m\)-equivariant deformation theory of \(A_r\)-stable curves, which in turn amounts to studying certain \(\mathbb{G}_m\)-representations.

\subsection{Affine linear GIT}

In this short subsection, we analyze the problem of finding $G$-equivariant closed subset $Z\subset V$ of a $G$-representation containing $0$ and with the property that the complement of \([Z/G]\) in \([V/G]\) is $\Theta$-complete (respectively is $\textsf{S}$-complete, it has a separated good moduli space).

\begin{definition}\label{def:multi-rep-notation}
    Let $V$ be a $\bG_m$-representation. We denote by $V^>$ (respectively $V^{\geq}$, $V^<$, $V^{\leq}$) the subrepresentation of $V$ defined as the sum of the irreducible subrepresentation of $V$ with positive (respectively non negative, negative and non positive) weights. We denote by $V^0$ the subspace of $\bG_m$-invariants vectors inside $V$.
\end{definition}

The following lemma is a special case of \cite[Theorem~1.4.8]{StructureOfInstability}.
\begin{lemma}\label{lem:Theta-and-BGmR}
    Let $k$ be an algebraically closed field, $G$ be an algebraic group over $k$ and $V$ be a $G$-representation. Suppose we are given a commutative diagram
    $$
    \begin{tikzcd}
    {\cB \bG_{m,k}} \arrow[d, hook] \arrow[r] & \cB G \arrow[d, hook] \\
    \Theta_k \arrow[r]                        & {[V/G]}.       \end{tikzcd}
    $$
    Then the morphism $\Theta_k \rightarrow [V/G]$ factors through the morphism $[V^{>}/\bG_{m}]\rightarrow [V/G]$. Moreover, let $R$ be a dvr with residue field $k$ and suppose we have a commutative diagram
    $$
    \begin{tikzcd}
    {\cB \bG_{m,k}} \arrow[d, hook] \arrow[r] & \cB G \arrow[d,       hook] \\
    {\cB \bG_{m,R}} \arrow[r]                 & {[V/G]}
    \end{tikzcd}
    $$
    Then the morphism $\cB \bG_{m,R} \rightarrow [V/G]$ factors through the morphism $[V^{0}/\bG_{m}]\rightarrow [V/G]$.
\end{lemma}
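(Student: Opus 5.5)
Since $\Theta_k$ and $\cB\bG_{m,R}$ are both quotient stacks by $\bG_m$, the plan is to turn the given maps into explicit $\bG_m$-equivariant data. First we show that each map factors through $[V/\bG_m]$ via a cocharacter of $G$. Then we decode $\bG_m$-equivariant maps from $\bA^1$ (respectively $\Spec R$) into $V$ using the weight decomposition of $V$ from \Cref{def:multi-rep-notation}.

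\textbf{Step 1: reduce to a cocharacter.} A morphism $\Theta_k=[\bA^1/\bG_m]\to[V/G]$ is a $G$-torsor $P\to\bA^1$ with a $\bG_m$-linearization, together with an equivariant map $P\to V$. Restricting to $0$, the square with $\cB\bG_{m,k}\to\cB G$ says that over the origin the torsor is trivialized and the $\bG_m$-action is given by a homomorphism $\lambda:\bG_m\to G$. Since $G$-torsors over $\bA^1_k$ with a $\bG_m$-action contract to the fibre over $0$, one can equivariantly trivialize $P\cong\bA^1\times G$ with $\bG_m$ acting through $\lambda$. This is the key input from \cite[Theorem~1.4.8]{StructureOfInstability}, or one may argue directly: $\operatorname{Map}(\Theta,\cB G)\simeq\operatorname{Map}(\cB\bG_m,\cB G)$ for affine $G$. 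The same holds over $\cB\bG_{m,R}$: a $G$-torsor on $\Spec R$ with $\bG_m$-action lifting $\lambda$ at the closed point is equivariantly trivial, because $R$ is henselian after completion, or by deformation theory of homomorphisms $\bG_m\to G$ (linearly reductive source, so rigid). This step is the main obstacle. I would handle it by citing the cited theorem rather than reproving it, noting that conjugating by an element of $G$ identifies the cocharacter with the given one. Consequently the map factors through $[V/\bG_m]\to[V/G]$, where $\bG_m$ acts on $V$ via $\lambda$.

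\textbf{Step 2: weight analysis for $\Theta_k$.} Now the data is a $\bG_m$-equivariant morphism $f:\bA^1\to V$, with $t$ of weight $1$. Write $V=\bigoplus_n V_n$ into weight spaces. Equivariance means $f(st)=s\cdot f(t)$, so the component $f_n$ satisfies $f_n(st)=s^nf_n(t)$. Hence $f_n(t)=t^nv_n$ for some $v_n\in V_n$. Polynomiality forces $v_n=0$ for $n<0$. For $n=0$, $f_0$ is constant, equal to the value at $0$. With the convention that the closed point maps to $0\in V$ (the image of $\cB G$), $f_0=0$ and $f$ lands in $V^{>}$.

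\textbf{Step 3: weight analysis for $\cB\bG_{m,R}$.} In this case $\bG_m$ acts trivially on $\Spec R$. An equivariant map $\Spec R\to V$ therefore lands in the invariants $V^0$, so the map factors through $[V^0/\bG_m]$.

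Both factorizations are compatible with the given $\cB\bG_{m,k}\to\cB G$ by construction, which completes the argument.
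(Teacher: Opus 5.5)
Your route is the paper's: reduce to $G=\bG_m$ along the cocharacter $\lambda$, then do the weight computation. The paper performs the reduction via the cartesian square $[V/\bG_m]\simeq[V/G]\times_{\cB G}\cB\bG_m$ together with coherent completeness of $\Theta_k$, which amounts to your equivariant trivialization of the torsor. Your weight computation is correct and more detailed than the paper's.

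One small correction in Step 1: $\Map(\Theta,\cB G)\to\Map(\cB\bG_m,\cB G)$ is not an isomorphism. On the component of $\lambda$ it is $\cB P_\lambda\to\cB L_\lambda$, from the parabolic to the Levi. What you actually use, and what is true over algebraically closed $k$, is that every $G$-torsor on $\Theta_k$ is isomorphic to the pullback of its restriction to $\cB\bG_{m,k}$.

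The real gap is the trivialization over $\cB\bG_{m,R}$. ``Henselian after completion'' does not prove it, because completing changes the family you were given. Rigidity of cocharacters does not prove it either, because it controls the $\bG_m$-action, not the underlying torsor on $\Spec R$. For a general dvr the claim is in fact false. Take $V=0$, $G=\mu_2$, $R=k[t]_{(t)}$, and the nontrivial $\mu_2$-torsor $\Spec R[x]/(x^2-1-t)$ pulled back to $\cB\bG_{m,R}$. It is trivial over the closed point, so the square commutes with the trivial map $\cB\bG_{m,k}\to\cB\mu_2$. But it cannot factor through $\cB\bG_m\to\cB\mu_2$ (the only cocharacter of $\mu_2$ is trivial), since every such composite is the trivial torsor.

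So the second half of the lemma, and your Step 3, need $R$ henselian, e.g.\ complete. Then the torsor is trivial by Hensel's lemma, the cocharacter is $G(R)$-conjugate to the constant $\lambda$, and your weight argument applies verbatim. The paper's sketch (``adapted easily'') carries the same implicit hypothesis. This is harmless downstream, since \Cref{prop:local-Theta-complete} works with a complete dvr, but you should state the assumption rather than appeal to completion.
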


\begin{proof}
    First, we notice that we have a commutative diagram
    $$
    \begin{tikzcd}
    \Theta_k \arrow[r] \arrow[rd] & {[V/\bG_m]} \arrow[r]   \arrow[d] & {[V/G]} \arrow[d] \\& {\cB \bG_{m,k}} \arrow[r]       & \cB G
    \end{tikzcd}
    $$
    where the right-hand square is cartesian. Thus without loss of generality, we can assume $G=\bG_m$. Since $\Theta$ is coherently complete along its closed point, it is straightforward to prove that the morphism $\Theta_k \rightarrow [V/\bG_m]$ is induced by a $\bG_m$-equivariant morphism $\bA^1 \rightarrow V$. The result follows from a straightforward computation. The proof can be adapted easily to the other case.
\end{proof}

\begin{proposition}\label{prop:affine-lin-GIT}
    Let $V$ be a $G$-representation and let $Z$ be a $G$-equivariant closed subset of $V$ which contains $0$ and denote by $U$ the open complement. The following are equivalent:
    \begin{itemize}
        \item[(i)] there exists a cocharacter $\bG_m \rightarrow G$ such that $Z$ contains neither $V^{>}$ nor $V^0$;
        \item[(ii)] there exists a commutative diagram
    $$
    \begin{tikzcd}
    \Theta_R \setminus 0 \arrow[d, hook] \arrow[r, "f^0"] & {[U/G]} \arrow[d, hook] \\
    \Theta_R \arrow[r, "f"]                               & {[V/G]}.
    \end{tikzcd}
    $$
    where $f$ sends the closed point of $\Theta_R$ to the $0$ of the $G$-representation $V$.
    \end{itemize}
    Moreover, the following are equivalent:
    \begin{itemize}
        \item[(i)] there exists a cocharacter $\bG_m \rightarrow G$ such that $Z$ contains neither $V^{>}$ nor $V^{<}$;
        \item[(ii)] there exists a commutative diagram
    $$
    \begin{tikzcd}
    \operatorname{ST}_R \arrow[d, hook] \arrow[r, "f^0"] & {[U/G]} \arrow[d, hook] \\
    \overline{\operatorname{ST}}_R \arrow[r, "f"]                               & {[V/G]}.
    \end{tikzcd}
    $$
    where $f$ sends the closed point of $\overline{\operatorname{ST}}_R$ to the $0$ of the $G$-representation $V$.
    \end{itemize}
\end{proposition}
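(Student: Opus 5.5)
We reduce everything to explicit $\bG_m$-equivariant maps, using \Cref{lem:Theta-and-BGmR} and the local structure of $\Theta_R$ and $\ST_R$ near their closed points. Throughout, we may replace $R$ by its completion, or by a dvr essentially of finite type over $k$ with residue field $k$; in either case $\Theta_R$ is coherently complete along its closed point $\cB\bG_{m,k}$. Consequently a morphism $f\colon \Theta_R\to[V/G]$ sending the closed point to $0$ is, after fixing the induced map on stabilizers, given by a cocharacter $\lambda\colon\bG_m\to G$ together with a $\bG_m$-equivariant morphism $\Spec R[x,y]/(y-\pi)\cong \bA^1_R\to V$, with $x$ of weight $1$ and $\bG_m$ acting on $V$ through $\lambda$. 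The same holds for $\ST_R$, using $\Spec R[x,y]/(xy-\pi)$ with weights $(1,-1)$.

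\textbf{Direction (i)$\Rightarrow$(ii) for $\Theta$.} Choose $\lambda$, and points $v_+\in V^>\setminus Z$ and $v_0\in V^0\setminus Z$. Decompose $v_+=\sum_{d>0}v_d$ into weight components. Define the equivariant map $\bA^1_R\to V$ by
\[x\mapsto \pi\, v_0+\textstyle\sum_{d>0}x^d v_d,\]
which is equivariant because $x$ has weight $1$ and $v_0$ is invariant. We then check that the punctured locus lands in $U$. The punctured stack $\Theta_R\setminus 0$ has two kinds of points. The first is the point $x\neq0$ over the closed fiber $\pi=0$; it maps to the $\bG_m$-orbit of $v_+$, which lies in $U$ because $Z$ is $G$-stable. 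The second is the point $x=0$ over the generic fiber; it maps to $\pi v_0$, which lies in $U$ by homogeneity under $\bG_m$-scaling. Here one must be careful: if $0$ is not a weight of $\lambda$, scaling is not available directly, so we should instead choose $v_0$ over $Q$ such that $\pi v_0\notin Z$; this is possible since $V^0\setminus Z$ is a nonempty open set. Alternatively, use the map $x\mapsto v_0+\sum_d x^dv_d$ and note that $0$ is reached only at $x=0,\pi=0$, which is the excluded point. The second option is cleaner, but it would put the closed point at $v_0$ rather than $0$. To keep the closed point at $0$ we therefore take $\pi v_0$ with $v_0$ generic in $V^0_Q\setminus Z_Q$, and use that $Z\cap V^0$ is a proper closed subset.

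\textbf{Direction (ii)$\Rightarrow$(i) for $\Theta$.} By the description above, $f$ comes from $\lambda$ and an equivariant map $\bA^1_R\to V$. Restricting to the closed fiber $\Theta_k\to[V/G]$, \Cref{lem:Theta-and-BGmR} shows that it factors through $V^>$; its image at $x=1$ is a point of $V^>$ lying in $U$, so $V^>\not\subset Z$. Restricting to the closed point $x=0$ gives a map $\cB\bG_{m,R}\to[V/G]$ extending the residual one, and by the second part of \Cref{lem:Theta-and-BGmR} it factors through $V^0$. Its generic point lies in $U$, so $V^0\not\subset Z$ (after base change, since $Z$ is defined over $k$).

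\textbf{The $\ST_R$ case.} This is analogous. The two punctured points are $x\neq0$ and $y\neq0$ in the closed fiber, and equivariant maps $\Spec R[x,y]/(xy-\pi)\to V$ are sums of monomials $x^a y^b$ times vectors of weight $a-b$, with $v(x)\in V^>$ and $v(y)\in V^<$ on the two branches. For (i)$\Rightarrow$(ii) we send $x\mapsto\sum_d x^dv_d+\sum_e y^{e}w_{-e}$, where $v_+\in V^>\setminus Z$ and $w\in V^<\setminus Z$. For the converse, we restrict to the two branches $\Theta_k$ and $\Theta_k^{-}$ and apply the first part of \Cref{lem:Theta-and-BGmR} to each, with the inverse cocharacter on the second branch.

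\textbf{Main obstacle.} The main obstacle is the first reduction: justifying that $f$ is induced by an equivariant map of affine schemes with a single cocharacter, i.e. the coherent completeness and Tannakian argument for $\Theta_R$ and $\ST_R$, including the conjugation ambiguity in $G$. We would cite the analogous argument used in \Cref{lem:Theta-and-BGmR}, from \cite[Theorem~1.4.8]{StructureOfInstability}.
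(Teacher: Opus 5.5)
Your proposal is correct and follows essentially the same route as the paper: you reduce to $\bG_m$ via the cocharacter, and you get (ii)$\Rightarrow$(i) by applying \Cref{lem:Theta-and-BGmR} to the closed fiber $\Theta_k$ and to the locus $\cB\bG_{m,R}$ (resp.\ to the two branches of $\ST_R$); for (i)$\Rightarrow$(ii) you combine the orbit map of a point of $V^{>}\setminus Z$ with an arc in $V^0$ from $0$ to a point outside $Z$, and your map $x\mapsto \pi v_0+\sum_d x^d v_d$ is exactly the paper's $(f_+,f_0)$, just as your $\ST_R$ map is the paper's fiber product of $f_+$ and $f_-$. The wandering discussion of $v_0$ can be cut: no scaling is available on $V^0$ and none is needed, because for any $v_0\in (V^0\setminus Z)(k)$ the generic point of $\pi v_0$ is the generic point of the line $k v_0$, which misses $Z$ since $Z\cap k v_0$ is a proper closed subset of that line (also, $\Theta_R$ is coherently complete only for complete $R$, though you never actually use this).
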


\begin{proof}
    We will treat the $\Theta_R$ case. First of all, that (ii) implies (i) follows straightforwardly from \Cref{lem:Theta-and-BGmR}. Let us assume (i). First of all, by pulling back through the cocharacter $\chi:\bG_m \rightarrow G$, it is enough to consider the case $G=\bG_m$, with the identity cocharacter. We have that $Z\subset V$ is a $\bG_m$-equivariant subset of $V$ which does not contain $V^{>}$ nor $V^0$. First, notice that because $Z$ does not contain $V^0$, this implies we can find a dvr $R$ and a morphism $f_0:\spec R \rightarrow V^0$ such that $f$ sends the closed point of $\spec R$ in the origin of the subrepresentation $V^0$ while the open point is mapped to the generic point of $V^0$, which is not in $Z$. Similarly, because $Z$ does not contain $V^{>}$ we can find a closed rational point $x_1\in V^{>}(k)$ that is not in $Z$. Because the weights of $\bG_m$ on $V^{>}$ are strictly positive, we know that $0 = \lim_{t\rightarrow 0}t.x_1$ in the quotient stack $[V^{>}/\bG_m]$. Therefore we get a morphism $f_+:\Theta_k \rightarrow [V^{>}/\bG_m]$ such that the generic point $\spec k \subset \Theta_k$ is mapped to $x_1 \in [V^{>}/\bG_m]$. We constructed a morphism
    $$ (f_+,f_0):\Theta_R=\Theta_k \times_k \spec R \longrightarrow [V^{>}\oplus V^0/\bG_m]\subset [V/\bG_m];$$
    we leave it to the reader to verify that it satisfies the desired properties.
    The $\ST_R$ case is identical. Indeed, (ii) implies (i) follows from \Cref{lem:Theta-and-BGmR}. We assume (i). Repeating the argument above we can construct the following diagram
    $$
    \begin{tikzcd}
    {[\bA^1/\bG_m]} \arrow[rr, "f_+"] \arrow[rd] &                                           & {[V^{>}/\bG_m]} \arrow[rd] &          \\
      & \cB \bG_m \arrow[rr, Rightarrow, no head] &                 & \cB\bG_m \\
    {[\bA^1/\bG_m]} \arrow[rr, "f_-"] \arrow[ru] &                                           & {[V^{<}/\bG_m]} \arrow[ru] &
    \end{tikzcd}
    $$
    where the source of $f_+$ (respectively $f_-$) is the quotient of $\bA^1$ by the $\bG_m$-action of weight $1$ (respectively of weight $-1$). By taking the fiber product, we get a morphism
    $$ (f_+,f_-): [\bA^2/\bG_m] \longrightarrow [V^{>}\oplus V^{<}/\bG_m] \subset [V/\bG_m]$$
    where the $\bG_m$-action of $\bA^2$ has weights $(1,-1)$. Finally, the result follows because $\overline{\rm ST}_R$ is the local version of $[\bA^2/\bG_m]$, namely
    $$ \overline{\rm ST}_R\simeq [\bA^2/\bG_m]\times_{\bA^1}\spec R\longrightarrow [\bA^2/\bG_m]$$
    where $[\bA^2/\bG_m]\rightarrow \bA^1$ is the good moduli space morphism and $\spec R \rightarrow \bA^1$ is a morphism sending the closed point of $\spec R$ to $0 \in \bA^1$. In our case, we can just choose $R=k[[t]]$.
\end{proof}

\subsection{Reduction to the deformation space}
In this subsection, we show how one can use local structure results to give a global version of what we did in the previous subsection.

Let $\cX$ be an algebraic stack with affine stabilizers, of finite type over a noetherian base scheme $S$, and $x:\spec k \rightarrow \cX$ be a point. We denote by $G_x$ the stabilizer group and by $T_x\cX$ the $G_x$-representation given by the tangent vector space of $x \in X$.

\begin{remark}\label{rem:formal-local}
    Let $\cX$ be an algebraic stack with affine stabilizers of finite type over a noetherian base scheme $S$ and $x$ be a point in $\cX$ with linearly reductive stabilizer group. We know the completion exists
    $$ \eta_x: \widehat{\cX}_x \longrightarrow \cX$$
    at the point $x \in \cX$ (see Corollary 1.12 of \cite{AlpHalRyd}). Suppose now that we are given a morphism $x:\spec k \rightarrow \cX$ such that the stabilizer group $G_x$ is linearly reductive and $x$ is a smooth point of $\cX$. Then we can identify $\widehat{\cX}_x$ with the completion $[T_x\cX/G_x]^{\wedge}$ of the linearly fundamental stack $[T_x\cX/G_x]$ at the closed point $\cB G_x$. The identification follows from the fact that the first thickening  $\cX_x^{[0]}:=\cB G_x \subset \cX_x^{[1]}$ of $\cB G_x$ in $\widehat{\cX}_x$ is actually trivial, i.e. it admits a section, because $G_x$ is linearly reductive. Recall that the completion $[T_x\cX/G_x]^{\wedge}$ coincides with the fiber product
    $$
    \begin{tikzcd}
    {[T_x\cX/G_x]^{\wedge}} \arrow[r] \arrow[d] & {[T_x\cX/G_x]} \arrow[d, "p"] \\
    (T_x\cX /\!/ G_x)^{\wedge} \arrow[r]            & T_x\cX /\!/ G_x
\end{tikzcd}
    $$
    where $p$ is the good moduli space morphism and the bottom horizontal morphism is the completion of the good moduli space in $0$.
\end{remark}

\begin{lemma}\label{lem:lifting-complete}
    Let $\cX$ be an algebraic stack with affine stabilizers of finite type over a field $k$ and $(\cZ,\cZ_0)$ be a coherently complete pair of algebraic stacks. Suppose given a diagram
    $$
    \begin{tikzcd}
    \cZ_0 \arrow[r, hook] \arrow[d] & \cZ \arrow[d, "f"] \\
    \cB G_x \arrow[r]                 & \cX
    \end{tikzcd}
    $$
    where $G_x$ is the stabilizer group of a rational smooth point $x \in \cX(k)$. Assume moreover that $G_x$ is linearly reductive, then there exists a unique lifting
    $$ \begin{tikzcd}
{(\cZ,\cZ_0)} \arrow[r, "\exists ! \hat{f}", dashed] \arrow[rd, "f"] & {([T_x \cX/G_x]^{\wedge},\cB G_x)} \arrow[d] \arrow[r] & {[T_x\cX/G_x]} \\ &
\cX &\end{tikzcd}
$$
where the right side of the diagram is described in \Cref{rem:formal-local}.

The same description holds if we replace \(G_x\) by \(G_x^{0}\), the connected component of the identity of the stabilizer of \(x\).

\end{lemma}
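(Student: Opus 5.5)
The plan is to factor $f$ through the completion $\eta_x\colon \widehat{\cX}_x\to\cX$ of \Cref{rem:formal-local}, and then use that identification $\widehat{\cX}_x\simeq [T_x\cX/G_x]^{\wedge}$.

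\emph{Setting up the tower of thickenings.} Let $\cZ_n\subset\cZ$ be the $n$-th nilpotent thickening of $\cZ_0$. Since the square commutes, $f$ sends $\cZ_0$ into the residual gerbe $\cB G_x$. Hence $f|_{\cZ_n}$ factors through the $n$-th infinitesimal neighbourhood $\cX_x^{[n]}$ of $\cB G_x$ in $\cX$: the ideal of $\cB G_x$ pulls back into the ideal of $\cZ_0$, so its $(n+1)$-st power pulls back to zero on $\cZ_n$. These factorizations are unique, because $\cX_x^{[n]}\to\cX$ is a closed immersion, hence a monomorphism. They are also compatible in $n$.

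\emph{Passing to the completion.} The stabilizer $G_x$ is linearly reductive, so the completion $\widehat{\cX}_x$ exists (\cite[Corollary~1.12]{AlpHalRyd}). It is coherently complete along $\cB G_x$, and it is the colimit of the $\cX_x^{[n]}$ in the sense of Tannaka duality: the results of \cite{AlpHalRyd} give, for $\widehat{\cX}_x$ with affine diagonal and $\cZ$ coherently complete,
$$\Hom(\cZ,\widehat{\cX}_x)\simeq \varprojlim_n \Hom(\cZ_n,\cX^{[n]}_x).$$
This uses that $(\cZ,\cZ_0)$ is coherently complete, so $\operatorname{Coh}(\cZ)\simeq\varprojlim\operatorname{Coh}(\cZ_n)$. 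The compatible system from the first step therefore yields a morphism $\hat f\colon \cZ\to\widehat{\cX}_x$ with $\eta_x\circ\hat f=f$, and $\hat f$ is unique with this property. Finally, \Cref{rem:formal-local} identifies $\widehat{\cX}_x$ with $[T_x\cX/G_x]^{\wedge}$ compatibly with the closed point $\cB G_x$, since $x$ is a smooth point with linearly reductive stabilizer. Composing with $[T_x\cX/G_x]^{\wedge}\to[T_x\cX/G_x]$ gives the diagram.

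\emph{The $G_x^0$ variant.} Let $H=G_x^0$. It is linearly reductive, being a closed normal subgroup of finite index in $G_x$, and $G_x/H$ is étale. By the local structure theorem of \cite{LunaetaleSliceStacks} there is an étale morphism $g\colon [\Spec A/H]\to\cX$ with $g^{-1}(\cB G_x)=\cB H$. Its completion at $w$ is $[T_x\cX/H]^{\wedge}$, by the same argument as in \Cref{rem:formal-local}, using that the tangent spaces agree because $g$ is étale. We then interpret the statement with $\cB G_x$ replaced by $\cB H$ in the hypothesis, i.e.\ we assume $f|_{\cZ_0}$ lifts to $\cB H$. Étaleness of $g$ lets us lift uniquely through all thickenings $\cZ_n$, by the infinitesimal lifting property for étale morphisms. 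Coherent completeness then gives the lift to $[T_x\cX/H]^{\wedge}$ as before.

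The main obstacle is the passage from the compatible system over the $\cZ_n$ to an honest morphism from $\cZ$, i.e.\ the effectivity statement $\Hom(\cZ,\widehat{\cX}_x)=\varprojlim\Hom(\cZ_n,\cX_x^{[n]})$. For this I would invoke the Tannakian formalism of Hall--Rydh, as packaged in \cite{AlpHalRyd}, and check that $[T_x\cX/G_x]^{\wedge}$ satisfies its hypotheses (affine diagonal, quasi-compactness). Uniqueness is comparatively easy: two lifts agree on every $\cZ_n$, hence agree by coherent completeness.
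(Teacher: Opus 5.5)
Your proposal is correct and follows essentially the paper's argument. The paper's proof is a one-liner: the lift follows from the definition of coherently complete pairs together with Hall--Rydh coherent Tannaka duality, as packaged in Alper--Hall--Rydh. That is exactly your passage from the compatible system $\cZ_n\to\cX_x^{[n]}$ to a morphism $\cZ\to\widehat{\cX}_x\simeq[T_x\cX/G_x]^{\wedge}$. Your treatment of the $G_x^0$ case, via an \'etale slice $[\Spec A/G_x^0]\to\cX$ and infinitesimal lifting, spells out what the paper leaves implicit. One small point: concluding $\eta_x\circ\hat f\simeq f$ from agreement on every $\cZ_n$ needs the same Tannakian full faithfulness for maps into $\cX$, not only for maps into $\widehat{\cX}_x$.
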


\begin{proof}
    It follows by the definition of coherently complete pairs and \cite[Theorem~1.1]{CoherentTannaka} as in \cite[Section~1.7]{AlpHalRyd}.
\end{proof}

\begin{remark}\label{rem:formal-closed-subset}
    Let $\cX$ be an algebraic stack of finite type over a noetherian base scheme $S$ and $x$ be a point in $\cX$. Given $\cZ$ be a closed subset of $\cX$, we denote by $\widehat{\cZ}_x$ the completion of $\cZ$ in $x$, namely the preimage $\eta_x^{-1}(\cZ)$. If \(G_x\) is linearly reductive and $x$ is a smooth point of $\cX$, we claim that $\cZ_x$ coincides with the preimage of a closed $G_x$-invariant subset $Z_x \subset T_x \cX$ through the (completion) morphism
    $$ \tau_0: \widehat{\cX}_x\simeq [T_x\cX/G_x]^{\wedge} \longrightarrow [T_x \cX/G_x]. $$
    Indeed, it follows easily from the explicit construction of $\tau_0$ and $\eta_x$ that $\tau_0 ^{-1}\tau_0(\cZ_x) =\cZ_x$ and, since $\tau_0$ is universally generalizing and $\cZ_x$ is closed, we have that $\tau_0^{-1} \left( \overline{\tau_0(\cZ_x)}\right)=\cZ_x$ and we can set $Z_x:= \overline{\tau_0(\cZ_x)}$. Notice that there is no guarantee that $Z_x$ is unique: indeed if we add to $Z_x$ a closed $G_x$-invariant subset of $T_x\cX$ which does not contain the origin of the representation $T_x\cX$, its preimage through $\tau_0$ does not change. Nevertheless, we only need to control its behavior around the origin, so we can choose $Z_x$ as above for the following proposition. 
\end{remark}

\begin{proposition}\label{prop:local-Theta-complete}
    Let $\cX$ be an algebraic stack smooth over a field with affine diagonal and linearly reductive stabilizers (at every point). Let $\cZ$ be a closed subset of $\cX$ and $Z_x$ be the induced closed $G_x$-equivariant subset of $T_x\cX$ as described in \Cref{rem:formal-closed-subset}. The following are equivalent:
    \begin{itemize}
        \item[(i)]  for every geometric point $x:\spec k \rightarrow \cZ$ and every cocharacter $\bG_{m,k} \rightarrow G_x$, the $G_x$-equivariant closed subset $Z_x \subset T_x \cX$ contains either $T_x \cX^{>}$ or $T_x \cX^0$ (respectively  $T_x \cX^{>}$ or $T_x \cX^{<}$);
        \item[(ii)] the open embedding $\cU:=\cX \setminus \cZ \subset \cX$ is $\Theta$-complete (respectively $\textsf{S}$-complete).
    \end{itemize}
\end{proposition}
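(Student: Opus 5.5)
We prove both implications by reducing, via the local structure results, to the linear situation of \Cref{prop:affine-lin-GIT}. The key observation is that any failure of $\Theta$-completeness (resp. $\textsf{S}$-completeness) of $\cU\subset\cX$ is witnessed by a diagram whose closed point $\cB\Gm$ maps to some point $x\in\cZ$. We may assume this closed point is sent to a point with a cocharacter $\Gm\to G_x$ after passing to a geometric point. The existence of the extension is then decided formally locally at $x$.

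\emph{(ii) $\Rightarrow$ (i).} Suppose (i) fails at some geometric point $x\in\cZ$ and cocharacter $\lambda\colon\Gm\to G_x$. Thus $Z_x$ contains neither $T_x\cX^{>}$ nor $T_x\cX^{0}$ (resp. $T_x\cX^{<}$).
\begin{enumerate}
\item By \Cref{prop:affine-lin-GIT} (with $R=k[[t]]$) there is $f\colon\Theta_R\to[T_x\cX/G_x]$ sending the closed point to $0$ and $\Theta_R\setminus 0$ into the complement of $Z_x$.
\item The construction in that proof factors through $[V^{>}\oplus V^0/\Gm]$, and its composite to the good moduli space $T_x\cX/\!/G_x$ lands in $(T_x\cX/\!/G_x)^{\wedge}$. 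Here we use $R$ complete and the explicit form $(f_+,f_0)$ with $f_0$ sending the closed point to $0$. Hence $f$ factors through $[T_x\cX/G_x]^{\wedge}\simeq\widehat{\cX}_x$ of \Cref{rem:formal-local}.
\item Composing with $\eta_x$ gives $\Theta_R\to\cX$. By \Cref{rem:formal-closed-subset}, $\tau_0^{-1}(Z_x)=\widehat{\cZ}_x=\eta_x^{-1}(\cZ)$, so $\Theta_R\setminus 0$ maps into $\cU$ while the closed point maps to $x\in\cZ$.
\end{enumerate}
Since $\cX$ has affine diagonal, the extension of $\Theta_R\setminus 0\to\cU$ to $\Theta_R$ in $\cX$ is unique. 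It therefore cannot factor through $\cU$, which contradicts (ii). The $\textsf{S}$ case is identical using $\ST_R$.

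\emph{(i) $\Rightarrow$ (ii).} Take a diagram $\Theta_R\setminus 0\to\cU$ with extension $f\colon\Theta_R\to\cX$. The extension exists and is unique in $\cX$: $\cX$ is smooth, so its open substack relevant to $f$ is handled via the local structure, and one can also reduce to $\cX$ itself being $\Theta$-complete after shrinking to a neighbourhood with a good moduli space. We must show $f$ factors through $\cU$.
\begin{enumerate}
\item Only the image $x$ of the closed point $0$ can lie in $\cZ$, since the other points of $\Theta_R$ are generizations already mapping to $\cU$ or specializing from it. After a finite extension of the residue field we may take $x$ geometric. The restriction to $\cB\Gm$ then gives a cocharacter $\Gm\to G_x$; here we use the version of \Cref{lem:lifting-complete} for $G_x$.
\item The pair $(\Theta_R,\cB\Gm_k)$ is coherently complete. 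Hence \Cref{lem:lifting-complete} lifts $f$ uniquely to $\hat f\colon\Theta_R\to[T_x\cX/G_x]^{\wedge}\to[T_x\cX/G_x]$, sending $0$ to the origin.
\item \Cref{lem:Theta-and-BGmR} forces $\hat f$ to factor through $[T_x\cX^{>}\oplus T_x\cX^{0}/\Gm]$. Restricted to the special fibre this gives $\Theta_k\to[T_x\cX^{>}/\Gm]$, and restricted to $\cB\Gm_R$ it gives $\Spec R\to T_x\cX^{0}$. Using \Cref{rem:formal-closed-subset}, $\Theta_R\setminus 0$ mapping into $\cU$ means these avoid $Z_x$ at the generic points.
\item By (i), $Z_x$ contains $T_x\cX^{>}$ or $T_x\cX^{0}$. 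In either case one of these generic points lies in $Z_x$, which is a contradiction. The same argument works with $T_x\cX^{<}$ for $\ST_R$.
\end{enumerate}

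\textbf{Main obstacle.} The delicate step is the compatibility in (i) $\Rightarrow$ (ii), steps 3–4. The linear factorization of \Cref{lem:Theta-and-BGmR} is stated for $\Theta_k$ and $\cB\Gm_R$ separately. We need that the generic point of the $T^{>}$-part (a $k$-point) and the generic point of the $T^{0}$-part both lie in the image of $\Theta_R\setminus 0$, which holds because $\Theta_R\setminus 0$ contains both $\Theta_k\setminus 0$ and $\cB\Gm_Q$. We also need to handle carefully the non-uniqueness of $Z_x$ away from the origin. This is harmless because all of $\hat f$ lives in the formal neighbourhood, where $\tau_0^{-1}(Z_x)$ is canonical.
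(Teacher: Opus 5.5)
Your proof is correct and follows the same route as the paper. For (i)$\Rightarrow$(ii) you lift $f$ to $[T_x\cX/G_x]$ via \Cref{lem:lifting-complete} and conclude with the linear analysis of \Cref{lem:Theta-and-BGmR} and \Cref{prop:affine-lin-GIT}; for (ii)$\Rightarrow$(i) you push the linear test map of \Cref{prop:affine-lin-GIT} into $[T_x\cX/G_x]^{\wedge}\simeq\widehat{\cX}_x$ and compose with $\eta_x$, and your explicit factorization through the completed good moduli space (using that $R$ is complete) spells out a step the paper leaves to \Cref{rem:formal-closed-subset}. \textbf{Just delete the aside claiming the extension $\Theta_R\to\cX$ always exists:} it is false for general smooth $\cX$, which need not be $\Theta$-complete, and it is never needed, since $f$ is part of the data of the relative $\Theta$-completeness diagram.
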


\begin{proof}
    We will prove the case for $\Theta$-completeness. The same idea works for $\textsf{S}$-completeness.
    We start with the implication $(i)\implies (ii)$. Let $R$ be a complete dvr, and suppose given a commutative diagram
    $$
    \begin{tikzcd}
    \Theta_R \setminus 0 \arrow[r] \arrow[d, hook] & \cU        \arrow[d, hook] \\ \Theta_R \arrow[r]                             & \cX.                \end{tikzcd}
    $$
    By contradiction, we assume that the closed point of $\Theta_R$ maps to a point $x$ in $\cZ$. Because $\cX$ is defined over a field, Cohen's structure theorem implies that $R$ is a $k$-algebra where $k$ is the residue field of $R$. Thus, we can base change the whole problem to $\spec k$, and, without loss of generality, assume that $x$ is a rational point. Using \Cref{lem:lifting-complete} for the pair $(\Theta_R, 0)$, we can reduce to the study of the induced morphism
     $$ \Theta_R \longrightarrow [T_x \cX / G_x]$$
     which sends the closed point of $\Theta_R$ to the zero of the tangent space. The statement follows from \Cref{prop:affine-lin-GIT}.
     For the converse, we can reason by contradiction again and use \Cref{prop:affine-lin-GIT} to construct a diagram
     $$
    \begin{tikzcd}
    \Theta_R \setminus 0 \arrow[r] \arrow[d, hook] & {[T_x\cX \setminus Z_x/G_x]}        \arrow[d, hook] \\ \Theta_R \arrow[r]                             & {[T_x\cX/G_x]}                \end{tikzcd}
    $$ 
    which can be lifted to a diagram
    $$
    \begin{tikzcd}
    \Theta_R \setminus 0 \arrow[r] \arrow[d, hook] &     \widehat{\cX}_x\setminus \cZ_x  \arrow[d, hook] \\ \Theta_R \arrow[r]                             &       \widehat{\cX}_x        \end{tikzcd}
    $$ 
    thanks to  \Cref{rem:formal-closed-subset}. To conclude, it is enough to notice that $\Theta$-completeness (and $\textsf{S}$-completeness) are properties stable by base change.
\end{proof}

\begin{remark}
    The equivalent condition of $\Theta$-completeness in the previous proposition can be equivalently stated as follows (even if apparently stronger):
    \begin{itemize}
        \item[(i)]  for every $x:\spec k \rightarrow \cZ$ geometric point and every cocharacter $\bG_{m,k} \rightarrow G_x$, the $G_x$-equivariant closed subset $\cZ_x \subset T_x \cX$ contains either $T_x \cX^0$ or $T_x \cX^{>} \cup T_x \cX^{<}$;
    \end{itemize}
\end{remark}

\begin{remark}
    As it was pointed out in \Cref{rem:formal-closed-subset}, the closed subset $Z_x\subset T_x\cX$ is not unique (although it is not a problem for our application). Nevertheless, condition $(i)$ in \Cref{prop:local-Theta-complete} can be rewritten in a more intrinsic way. For every geometric point $x:\spec k \rightarrow \cZ$ and every cocharacter $\bG_{m,k} \rightarrow G_x$, we have the following diagram
    $$\begin{tikzcd}
    & X_x \arrow[d, "p"] \arrow[r, "t_0"] & T_x\cX \arrow[d] \\
    \cX & \widehat{\cX}_x \arrow[r, "\tau_0"'] \arrow[l, "\eta_x"] & {[T_x\cX/G_x]}          
    \end{tikzcd}$$
    where the square is Cartesian. Condition $(i)$ can then be restated as 
    \begin{itemize}
        \item[$(i')$] for every geometric point $x:\spec k \rightarrow \cZ$ and every cocharacter $\bG_{m,k} \rightarrow G_x$, we have that $p^{-1}(\cZ_x)$ contains either $t_0^{-1}(T_x \cX^{>})$ or $t_0^{-1}(T_x \cX^0)$;
    \end{itemize}
    where $X_x$ is the fiber product of affine schemes $T_x\cX \times_{T_x\cX//G_x} (T_x\cX//G_x)^{\wedge}$. The equivalence between $(i)$ and $(i')$ is a direct consequence of \Cref{rem:formal-closed-subset}. The same is true for $S$-completeness.
\end{remark}
\section{\texorpdfstring{$A_r$}{A\_r}-stable curves}
\label{sec:notation}
This section is dedicated to the introductory material regarding curves with singularities of type $A$. Specifically, we start by recalling some basic theory regarding curves with $A$-singularities and their moduli stack in \Cref{sub:A_r-crimping}, where we also study the crimping spaces associated to $A$-singularities. In \Cref{sub:equivariant-normalization}, we study the equivariant properties of normalizations of $A$-singularities, with the goal of understanding which automorphisms (in the connected component of the identity) descend through the normalization. Finally, in \Cref{sub:hyper-A-stable} we concentrate our efforts on understanding hyperelliptic curves with $A$-singularities, as they play a crucial role in the study of isotrivial degenerations of $A$-singularities. We end the section by proving that, in the case of the stack of cyclic covers of the projective line, the full classification of opens which admit a separated good moduli space coincides with the answer provided by Geometric Invariant Theory (see \Cref{theo:classify-opens-hyp}).

We fix a base field $\kappa$ of characteristic zero. Every time we talk about genus, we mean arithmetic genus, unless specified otherwise. If $R$ is a discrete valuation ring (or dvr for short), then we will denote by $Q$ its fraction field and by $k$ its residue field.

\subsection{\texorpdfstring{$A_r$}{A\_r}-stable curves and crimping}\label{sub:A_r-crimping}
In this subsection, we collect some results about curves with \(A_k\)-singularities, in particular about the normalization of an \(A_r\) singularity, and conversely how one or two smooth points on a curve together with some infinitesimal data can be \emph{crimped} into an \(A_k\) singularity. Many of the results about crimping spaces originally come from the PhD thesis of van der Wyck \cite{VanDerWyck}. We will follow the notation introduced in the third-named author's PhD thesis (specifically \cite{Per1}), where some of the results of \cite{VanDerWyck} were generalized to families over any base scheme.

We start by fixing some classical notations.
\begin{definition}
    A curve $C$ over an algebraically closed field $k$ is a connected, reduced, one-dimensional scheme proper over $k$. An $n$-pointed curve $(C,p_1,\dots,p_n)$ over a field $k$ is a curve $C$ together with $n$ rational smooth points $p_1,\dots,p_n$.
\end{definition}

\begin{definition}[Partial Normalization]
    Let $C$ be a curve over an algebraically closed field $k$, and let
    $S \subset \operatorname{Sing}(C)$ be a subset of its singular points.
    A finite morphism
    \[
    \nu \colon \widetilde{C} \to C
    \]
    is called the \emph{partial normalization of $C$ along $S$} if $\nu$ is an isomorphism over $C \setminus S$ and the induced map $\widetilde{C} \setminus \nu(S^c) \to C\setminus S^c$ is the normalization, where $S^c = \operatorname{Sing}(C) \setminus S$.
\end{definition}

\begin{definition}
    A closed point $p \in C$ over an algebraically closed field $k$ is called an \emph{$A_h$-singularity} if
    $$ \widehat{\cO_{C,p}} \simeq k[[x,y]]/(y^2-x^{h+1}).$$
    If we do not want to stress the integer $h$, we will drop the subscript and write \emph{$A$-singularity}. An $A$-singularity $p$ of $C$ is \emph{separating} if the partial normalization of $C$ at $p$ is disconnected. Moreover, we say that $C$ is an $A$-prestable curve if all the singular points of $C$ are $A$-singularities.
\end{definition}

\begin{definition}[Pointed partial normalization]
    \index{Pointed partial normalization}
    Let $(C,\{p_i\}_{i\in I})$ be a pointed curve, and let $S \subset \operatorname{Sing}(C)$ be a finite subset.
    The \emph{pointed partial normalization of $(C,\{p_i\})$ along $S$} is the triple $(\widetilde{C},\{q_j\}_{j\in J}, \nu)$ where
    \[
    \nu \colon \widetilde{C} \to C
    \]
    is the partial normalization of $C$ along $S$ and $\{q_j\}_{j\in J} \coloneq \nu^{-1}\!\left(S \cup \{p_i \mid i \in I\}\right)$. If $S = \operatorname{Sing}(C)$, then $\nu$ is called the \emph{pointed normalization}.
\end{definition}

Notice that the normalization morphism is not a morphism between pointed curves; however, it sends markings to special points, which are either markings or singularities.

Let $g$ and $n$ be two nonnegative integers. Fix a nonnegative integer $r$.

\begin{definition}[$A_r$-stable curve]\index{$A_r$-stable curve}
    Let $k$ be an algebraically closed field and $(C,p_1,\dots,p_n)/k$ be an $n$-pointed $A$-prestable curve. We say that $C$ is $A_r$-prestable if every singular point is an $A_h$-singularity for $h\leq r$. We say that $(C,p_1,\dots,p_n)$ is \emph{$A_r$-stable} if it is $A_r$-prestable and the line bundle $\omega_C(p_1+\dots+p_n)$ is ample.
\end{definition}
\begin{remark}
    Notice that an $A$-prestable curve $C$ is l.c.i. by definition, therefore the dualizing complex $\omega_C$ is in fact a line bundle.
\end{remark}

\begin{remark}\label{rem:genus-count}
    Let $C$ be a curve over an algebraically closed field and let $p$ be an $A_r$-singularity.  We denote by $b:\widetilde{C}\rightarrow C$ the partial normalization at the point $p$ and by $J_b$ the conductor ideal of $b$. Then a classical computation shows that
    \begin{enumerate}
        \item if $r=2h$, then $g(C)=g(\widetilde{C})+h$;
        \item if $r=2h+1$ and $\widetilde{C}$ is connected, then $g(C)=g(\widetilde{C})+h+1$,
        \item if $r=2h+1$ and $\widetilde{C}$ is not connected, that is \(p\) is a separating singularity, then $g(C)=g(\widetilde{C})+h$.
    \end{enumerate}
    Furthermore, Noether's conductor formula gives us that $b^*\omega_C \simeq \omega_{\widetilde{C}}(J_b^{\vee})$. See, for instance, \cite[Proposition~1.2]{Cat}.
\end{remark}

Let $\cM_{g,n}^r$ be the fibered category over $\kappa$-schemes whose objects over \(S\) are tuples \((C,p_1,\ldots,p_n)\) where \(C\to S\) is a proper flat morphism with sections \(p_i:S\to C\) such that for every geometric point \(x:\Spec k\to S\), \((C_x,p_1(x),\ldots,p_n(x))\) is an \(n\)-pointed \(A_r\)-stable curve of genus \(g\). These families are called \emph{$n$-pointed $A_r$-stable curves} over $S$. Morphisms are just isomorphisms of $n$-pointed curves.

We recall the following properties of $\cM_{g,n}^r$. See \cite[Theorem~2.2]{Per1} for the proof.

\begin{theorem}\label{theo:descr-quot}
    $\cM_{g,n}^r$ is a smooth connected algebraic stack of finite type over $\kappa$. Furthermore, it is a quotient stack: that is, there exists a smooth quasi-projective scheme X with an action of $\GL_N$ for some positive $N$, such that
    $ \cM_{g,n}^r \simeq [X/\GL_N]$.
\end{theorem}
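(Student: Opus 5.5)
The plan is to realize $\cM_{g,n}^r$ as an open substack of a larger, well-understood stack of curves. I will then transport algebraicity, finite type, smoothness and the quotient presentation from that larger stack, and prove connectedness separately.

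\emph{Step 1: embedding and openness.} Consider the stack $\mathfrak{M}_{g,n}$ of all $n$-pointed proper flat families of connected reduced curves of arithmetic genus $g$ with sections in the smooth locus. This is an algebraic stack locally of finite type. The condition that every geometric fiber has only $A_h$-singularities with $h\le r$ is open, by upper semicontinuity of the Milnor/Tjurina number together with the versal deformation of $A_h$ singularities, which contains only $A_{h'}$-singularities with $h'\le h$. Ampleness of $\omega_{C/S}(\sum p_i)$ is an open condition on proper flat families.

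\emph{Step 2: boundedness and quotient presentation.} I would show that for $A_r$-stable curves a fixed power $L=\omega_C(\sum p_i)^{\otimes m}$, with $m$ depending only on $(g,n,r)$, is very ample with $H^1=0$. The degree of $\omega_C(\sum p_i)$ on each irreducible component is positive and is bounded in terms of $g,n$, using the genus count of \Cref{rem:genus-count} and Noether's formula $b^*\omega_C\simeq\omega_{\widetilde C}(J_b^\vee)$. This makes the standard Deligne--Mumford-type argument work. It realizes $\cM_{g,n}^r$ as $[X/\GL_N]$, where $X$ is the locally closed subscheme of the Hilbert scheme of $\bP^{N-1}\times(\bP^{N-1})^n$ parametrizing $A_r$-stable embedded curves with $\cO(1)\cong L$ and a chosen basis of $H^0(L)$. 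The scheme $X$ is quasi-projective, hence $\cM_{g,n}^r$ is of finite type.

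\emph{Step 3: smoothness.} Obstructions to deforming a curve lie in $\Ext^2(\Omega_C,\cO_C)$. Since $C$ is l.c.i. of dimension one, the local-to-global spectral sequence reduces this to $H^1(\cExt^1(\Omega_C,\cO_C))$ and $H^2(\cHom(\Omega_C,\cO_C))$. The first vanishes because $\cExt^1$ is supported at finitely many points, and the second vanishes for dimension reasons. The points add no obstructions, so $\cM_{g,n}^r$, and hence $X$, is smooth.

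\emph{Step 4: connectedness, the main obstacle.} Since $\cM_{g,n}^r$ is smooth, connectedness is equivalent to irreducibility, and it suffices to show that the smooth locus $\cM_{g,n}$, which is irreducible, is dense. Equivalently, every $A_r$-stable curve admits a smoothing within $\cM_{g,n}^r$. Because the stack is smooth and the obstructions vanish, local deformations of the singularities are realized globally; this uses the surjectivity $\Ext^1(\Omega_C,\cO_C)\to H^0(\cExt^1)$, which holds since $H^2(\cHom(\Omega_C,\cO_C))=0$. Each $A_h$ singularity can be smoothed inside its miniversal deformation $y^2=x^{h+1}+a_{h-1}x^{h-1}+\dots+a_0$. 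A generic one-parameter family therefore yields a smooth generic fiber, and ampleness is preserved under generization by openness. I expect the care to lie in verifying that the surjectivity onto local deformations holds in families and that the pointed data poses no issue.
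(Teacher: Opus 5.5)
The paper gives no argument here beyond citing \cite[Theorem~2.2]{Per1}, and your outline is the standard argument that this citation rests on: an open substack of the stack of pointed curves, smoothness from the vanishing of $\operatorname{Ext}^2(\Omega_C,\cO_C)$ for reduced l.c.i.\ curves, a $\GL_N$-quotient via a uniformly very ample power of $\omega_C(\sum p_i)$, and connectedness from density of $\cM_{g,n}$ by smoothing each $A_h$-singularity. The one step you leave under-justified is uniform very ampleness: positive degrees on irreducible components plus Noether's formula on the normalization do not by themselves control separation of points and tangent vectors where components meet in higher $A$-singularities, so you should invoke a subcurve criterion for Gorenstein curves (Catanese--Franciosi--Hulek--Reid: $\deg L|_B\geq 2p_a(B)+1$ for every subcurve $B$ gives very ampleness, and $\deg L|_B\geq 2p_a(B)-1$ gives $H^1(L)=0$), which together with $\deg \omega_C(\sum p_i)|_B=2p_a(B)-2+B\cdot(C-B)+|P_B|\geq 1$ yields $m=3$.
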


\begin{remark}\label{rem: max-sing}
Recall that we have an open embedding $\cM_{g,n}^r  \subset \cM_{g,n}^s$ for every $r\leq s$. Notice that $\cM_{g,n}^r=\cM_{g,n}^{2g+1}$ for every $r\geq 2g+1$, because of \Cref{rem:genus-count}. Thus, we can consider $\cM_{g,n}^{2g+1}$ as the moduli stack of all $A$-stable curves (with no restriction on the singularities).
\end{remark}

\subsection*{Crimping morphisms of \texorpdfstring{$A_r$}{A\_r}-singularities}

In this subsection, we describe the crimping datum needed to construct an $A_r$-stable curve starting from its normalization. Contrary to the nodal and cuspidal cases, it is not enough to remember only the closed points mapping to a given singularity if it is of type \(A_l\) with \(l\geq 3\). We want to stress again that the crimping morphism described below is not new: it can be found, for instance, in \cite{VanDerWyck}. We slightly generalize the analysis of \emph{loc.~cit.} for $A$-type singularities by extending the crimping morphism also to the case where the combinatorics of the curve is not fixed. In particular, when we restrict the normalization map in \Cref{prop:descr-an-even} to the locally closed substack given by specified combinatorics of the curve, we recover the description given in \cite[Theorem~1.105]{VanDerWyck} for $A$-type singularities.

The main players in the description are the stacks \(\cA_l\) for \(0\leq l \leq r\), parametrizing families of \(A_r\)-stable curves (with \(n\) marked points) together with a section that is an \(A_l\)-singularity in each fiber. This stack is naturally a locally closed substack of the universal curve over \(\cM_{g,n}^r\) as is shown in \cite[Section~4]{Per1} (in the case \(n=0\), but the argument in the pointed case is identical).
Depending on whether \(l\) is even or odd, the geometry of \(\cA_l\) and its relation to crimping vary, so we treat the two cases separately in what follows.
The following was done in \cite[Sections~3 and 4]{Per1} in the case $n=0$, but since the markings are always smooth, nothing changes in the proofs of the following results.

\subsubsection*{Even $A_r$-singularities}

Suppose $r=2h$ for some integer $h\geq 1$. We start by discussing the data one needs to construct an $A_{2h}$-singularity \emph{crimping} a smooth point.

Consider the affine scheme $E_{h,2}$ which classifies finite flat extensions of degree $2$ of the form
$$ \kappa[t]/(t^h) \hookrightarrow \kappa[t]/(t^{2h});$$
these are determined by the choice of an element $\phi(t) \in \kappa[t]/(t^{2h-3})$ with $\phi(0)\neq 0$, using the association $t\mapsto t^2\phi(t)$. Therefore $E_{h,2}$ is isomorphic to $(\bA^1 \setminus \{0\}) \times \bA^{2h-3}$. If we denote by $G_h$ the automorphism group of $\kappa[t]/(t^h)$, we have an algebraic stack $\cE_{h,2}:=[E_{h,2}/G_h\times G_{2h}]$, which parametrizes finite flat extensions of degree $2$ as above, up to isomorphisms of both the source and target. As we are working over a field of characteristic \(0\), $G_h$ is isomorphic to $\bG_m\ltimes U_h$, where $U_h$ is the unipotent radical, and an element of $G_h$ can be identified with the application $t\mapsto t\varphi(t)$ where $\varphi(t)\in \kappa[t]/(t^{h-1})$ such that $\varphi(0)\neq 0$. See \cite[Section~3]{Per1} for a more detailed discussion. The stack $\cB G_{h}$ classifies finite flat algebras of length $h$ which \'etale locally are of the form $\kappa[t]/(t^h)$ (cf. \cite[Corollary~3.5]{Per1}). The natural morphism
$$ \cE_{h,2} \longrightarrow \cB G_{2h}$$
classifies $A_{2h}$ singularities locally as we now explain. The idea is that to construct an $A_{2h}$-singularity starting from a smooth point in a curve, it is enough to give a finite flat (sub)extension of degree $2$ as the one described above, and then consider the pushout of the curve with the finite flat (sub)algebra of the infinitesimal neighbourhood of order $2h-1$ of the smooth point. The rest of this section will elaborate on this idea. We start with some remarks on the spaces introduced above.

\begin{remark}[{\cite[Lemma~3.8]{Per1}}]
\label{rem:crim-mor-even}
The quotient
\[
V_{h,2}:=[E_{h,2}/G_h]
\]
is a scheme and can be described as an affine space $\bA^{h-1}$: a point in such a space corresponds to a finite flat extension determined by $\phi(t) \in \kappa[t]/(t^{2h-2})$ such that $\phi(t)=1+a_1 t+a_2 t^3 +\dots +a_{h-1} t^{2h-3}$.
\end{remark}

Before describing $\cA_{2h}$, we introduce the stack which classifies the partial normalization along the equisingular section without the crimping datum needed to recover the curve.

\begin{definition}\label{def:stabilization}
    Given a positive integer $l$, we denote by $\cM_{g,n,[l]}^r$ the stack parametrizing objects $(\widetilde{C},p_1,\dots,p_n,q)$ where $\widetilde{C}$ is an $A_r$-prestable curve, $p_1,\dots,p_n,q$ are smooth distinct sections, and $\omega_C(p_1+\dots+p_n+lq)$ is ample. By definition, we have that
    $\cM_{g,n,[1]}^r \simeq \cM_{g,n+1}^r.$
\end{definition}

One can compare the stack of partial normalizations with the moduli stacks of $A_r$-stable curves using the stabilization morphism $\cM_{g,n,[l]}\rightarrow \cM_{g,n+1}$, defined by contracting unstable irreducible components (see \cite[Remark~1.6, Corollary~1.9 and Proposition~1.17]{Per1}).

\begin{proposition}[{\cite[Proposition~4.9]{Per1}}]\label{prop:descr-base-crimping-even}
    Given $g,n$ nonnegative integers and $l\geq 2$, we have that
    $$ \cM_{g,n,[l]}^r \simeq \cM_{g,n+1}^r \times [\bA^1/\bG_m]$$ if $2g-2+n\geq 0$. If $g=0$ and $n=1$, we have
    $$ \cM_{0,1,[l]}^r \simeq \cB \bG_m$$ for any $l\geq 2$. If $g=n=0$, then $\cM_{g,n,[l]}^r$ is isomorphic to $\cB(\bG_m \ltimes \bG_a)$ if $l\geq 3$, whereas for $l=2$ it is empty.
\end{proposition}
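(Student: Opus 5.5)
The plan is to compare \(\cM_{g,n,[l]}^r\) with \(\cM_{g,n+1}^r\) through the stabilization morphism \(\mathrm{st}\colon \cM_{g,n,[l]}^r\to \cM_{g,n+1}^r\), and to identify the extra datum with a morphism to \(\Theta=[\bA^1/\bG_m]\). First I will classify the \(l\)-stable objects \((\widetilde C,p_1,\dots,p_n,q)\) that are not already \(1\)-stable. Ampleness of \(\omega(\sum p_i+lq)\) with \(l\geq 2\) can fail for \(\omega(\sum p_i+q)\) only on a smooth rational component \(R\) containing \(q\) and meeting the rest of the curve in a single point, which must be a node. Indeed, on \(R\) the degree of \(\omega(\sum p_i + q)\) is \(-2+1+\#(\text{other special points})\), which is at most \(0\) exactly when there is one other special point. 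A separating \(A_{2k+1}\)-singularity with \(k\geq1\) attaching \(R\) also contributes conductor degree \(k+1\geq 2\) (by \Cref{rem:genus-count}), which keeps the component stable. So the unstable locus is the divisor of curves with a rational tail carrying \(q\) and no other markings, attached at a node. Stabilization contracts this tail and sends \(q\) to the attaching point, which becomes a smooth point of the contracted curve. Conversely, given \((C,p_i,q)\in\cM_{g,n+1}^r\), the only \(l\)-stable curves over it are \(C\) itself and \(C\) with a \(\bP^1\) sprouted at \(q\). The second has \(\bG_m\)-automorphisms fixing the node and \(q\); the extra \(\bG_a\) is killed because it would have to fix both \(q\) and the node.

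Next I would construct the isomorphism globally as a blowup construction. Given a family \((C\to S,p_i,q)\) in \(\cM^r_{g,n+1}\) together with a line bundle \(L\) with section \(s\) (that is, a map \(S\to\Theta\)), form the blowup of \(C\) along the closed subscheme cut out by the ideal of \(q\) together with \(s\), using the twisted version \(\mathcal I_q + s\cdot L^{\vee}\). Equivalently, glue in \(\bP(N_q\oplus L)\) over the zero locus of \(s\), and take the strict transform of \(q\) as the new section. I would check fiberwise that this gives an \(l\)-stable curve, and that the construction is inverse to \(\mathrm{st}\times(\text{map to }\Theta)\). Here the map to \(\Theta\) is given by the line bundle \(\mathcal O(\Delta)\) of the boundary divisor \(\Delta\) of curves with a sprouted tail, together with its canonical section. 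This divisor is Cartier because \(\cM^r_{g,n,[l]}\) is smooth, which I would justify by the same deformation-theoretic argument as for \(\cM^r_{g,n}\) in \Cref{theo:descr-quot}, and the tail is locally smoothed by the node parameter. The expected main obstacle is showing that the pair \((\mathrm{st},\Delta)\) is an isomorphism and not just a bijection on points. For this I will compare deformation spaces: at a sprouted curve the smoothing parameter of the node has weight \(1\) under the tail's \(\bG_m\), matching \(T\Theta\), and all other deformations are pulled back from \(C\). Then an étale, representable, bijective-on-geometric-points morphism with isomorphic stabilizers is an isomorphism.

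Finally, the degenerate cases are handled separately. For \(g=0,n=1\), the curve \(\bP^1\) with two smooth points \(p,q\) is stable for \(l\geq2\), since \(\deg\omega(p+lq) = l-1 > 0\). It is the unique object (components must be rational, and a chain would have an unstable component), and its automorphisms fixing \(0\) and \(\infty\) form \(\bG_m\), giving \(\cB\bG_m\). For \(g=n=0\), only \(q\) is marked, so the curve is \(\bP^1\) with \(\deg = -2+l\). This is positive iff \(l\geq3\), and empty for \(l=2\). Its automorphisms are the affine group of \(\bA^1=\bP^1\setminus\{q\}\), namely \(\bG_m\ltimes\bG_a\). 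In these cases one must also rule out singular or reducible curves: any extra component or singularity forces an unstable rational component or positive genus, which the genus constraint \(g=0\) excludes.
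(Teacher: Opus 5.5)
Your proposal is correct and follows the approach the paper relies on. The paper gives no proof here: it cites \cite[Proposition~4.9]{Per1} and frames the statement through the stabilization morphism and \Cref{rem:sprout-proj-line}, where the open point of $[\bA^1/\bG_m]$ means no unstable component and the closed point means a sprouted $\bP^1$ carrying $q$ with its $\bG_m$. That picture is exactly your map $\mathrm{st}\times(\mathcal O(\Delta),s_\Delta)$ together with the étale, bijectivity and stabilizer check.

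One step is wrong as written, but you do not need it. The blowup along $\mathcal I_q+s\cdot L^{\vee}$ does not commute with base change, so it cannot be applied to an arbitrary pair $(L,s)$. For example, over the closed point of $\Theta$ (where $s=0$) the ideal is just $\mathcal I_q$, which is invertible, so the blowup returns $C$ rather than the sprouted curve. To make it work, you would perform the blowup once on the universal family over an atlas $X\times\bA^1$ of $\cM^r_{g,n+1}\times[\bA^1/\bG_m]$, where $t$ is a nonzerodivisor and the blowup is flat, and then pull back. Your deformation-theoretic argument already shows that $(\mathrm{st},\Delta)$ is an isomorphism without an explicit inverse, so the blowup can simply be dropped.
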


Let \(\widetilde{C}\to T\) be a family of curves over an algebraic stack $T$ and \(q:T\to \widetilde{C}\) a smooth section. This data induces a map \(j_h:T\to \cB G_{h}\) for any \(h\in \mathbb{Z}_{\geq 1}\) defined by the infinitesimal neighbourhood of order $h-1$ of the section \(q\). Whenever $\widetilde{C}$ is a family of curves in $\cM_{g-h,n,[2h]}^r$ and $q$ is the section corresponding to the last marking, the map is given by the composition
\[
j_{2h} \colon T \to \cM_{g-h,n,[2h]}^r \to \cB G_{2h} .
\]
Over a field $\kappa$, we can consider a curve $(C, s)$ and an extension of algebras as above $\operatorname{cr} \colon \spec \kappa[t]/(t^{2h}) \to \spec \kappa[t]/(t^h)$ and perform a Ferrand pushout as in the following diagram
\begin{equation*}\label{eq:singularity-from-crimping-datum-even}\begin{tikzcd}
    {\Spec \kappa[t]/(t^{2h})} & \widetilde{C} \\
    {\Spec \kappa[t]/(t^{h})} & {C}
    \arrow["{q_{2h}}", from=1-1, to=1-2]
    \arrow["{\operatorname{cr}}"', from=1-1, to=2-1]
    \arrow[from=1-2, to=2-2]
    \arrow[from=2-1, to=2-2]
\end{tikzcd}\end{equation*}
where \(q_{2h}:\Spec \kappa[t]/{t^{2h}} \to \widetilde{C}\) denotes the closed inclusion of the \((2h-1)\)-th order infinitesimal neighborhood of \(q\in C\). The pushout defines a curve \(C\) isomorphic to \(\widetilde{C}\) outside the image of the horizontal arrows, with the image of \(q\) being an \(A_{2h}\)-singularity with \(\widetilde{C}\) the partial normalization of \(C\) at \(q\), see \Cref{prop:descr-an-even} below.

Conversely, if \((\widetilde{C},q)\) is the partial normalization of a pointed curve \((C,p)\) over an algebraically closed field \(\spec \kappa\) where \(p\) is an \(A_{2h}\)-singularity, then there exists an extension of algebras (not necessarily unique) $\operatorname{cr} \colon \spec \kappa[t]/(t^{2h}) \to \spec \kappa[t]/(t^h)$ and a Ferrand pushout diagram as above.

Denote by $\cA_{2h}$ the locally closed substack of the universal curve $\cC_{g,n}^r$ of $\cM_{g,n}^r$ classifying objects $(C,p_1,\dots,p_n,p)$ where $p$ is an $A_{2h}$-singularity. Then the discussion above can be carried out in families; this is the content of the following \cite[Corollary~4.12]{Per1}.

\begin{proposition}\label{prop:descr-an-even}
    The pointed normalization of a point $(C,p_1,\dots,p_n,p) \in \cA_{2h}(T)$ along the equisingular section $p$ defines a morphism
    $$\cA_{2h} \longrightarrow \cM_{g-h,n,[2h]}^r$$
    that we call the \emph{crimping morphism}. This is obtained via the pullback
    \[
    \begin{tikzcd}
    \cA_{2h} \arrow[d] \arrow[r]   & {[V_{h,2}/G_{2h}]} \arrow[d] \\
    {\cM_{g-h,n,[2h]}^r} \arrow[r] & \cB G_{2h}
    \end{tikzcd}
    \]
    where the bottom morphism is the one described by mapping $(\widetilde{C},p_1,\dots,p_n,q)$ to the infinitesimal neighbourhood of order $2h-1$ of $q$, which is \'etale locally of the form $\kappa[t]/(t^{2h})$.
    In particular, $\cA_{2h}$ is an affine bundle of dimension $h-1$ over the stack $\cM_{g-h,n,[2h]}^r$.
\end{proposition}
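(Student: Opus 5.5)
We will build the inverse construction and show that the normalization functor is an equivalence onto the stated fiber product. We define the morphism $\cA_{2h}\to \cM_{g-h,n,[2h]}^r$ first. Given $(C,p_1,\dots,p_n,p)$ over $T$ with $p$ equisingular of type $A_{2h}$, we take the partial normalization $\nu\colon\widetilde{C}\to C$ along $p$. The first step is to check that this commutes with base change. Equisingularity gives that $\nu_*\cO_{\widetilde C}/\cO_C$ is flat of rank $h$, via \Cref{rem:genus-count} fiberwise and a constancy-of-length argument. The preimage $q=\nu^{-1}(p)$ is then a smooth section, since the branch is unibranch for even $A$. Ampleness of $\omega_{\widetilde C}(\sum p_i+2hq)$ follows from the conductor formula $b^*\omega_C\simeq\omega_{\widetilde C}(J_b^\vee)$ with $J_b^\vee=\cO(2hq)$, which is exactly the definition of $\cM^r_{g-h,n,[2h]}$. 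The genus drops by $h$, again by \Cref{rem:genus-count}.

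Next we produce the lift to $[V_{h,2}/G_{2h}]$. The image of $\cO_C$ in $\nu_*\cO_{q_{2h}}$, the $(2h-1)$-st infinitesimal neighbourhood of $q$, is a subalgebra $\cO_C/\mathfrak{c}$, where $\mathfrak{c}$ is the conductor. Locally it equals $\kappa[t^2\phi(t)]\subset \kappa[t]/(t^{2h})$, as one sees from the normal form $y^2=x^{2h+1}$ with $x=t^2$, $y=t^{2h+1}$. Étale locally on $T$ this is a degree $2$ extension of the form classified by $E_{h,2}$. Since the length-$h$ algebra is only defined up to $G_h$, we get a $T$-point of $[E_{h,2}/(G_h\times G_{2h})]$. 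The $G_{2h}$-part is compatible with $j_{2h}$, so we obtain a point of $[V_{h,2}/G_{2h}]$ over $\cB G_{2h}$, using \Cref{rem:crim-mor-even}. This gives the square.

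For the inverse, given $(\widetilde C,\{p_i\},q)$ and a compatible crimping datum $\operatorname{cr}$, we form the Ferrand pushout of $q_{2h}$ along $\operatorname{cr}$. Ferrand's theorem applies because $q_{2h}$ is a closed immersion and $\operatorname{cr}$ is finite, and it yields a flat proper family, with flatness from the pushout of flat modules. We then check fiberwise that the image of $q$ is an $A_{2h}$-singularity. After completing, the ring $\{f\in\kappa[[t]] : f \bmod t^{2h}\in \kappa[t^2\phi]\}$ is generated by $x=t^2\phi$ and $y$ of order $2h+1$, so $y^2=\text{unit}\cdot x^{2h+1}$, an $A_{2h}$-singularity. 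Ampleness descends by the conductor formula. The two constructions are mutually inverse by the uniqueness of pushouts and because the conductor recovers the datum. Hence the square is cartesian.

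The affine bundle statement then follows, since $[V_{h,2}/G_{2h}]\to\cB G_{2h}$ has fibers $V_{h,2}\simeq\bA^{h-1}$ and is therefore an affine bundle; it pulls back to one. The main obstacle is the first step: showing that in families the partial normalization along an equisingular section commutes with arbitrary base change and that the conductor subalgebra is flat of the right shape. We would handle this by working étale locally, where the family is a deformation of the standard equation; equisingularity forces the family to be trivial near $p$ to the needed order. Alternatively, we would simply quote \cite[Sections~3--4]{Per1}, noting that the markings are smooth and disjoint from $p$, so they play no role.
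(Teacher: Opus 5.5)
Your proposal is correct and follows essentially the same route as the paper: the paper gives no independent argument, but cites Corollary~4.12 of \cite{Per1} and notes that the smooth markings play no role in that proof, which is exactly your fallback. Your sketch is a faithful reconstruction of the argument behind that reference. It uses the conductor subalgebra $\cO_C/\mathfrak{c}\subset\cO_{\widetilde C}/\mathfrak{c}$ as the crimping datum, the Ferrand pushout as the inverse, and the local check that the result is $k[[u^2,u^{2h+1}]]$; you correctly identify the delicate point, namely that partial normalization along an equisingular section commutes with base change, and defer it to \cite{Per1} just as the paper does.
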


This motivates the following definition.

\begin{definition}\label{def:even-crimping-datum}
    Given a family $\widetilde{C}$ of curves over an algebraic stack \(T\) and a section $q$ over \(T\), a \emph{crimping datum} of type \(A_{2h}\) for the pair is a lift \(\operatorname{cr} \colon T\to \cE_{2h}\) of \(j_{2h}:T\to \cB G_{2h}\), i.e., a dotted arrow making the following diagram
    $$
    \label{eq:crimping-data}
    \begin{tikzcd}
    & {\cE_{2h}} \\
    T & {\cB G_{2h}}
    \arrow[from=1-2, to=2-2]
    \arrow["{\operatorname{cr}}", dashed, from=2-1, to=1-2]
    \arrow["{j_{2h}}"', from=2-1, to=2-2]
    \end{tikzcd}
     $$
    commute.
\end{definition}

\subsubsection*{Odd $A_r$-singularities}
We will now deal with the odd case; similar results obtained in the previous section hold, and we will adopt the same notation.

\begin{remark}\label{rem:crim-mor-odd}
In \cite[Section~3]{Per1}, the author proved that the diagonal morphism
$$ \cB G_{h} \longrightarrow \cB G_{h} \times \cB G_{h} $$
is the crimping morphism of the $A_{2h-1}$-singularity. In other words, given a pair of smooth points (in either one or two distinct curves), describing a crimping of them into an \(A_r\)-singular point is equivalent to giving an identification of their respective infinitesimal neighbourhoods of order $h-1$.
\end{remark}

As in the even case, denote by $\cA_{2h-1}$ the locally closed substack of the universal curve $\cC_{g,n}^r$ of $\cM_{g,n}^r$ classifying objects $(C,p_1,\dots,p_n,q)$ where $q$ is an $A_{2h-1}$-singularity. Before describing the relation of $\cA_{2h-1}$ to crimping morphisms, we introduce the stack which classifies the partial normalization along the equisingular section, in the case where the partial normalization is connected.

\begin{definition}
    Let $l$ be a positive integer. We denote by $\cM_{g,n,2[l]}^r$ the stack parametrizing $(\widetilde{C},p_1,\dots,p_n,q_1,q_2)$ where $\widetilde{C}$ is an $A_r$-prestable curve and $p_1,\dots,p_n,q_1,q_2$ are smooth distinct sections such that the line bundle $\omega_C(p_1+\dots+p_n+l(q_1+q_2))$ is ample. By definition we have $\cM_{g,n,2[1]}^r \simeq \cM_{g,n+2}^r.$
\end{definition}

Similarly to the even case, we can compare the stack of partial normalizations with the moduli stacks of $A_r$-stable curves using the stabilization procedure (see \cite[Proposition~4.15]{Per1}).
\begin{proposition}\label{prop:descr-base-crimping-odd}
    Given $g,n$ nonnegative integers and $l\geq 2$, we have that
    $$ \cM_{g,n,2[l]}^r \simeq \cM_{g,n+2}^r \times [\bA^1/\bG_m] \times [\bA^1/\bG_m]$$ if $2g-1+n\geq 0$. If $g=n=0$, then $\cM_{0,0,2[l]}^r$ is isomorphic to $[\bA^1/\bG_m^2]$ for $l\geq 2$.
\end{proposition}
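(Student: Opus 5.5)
Since \(\cM_{g,n,2[l]}^r\) differs from \(\cM_{g,n+2}^r\) only in the ampleness condition, I would compare the two through the stabilization morphism
\[
\operatorname{st}\colon \cM_{g,n,2[l]}^r \to \cM_{g,n+2}^r,
\]
which contracts the components that become unstable when the weight of \(q_1,q_2\) drops from \(l\) to \(1\). This mirrors the even case (\Cref{prop:descr-base-crimping-even}), where \cite[Proposition~4.15]{Per1} handles one marking with weight \(l\). The plan has three steps.

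\emph{Step 1: locate the unstable components.} If \(\omega(\sum p_i + q_1+q_2)\) fails to be ample on a curve where \(\omega(\sum p_i + l(q_1+q_2))\) is ample, the offending components are rational and meet the rest of the curve in a single point. Each contains exactly one of \(q_1,q_2\) and no other marking. Concretely, they are "rational tails": a \(\bP^1\) carrying \(q_j\), attached to the rest of the curve at one point.
\begin{itemize}
\item The attaching point must be a node or an \(A_{2k-1}\)-singularity. The constraint comes from the degree of \(\omega\) on that component, computed with the conductor formula in \Cref{rem:genus-count}.
\item In the end only nodes and cusps (\(A_1,A_2\)) should occur. These are exactly the configurations where \(l\geq 2\) restores ampleness.
\end{itemize}
I would imitate the one-point analysis of \cite[Proposition~4.15]{Per1} separately near \(q_1\) and near \(q_2\). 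When \(2g-1+n\geq 0\), the two tails are disjoint and independent.

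\emph{Step 2: describe the fiber over a point of \(\cM_{g,n+2}^r\).} The tail at \(q_j\) is determined by how the marking is "bubbled off". This is the same datum as in the one-point case, and it is classified by \([\bA^1/\bG_m]\):
\begin{itemize}
\item the open point corresponds to no tail;
\item the origin corresponds to a tail \(\bP^1\) with \(q_j\) at \(\infty\), whose automorphism group fixing the attaching point and \(q_j\) is \(\bG_m\).
\end{itemize}
Globally, the deformation to the normal cone of \(q_j\) (blowing up \(\widetilde C\times\bA^1\) at \(q_j\times 0\)) gives the universal family over \(\cM_{g,n+2}^r\times[\bA^1/\bG_m]\). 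I would then invert \(\operatorname{st}\) by doing this construction independently for \(j=1,2\), producing
\[
\cM_{g,n+2}^r\times[\bA^1/\bG_m]^2\to \cM_{g,n,2[l]}^r .
\]
To show the two maps are mutually inverse, I would check equivalence on geometric points and automorphism groups, then use smoothness, hence reducedness, of both sides together with a deformation-theoretic comparison. The expected main obstacle is verifying \emph{in families} that stabilization is inverse to this blow-up construction. It may be cleanest to cite the one-point statement of \cite{Per1} and apply it twice, viewing \(q_2\) as an ordinary marking while treating \(q_1\).

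\emph{Step 3: the case \(g=n=0\).} Here \(\cM_{0,2}^r\) is empty, so the product formula cannot hold and the stack must be described directly. The only object is a \(\bP^1\) with \(q_1=0\), \(q_2=\infty\), together with degenerations where the two points separate onto tails. The unstable base is the pair \((\bP^1,0,\infty)\) with automorphism group \(\bG_m\). Each tail datum again contributes a factor \([\bA^1/\bG_m]\), but one copy of \(\bG_m\) is absorbed by the base, giving \([\bA^1/\bG_m^2]\). I would verify this by writing the universal family explicitly as the blow-up of \(\bP^1\times\bA^1\) at the two points over the origin, equivariant for \(\bG_m^2\) acting by the relevant weights.
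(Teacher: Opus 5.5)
Your strategy (compare the two stacks through the stabilization morphism, and invert it by sprouting a rational tail at each heavy marking via deformation to the normal cone) is the one the paper relies on. The paper gives no argument beyond pointing to the stabilization procedure of \cite[Proposition~4.15]{Per1}, which is exactly this two-point statement; the one-point version you want to iterate is \cite[Proposition~4.9]{Per1}.

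For $2g-1+n\geq 0$ your outline goes through, but Step~1 contains a misstatement: cusps never occur. An $A_2$ point is unibranch, so it cannot be the point where a tail meets the rest of the curve. More precisely, suppose a component $\Gamma\neq\widetilde C$ has $\deg\omega_{\widetilde C}(\sum_i p_i+q_1+q_2)|_{\Gamma}\leq 0$ but positive degree for weight $l$. Then $\Gamma$ carries some $q_j$, and $2p_a(\Gamma)-2+c+m\leq 0$. Here $m\geq 1$ is the number of markings on $\Gamma$, and $c\geq 1$ is the total conductor degree at $\Gamma\cap(\widetilde C-\Gamma)$, where an outer $A_{2k-1}$ point contributes $k$. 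This forces $p_a(\Gamma)=0$ and $c=m=1$. So $\Gamma$ is a smooth rational tail, attached at a single node, carrying only $q_j$. It is this nodal attachment that makes the fiber of stabilization a single $[\bA^1/\bG_m]$ (the smoothing parameter of that node) per heavy marking.

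The substantive problem is Step~3. Blowing up $\bP^1\times\bA^1$ at both $(0,0)$ and $(\infty,0)$ gives central fiber $E_0\cup F\cup E_\infty$. The strict transform $F$ of $\bP^1\times\{0\}$ is an unmarked rational bridge on which $\omega(lq_1+lq_2)$ has degree $0$, so this is not a family in $\cM^r_{0,0,2[l]}$.

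The heuristic of one $[\bA^1/\bG_m]$ per tail, with a $\bG_m$ ``absorbed by the base'', is also not an argument. Two independent tail parameters would give four isomorphism classes of objects, but there are only two:
\begin{itemize}
\item $(\bP^1,0,\infty)$, with automorphism group $\bG_m$;
\item the nodal union $T_1\cup T_2$ with $q_j\in T_j$, with automorphism group $\bG_m^2$.
\end{itemize}
When $g=n=0$ the tail data are not independent, because sprouting a tail at one marking already turns the original $\bP^1$ into the tail of the other.

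The correct family is the blow-up of $\bP^1\times\bA^1$ at the single point $(0,0)$, equivariant for $(\lambda,\mu)\cdot(z,t)=(\lambda z,\mu t)$. It gives $[\bA^1/\bG_m^2]\to\cM^r_{0,0,2[l]}$, with $\bG_m^2$ acting on $\bA^1$ through $(\lambda,\mu)\mapsto\mu$. This map is bijective on points and induces isomorphisms on stabilizers and on tangent spaces. At the origin, the map to $\Aut(T_1)\times\Aut(T_2)\cong\bG_m^2$ is $(\lambda,\mu)\mapsto(\mu\lambda^{-1},\lambda)$ in suitable coordinates.

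Your one-marking-at-a-time idea gives the same answer directly. Contracting the tail of $q_1$ maps $\cM^r_{0,0,2[l]}$ to the stack with weights $(1,l)$. That stack is $\cM^r_{0,1,[l]}\simeq\cB\bG_m$ by \Cref{prop:descr-base-crimping-even}, and the fiber is a single $[\bA^1/\bG_m]$. Hence $\cM^r_{0,0,2[l]}\simeq\cB\bG_m\times[\bA^1/\bG_m]\simeq[\bA^1/\bG_m^2]$.

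This step, like the second step of your iteration in the general case, needs the one-point argument over a base in which the other marking already has weight $l$. That is not literally the cited statement. The proof carries over, because contracting a nodal tail replaces the node by a weight-one marking and so never destabilizes the neighbouring component.
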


\begin{remark}\label{rem:sprout-proj-line}
   \Cref{prop:descr-base-crimping-even} and \Cref{prop:descr-base-crimping-odd} warn us that the pointed partial normalization of an $A_r$-stable curve at an $A$-singularity is not $A_r$-stable, in contrast with the nodal case. For instance, we can have an irreducible rational curve $(\Gamma,p)$, attached to a smooth curve in a node $p$, with an even $A$-singularity $q$ on $\Gamma$. When we normalize, the component $(\widetilde{\Gamma},p,q)$ is not stable anymore, only prestable. This is taken into account by adding the $[\bA^1/\bG_m]$ in \Cref{prop:descr-base-crimping-even}. The generic point of $[\bA^1/\bG_m]$ corresponds to the case where there is no unstable component, thus the point $p$ will be pinched to create the singularity directly in the smooth curve. The closed point corresponds to the case where we pinch an unstable component, which has in fact a $\bG_m$ as automorphism group, making it stable. The odd case is similar, with the only difference that we can sprout an unstable component from both the points we are gluing in the normalization.
\end{remark}

Contrary to the even case, $\cA_{2h-1}$ is the disjoint union of several connected components, namely $\cA_{2h-1}^{\rm ns}$ and $\cA_{2h-1}^{i,S}$ for $0\leq i\leq (g-m+1)/2$ and $S\subset \{1,2,\dots,n\}$. The stack $\cA_{2h-1}^{\rm ns}$ parametrizes $(C,p_1,\dots,p_n,q)$ such that the partial normalization of $C$ at $q$ is connected; in other words, the singularity \(q\) is non-separating. Furthermore, $\cA_{2h-1}^{i,S}$ parametrizes pairs $(C,p_1,\dots,p_n,q)$ such that the partial normalization of $C$ at $q$ is the disjoint union of two components, respectively of genus $i$ and $g-h-i+1$, such that $p_j$ belongs to the component of genus $i$ if and only if $j \in S$.

The relation of these components to the crimping morphism is slightly more involved than in the even case, but in the end rather similar. In \cite{Per1}, the author introduces the moduli stack $\cI_{2h-1}^{\rm ns}$ parametrizing the partial normalization of a curve in $\cA_{2h-1}^{\rm ns}$ at the equisingular section plus a crimping datum, and similarly $\cI_{2h-1}^{i,S}$. In the following remark, we recall the formal constructions.

\begin{remark}\label{rem:descr-an-disp-i}
     Exactly as in the even case, one can construct $\cI_{2h-1}^{i,S}$ by considering the following diagram:
    $$ \begin{tikzcd}
    \cA_{2h-1}^{i,S}  &{\cI_{2h-1}^{i,S}} \arrow["F^{i, S}"', l] \arrow[r] \arrow[d]                 & \cB G_h \arrow[d]      \\
    &{\cM_{g-i-h+1,n-s,[h]} \times \cM_{i,s,[h]}} \arrow[r] & \cB G_h \times \cB G_h
    \end{tikzcd}
    $$
    where the square is cartesian. The same is true for the non-separating case. We now explain how to construct $F^{i,S}$. Given a crimping datum over \(\Spec \kappa\) and a (non-connected) curve $\widetilde{C} \in  {\cM_{g-i-h+1,n-s,[h]} \times \cM_{i,s,[h]}}$, the Ferrand pushout diagram
\begin{equation*}\label{eq:singularity-from-crimping-datum-odd}\begin{tikzcd}
    {\Spec \kappa[t]/(t^h) \sqcup \Spec \kappa[t]/(t^h)} & &  \widetilde{C} \\
    {\Spec \kappa[t]/(t^{h})} && {C}
    \arrow["{q_{1,h} \sqcup q_{2,h}}"', from=1-1, to=1-3]
    \arrow["{\operatorname{cr}}"', from=1-1, to=2-1]
    \arrow[from=1-3, to=2-3]
    \arrow[from=2-1, to=2-3]
\end{tikzcd}\end{equation*}
 gives a curve $C\in \cA_{2h-1}^{i,S}$. Notice that the morphism $\operatorname{cr}: \spec \kappa[t]/(t^h) \sqcup  \spec \kappa[t]/(t^h) \rightarrow  \spec \kappa[t]/(t^h) $ is determined by the crimping datum \(\operatorname{cr}:\Spec \kappa \to \cB G_{h}\), whereas \(q_{h}:\Spec \kappa[t]/{t^{h}} \to  \widetilde{C}\) denotes the closed inclusion of the infinitesimal neighborhood of order $h-1$, as in the even case. The curve \(C\) is isomorphic to \(\widetilde{C}\) outside the image of the horizontal arrows, with the image $q$ of $q_1$ (hence also of $q_2$) being an \(A_{2h}\)-singularity with \(\widetilde{C}\) the partial normalization of \(C\) at \(q\). This gives the morphism
 $$ F^{i,S}:\cI_{2h-1}^{i,S} \longrightarrow \cA_{2h-1}^{i,S};$$
 a similar construction defines a morphism
 \[F^{\rm ns}: \cI_{2h-1}^{\rm ns} \longrightarrow \cA_{2h-1}^{\rm ns}.\]
\end{remark}

\begin{proposition}\label{prop:descr-an-disp-i}
 Let $S\subset \{1,2,\dots,n\}$ and denote by $s$ its cardinality. The morphism
 $$ F^{i,S}:\cI_{2h-1}^{i,S} \longrightarrow \cA_{2h-1}^{i,S}$$
 where $\cI_{2h-1}^{i,S}$ is affine over the product
 $$ \cM_{i,s,[h]}^r \times \cM_{g-h-i+1,n-s,[h]}^r.$$
  If $2i=g-h+1$ and $n=0$ (so \(S=\emptyset\)), then $F^{i,S}$ is finite \'etale of degree $2$; otherwise, it is an isomorphism.
\end{proposition}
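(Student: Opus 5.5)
The statement has two parts, handled separately. The first is that $\cI_{2h-1}^{i,S}$ is affine over $\cM_{i,s,[h]}^r \times \cM_{g-h-i+1,n-s,[h]}^r$. The second is that $F^{i,S}$ is an isomorphism, or finite étale of degree $2$ in the symmetric case.

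\emph{Affineness.} By construction, $\cI_{2h-1}^{i,S}$ is the fiber product of the diagonal $\cB G_h \to \cB G_h\times \cB G_h$ with the map recording the two $(h-1)$-th order infinitesimal neighbourhoods of $q_1$ and $q_2$. The diagonal of $\cB G_h$ is representable by the Isom-scheme of two $G_h$-torsors. This scheme is a $G_h$-torsor, hence affine, since $G_h=\bG_m\ltimes U_h$ is affine. Base change then gives that $\cI_{2h-1}^{i,S}$ is affine over the product. Concretely, its fibers are the isomorphisms $\kappa[t]/(t^h)\to\kappa[t]/(t^h)$ between the two jet algebras.

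\emph{Strategy for $F^{i,S}$.} I will construct an inverse, or a degree-$2$ quotient, via partial normalization in families. Given $(C,p_1,\dots,p_n,q)\in\cA_{2h-1}^{i,S}(T)$, I take the partial normalization along the equisingular section $q$. As in \cite{Per1}, this commutes with base change because $q$ is equisingular. The result is a disjoint union of two families $\widetilde{C}_1\sqcup\widetilde{C}_2$, and over $T$ we get two smooth sections $q_1,q_2$ lying over $q$. The conductor description then gives a canonical identification of the two $h$-jets, so by \Cref{rem:crim-mor-odd} we obtain a crimping datum. The only ambiguity is the labelling of the two components. The component containing exactly the markings indexed by $S$ and having genus $i$ is canonically determined unless it could be confused with the other one. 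That confusion happens exactly when the genera agree, $i=g-h-i+1$, and the marking condition gives no distinction. Since $S$ must equal its complement, this forces $n=0$. In every other case the labelling is canonical, so this construction gives a morphism $\cA_{2h-1}^{i,S}\to\cI_{2h-1}^{i,S}$.

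I then check that this morphism is inverse to $F^{i,S}$. Ferrand pushout followed by normalization returns the original data. Conversely, normalization followed by pushout recovers $C$, because a curve with an $A_{2h-1}$-singularity is the pushout of its normalization along the conductor subscheme.

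In the symmetric case, the same construction gives a map from the étale double cover of $\cA_{2h-1}^{i,\emptyset}$ that parametrizes an ordering of the two branches at $q$. This shows that $F^{i,S}$ is the quotient by the free involution swapping the two factors together with the crimping datum, so it is finite étale of degree $2$.

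The main obstacle is making the family version of partial normalization and of the conductor identification work over an arbitrary base $T$. This means showing that the normalized family is flat with smooth sections, and that the jet identification globalizes. I would import this from \cite[Section~4]{Per1}, where the same argument is carried out for $n=0$; the markings stay away from $q$, so they play no role in it.
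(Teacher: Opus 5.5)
Your proposal is correct and follows the route the paper relies on. The paper gives no independent argument: it imports the case $n=0$ from \cite[Sections~3 and~4]{Per1} and observes that the smooth markings never meet the singular section. That is exactly the family-level input you defer to \cite{Per1}, namely equisingular partial normalization, the conductor identification of the order-$(h-1)$ neighbourhoods, and the labelling of the two components. One minor precision: $\underline{\operatorname{Isom}}(P_1,P_2)$ is a torsor under the inner form $\underline{\operatorname{Aut}}(P_1)$ of $G_h$, not under $G_h$ itself, but it is still fppf-locally isomorphic to $G_h$, so your affineness conclusion stands.
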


A similar statement is true for the other connected component.

\begin{proposition}\label{prop:descr-an-disp-ns}
 The morphism
 $$ F^{\rm ns}: \cI_{2h-1}^{\rm ns} \longrightarrow \cA_{2h-1}^{\rm ns}$$
 is an \'etale cover of degree $2$, and $\cI_{2h-1}^{\rm ns}$ is affine over $\cM_{g-h,n,2[h]}^r$.
\end{proposition}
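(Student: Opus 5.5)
The plan is to construct $\cI_{2h-1}^{\rm ns}$ and compare it with $\cA_{2h-1}^{\rm ns}$ exactly as in \Cref{rem:descr-an-disp-i}. First, form the cartesian square
\[
\begin{tikzcd}
\cI_{2h-1}^{\rm ns} \arrow[r] \arrow[d] & \cB G_h \arrow[d,"\Delta"] \\
\cM_{g-h,n,2[h]}^r \arrow[r,"(j_h^{(1)}{,}\,j_h^{(2)})"'] & \cB G_h\times \cB G_h
\end{tikzcd}
\]
where the bottom arrow records the $(h-1)$-th order infinitesimal neighbourhoods of $q_1$ and $q_2$. The diagonal of $\cB G_h$ is representable and affine, with fibres $G_h$-torsors of isomorphisms. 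Hence $\cI_{2h-1}^{\rm ns}$ is affine over $\cM_{g-h,n,2[h]}^r$: an object is a pair $(\widetilde C,q_1,q_2)$ together with an identification $\varphi$ of the two infinitesimal neighbourhoods. This gives the second assertion immediately.

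Next, define $F^{\rm ns}$ by the Ferrand pushout of $\widetilde C$ along $q_{1,h}\sqcup q_{2,h}$ and the map to $\Spec \kappa[t]/(t^h)$ induced by $\varphi$. By \Cref{rem:crim-mor-odd}, the pushout is an $A_{2h-1}$-singularity, and since $\widetilde C$ is connected it is non-separating. The genus count in \Cref{rem:genus-count}(2) gives genus $g-h+1+h=g$; here I would double-check the normalisation of indices in $\cM_{g-h,n,2[h]}^r$. Ampleness of $\omega_C(\sum p_i)$ follows from Noether's formula $b^*\omega_C\simeq\omega_{\widetilde C}(h(q_1+q_2))$, which is exactly the ampleness condition defining $\cM_{g-h,n,2[h]}^r$. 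Pushouts commute with flat base change, so this works in families.

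For the inverse direction, consider the normalisation of the equisingular section. Over $\cA_{2h-1}^{\rm ns}$, the preimage of $q$ in the partial normalisation is a finite étale double cover $\widetilde{\cA}\to\cA_{2h-1}^{\rm ns}$, given by the two branches. Choosing a branch orders the two preimages as $(q_1,q_2)$, and the singularity itself induces the identification $\varphi$ of their neighbourhoods via \Cref{rem:crim-mor-odd}. This gives a morphism $\widetilde{\cA}\to\cI_{2h-1}^{\rm ns}$. Conversely, $F^{\rm ns}$ lifts to $\widetilde{\cA}$ by remembering $q_1$ as the chosen branch.

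The main obstacle is checking that these two constructions are mutually inverse in families, i.e.\ that normalisation and Ferrand pushout are inverse equivalences. This is \cite[Section~3–4]{Per1}, and I would rely on it as in the even case (\Cref{prop:descr-an-even}); the key input is that $\cO_C$ is the fibre product $\cO_{\widetilde C}\times_{\cO_{q_{1,h}}\times \cO_{q_{2,h}}}\cO_{\Spec \kappa[t]/(t^h)}$, which can be verified formally locally on $k[[x,y]]/(y^2-x^{2h})$.

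Once $\cI_{2h-1}^{\rm ns}\simeq\widetilde{\cA}$ is established, $F^{\rm ns}$ is identified with the double cover forgetting the ordering of the branches. Swapping $(q_1,q_2,\varphi)\mapsto(q_2,q_1,\varphi^{-1})$ is a free involution over $\cA_{2h-1}^{\rm ns}$. Therefore $F^{\rm ns}$ is étale of degree $2$. Unlike the separating case, no isomorphism exceptions arise, because the two branches lie on the same connected curve and carry no distinguishing data.
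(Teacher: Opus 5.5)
Your proposal is correct and matches the paper's treatment. The paper builds $\cI^{\rm ns}_{2h-1}$ by base change of the affine diagonal $\cB G_h\to\cB G_h\times\cB G_h$ and defines $F^{\rm ns}$ by Ferrand pushout; like you, it defers to \cite{Per1} the family-level fact that normalization along the equisingular section inverts the pushout, so that $F^{\rm ns}$ is the quotient by the branch swap (cf.\ \Cref{rem:involution}).

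Two cosmetic fixes. First, the genus count should read $(g-h)+(h-1)+1=g$, since the singularity is $A_{2(h-1)+1}$. Second, you need the pushout to commute with arbitrary (not just flat) base change; this holds because all three terms of $0\to\cO_C\to\cO_{\widetilde C}\oplus\cO_{\Spec\kappa[t]/(t^h)}\to\cO_{q_{1,h}}\times\cO_{q_{2,h}}\to 0$ are flat over the base.
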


\begin{remark}\label{rem:involution}
    If $2i=g-h+1$ and $n=0$, we have a $\mathbb{Z}/2$ action on the target of the crimping morphism. This exchanges the marked points arising via the normalization of the equisingular section. In that case, we have a map from $\cA_{2m-1}$ to the quotient via this action and we can recover the crimping via pullback. In the non-separating case, we have
    $$ \begin{tikzcd}
    {\cI_{2m-1}^{\rm ns}} \arrow[r] \arrow[d] & \cA_{2m-1}^{\rm ns}   \arrow[d]   \\
    {\cM_{g-m,n,2 [m]}} \arrow[r] & {[{\cM_{g-m,n,2 [m]}}/{\mathbb{Z}_2}]}.
    \end{tikzcd}
    $$
\end{remark}

The following is the analogue of \Cref{def:even-crimping-datum} in the odd case.
\begin{definition}\label{def:odd-crimping-datum}
    Given a family  \((\widetilde{C}, q_1,q_2)\) of curves over a stack \(T\) and two smooth distinct sections, a \emph{crimping datum} of type \(A_{2h-1}\) for the triplet is a lift \(\operatorname{cr}:T\to \cB G_h\) of \((j_{h,1}, j_{h,2}) :T \to \cB G_h \times \cB G_h\), i.e., a dotted arrow making the following diagram
    \[
    \begin{tikzcd}
    & {\cB G_h} \\
    T & {\cB G_h \times \cB G_h}
    \arrow[from=1-2, to=2-2]
    \arrow["{\operatorname{cr}}", dashed, from=2-1, to=1-2]
    \arrow["{(j_{h,1}, j_{h,2})}"', from=2-1, to=2-2]
\end{tikzcd}
\]
commute, where $j_{h,i}$ is the morphism induced by the family $(\widetilde{C},q_i)\rightarrow T$ for $i=1,2$.

\end{definition}

\subsection{Equivariant properties of normalization}\label{sub:equivariant-normalization}
In this subsection, we collect some results on descending automorphisms along the partial normalization map of an \(A_r\)-(pre)stable curve.

Because $A$-singularities are either unibranched (in the even case) or double branched (in the odd case), there are strong restrictions on which curves have positive-dimensional stabilizers. Indeed, given that the normalization is smooth local (cf. \cite[\href{https://stacks.math.columbia.edu/tag/07TD}{Lemma 07TD}]{stacks-project}), we have the following remark.
\begin{remark}
    \label{rem:motivation-hyp-sing}
    Let $(C, p_1, \dots, p_n)$ be an $n$-pointed $A_r$-stable curve over an algebraically closed field $k$ with $\car k=0$, and denote by $\nu \colon \widetilde{C} \to C$ the partial normalization along some singularities $S \subset {\rm Sing\,} C$. The connected component of the identity of the automorphism group lifts to the partial normalization, and we have a natural inclusion
        \[
         \iota \colon \Aut^{\circ}(C, p_1, \dots,p_n) \hookrightarrow \Aut^{\circ}(\widetilde{C}, \nu^{-1}(p_1), \dots, \nu^{-1}(p_n)) .
        \]
    The image of $\iota$ acts trivially on any connected component $\Gamma \subset \widetilde{C}$ not isomorphic to $\bP^1$.\\
    Consider a cocharacter $\Gm \rightarrow \Aut^{\circ}(C, p_1, \dots,p_n)$ and a singularity $p \in C$. We have two possibilities:
    \begin{itemize}
        \item $p$ is an $A_{2h}$-singularity. Then $\nu^{-1}(p)$ contains a unique topological point $q$, and we have an induced action of $\Gm$ on $T_{q} \widetilde{C}$.
        \item $p$ is an $A_{2h+1}$-singularity. Then $\nu^{-1}(p)$ contains two topological points $q_1, q_2$, and we have an induced action of $\Gm$ on $T_{q_i} \widetilde{C}$ for $i=1,2$.
    \end{itemize}
    In these cases, the action of $\Gm$ on the tangent space of $p$ in $C$ is nontrivial only if the connected component (respectively components) of $\widetilde{C}$ containing $q$ (resp. $q_1$ and $q_2$) is isomorphic to $\bP^1$.
\end{remark}

\begin{remark}
    \label{rem:aut-and-normalization-along-nodes}
    With the notation of \Cref{rem:motivation-hyp-sing}, whenever $S \subset C(k)$ only contains nodal singularities, the map $\iota$ is an isomorphism.
\end{remark}

In what follows, we are going to relate the automorphism group of an $A_r$-prestable curve with the one of its partial pointed normalization at a singularity. As always, we treat the even and odd cases separately. This will be fundamental in the classification of curves with positive-dimensional stabilizers.

\subsubsection*{Even case}

First, we describe the moduli stack of curves with a prescribed partial normalization at an even $A$-singularity. This is a direct consequence of \Cref{prop:descr-an-even}.

\begin{lemma}\label{cor:diff-crimping-same-curve-even}
    Let $(\widetilde{C},q)$ be a $1$-pointed curve over an algebraically closed field. The fiber product
    $$
    \begin{tikzcd}
    {[V_{h,2}/\Aut(\widetilde{C},q)]} \arrow[r] \arrow[d] & {[V_{h,2}/G_{2h}]}=\cE_{h,d} \arrow[d] \\
    {\cB \Aut(\widetilde{C},q)} \arrow[r]                 & \cB G_{2h}
    \end{tikzcd}
    $$
    parametrizes curves $(C,p)$ where $p$ is an $A_{2h}$-singularity such that the (pointed) partial normalization at $p$ is isomorphic to $(\widetilde{C},q)$.
\end{lemma}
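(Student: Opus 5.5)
The plan is to derive the lemma from \Cref{prop:descr-an-even} by restricting its cartesian square to a single fiber of the normalization. First I identify the fiber product. The map $\cB\Aut(\widetilde{C},q)\to \cB G_{2h}$ is given by the infinitesimal neighbourhood of order $2h-1$ of $q$. Choosing a local coordinate at $q$ identifies this neighbourhood with $\Spec \kappa[t]/(t^{2h})$, so the map comes from the restriction homomorphism $\Aut(\widetilde{C},q)\to G_{2h}$. For a homomorphism of groups $H\to G$ and a $G$-scheme $V$, one has $\cB H\times_{\cB G}[V/G]\simeq [V/H]$. Taking $V=V_{h,2}$, where $G_{2h}$ acts on $V_{h,2}=[E_{h,2}/G_h]$ (a scheme by \Cref{rem:crim-mor-even}), gives the upper-left corner.

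Second, I interpret the objects of $[V_{h,2}/\Aut(\widetilde{C},q)]$ over a base $T$. Such an object is an $\Aut(\widetilde{C},q)$-torsor $P\to T$ together with an equivariant map $P\to V_{h,2}$. Equivalently, it is a family $(\widetilde{C}_T,q_T)$ that is étale locally isomorphic to the constant family $(\widetilde{C},q)\times T$, equipped with a crimping datum in the sense of \Cref{def:even-crimping-datum}. The Ferrand pushout along this datum, carried out in families as in \Cref{prop:descr-an-even}, produces a family $(C,p)$ in which $p$ is an $A_{2h}$-singularity. Its pointed partial normalization at $p$ is $(\widetilde{C}_T,q_T)$, which is fiberwise isomorphic to $(\widetilde{C},q)$.

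Third, I check that this construction is an equivalence onto the substack of such $(C,p)$. To go back, start from $(C,p)$ and take its partial normalization; this recovers the pair $(\widetilde{C}_T,q_T)$ and the crimping datum. This is exactly the statement that the square in \Cref{prop:descr-an-even} is cartesian. The isomorphism type of the normalization is what identifies the pair as a twisted form of $(\widetilde{C},q)$, i.e.\ a torsor under $\Aut(\widetilde{C},q)$.

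The main obstacle is the base $\cM^r_{g-h,n,[2h]}$, because $(\widetilde{C},q)$ need not be stable. The clean way around this is not to work over that stack at all. Instead, I will carry out the previous steps over the stack of all $1$-pointed curves, replacing it by its residual gerbe $\cB\Aut(\widetilde{C},q)$ at the point $(\widetilde{C},q)$. The Ferrand pushout construction and its inverse, normalization, are local around $q$ and do not depend on any stability condition. So the cartesian property of \Cref{prop:descr-an-even} holds over this gerbe too, and base-changing along $\cB\Aut(\widetilde{C},q)\to\cB G_{2h}$ completes the proof.
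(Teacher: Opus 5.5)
Your proposal is correct and takes the paper's route: the paper simply calls the lemma a direct consequence of \Cref{prop:descr-an-even}, which amounts to pulling that cartesian square back along the point $\cB\Aut(\widetilde{C},q)\to\cM^r_{g-h,n,[2h]}$ and using $\cB H\times_{\cB G}[V/G]\simeq[V/H]$. Your extra step of working over all $1$-pointed curves instead of $\cM^r_{g-h,n,[2h]}$ is a sensible refinement the paper leaves implicit, since \Cref{rem:even-atoms} applies the lemma to $(\bP^1,\infty)$ with $h=1$, where the pointed normalization does not lie in that stack.
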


\begin{definition}
    Let $(\widetilde{C},q)$ be a $1$-pointed curve over an algebraically closed field and $G\subset \Aut(\widetilde{C},q)$ be a subgroup. A $G$-equivariant crimping datum is a crimping datum for the family $[(\widetilde{C},q)/G]\rightarrow \cB G$.
\end{definition}

\begin{remark}\label{rem:diff-crimping-same-curve-even}
A $G$-equivariant crimping datum
$$\operatorname{cr} \colon \cB G \to \cE_{h,2}$$
exists if and only if one can crimp an $A_{2h}$-singularity on $(\widetilde{C},q)$ in such a way that the subgroup $G \subset \Aut(\widetilde{C},q)$ descends to a subgroup of the automorphisms of $C$ along the normalization $\widetilde{C} \to C$. Notice that different $G$-equivariant crimping data may give rise to the same curve. Indeed, two $G$-equivariant crimping data
$$ \operatorname{cr}_1, \operatorname{cr}_2: \cB G \to \cE_{h,2} $$
will give rise to the same curve if and only if the two induced morphisms $\cB G \rightarrow \cE_{h,2} \times_{\cB G_{2h}} \cB\Aut(C,s)$ are isomorphic. This follows directly from the fact that the quotient stack $\cE_{h,2} \times_{\cB G_{2h}} \cB\Aut(C,s)$ parametrizes curves $(C,p)$ where $p$ is an $A_{2h}$-singularity and the partial normalization at $p$ is isomorphic to $(\widetilde{C},q)$ (see \Cref{cor:diff-crimping-same-curve-even}).
\end{remark}

The case \(G=\mathbb{G}_m\) will play a central role in our study of isotrivial degenerations of \(A_r\)-stable curves.

\begin{lemma}\label{lem:hyp-crimping-datum-even}
    Let $h$ be a positive integer. Let $\widetilde{C}/k$ be a (possibly pointed) curve over an algebraically closed field $k$ with a rational smooth point $q \in \widetilde{C}(k)$. Let $\Gm \rightarrow \Aut^{\circ}(\widetilde{C},q)$ be a cocharacter such that the action on $T_q \widetilde{C}$ is nontrivial. Then there exists a unique $\Gm$-equivariant crimping datum, i.e., there exists a unique dotted arrow making the following diagram
    $$
    \begin{tikzcd} & {\cE_{h,2}} \arrow[d] \\
    \cB \Gm \arrow[r, "j_{2h}"'] \arrow[ru, "\exists !", dashed] & \cB G_{2h}
    \end{tikzcd}
    $$
    commute.
\end{lemma}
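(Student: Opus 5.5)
Since $\cE_{h,2}=[E_{h,2}/G_h\times G_{2h}]\to \cB G_{2h}$ has fibre $V_{h,2}=[E_{h,2}/G_h]\simeq\bA^{h-1}$ (\Cref{rem:crim-mor-even}), lifts of $j_{2h}\colon \cB\Gm\to\cB G_{2h}$ are the same as $\Gm$-fixed points of $V_{h,2}$. Here $\Gm$ acts on $V_{h,2}$ through the composite $\Gm\to \Aut^\circ(\widetilde C,q)\to G_{2h}$, where the second map is the action on $\kappa[t]/(t^{2h})$, the $(2h-1)$-th order neighbourhood of $q$. More precisely, lifts up to isomorphism correspond to $\Gm$-equivariant sections of the trivial-over-a-point bundle, i.e.\ to $V_{h,2}^{\Gm}$; uniqueness of the dotted arrow up to unique $2$-isomorphism reduces to $V_{h,2}^{\Gm}$ being a single reduced point. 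So the plan is to compute this action and show it has exactly one fixed point.

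\textbf{Step 1: linearize the action at $q$.} Since $\Gm$ is linearly reductive and $q$ is a smooth fixed point, we can choose a formal coordinate $t$ at $q$ on which $\Gm$ acts by $\lambda\cdot t=\lambda^{w}t$. Here $w\neq 0$ because the action on $T_q\widetilde C$ is nontrivial. After possibly composing with inversion, which does not affect fixed points, we may assume $w>0$. The image of $\Gm$ in $G_{2h}=\Gm\ltimes U_{2h}$ is then the diagonal torus acting by $t\mapsto \lambda^w t$.

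\textbf{Step 2: compute the induced action on $V_{h,2}$.} A point of $V_{h,2}$ is a subalgebra $\kappa[s]/(s^h)\hookrightarrow \kappa[t]/(t^{2h})$ with $s=t^2\phi(t)$, normalized so that $\phi=1+a_1t+a_2t^3+\dots+a_{h-1}t^{2h-3}$. The normalization uses the $G_h$-action on $s$ to kill the even-degree coefficients and the constant. Under $t\mapsto\lambda^w t$, the element $s$ becomes $\lambda^{2w}t^2\phi(\lambda^wt)$. Rescaling $s$ by $\lambda^{-2w}\in G_h$ returns it to normalized form, with coefficients $a_i\mapsto \lambda^{w d_i}a_i$, where $d_1=1$ and $d_i=2i-1$. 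One must check that this rescaling is compatible with the normal form: a torus element preserves the shape of $\phi$, so no further $U_h$-correction is needed. The upshot is that $\Gm$ acts linearly on $\bA^{h-1}$ with all weights $wd_i$ strictly positive.

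\textbf{Step 3: conclude.} A linear action with all weights nonzero has the origin as its only fixed point, and $V_{h,2}^{\Gm}$ is the reduced point $a=0$. This point corresponds to the crimping $s=t^2$. This gives existence and uniqueness of the equivariant lift; the stabilizer bookkeeping works because the fixed locus is scheme-theoretically a point.

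The main obstacle is Step 2. One must make sure that the identification $V_{h,2}\simeq\bA^{h-1}$ from \cite[Lemma~3.8]{Per1} is $\Gm$-equivariant with the claimed weights, i.e.\ that the slice for the $G_h$-action is torus-stable. If this is awkward, the fallback is to argue directly on $E_{h,2}$: a $\Gm$-fixed point modulo $G_h$ means that $\kappa[s]/(s^h)$ is a $\Gm$-stable subalgebra. Then $s$ can be chosen as a weight vector, and since $s$ has order exactly $2$ in $t$, it must be $s=ct^2$. This shows there is exactly one such subalgebra, namely $\kappa[t^2]$.
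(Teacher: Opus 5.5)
Your proof is correct and follows the paper's argument. Both identify $\cB\Gm\times_{\cB G_{2h}}\cE_{h,2}$ with $[V_{h,2}/\Gm]$, observe that $\Gm$ acts on $V_{h,2}\simeq\bA^{h-1}$ with strictly positive weights (your explicit weights $(2i-1)w$), and take the zero section as the unique lift. The only difference is cosmetic: you linearize the action on a formal coordinate at $q$ using linear reductivity, whereas the paper reduces to $(\bP^1,\infty)$ via \Cref{rem:motivation-hyp-sing}.
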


\begin{proof}
    This follows from a straightforward computation. Indeed, we have seen in \Cref{sub:A_r-crimping} that $\cE_{h,2}$ can be described as the quotient stack $[V_{h,2}/G_{2h}]$ where $V_{h,2}$ is an affine space. Thanks to \Cref{rem:motivation-hyp-sing}, it is clear that $q$ lies in an irreducible component of $\widetilde{C}$ whose normalization is isomorphic to $\bP^1$, and the cocharacter given induces a cocharacter $\Gm \rightarrow \Aut^{\circ}(\bP^1,\infty)\simeq \Gm \ltimes \Ga$. Since the morphism $j_{2h}:\cB \bG_m\to \cB G_{2h}$ is local on $q$, we can assume $(C,p)\simeq (\bP^1,\infty)$ and compute $j_{2h}$ explicitly.  Using the explicit description of the $G_{2h}$-action on $V_{h,2}$, we get that the fiber product $\cB \Gm \times_{\cB G_{2h}} \cE_{h,2} $ is isomorphic to the quotient stack $[V_{h,2}/\Gm]$ where $\Gm$ acts on the affine space $V_{h,2}$ with strictly positive weights. Therefore, we have that there exists a unique lifting of $j_{2h}$, corresponding to the $0$-section of $V_{h,2}$.
\end{proof}

\begin{remark}\label{rem:unique-curve-crimp-even}
   \Cref{lem:hyp-crimping-datum-even} implies that there exists a unique curve $(C,p)$ (constructed using the unique crimping datum given by \Cref{lem:hyp-crimping-datum-even}) such that the $\Gm$-action on $(\widetilde{C},q)$ descends to $(C,p)$.
\end{remark}

\subsubsection*{Odd case}
In analogy with the even case, we start by describing the moduli stack of curves with a prescribed normalization at an odd $A$-singularity. The following is a direct consequence of \Cref{prop:descr-an-disp-i} and \Cref{prop:descr-an-disp-ns}.
\begin{lemma}\label{cor:diff-crimping-same-curve-odd}
    Let $(\widetilde{C},q_1,q_2)$ be a $2$-pointed, not necessarily connected curve over an algebraically closed field. The fiber product
    $$
    \begin{tikzcd}
    {[G_h/\Aut(\widetilde{C},q_1,q_2)]} \arrow[r] \arrow[d] & {\cB G_h} \arrow[d] \\
    {\cB \Aut(\widetilde{C},q_1, q_2)} \arrow[r]                  & \cB G_h \times \cB G_h
    \end{tikzcd}
    $$
    parametrizes curves $(C,p)$, where $p$ is an $A_{2h-1}$-singularity such that the (pointed) partial normalization at $p$ is isomorphic to $(\widetilde{C},q_1,q_2)$.
\end{lemma}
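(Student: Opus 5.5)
The plan is to mirror the argument behind \Cref{cor:diff-crimping-same-curve-even}, replacing the even crimping morphism by the odd one from \Cref{rem:crim-mor-odd}. First I will identify the fiber product. The right vertical arrow is the diagonal $\Delta\colon \cB G_h\to \cB G_h\times \cB G_h$. For any stack $T$ with a morphism $(a,b)\colon T\to \cB G_h\times\cB G_h$, the fiber product $T\times_{\cB G_h\times \cB G_h}\cB G_h$ is the stack of isomorphisms $\mathrm{Isom}(a,b)$ between the two $G_h$-torsors. Apply this to $T=\cB\Aut(\widetilde{C},q_1,q_2)$ with $(a,b)=(j_{h,1},j_{h,2})$, i.e.\ the $(h-1)$-th order neighbourhoods of $q_1$ and $q_2$ with their induced actions. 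Over a point, both torsors are trivial, so the isomorphism scheme is $G_h$ itself. The group $\Aut(\widetilde{C},q_1,q_2)$ acts on it by $g\cdot\phi=\rho_2(g)\circ\phi\circ\rho_1(g)^{-1}$, where $\rho_i$ is the action on the jets at $q_i$. This gives the description $[G_h/\Aut(\widetilde{C},q_1,q_2)]$.

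Next, I will interpret a $T$-point of $\mathrm{Isom}$. It consists of a family over $T$ that is étale locally isomorphic to $(\widetilde{C},q_1,q_2)$, together with an identification of the two order-$(h-1)$ neighbourhoods, which is exactly an odd crimping datum in the sense of \Cref{def:odd-crimping-datum}. The Ferrand pushout of \Cref{rem:descr-an-disp-i} turns this into a family $(C,p)$ in which $p$ is an $A_{2h-1}$-singularity. Both the pushout and the partial normalization along the equisingular section are functorial. By \Cref{prop:descr-an-disp-i} and \Cref{prop:descr-an-disp-ns}, the stacks $\cI_{2h-1}^{\bullet}$ are exactly the base-changed diagonal $\cB G_h$ over the stacks of normalizations, and $F$ sends a crimping datum to its pushout. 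Therefore the fiber product in the statement is $\cI_{2h-1}$ restricted to the residual gerbe $\cB\Aut(\widetilde{C},q_1,q_2)$ of the normalization. This holds in both the connected and the separating case, which are distinguished by whether $\widetilde{C}$ is connected.

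The delicate point is the comparison with curves $(C,p)$ in $\cA_{2h-1}$: the map $F$ is an isomorphism in general but étale of degree $2$ in the non-separating case and in the symmetric separating case. I will resolve this by noting that the statement concerns the \emph{ordered} pointed normalization $(\widetilde{C},q_1,q_2)$. The degree-$2$ cover $F$ exactly records the choice of ordering of the two preimages of $p$, as in \Cref{rem:involution}. So the fiber product parametrizes curves together with an identification of their ordered pointed partial normalization with $(\widetilde{C},q_1,q_2)$, which is the intended meaning, and no further quotient is needed. Finally, I will check that objects over an algebraically closed field correspond bijectively: every $(C,p)$ with normalization $(\widetilde{C},q_1,q_2)$ arises from some $\phi\in G_h$, and two such $\phi$ give isomorphic curves exactly when they lie in the same $\Aut(\widetilde{C},q_1,q_2)$-orbit. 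This follows because any isomorphism of curves lifts to the normalizations respecting the ordered points.
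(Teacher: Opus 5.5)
Your proposal is correct and follows the paper's route. The paper simply calls the lemma a direct consequence of \Cref{prop:descr-an-disp-i} and \Cref{prop:descr-an-disp-ns}: one restricts the cartesian square defining $\cI_{2h-1}$, which is a pullback of the diagonal $\cB G_h\to\cB G_h\times\cB G_h$, to the residual gerbe $\cB\Aut(\widetilde{C},q_1,q_2)$. Your explicit description of the fiber product as $[G_h/\Aut(\widetilde{C},q_1,q_2)]$, via the isomorphisms between the two jet torsors, and your observation that the statement must be read with the two branches at $p$ ordered (which is what the degree-$2$ map $F$ accounts for, cf.\ \Cref{rem:involution}), are useful clarifications that the paper leaves implicit.
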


\begin{definition}
    \label{def:G-equiv-crimping}
    Let $(\widetilde{C},q_1,q_2)$ be a $2$-pointed, not necessarily connected curve over an algebraically closed field and $G\subset \Aut(\widetilde{C},q_1,q_2)$ be a subgroup. A $G$-equivariant crimping datum is a crimping datum for the family $[(\widetilde{C},q_1,q_2)/G]\rightarrow \cB G$.
\end{definition}

\begin{remark}
\label{rem:diff-crimping-same-curve-odd}
A $G$-equivariant crimping datum
$$\operatorname{cr} \colon \cB G \to \cB G_h$$
exists if and only if one can construct an $A_{2h-1}$-singularity gluing $q_1$ and $q_2$ such that the subgroup $G \subset \Aut(\widetilde{C},q_1,q_2)$ descends to a subgroup of the automorphisms of $C$ along the normalization $ \widetilde{C}\to C$. As in the even case, different $G$-equivariant crimping data may give rise to the same curve. Indeed, two $G$-equivariant crimping data
$$ \operatorname{cr}_1, \operatorname{cr}_2: \cB G \to \cB G_h $$
will give rise to the same curve if and only if the two induced morphisms $\cB G \rightarrow \cB G_h \times_{\cB G_{h}\times \cB G_h} \cB\Aut(C,s)$ are isomorphic, as follows directly from \Cref{cor:diff-crimping-same-curve-odd}.
\end{remark}

A simple consequence of \Cref{cor:diff-crimping-same-curve-odd} is the following result. It will help us understand how the automorphism group changes when attaching rational curves using odd $A$-singularities.

\begin{lemma}\label{cor:quotient-stack-description}
     Let $h$ be a positive integer with $h\geq 2$. Let $\widetilde{C}/k$ be a (possibly pointed) curve over an algebraically closed field $k$ with one rational smooth point $q\in \widetilde{C}(k)$. Let $\cA_{2h-1}(\widetilde{C}\cup\bP^1)$ be the moduli stack parametrizing (possibly pointed) $A_r$-stable curves $C$ with a separating $A_{2h-1}$-singularity such that the pointed normalization of $C$ at $p$ is isomorphic to $(\widetilde{C}\sqcup \bP^1,p,\infty)$. Then $\cA_{2h-1}(\widetilde{C}\cup \bP^1)$ is isomorphic to the quotient stack $[(U_h/\bG_a)/\Aut(\widetilde{C},q)]$ where $U_h$ is an $\Aut(C,p)$-representation. If $\bP^1$ has another marking (say the point $0$), then the quotient stack is of the form $[U_h/\Aut(\widetilde{C},q)]$.
\end{lemma}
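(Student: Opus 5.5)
The plan is to specialize \Cref{cor:diff-crimping-same-curve-odd} to the disconnected two-pointed curve $(\widetilde{C}\sqcup\bP^1, q, \infty)$ and then identify the resulting quotient explicitly.

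\textbf{Step 1: the automorphism group of the normalization.} A curve $C$ as in the statement has pointed normalization $(\widetilde{C}\sqcup \bP^1,q,\infty)$. Since the two connected components are distinguished (one is the $\bP^1$ tail, the other is $\widetilde{C}$), any automorphism of $(\widetilde{C}\sqcup\bP^1,q,\infty)$ preserves each component. Hence
\[
\Aut(\widetilde{C}\sqcup\bP^1,q,\infty)=\Aut(\widetilde{C},q)\times \Aut(\bP^1,\infty),
\qquad \Aut(\bP^1,\infty)\simeq \Gm\ltimes\Ga .
\]
If $\bP^1$ carries an extra marking at $0$, then $\Ga$ disappears and the second factor is $\Aut(\bP^1,0,\infty)\simeq\Gm$. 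I would first check that $\widetilde{C}$ cannot itself be a $\bP^1$ with $q$ as its only special point; the $A_r$-stability of $C$ should rule this out.

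\textbf{Step 2: apply the fiber product.} By \Cref{cor:diff-crimping-same-curve-odd}, the stack in question is
\[
[G_h/(\Aut(\widetilde{C},q)\times\Aut(\bP^1,\infty))],
\]
where $G_h$ is acted on by left and right composition through the maps $\Aut(\widetilde{C},q)\to G_h$ and $\Aut(\bP^1,\infty)\to G_h$. These maps record the action on the $(h-1)$-th order neighbourhoods of the two points.

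\textbf{Step 3: identify the quotient by the $\bP^1$ factor.} The main computation is that $\Aut(\bP^1,\infty)\to G_h$ is injective for $h\ge 2$. In the coordinate $s=1/x$ at $\infty$, the map $x\mapsto ax+b$ becomes $s\mapsto s/(a+bs)=a^{-1}s-ba^{-2}s^2+\dots$, which is nonzero mod $s^3$ whenever $b\neq0$. Moreover, $\Aut(\bP^1,\infty)$ contains the torus $\Gm\subset G_h=\Gm\ltimes U_h$. Quotienting $G_h$ first by the $\Gm$ factor of $\Aut(\bP^1,\infty)$ acting on the right therefore gives $G_h/\Gm\simeq U_h$. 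This is an affine space, with the residual action given by conjugation and composition. So $[G_h/\Aut(\bP^1,\infty)]\simeq U_h/\Ga$, where $\Ga\subset U_h$ acts freely by right multiplication.

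\textbf{Step 4: the residual action.} The action of $\Aut(\widetilde{C},q)$ by left composition commutes with the right action, so it descends to $U_h/\Ga$, giving $[(U_h/\Ga)/\Aut(\widetilde{C},q)]$. To call $U_h$ a representation, I would choose linearizing coordinates $U_h\simeq\bA^{h-1}$. The action $t\mapsto t\varphi(t)$ composed on the left, after normalizing away the $\Gm$ factor, is affine in these coordinates. In characteristic zero this can be viewed as a representation via exponential coordinates, or after linearizing the reductive part and using that the unipotent part acts by triangular affine maps.

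\textbf{Step 5: the extra marking.} With the marking at $0$, the $\Ga$ factor is absent and we get $[U_h/\Aut(\widetilde{C},q)]$ directly.

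I expect the main obstacle to be Step 4: making precise in what sense $U_h$ (and $U_h/\Ga$, which is itself a scheme since $\Ga$ acts freely on the unipotent group) carries a linear structure compatible with the $\Aut(\widetilde{C},q)$-action. I would start from the explicit group law of $G_h$ via truncated power series.
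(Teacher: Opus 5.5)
Your route is the paper's own: specialize \Cref{cor:diff-crimping-same-curve-odd} to get $[G_h/(\Aut(\widetilde{C},q)\times\Aut(\bP^1,\infty))]$, compute the image of $\Aut(\bP^1,\infty)\simeq\Gm\ltimes\Ga$ in $G_h\simeq\Gm\ltimes U_h$ from its action on the $(h-1)$-st order neighbourhood of $\infty$, and identify the coset space with the affine space $U_h/\Ga$.

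\textbf{The gap is in Step 3.} The neighbourhood of order $h-1$ is $k[s]/(s^h)$, not $k[s]/(s^3)$. So the $\Ga$-factor is detected only through the $s^2$-coefficient $-ba^{-2}$, and that coefficient survives exactly when $h\geq 3$. For $h=2$ one has $G_2\simeq\Gm$ and the $\Ga$-factor maps trivially. Then $[G_2/(\Gm\ltimes\Ga)]\simeq\cB\Ga$, so your computation would give a stack with $\Ga$ in its stabilizers, not the claimed quotient. Your injectivity claim for $h\geq 2$ is therefore false at $h=2$.

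The missing input is $A_r$-stability. By Noether's formula (\Cref{rem:genus-count}), the conductor contributes $h\cdot\infty$ on the $\bP^1$-branch. Hence $\deg\omega_C|_{\bP^1}=h-2$ when $\bP^1$ carries no marking, and $h-1$ when $0$ is marked. Ampleness forces $h\geq 3$ in the unmarked case, and this is exactly what the paper invokes. In the marked case $h=2$ is harmless, because $\Aut(\bP^1,0,\infty)\simeq\Gm$ maps isomorphically onto the torus of $G_h$.

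\textbf{Step 1.} The check you propose there would fail, and it is not needed. For $h\geq 3$, taking $(\widetilde{C},q)=(\bP^1,\infty)$ with no markings is allowed: it gives the odd atom, to which the paper applies this very lemma in \Cref{rem:odd-atoms}. The product decomposition of the automorphism group holds regardless, since automorphisms of the pointed curve fix $q$ and $\infty$, hence the components containing them.

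\textbf{Step 4.} The paper's proof stops once the coset space is identified with $U_h/\Ga$, and it uses ``representation'' loosely. What is needed later is only that a torus in $\Aut(\widetilde{C},q)$ acts linearly, and this is immediate.
\begin{itemize}
\item The torus $T\subset G_h$ lies in the image $H$ of $\Aut(\bP^1,\infty)$, so $\mu\in T$ acts on cosets by $uH\mapsto(\mu u\mu^{-1})H$.
\item Write $u\in U_h$ as $\xi+a_2\xi^2+\dots+a_{h-1}\xi^{h-1}$. Conjugation by $\mu$ multiplies each $a_i$ by a power of $\mu$.
\item The free $\Ga$-action shifts $a_2$, so $\{a_2=0\}$ is a $T$-stable slice identifying $U_h/\Ga$ $T$-equivariantly with a linear representation.
\end{itemize}
There is no need to linearize unipotent parts.
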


\begin{proof}
    We leave the $2$-pointed case to the reader, since the same proof works. By \Cref{cor:diff-crimping-same-curve-odd}, $\cA_{2h-1}(\widetilde{C}\cup\bP^1)$ is isomorphic to $[G_h/\Aut(\widetilde{C},q) \times \Aut(\bP^1,\infty)]$, where $h\geq 3$ because the curve is $A_r$-stable. To prove the statement, we need to study the explicit description of the morphism
    $$ \eta: \Aut(\bP^1,\infty)\simeq \cB (\bG_m \ltimes \bG_a) \longrightarrow \cB G_h.$$
     We identify an element $(\lambda,t)\in\bG_m\ltimes \bG_a$ with the automorphism $z\mapsto \lambda^{-1}(z-t)$ of $\bP^1$. This automorphism will act on the infinitesimal neighborhood of order $h-1$ of $\infty \in \bP^1$ by the formula
    $$ \xi \mapsto \frac{\lambda \xi}{1-\xi t}=\lambda \xi \sum_{i=0}^{m-2}(\xi t)^i=\lambda\xi+\lambda t\xi^2 +\lambda t^2\xi^3+\dots$$
    where $\xi$ is the local parameter of $\infty$ in $\bP^1$. Therefore, the morphism $\eta$ is induced by an explicit embedding $\bG_m \ltimes \bG_a \subset G_h\simeq \bG_m \ltimes U_h$ given by the formula
    $$ (\lambda,t) \mapsto (\lambda, \lambda t,\lambda t^2,\dots,\lambda t^{h-2})$$
    and the quotient is an affine space, namely $U_h/\bG_a$. Thus, we have the statement as claimed.
\end{proof}
\begin{remark}
    In the previous corollary, the $A_r$-stable condition on $C$ ensures that if $h=2$, then $\bP^1$ has to be $2$-pointed. Indeed, if this were not the case, there would be an additional copy of $\Ga$ in the group by which we want to quotient.
\end{remark}

We include the following remark, which will be useful for subsequent papers in the series.

\begin{remark}\label{rem:case-non-reductive}
    Suppose we are in the situation of \Cref{prop:descr-an-disp-i}, where $S=\emptyset$, $i=0$ and $h\geq 2$.  We have the affine crimping morphism
    $$ \cI:=\cI_{2h-1}^{0,\emptyset} \rightarrow \cM_{g-h+1,n,[h]} \times \cM_{0,0,[h]} $$
    where $\cM_{0,0,[h]}\simeq \cB(\bG_m\ltimes \bG_a)$ (see \Cref{prop:descr-base-crimping-even}). We can consider the diagram
    $$
    \begin{tikzcd}
    \cI \arrow[r] \arrow[d]                                        & {[(U_h/\Ga)/G_h]} \arrow[r] \arrow[d]          & \cB G_h \arrow[d]        \\
    {\cM_{g-h+1,n,[h]}^r \times \cM_{0,0,[h]}} \arrow[r] \arrow[d] & \cB G_h \times \cB (\Gm \ltimes \Ga) \arrow[d] \arrow[r] & \cB G_{h} \times \cB G_h \\
    {\cM_{g-h+1,n,[h]}^r} \arrow[r]                                & \cB G_h                                        &
    \end{tikzcd}
    $$
    where all the squares are cartesian thanks to \Cref{cor:quotient-stack-description} and \Cref{rem:descr-an-disp-i}.
    Thus, the composition
    $$ \cI \longrightarrow \cM_{g-h+1,n,[h]} \times \cB(\bG_m\ltimes \bG_a) \rightarrow \cM_{g-h+1,n,[h]} $$
    is still affine, since it is the pullback of an affine morphism.
\end{remark}

In analogy with the even case, we now characterize $\Gm$-equivariant crimping data. Since we are gluing the two infinitesimal neighborhoods together, the idea is that any cocharacter that descends through the normalization morphism should have balanced action on the tangent spaces of the points we are about to glue. This is summarized in the following result.

\begin{lemma}\label{lem:hyp-crimping-datum-odd}
    Let $h$ be a positive integer with $h\geq 2$. Let $\widetilde{C}/k$ be a not necessarily connected curve over an algebraically closed field $k$ with two rational smooth points $q_1,q_2\in \widetilde{C}(k)$. Let $\phi:\Gm \rightarrow \Aut^{\circ}(\widetilde{C},q_1,q_2)$ be a cocharacter and denote by $n_i$ the integer associated to the $1$-dimensional $\Gm$-representation $T_{q_i} \widetilde{C}$ for $i=1,2$. The following are equivalent:
    \begin{itemize}
        \item[(a)] $n_1=n_2$;
        \item[(b)] there exists a $\Gm$-equivariant crimping datum, i.e., there exists a dotted arrow making the following diagram
    $$
    \begin{tikzcd} & \cB G_h \arrow[d] \\
    \cB \Gm \arrow[r, "j_{h,1}\times j_{h,2}"'] \arrow[ru, "\exists ", dashed] & \cB G_{h} \times \cB G_h
    \end{tikzcd}
    $$
    commute, where $j_{h,i}$ is the morphism associated to the infinitesimal neighbourhood of order $h-1$ of $q_i$ for $i=1,2$.
    \end{itemize}
    Moreover, suppose that $n_1=n_2\neq 0$ and the cocharacter $\phi$ factors through a $2$-dimensional subtorus $\Gm^2 \subset \Aut^{\circ}(\widetilde{C},q_1,q_2)$. Then the curves obtained by gluing $p_1$ and $p_2$ with two different $\Gm$-equivariant crimping data are isomorphic.
\end{lemma}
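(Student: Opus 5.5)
The plan is to turn the existence of a lift into a conjugacy question inside $G_h$, and then to analyse the set of lifts explicitly. The cocharacter $\phi$ together with the infinitesimal neighbourhoods of order $h-1$ of $q_1,q_2$ gives two homomorphisms $\rho_1,\rho_2\colon \Gm\to G_h\simeq \Gm\ltimes U_h$. The morphism $j_{h,1}\times j_{h,2}$ is $B(\rho_1,\rho_2)$, and the vertical arrow is the diagonal. Hence the fiber product $\cB\Gm\times_{\cB G_h\times\cB G_h}\cB G_h$ is the quotient $[G_h/\Gm]$, where $\lambda$ acts by
\[
\lambda\cdot g=\rho_1(\lambda)\,g\,\rho_2(\lambda)^{-1}.
\]
Dotted arrows in (b) correspond to $\Gm$-fixed points of $G_h$ for this action, i.e. to elements $g\in G_h$ with $\rho_1=g\rho_2g^{-1}$. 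So (b) says exactly that $\rho_1$ and $\rho_2$ are conjugate in $G_h$.

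\textbf{Proof of (a)$\Leftrightarrow$(b).} The composition of $\rho_i$ with the projection $G_h\to\Gm$ (action on the cotangent or tangent line $\mathfrak m/\mathfrak m^2$) is the character of $T_{q_i}\widetilde C$, so it is $\lambda\mapsto\lambda^{\pm n_i}$ with a fixed sign convention. Since this quotient is abelian, conjugate cocharacters have the same image there, so (b)$\Rightarrow$(a). Conversely, $\rho_i(\Gm)$ is a torus in the solvable group $G_h$, and in characteristic $0$ all maximal tori are $U_h$-conjugate to the Levi factor $\Gm$. Hence $\rho_i$ is conjugate to $\lambda\mapsto(\lambda^{n_i},1)$; when $n_i=0$ this is trivial, since a cocharacter into $U_h$ is trivial. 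So $n_1=n_2$ gives conjugacy, which proves (a)$\Rightarrow$(b).

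\textbf{The uniqueness statement.} After conjugating we may assume $\rho_1=\rho_2=\rho$, the diagonal cocharacter of weight $n\neq 0$. The set of crimping data is then the centralizer $Z_{G_h}(\rho)$. Now $U_h$ has a filtration whose graded pieces have $\Gm$-weights $jn$ with $j\ge1$, all nonzero, so $Z_{G_h}(\rho)=\Gm$ is the Levi torus. Thus two crimping data differ by an element $c\in\Gm$ that rescales the identification of the two neighbourhoods, i.e. rescales the local parameter at $q_2$ relative to $q_1$.

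By \Cref{rem:diff-crimping-same-curve-odd} it suffices to find $\sigma\in\Aut(\widetilde C,q_1,q_2)$ commuting with $\phi$ whose induced elements of $G_h$ at $q_1$ and $q_2$ differ exactly by $c$; then $\sigma$ descends to an isomorphism of the two glued curves. I would take $\sigma$ in the given torus $T=\Gm^2\supset\phi(\Gm)$, which commutes with $\phi$ because $T$ is commutative. Elements of $T$ fix $\rho$ up to conjugacy and act at $q_1,q_2$ through Levi elements given by characters $\chi_1,\chi_2$ of $T$. It then suffices to show that $\chi_1\chi_2^{-1}\colon T\to\Gm$ is nontrivial, hence surjective, so that some $\sigma\in T$ realizes any given $c$.

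\textbf{Main obstacle.} The hard step is this nontriviality, and I would argue by contradiction. By \Cref{rem:motivation-hyp-sing}, $T$ acts only on rational components, and on a component isomorphic to $\bP^1$ a torus acts through its faithful action on the tangent line at a fixed point. If $\chi_1=\chi_2$, one has to show that the action of $T$ factors through a rank-one quotient, contradicting $\dim T=2$. The case distinction is as follows.
\begin{itemize}
\item If $q_1,q_2$ lie on different components, each component carries its own character, and the equality $\chi_1=\chi_2$ would force these rational tails to move together.
\item If $q_1,q_2$ lie on the same $\bP^1$, they are the two fixed points and $\chi_2=\chi_1^{-1}$. Then $\chi_1=\chi_2$ forces $\chi_1^2=1$, which contradicts $n\neq0$ unless the relevant subtorus acts trivially there.
\end{itemize}
The delicate point is the case where part of $T$ acts on components away from $q_1,q_2$. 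There I would try to reduce to the components containing $q_1,q_2$, using that $\phi$ already has weight $n\neq 0$ there, and interpret the hypothesis on the $2$-dimensional subtorus as giving independent scaling at the two branches. I expect this reduction to need the most care.
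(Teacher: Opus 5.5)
Your proof of (a)$\Leftrightarrow$(b) is correct. It is a more conceptual version of the paper's argument. The paper linearizes $\phi$ at $\infty\in\bP^1$ so that both $\rho_i$ land in the Levi torus, and then reads off the weights $\lambda^{kn_1-n_2}$ of the induced action on $G_h$. You instead phrase lifts as $G_h$-conjugacy of $\rho_1,\rho_2$, and use the abelian quotient $G_h\to\Gm$ together with conjugacy of tori.

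Your reduction of the final assertion is also sound: equivariant crimping data form the centralizer of $\rho$, i.e.\ the Levi $\Gm$, so it suffices that $\chi_1\chi_2^{-1}$ be nontrivial on the torus. This is the same content as the paper's identification of $(\cB\Gm\times\cB\Gm)\times_{\cB G_h\times\cB G_h}\cB G_h$ with $[U_h/\Gm]$, which has a unique fixed point.

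The gap is the step you call the main obstacle, and your proposed strategy for it fails. Having $\chi_1=\chi_2$ on a faithful $2$-dimensional torus $T\supset\phi(\Gm)$ is not contradictory. Take a chain of four copies of $\bP^1$ where:
- $0_1\sim\infty_3$ and $\infty_4\sim 0_2$ are nodes;
- $0_3$ and $0_4$ are glued to a tacnode;
- $q_i=\infty_i$ for $i=1,2$.

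Then $\Aut^{\circ}(\widetilde C,q_1,q_2)=\{(\lambda_1,\lambda_2,\mu,\mu)\}\simeq\Gm^3$, and the cocharacter $\lambda\mapsto(\lambda,\lambda,1,1)$ has $n_1=n_2=-1\neq 0$. The subtorus $T=\{(\lambda,\lambda,\mu,\mu)\}$ is $2$-dimensional, contains this cocharacter, and satisfies $\chi_1=\chi_2$. Its extra direction acts only on components away from $q_1,q_2$, which is exactly the case you could not reduce. Here the conclusion still holds, but only via $(\lambda_1,1,1,1)\notin T$. So no argument that uses the hypothesis only through $\dim T=2$ can produce your $\sigma$ inside $T$.

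What is missing is the way the paper actually uses the hypothesis (compare \Cref{rem:at-most-one-Gm}). The subtorus $\Gm^2$ is meant to act on $T_{q_1}\widetilde C$ and $T_{q_2}\widetilde C$ through independent characters, i.e.\ $(\chi_1,\chi_2)$ is an isogeny onto $\Gm^2$. This is what lets the paper map $\cB\Gm\times\cB\Gm$, via the product of the two Levi inclusions, to $\cB G_h\times\cB G_h$ in place of $\cB\Aut$ when applying \Cref{rem:diff-crimping-same-curve-odd}. Under this reading $\chi_1\chi_2^{-1}$ is nontrivial, hence surjective, so your $\sigma\in T$ exists and your argument is complete and equivalent to the paper's.

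Without this reading, the literal statement can fail. Let $\widetilde C$ be the tacnodal pair of lines from the remark following the lemma, together with a further connected component carrying its own $\Gm$-action. A $2$-dimensional torus through $\phi$ then exists, yet different equivariant crimping data still give a non-isotrivial family of glued curves.
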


\begin{proof}
    As in the proof of \Cref{lem:hyp-crimping-datum-even}, $p_1$ and $p_2$ lie in irreducible components (possibly the same) whose normalizations are copies of $\bP^1$. Therefore, because the morphisms \(j_{h,i}:\cB \bG_m\to \cB G_h\) are local, we can factor the morphism $j_{h,1}\times j_{h,2}:\cB \Gm \rightarrow \cB G_{h} \times \cB G_h$ through two copies of $\bG_m \subset \Aut(\bP^1,\infty)$. More precisely, we have a commutative diagram
    $$ \begin{tikzcd}
    \cB \Gm \arrow[r, "{(n_1,n_2)}"'] \arrow[rr, "j_{h_1} \times j_{h_2}", bend left] & \cB \Gm \times \cB \Gm \arrow[r] & \cB G_{h} \times \cB G_{h}
    \end{tikzcd}$$
    where the rightmost morphism can be explicitly computed as (two copies of) the morphism \(j_h:\cB \Gm\to \cB G_h\) induced by $\infty\in \mathbb{P}^1$ and associated to the cocharacter $ \Gm \subset \Aut^{\circ}(\bP^1,\infty) \simeq \Gm \ltimes\Ga$. Since an odd crimping datum is a lift of the morphism $j_{h_1}\times j_{h_2}$, any $\bG_m$-equivariant crimping datum should factor through $[G_{h}/\Gm \times \Gm]$. Recall that $G_{h} \simeq \Gm \ltimes U_h$ where $U_h$ is the unipotent radical, and the two $\Gm$ actions are respectively left and right multiplication for elements of the subgroup $\Gm \subset G_h$. Notice that the right multiplication is done after taking the inverse of the element in $\Gm$. Therefore, after pulling back the diagonal of $\cB G_h$ through the morphism $j_{h_1} \times j_{h_2}$, we get the quotient stack $[G_h/\Gm]$ where $\Gm$ acts on $G_h$ as follows:
    $$ \lambda.(u_1, u_2, \dots, u_{h-1}) := (\lambda^{n_1-n_2}u_1, \lambda^{2n_1-n_2}u_2, \dots, \lambda^{(h-1)n_1-n_2}u_{h-1}) $$
    where $u_1 \in \Gm$ and $(u_2,\dots,u_{h-1})\in U_h$. Thus $[G_h/\Gm] \rightarrow \cB \Gm$ has a section only if $n_1=n_2$; otherwise, $[G_h/\Gm]$ only has finite stabilizers, thus $(b) \implies (a)$. Furthermore, if $n_1=n_2$, then we have $[G_h/\Gm]\simeq \Gm \times [U_h/\Gm]$ where $U_h$ is a $\Gm$-representation with nonnegative weights,
    thus proving $(a)\implies (b)$.

    Finally, we prove the concluding assertion. Since the cocharacter $\phi$ factors through $2$-dimensional subtorus $\Gm^2 \subset \Aut^{\circ}(\widetilde{C},q)$, \Cref{rem:diff-crimping-same-curve-odd} tells us that two crimping data give rise to isomorphic curves if the induced morphisms
    $$ c_1,c_2:\cB \Gm \longrightarrow (\cB \Gm \times \cB \Gm) \times_{(\cB G_h\times \cB G_h)} \cB G_h$$
    are isomorphic. Nevertheless, the explicit nature of the morphism gives that
    $$  (\cB \Gm \times \cB \Gm) \times_{(\cB G_h\times \cB G_h)} \cB G_h \simeq [G_h/\Gm \times \Gm] \simeq [U_h/\Gm]$$
    where $\Gm$ acts on $U_h$ as described above. Since $n_1=n_2 \neq 0$, we have that $U_h$ is a $\Gm$-representation with strictly positive weights; therefore, there exists a unique point with $\Gm$-stabilizers. The result follows.
\end{proof}

\begin{remark}\label{rem:at-most-one-Gm}
    Notice that \Cref{lem:hyp-crimping-datum-odd} tells us that if we start with a curve $(\widetilde{C},q_1,q_2)$ with two copies of $\Gm\subset \Aut(\widetilde{C},q_1,q_2)$ acting independently on $T_{q_1}\widetilde{C}$ and $T_{q_2}\widetilde{C}$, then at most one copy of $\Gm$ will survive after we glue $q_1$ and $q_2$. Moreover, if such a copy survives, the induced restriction to the normalization can be identified with the diagonal embedding $\Gm \subset \Gm^2$.
    Note further that \Cref{lem:hyp-crimping-datum-odd} is stated for $h\geq 2$; thus, it does not deal with nodes. Indeed, when gluing two points using nodes, there is no condition $n_1=n_2$, and thus both copies of $\Gm$ (if they exist) will survive: see \Cref{rem:aut-and-normalization-along-nodes}.
\end{remark}
\begin{remark}
    It is important to stress that in the final statement of \Cref{lem:hyp-crimping-datum-odd}, the conditions on the cocharacter $\phi$ are necessary. Indeed, consider two disjoint copies of $(\bP^1,0,\infty)$: the automorphism group of the disjoint union is $\Gm^2$. For simplicity, we can assume $h=2$, thus we are dealing with tacnodal singularities. If we apply \Cref{lem:hyp-crimping-datum-odd} to the diagonal cocharacter $\Gm \rightarrow \Gm^2$ gluing the two copies at $0$, there exists a unique curve $\widetilde{C}$ which has $\Gm$ as (the connected component of the identity of) the group of automorphisms: it is constructed by gluing the two projective lines at $0$, and $\Gm$ acts as scalar multiplication on both components. Suppose that we want to glue the two irreducible components at $\infty$ as well, i.e., we want to apply \Cref{lem:hyp-crimping-datum-odd} to $(\widetilde{C},q_1,q_2)$ where $q_1,q_2$ are the points $\infty$ in the two irreducible components. Following the proof of \Cref{lem:hyp-crimping-datum-odd}, we have that the moduli stack of curves constructed in this way is isomorphic to $\Gm \times [U_h/\Gm]$ (see also \Cref{rem:diff-crimping-same-curve-odd}). Therefore, there is a family of non-isomorphic curves over $\Gm$ constructed with different $\Gm$-equivariant crimping data. The final statement of \Cref{lem:hyp-crimping-datum-odd} fails because the curve $(\widetilde{C},q_1,q_2)$ does not have a copy of $\Gm^2$ as a subtorus.
\end{remark}

\Cref{lem:hyp-crimping-datum-odd} has the following interesting consequence.
\begin{corollary}
    Let $(C,p_1,\dots,p_n)$ be an $n$-pointed curve of genus $g$. Let $\Gamma$ be an irreducible component and $\bG_m\rightarrow \Aut^{\circ}(C,p_1,\dots,p_n)$ be a cocharacter such that the induced morphism $\Gm \rightarrow \Aut^{\circ}(\Gamma,P_{\Gamma}\cup I_{\Gamma})$ is not trivial. Then the same is true for every component $\Gamma'$ meeting $\Gamma$ in (at least one) worse-than-nodal odd $A$-singularity.
\end{corollary}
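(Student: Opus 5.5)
The plan is to pass to the partial normalization of $C$ at the singularity $p$ joining $\Gamma$ and $\Gamma'$, and read off the weights on the two branches from \Cref{lem:hyp-crimping-datum-odd}.

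Let $p\in\Gamma\cap\Gamma'$ be an $A_{2h-1}$-singularity with $h\geq 2$. Let $\nu\colon\widetilde{C}\to C$ be the partial normalization at $p$, with preimages $q_1\in\widetilde{\Gamma}$ and $q_2\in\widetilde{\Gamma}'$. By \Cref{rem:motivation-hyp-sing}, the cocharacter $\phi\colon\Gm\to\Aut^{\circ}(C,p_1,\dots,p_n)$ lifts to a cocharacter $\widetilde{\phi}\colon\Gm\to\Aut^{\circ}(\widetilde{C},\nu^{-1}(p_1),\dots,\nu^{-1}(p_n),q_1,q_2)$. The points $q_1,q_2$ are fixed, being the preimages of the $\Gm$-fixed point $p$; since $\Gm$ is connected, it cannot swap them. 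Moreover, by construction the curve $C$ is obtained from $(\widetilde{C},q_1,q_2)$ through a $\Gm$-equivariant crimping datum in the sense of \Cref{def:G-equiv-crimping}. Let $n_1,n_2$ denote the weights of $\Gm$ on $T_{q_1}\widetilde{C}$ and $T_{q_2}\widetilde{C}$. Then \Cref{lem:hyp-crimping-datum-odd}, direction (b)$\Rightarrow$(a), gives $n_1=n_2$.

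Next we translate the hypothesis into a statement about $n_1$. The action of $\Gm$ on $\Gamma$ is nontrivial. The action on the normalization $\widetilde{\Gamma}^{\nu}$ is then a nontrivial action on a smooth irreducible curve, so $\widetilde{\Gamma}^{\nu}\simeq\bP^1$ by \Cref{rem:motivation-hyp-sing}. It fixes the point $q_1$. A nontrivial $\Gm$-action on $\bP^1$ fixing a point is, up to coordinates, $z\mapsto\lambda^{m}z$ with $m\neq0$, possibly composed with a translation fixing $\infty$. In every such case the weight on the tangent space at any fixed point is $\pm m\neq 0$. (The tangent space here is taken on the smooth branch, which $\nu$ identifies with a point of $\widetilde{\Gamma}^{\nu}$ because $q_1$ is a smooth point.) Hence $n_1\neq0$, and therefore $n_2=n_1\neq 0$.

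Finally, a nonzero weight at $q_2$ forces the induced action on $\widetilde{\Gamma}'$, and hence on $\Gamma'$, to be nontrivial. This is exactly the claim.

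The step needing care is the claim that a nontrivial action on $\bP^1$ has nonzero weight at every fixed point. This fails for a $\Ga$-action, but not for a $\Gm$-action: a $\Gm$-subgroup of $\Gm\ltimes\Ga$ is conjugate to the standard torus, whose fixed points $0$ and $\infty$ have weights $\pm m$.

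A subsidiary point is that the marked points on $\Gamma$, written $P_\Gamma\cup I_\Gamma$, play no role. The restriction to $\Aut^{\circ}(\Gamma,P_\Gamma\cup I_\Gamma)$ being nontrivial only means that the action on $\Gamma$ is nontrivial. The conclusion for $\Gamma'$ follows automatically, since the action preserves the special points of $\Gamma'$.
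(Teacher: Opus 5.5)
Your proof is correct and follows the paper's intended route: the paper gives no written proof and presents the statement as an immediate consequence of \Cref{lem:hyp-crimping-datum-odd}. You do the same, lifting the cocharacter to the partial normalization at the singularity, using (b)$\Rightarrow$(a) to get $n_1=n_2$, and noting that a nontrivial $\Gm$-action on the rational component $\Gamma$ has nonzero weight at the fixed point $q_1$. That last step, justified by the fact that any $\Gm$ inside $\Gm\ltimes\Ga$ is conjugate to the standard torus, is the one detail the paper leaves implicit, and you handle it correctly.
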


\subsection{Hyperelliptic \texorpdfstring{$A_r$}{A\_r}-stable curves}\label{sub:hyper-A-stable}

As anticipated, we conclude the section by focusing our attention on hyperelliptic $A_r$-stable curves. We start with the definition.

\begin{definition}
    A hyperelliptic $n$-pointed curve is a tuple $(C,p_1,\dots,p_n,\sigma)$ where $(C,p_1,\dots,p_n)$ is an $n$-pointed curve and $\sigma$ is a hyperelliptic involution, i.e., a two-torsion element in $\Aut(C)$ such that
    \begin{itemize}
        \item the scheme of fixed points of $\sigma$ is finite;
        \item the geometric quotient $C/\sigma$ is a nodal curve of genus $0$.
    \end{itemize}
    We say that a (possibly singular) point $q \in C$ is a \emph{Weierstrass} point if it is fixed by the involution.
\end{definition}

We define $\cH_{g,n}^r$ as the following fibered category: its objects consist of the data of a pair $(C/S,\sigma,p_1,\dots,p_n)$ where $(C,p_1\dots,p_n)$ is an $A_r$-stable curve over $S$ and $\sigma$ is an involution of $C$ over $S$ such that $(C_s,\sigma_s)$ is an $A_r$-stable hyperelliptic curve of genus $g$ for every geometric point $s \in S$. These are called \emph{hyperelliptic $n$-pointed $A_r$-stable curves over $S$}. A morphism is a morphism of $n$-pointed $A_r$-stable curves that commutes with the involutions. We clearly have a morphism of fibered categories
$$\eta:\cH_{g,n}^r  \longrightarrow \cM_{g,n}^r$$
over $\kappa$ defined by forgetting the involution. This morphism is known to be a closed embedding for $r\leq 1$. Recall that by convention $\cM_{g,n}^0$ coincides with the moduli stack of $n$-pointed smooth curves of genus $g$.

\begin{theorem}
     The morphism $\eta$ is a closed embedding for every $r$. In fact, we can identify $\cH_{g,n}^r$ with the closure of the locally closed substack $\cH_{g,n}^0 \subset \cM_{g,n}^0 \subset \cM_{g,n}^r$ parametrizing smooth $n$-pointed hyperelliptic curves of genus $g$.
\end{theorem}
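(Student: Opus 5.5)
We need to show that $\eta$ is a closed embedding onto the closure of the smooth hyperelliptic locus $\cH_{g,n}^0$ in $\cM_{g,n}^r$. The plan is to check three properties in turn: $\eta$ is a monomorphism, $\eta$ is proper, and the image of $\eta$ is the closure of $\cH_{g,n}^0$ (with $\cH_{g,n}^r$ reduced, in fact smooth).

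\emph{Step 1: $\eta$ is a monomorphism.} It suffices to show that a hyperelliptic involution of an $A_r$-stable curve, if it exists, is unique. Given two involutions $\sigma,\sigma'$, each quotient $C/\sigma$ is a nodal genus $0$ curve, so on every irreducible component the involution is the one induced by a degree $2$ map to $\bP^1$. I would compare them via the normalization, which has components of positive genus or rational ones.
\begin{itemize}
\item On components of genus $\geq 2$ of the normalization, $\sigma$ restricts to the classical hyperelliptic involution, which is unique.
\item On genus $1$ and rational components it is pinned down by the $2:1$ structure together with the requirement of fixing the special points in the pattern imposed by the crimping data of \Cref{sub:A_r-crimping}: Weierstrass points for even singularities, and swapped pairs or a fixed conjugate pair for odd ones.
\end{itemize}
Alternatively, and more cleanly, I would argue with the canonical/log-canonical linear system: $\sigma$ is the covering involution of the map given by $|\omega_C|$ on each component, which forces $\sigma=\sigma'$. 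The same argument in families shows that $\eta$ is injective on automorphisms, hence unramified and injective on geometric points.

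\emph{Step 2: properness of $\eta$.} I would use the valuative criterion. Take a family $C\to\Spec R$ in $\cM_{g,n}^r$ whose generic fiber carries an involution $\sigma_Q$. Since $C\to\Spec R$ is $A_r$-stable, its automorphism scheme is finite and unramified over $R$ (char $0$). Hence $\Aut$ is proper over $\Spec R$ after a finite extension (it is finite because $\cM_{g,n}^r$ has finite diagonal, being separated-diagonal with finite stabilizers). So $\sigma_Q$ extends to an involution $\sigma_R$. One must then check that the special fiber involution is still hyperelliptic:
\begin{itemize}
\item its fixed locus is finite, since a component fixed pointwise would contradict the generic quotient having degree $2$ everywhere;
\item its quotient $C_k/\sigma_k$ is a flat limit of $\bP^1$'s, hence of genus $0$ and reduced;
\item the quotient is nodal, by checking locally at images of $A$-singularities: the quotient of $y^2=x^{h+1}$ by $y\mapsto -y$ is smooth, and by the swapping involution it is a node or smooth.
\end{itemize}

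\emph{Step 3: identification of the image and reducedness.} Proper monomorphisms are closed immersions, so $\eta$ is a closed embedding. To show that $\cH_{g,n}^r$ is the closure of $\cH_{g,n}^0$, I would prove smoothness of $\cH_{g,n}^r$ and density of the smooth locus. Both follow from viewing hyperelliptic curves as double covers of nodal genus $0$ curves branched along a divisor, with the $A_h$ singularities arising as collisions of $h+1$ branch points; this gives an unobstructed $\mu_2$-equivariant deformation theory. Equivariant deformations of $C$ are deformations of $[C/\sigma]$, and these are unobstructed since $\cM^r$ is smooth and $\mu_2$ is linearly reductive. Every such curve deforms equivariantly to a smooth one by separating branch points.

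The main obstacle I expect is Step 1, namely uniqueness of the involution on highly degenerate curves, such as rational components with few special points joined by odd $A$-singularities, where the log-canonical map argument must be handled with care.
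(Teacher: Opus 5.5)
\textbf{Gap 1: Step 2 rests on a false premise.} You extend $\sigma_Q$ by asserting that the automorphism scheme of an $A_r$-stable family is finite and unramified, i.e.\ that $\cM_{g,n}^r$ has finite diagonal. For $r\geq 2$ this is false: $\cM_{g,n}^r$ is neither Deligne--Mumford nor separated. Already a curve with a nodally attached cuspidal rational tail (an attached $1$-pointed even atom of genus $1$) has $\Gm$ in its automorphism group. Indeed, this paper is built around curves with $\CAut$ equal to $\Gm$, $\Ga$ or $\Gm\ltimes\Ga$ and their isotrivial degenerations. So the automorphism schemes are not finite and the Isom schemes are not proper. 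A hyperelliptic generic fiber therefore has in general many non-isomorphic limits, and nothing formal forces $\sigma_Q$ to extend to a given one. What has to be shown is that \emph{every} limit in $\cM_g^r$ of hyperelliptic curves carries a hyperelliptic involution. That, together with uniqueness of the involution (your Step 1), is the real content of the statement. The paper does not reprove it; it imports it for $n=0$ from \cite[Section~3]{Per2}.

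\textbf{Gap 2: the pointed case.} For $n\geq 1$ you take $\sigma$ to be an involution of the pointed curve $C$ itself. Such a $\sigma$ need not preserve the markings, so pointed stability gives no control over it, and in fact both Step 1 and Step 2 fail. Let $r\geq 1$, let $C'$ be smooth hyperelliptic of genus $g\geq 2$, let $x\in C'$, and let $C=C'\cup_x R$, where $R\simeq\bP^1$ is a rational tail carrying $p_1,p_2$ with the node at $\infty$.
\begin{enumerate}
\item If $x$ is a Weierstrass point, then for every $c$ the hyperelliptic involution of $C'$ together with $z\mapsto -z+c$ on $R$ is a hyperelliptic involution of $C$. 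These are pairwise non-isomorphic over $(C,p_1,p_2)$, because $\Aut(C,p_1,p_2)$ acts trivially on $R$. So $\eta$ is not a monomorphism.
\item If $x$ is not Weierstrass, then $(C,p_1,p_2)$ is a limit of $(C',p_1(t),p_2(t))$ with $p_i(t)\to x$. But it admits no hyperelliptic involution, since any such would restrict to the hyperelliptic involution of $C'$ and would have to fix $x$. So the image of $\eta$ is not closed.
\end{enumerate}
Analogous examples, using a marked rational bridge inserted at a node, exist for $n=1$. Hence the pointed statement is only correct with $\cH_{g,n}^r$ read as $\cH_g^r\times_{\cM_g^r}\cM_{g,n}^r$, that is, with the involution living on the stabilization of the underlying unpointed curve.

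This is exactly the paper's proof. The forgetful morphism $\cM_{g,n}^r\to\cM_g^r$ exists because $\cM_{g,n}^r$ is open in an iterated universal curve \cite[Theorem~2.5]{Per1}, and closed immersions are stable under base change. Since this morphism is flat, hence open, the preimage of $\cH_g^r=\overline{\cH_g^0}$ is the closure of the preimage of $\cH_g^0$, in which smooth pointed hyperelliptic curves are dense. In this route your Step 3 (smoothness and density) is only needed for $n=0$, where it is again part of \cite{Per2}.
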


\begin{proof}
     Indeed, we have that $\cH_{g,n}^r$ coincides with the fiber product $\cH_g^r \times_{\cM_{g}^r}\cM_{g,n}^r$ where the morphism
     $$ \cM_{g,n}^r \longrightarrow \cM_g^r$$
     can be constructed using \cite[Theorem~2.5]{Per1}, which states that \(\cM_{g,n}^r\) is an open substack of the universal \(n-1\)-pointed \(A_r\)-stable curve. Thus, the statement follows from \cite[Section~$3$]{Per2}, which proves that \(\cH_g^r\to \cM_{g}^r\) is a closed immersion.
\end{proof}

To every $n$-pointed hyperelliptic curve, we can associate the geometric quotient $C\rightarrow Z:=C/\sigma$, where $Z$ is a nodal curve of genus $0$.

\begin{definition}
    If $Z$ is irreducible, then we say that $C$ is an $n$-pointed \emph{honestly hyperelliptic} $A_r$-stable curve, namely a cyclic cover of $\bP^1$ of degree $2$.
\end{definition}

The terminology is borrowed from \cite{Cat}, specifically Definition 3.18. The substack of $\cH_{g,n}^r$ classifying honestly hyperelliptic curves, $\cH_{g,n}^{r,\circ}$, is an open substack inside $\cH_{g,n}^r$. The rest of this section is dedicated to giving a description of $\cH_{g,n}^{r,\circ}$ as a quotient stack in some specific cases we will need. The reason is that these cases are the building blocks for the points in $\cM_{g,n}^r$ that have nontrivial positive-dimensional stabilizers, and understanding how they behave with respect to isotrivial degenerations is fundamental for the understanding of the local picture of the moduli stack we are interested in.

We start with the case $n=0$. We have the following identification:
$$ \cH_{g}^{r,\circ} \simeq [U_{r+1}/(\GL_2/\mu_{g+1})]$$
where $U_{r+1}$ is an open subset of the representation $\Sym^{2g+2}E$, where $E$ is the standard representation of $\GL_2$. This directly follows from \cite[Theorem~4.1]{ArVis}. The open $U_{r+1}$ parametrizes homogeneous binary forms $f \in \Sym^{2g+2}E$ of degree $2g+2$ such that they do not have roots of multiplicity strictly greater than $r+1$. We are identifying every honestly hyperelliptic curve with the branching divisor of length $2g+2$ in $\bP^1$. A point of length $h$ in this divisor corresponds to an $A_{h-1}$-singularity in the (honestly) hyperelliptic curve. Notice that $U_{r+1}$ never contains the $0$-section of the representation, as it corresponds to a non-reduced cover of $\bP^1$, and it is equal to $\Sym^{2g+2}E \setminus 0$ if $r=2g+1$. The question we are interested in is the following: for which $r$ does $\cH_{g}^{r,\circ}$ admit a good moduli space?

One attempt to solve this problem uses GIT. The moduli stack $\cH_g^{2g+1,\circ}$ is strongly related to the quotient $[\bP(\Sym^{2g+2}E)/\PGL_2]$ where the action of $\PGL_2$ on $E$ is the standard one: there is a morphism
$$ \cH_g^{2g+1,\circ} \rightarrow [\bP(\Sym^{2g+2}E)/\PGL_2]$$
which is a $\mu_2$-gerbe, whose triviality depends on the parity of $g$. Therefore, it is enough to study the open substacks of the right-hand side of the morphism which admit a separated good moduli space.

If we apply the GIT machinery, more specifically the Hilbert-Mumford criterion, we find that the open locus of semistable points corresponds to $U_{g+1}$, namely the binary forms without roots of multiplicity greater than $g+1$. Although this is classical, the authors learned about it in \cite[Proposition~7.12]{Vicnote}. Moreover, $U_r$ consists of only stable points if $r\leq g$. We claim that the GIT answer is somehow ``maximal''. By abuse of notation, we denote by $U_r$ the projectivization of the open subset of $\Sym^{2g+2}E\setminus 0$ and by $\cU_r$ the induced open substack of $\cU_{2g+2}:=[\bP(\Sym^{2g+2}E)/\PGL_2]$.
\begin{theorem}\label{theo:classify-opens-hyp}
    In the situation above, let $\cU \subset \cU_{2g+2}$ be any open substack. Then $\cU$ admits a separated good moduli space if and only if one of the following holds:
    \begin{itemize}
        \item[(i)] $\cU\subset \cU_g$;
        \item[(ii)] $\cU \subset \cU_{g+1}$ and $\cU_{g+1}\setminus \cU_g \subset \cU$.
    \end{itemize}
\end{theorem}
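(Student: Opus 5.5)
We will prove the two directions separately, working on the smooth stack $\cU_{2g+2}=[\bP(\Sym^{2g+2}E)/\PGL_2]$. By the existence theorem of \cite{ExistenceOfModuli}, an open $\cU$ admits a separated good moduli space if and only if it is $\Theta$- and $\textsf{S}$-complete. The closed points with positive-dimensional stabilizers will be identified first. A binary form has a positive-dimensional stabilizer in $\PGL_2$ if and only if its root divisor is supported on at most two points, i.e. it has the form $x^ay^{2g+2-a}$. The torus $\Gm\subset\PGL_2$ acts on the monomial $x^iy^{2g+2-i}$ with weight $i-(g+1)$ up to normalization.

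\emph{Sufficiency.} We will use the classical GIT description. $\cU_{g+1}$ is the semistable locus, whose good moduli space is $\bP(\Sym^{2g+2}E)/\!/\PGL_2$, and $\cU_g$ is the stable locus, whose good moduli space is the open complement of the strictly polystable point $[x^{g+1}y^{g+1}]$. In case (i), any open $\cU\subset\cU_g$ is a saturated open of a stack with a separated good moduli space (in fact a separated Deligne--Mumford stack with finite stabilizers), so it has a separated coarse, and hence good, moduli space. In case (ii), every $\Gm$-orbit closure meeting $\cU_{g+1}\setminus\cU_g$ contains the polystable point $x^{g+1}y^{g+1}$. Since $\cU\supset\cU_{g+1}\setminus\cU_g$, the open $\cU$ is the preimage of an open subset of the GIT quotient, hence saturated, and has a separated good moduli space.

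\emph{Necessity.} Suppose $\cU$ satisfies neither condition. This gives two cases.

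First, suppose $\cU\not\subset\cU_{g+1}$. Then $\cU$ contains a form with a root of multiplicity $a\ge g+2$. Degenerating it along the cocharacter that contracts toward that root gives the point $x^ay^{2g+2-a}\in\cU$, since $\cU$ is open and contains the generic member. We take $Z$ to be the complement of $\cU$ and apply \Cref{prop:local-Theta-complete} at $x^ay^{2g+2-a}$ if $\cU$ contains it. Otherwise we work with the $\textsf{S}$-completeness criterion directly, using the two opposite cocharacters. The tangent space at $x^ay^{2g+2-a}$ has both positive and negative weight directions. In the positive-weight direction, the attracting forms have a root of multiplicity $\ge a$. In the negative-weight direction, the forms acquire a root of multiplicity $2g+2-a\le g$ near the other point, and these are stable. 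We need to show that $Z$ contains neither $T^{>}$ nor $T^{<}$, or neither $T^{>}$ nor $T^0$. We would then produce a map $\ST_R\to\cU_{2g+2}$ whose punctured part lands in $\cU$ and whose special point does not. Concretely, we will exhibit two non-isomorphic forms in $\cU$ with isotrivial degenerations to the same unstable monomial. This gives two $R$-points of $\cU$ agreeing generically but with different limits, violating separatedness. Such a pair exists because a generic binary form with a root of multiplicity $a$ is in $\cU$ by openness, and it is not isomorphic to its "reflected" partner. The family $f+t\cdot h$ exhibits the non-separatedness.

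Second, suppose $\cU\subset\cU_{g+1}$ but $\cU$ misses some point of $\cU_{g+1}\setminus\cU_g$ while meeting it (otherwise we are in (i)). Two sub-cases arise. If $x^{g+1}y^{g+1}\notin\cU$, then $\cU$ contains a strictly semistable form $f$ with a root of multiplicity exactly $g+1$. This $f$ degenerates to $x^{g+1}y^{g+1}$ along $\Theta$, and we pair it with another such form $f'$ in the opposite weight direction. The pair $(f,f')$ gives $\ST_R\setminus 0\to\cU$ that cannot extend, because its only possible extension sends $0$ to $x^{g+1}y^{g+1}\notin\cU$ (\Cref{prop:affine-lin-GIT}). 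If instead $x^{g+1}y^{g+1}\in\cU$, then openness together with \Cref{lem:Theta-and-BGmR} shows that the positive and negative weight spaces near it all lie in $\cU$. These spaces sweep out all of $\cU_{g+1}\setminus\cU_g$ up to $\PGL_2$, which is a contradiction. Some care is needed here, since the positive-weight cone consists exactly of forms with $x^{g+1}\mid f$, while those near origin in $T^0\oplus T^{>}$ need openness to propagate.

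The main obstacle is the first necessity case. There we must make the non-separatedness precise, via \Cref{prop:local-Theta-complete} applied at the monomial point, identifying $Z_x$ explicitly in terms of root multiplicities and checking that it contains neither $T^{>}$ nor $T^{<}$.
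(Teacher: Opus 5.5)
Your sufficiency argument is fine; the saturation argument for (ii) is a clean alternative to the paper's appeal to \Cref{prop:local-Theta-complete}. The second necessity case also works, once you note that the involution $x\leftrightarrow y$ in the stabilizer of $x^{g+1}y^{g+1}$ exchanges $T^{>}$ and $T^{<}$, which is what supplies your $f'$. The gap is in the first necessity case, i.e.\ in proving $\cU\subset\cU_{g+1}$, which is the heart of the theorem.

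First, the step ``degenerating it \dots{} gives the point $x^ay^{2g+2-a}\in\cU$, since $\cU$ is open'' is backwards. Open substacks are stable under generization, not specialization, so $f\in\cU$ says nothing about whether its limit $f_a:=x^ay^{2g+2-a}$ lies in $\cU$. Similarly, \Cref{prop:local-Theta-complete} only constrains $Z_x$ at points $x$ of the complement, so it cannot be applied at $f_a$ when $f_a\in\cU$. You therefore genuinely face two sub-cases.

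If $f_a\notin\cU$, your idea can be completed. At $f_a$ one has $T^0=0$. The subspace $T^{>}$ (forms keeping the multiplicity-$a$ root) meets $\cU$ near the origin, because $f$ degenerates to $f_a$ in that direction. The subspace $T^{<}$ also meets $\cU$: a generic form with a root of multiplicity exactly $2g+2-a$ generizes $f$, hence lies in $\cU$, and it degenerates to $f_a$ along the opposite cocharacter. So $Z_{f_a}$ contains neither $T^{>}$ nor $T^{<}$, and $\textsf{S}$-completeness fails.

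If $f_a\in\cU$ with $a\neq g+1$, your sketch gives no argument. Two non-isomorphic forms degenerating to the same point contradict nothing; this happens in every GIT quotient. The ``reflected partner'' and the family ``$f+t\cdot h$'' are never specified.

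The missing idea is a \emph{single} form with two distinct isotrivial degenerations to $f_a$. Take $F_\ell=\ell^{2(a-g-1)}(xy)^{2g+2-a}$ with $\ell$ a linear form. Sending its third root to $0$ or to $\infty$ yields $f_a$ both times. Since $F_\ell$ generizes $f_a\in\cU$, it lies in $\cU$ by openness. Its torus-orbit closure therefore gives a finite morphism $[\bP^1/\Gm]\to\cU$; this is the paper's map $f\mapsto f^{2(g+1-h)}(x_0x_1)^h$ with $h=2g+2-a$. A stack affine over one with a good moduli space has a good moduli space, and $[\bP^1/\Gm]$ has none, so $\cU$ has none either.

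You also never exclude the point $\ell^{2g+2}$, i.e.\ the case $a=2g+2$. Its stabilizer $\Gm\ltimes\Ga$ is not linearly reductive, and the point is closed, so it cannot lie in $\cU$. Removing it is also required before invoking \Cref{prop:local-Theta-complete}, which assumes linearly reductive stabilizers on the ambient stack. The paper first passes to $\cU\subset\cU_{2g+1}$. That stack has affine diagonal, so $\Theta$- and $\textsf{S}$-completeness of $\cU$ transfer to the embedding, and only then does the paper run the local analysis at the points $f_h$.
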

\begin{proof}

First, notice that if either $(i)$ or $(ii)$ is satisfied, then the result follows from GIT machinery. In fact, $\cU_{g+1}$ (respectively $\cU_{g}$) coincides with the locus of semistable (respectively stable) points. Because $\cU_{g}$ is a separated DM stack, then $\cU\subset \cU_{g}$ implies that $\cU$ is also a separated DM stack.
Suppose then $\cU_{g+1}\setminus \cU \subset \cU_g$. Then \Cref{prop:local-Theta-complete} gives us that the open embedding $\cU\subset \cU_{g+1}$ is $\textsf{S}$-complete and $\Theta$-complete, because the complement $\cZ:=\cU_{g+1}\setminus \cU$ does not have positive-dimensional stabilizers. Because $\cU_{g+1}$ admits a separated good moduli space, so does $\cU$.

It remains to show that whenever $\cU$ is $\Theta$-complete, $\textsf{S}$-complete, and $\cU \not\subset \cU_g$, then $(ii)$ holds. First, we want to prove that for such an $\cU$, we have $\cU \subset \cU_{g+1}$. First, we notice that $\cU_{2g+2}\setminus \cU_{2g+1}$ consists of one (closed) point with non-reductive stabilizer. This can be seen by considering the morphism
$$\bP(E) \longrightarrow \bP(\Sym^{2g+2}E)$$
given by the association $f(x_0,x_1)\mapsto f^{2g+2}$, where $f$ is a binary linear form in $E$. The morphism is a closed immersion and identifies $\cU_{2g+2}\setminus \cU_{2g+1}$ with the quotient $[\bP^1/\PGL_2] \simeq \cB(\bG_m\ltimes \bG_a)$. Therefore, $\cU\subset \cU_{2g+1}$. Now, the only points that have positive-dimensional stabilizers in $\cU_{2g+1}$ are the binary forms $f_h$ having exactly two (set-theoretic) roots, one of multiplicity $h$ and the other of multiplicity $2g+2-h$. These points are closed in $\cU_{2g+1}$, as there are no further degenerations in $\cU_{2g+1}$. Their stabilizer groups are isomorphic to $\bG_m$, as it is the automorphism group of $\bP^1$ with two distinct points. We claim that $f_h \in \cU  \implies h=g+1$. Without loss of generality, suppose $h<g+1$. We can consider the following morphism
$$ [\bP(E)/\bG_m]\longrightarrow \cU \subset [\bP(\Sym^{2g+2}E)/\PGL_2]$$
defined by the association $f(x_0,x_1) \mapsto f^{2(g+1-h)}(x_0x_1)^h$, where $\bG_m \subset \PGL_2$ is the diagonal torus.  A standard computation shows that the morphism is finite: indeed, it is a $\mu_2$-torsor where $\mu_2$ acts on $E$ by exchanging $x_0$ and $x_1$, followed by a closed embedding. Since the map factors through the open \(\cU \subset [\bP(\Sym^{2g+2}E)/\PGL_2]\), we get a contradiction because $[\bP(E)/\bG_m]$ does not have a good moduli space (see \cite[Lemma~4.14]{Alp}).

Therefore, $f_h \notin \cU$ if $h\neq g+1$. We claim that this implies that the complement $\cZ$ contains $\cZ_h$, namely the locally closed subspace classifying binary forms with one root of multiplicity exactly $h$, when $h>g+1$, i.e., $\cU\subset \cU_{g+1}$.

Because $\cU$ is $\textsf{S}$-complete and $\Theta$-complete, so is the open embedding $\cU\subset \cU_{2g+1}$, since $\cU_{2g+1}$ has affine diagonal. This implies that the complement $\cZ$ of $\cU$ in $\cU_{2g+1}$ satisfies the condition of \Cref{prop:local-Theta-complete}. Let us apply it in the case where $x$ is the geometric point associated to $f_h$. First of all, notice that we know that the space of deformations of (the curve associated to)  $f_h$ can be described as the direct sum
$$ \Def_{f_h}=V_h\oplus V_{2g+2-h}$$
where $V_h$ (respectively $V_{2g+2-h}$) is the space that preserves the root of multiplicity $h$ (respectively $2g+2-h$). Moreover, such decomposition is not only $\bG_m$-equivariant, but sign-preserving, meaning that the irreducible representations inside $V_h$ have positive weights while the ones inside $V_{2g+2-h}$ have negative weights, or vice versa. This follows from the fact that $\bG_m$ acts with weights of opposite signs on the tangent spaces of $0$ and $\infty$. Therefore, \Cref{prop:local-Theta-complete} allows us to conclude that $\cZ$ either contains $V_h$ or $V_{2g+2-h}$, locally around $f_h$. This actually holds globally because of the following observation: every binary form which has a root of multiplicity $h$ admits an isotrivial degeneration to $f_h$; thus, the local structure (considered in the proof of \Cref{prop:local-Theta-complete}) of $\cU_{2g+1}$ at $f_h$ is surjective onto the locally closed $\cZ_h\subset \cZ$. Thus, we have that $\cZ$ contains either $\cZ_h$ or $\cZ_{2g+2-h}$ globally. Finally, we notice that $\cZ_h$ is contained in the closure of $\cZ_{2g+2-h}$ when $h>g+1$. Because $\cU$ is an open, in both cases $\cU$ will not intersect $\cZ_h$ for every $h>g+1$. Therefore, we have that $\cU\subset \cU_{g+1}$.

Suppose now that $f_{g+1} \notin \cU$. Then applying \Cref{prop:local-Theta-complete} locally on $f_{g+1}$ as before, we can prove that $\cZ_{g+1}$ is contained in the complement of $\cU$, thus giving $\cU\subset \cU_{g}$. Finally, it remains to prove that
$$ f_{g+1} \in \cU \iff \cZ_{g+1}\subset \cU.$$
This follows again from the fact that $\cU$ is open and every binary form in $\cZ_{g+1}$ admits an isotrivial degeneration to $f_{g+1}$.
\end{proof}

\begin{remark}
    The previous proof relies heavily on the fact that the positive-dimensional stabilizers, after removing the non-reductive ones, are one-dimensional tori. This allows us to extend the results from the local picture to the global one, giving us a way to classify the opens which admit a separated good moduli space.
\end{remark}

In the same spirit, we can answer the open question in \cite[Remark~3.11]{codogni2021some}. Indeed, using the notation of \cite[Figure~1]{codogni2021some}, we construct two isotrivial degenerations from $C_{nnn}$ to $C_{nt}$ which are not isomorphic, giving a finite map from \([\mathbb{P}^1/\mathbb{G}_m]\).

\begin{proposition}\label{prop:Viviani-answer}
    Using the notation of \emph{loc.cit.}, the stack $\overline{\cM}^{\rm irr}_2 \subset \cM^3_2$ does not admit a good moduli space.
\end{proposition}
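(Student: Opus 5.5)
We will mimic the argument used in the proof of \Cref{theo:classify-opens-hyp} for the points $f_h$ with $h\neq g+1$. There, the existence of a finite morphism $[\bP(E)/\bG_m]\to\cU$ was enough to rule out a good moduli space, because $[\bP^1/\bG_m]$ with the standard weight $(1,-1)$ action has no good moduli space (\cite[Lemma~4.14]{Alp}). A good moduli space, if it existed, would restrict along finite (indeed affine) morphisms. So the plan is to produce a finite, or at least affine, morphism $[\bP^1/\bG_m]\to\overline{\cM}^{\rm irr}_2$ whose two $\bG_m$-fixed points both map to $C_{nt}$ and whose open orbit maps to $C_{nnn}$. In the notation of \cite[Figure~1]{codogni2021some}, $C_{nnn}$ is the genus $2$ curve with three nodes obtained from two rational components glued along three nodes. $C_{nt}$ is the curve with a node and a tacnode, which has a $\bG_m$ in its automorphism group.

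\textbf{Step 1: two isotrivial degenerations.} We first construct two isotrivial degenerations $\Theta\to\overline{\cM}^{\rm irr}_2$ with generic fibre $C_{nnn}$ and special fibre $C_{nt}$. We use the $\bG_m$-equivariant deformation theory of $C_{nt}$, as in \Cref{lem:Theta-and-BGmR} and \Cref{lem:lifting-complete}. Since $\cM^3_2$ is smooth, the tangent space $T_{C_{nt}}\cM^3_2$ is a $\bG_m$-representation. Its deformations split into a node part (weight $0$) and a tacnode part. By the analysis of odd crimping in \Cref{lem:hyp-crimping-datum-odd}, the tacnode part contains the versal deformation $y^2=x^4+a_2x^2+a_1x+a_0$, which has weights of a fixed sign, together with crimping directions of the opposite sign. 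Deforming the tacnode to two nodes ($y^2=(x^2-a)^2$) gives a one-parameter direction of nonzero weight, say positive. The other direction uses the opposite sign: it comes from the presentation of $C_{nt}$ as a degeneration of $C_{nnn}$ in which the two rational branches through the tacnode are interchanged. Concretely, we expect to realize the two degenerations as the limits $t\to0$ and $t\to\infty$ of a single $\bG_m$-orbit $\bG_m\cdot C_{nnn}\subset$ some chart. Equivalently, we expect a $\bG_m$-equivariant family over $\bP^1$ whose fibres over $0$ and $\infty$ are both $C_{nt}$, and whose fibres elsewhere are isomorphic to $C_{nnn}$. We would write this explicitly: glue two copies of $\bP^1$ at $0\sim0$ by a node and at $\infty\sim\infty$ by a node, then glue $1\sim\lambda$ by a third node. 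As $\lambda\to0$ or $\lambda\to\infty$, the third node collides with one of the first two, and after stabilization we get a tacnode plus a node. The $\bG_m$ acting by $z\mapsto\mu z$ on one component relates the family to itself.

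\textbf{Step 2: the map is not degenerate.} We show that the two limits are genuinely different points of $\operatorname{Filt}$, i.e.\ the two degenerations are not isomorphic. Equivalently, the induced map $[\bP^1/\bG_m]\to\overline{\cM}^{\rm irr}_2$ is quasi-finite on stabilizers and hence finite, using affine diagonal and properness of $\bP^1$. For this it suffices to check that the cocharacters at the two fixed points have opposite signs on the relevant tangent line, via the sign-splitting $V^{>}\oplus V^{<}$ as in the proof of \Cref{theo:classify-opens-hyp}. We also need the total map to land in $\overline{\cM}^{\rm irr}_2$. This is clear because it is a closed substack containing $C_{nnn}$ and $C_{nt}$.

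\textbf{Conclusion and main obstacle.} Given the finite map, we argue as follows. If $\overline{\cM}^{\rm irr}_2$ had a good moduli space, then so would $[\bP^1/\bG_m]$, since good moduli spaces pull back along affine maps after Stein factorization. That is a contradiction. The main obstacle is Step 1. We must make the family over $\bP^1$ explicit and verify that the stabilization really yields $C_{nt}$ at both ends, rather than, say, the same limit twice up to automorphism. We would check this first by explicit equations near $\lambda=0$ and $\lambda=\infty$, using the $\bZ/2$ swapping the two components to see the asymmetry.
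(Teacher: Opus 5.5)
\paragraph{Overall.} Your overall strategy is the paper's: an affine morphism $[\bP^1/\bG_m]\to\overline{\cM}^{\rm irr}_2$ sending the open orbit to $C_{nnn}$ and both fixed points to $C_{nt}$, followed by inheritance of good moduli spaces along affine maps \cite{Alp}. However, Step~1 as written does not produce such a map, and the decisive property of the map is never established.

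\paragraph{Step 1 and the sign analysis.} Suppose you glue $1_A\sim\lambda_B$ and let $\bG_m$ scale only $B$. As $\lambda\to 0$, only $0_B$ and $\lambda_B$ collide, while $0_A$ and $1_A$ stay apart. So the naive limit identifies three distinct branch points $0_A,1_A,0_B$. That is a triple point, not an $A$-singularity, and no stabilization turns it into a tacnode.

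By \Cref{rem:crim-mor-odd}, a tacnode arises only when two nodes collide on \emph{both} components at once, with their first-order neighbourhoods identified. So you must glue $A$ and $B$ along the \emph{same} moving divisor $\{0,\mu,\infty\}$, with $\bG_m$ acting diagonally. This is a Ferrand pushout which at $\mu=0$ crimps $k[z]/(z^2)$ by the identity. The resulting family is the double cover $y^2=(x_0x_1f)^2$, which is precisely the paper's $f\mapsto (x_0x_1)^2f^2$.

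The deformation-theoretic picture is also off. For $z\mapsto sz$ on $C_{nt}$ (tacnode at $0$, node at $\infty$), the versal tacnode parameters have weights $2,3,4$ and the node smoothing has weight $-2$, not $0$ (up to a global sign). The negative side therefore produces irreducible rational tacnodal curves, never $C_{nnn}$.

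In the corrected family, the symmetry $z\mapsto 1/z$, $\mu\mapsto 1/\mu$ swaps the two fixed points and inverts $\bG_m$. So both ends come from $T^{>}$ with the same cocharacter. As maps $\Theta\to\overline{\cM}^{\rm irr}_2$ they are isomorphic. They differ only as degenerations of a fixed copy of $C_{nnn}$, namely in which two of its three nodes merge. Hence your ``opposite signs'' test in Step~2 cannot succeed.

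\paragraph{The main gap: affineness.} Your inference ``quasi-finite on stabilizers, hence finite, using affine diagonal and properness of $\bP^1$'' is not valid. The source $[\bP^1/\bG_m]$ is not separated, and representable quasi-finite maps do not transport good moduli spaces. For example, $[\bP^1/\bG_m]\cong[(\bA^2\setminus 0)/\bG_m^2]$, with weights $(1,0)$ and $(1,1)$, is an open substack of $[\bA^2/\bG_m^2]$, whose good moduli space is $\Spec k$.

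What you need is that the map is \emph{affine}. The paper gets this from the binary-form model, composed with the $\mu_2$-gerbe $\overline{\cM}^{\rm irr}_2\to\cU_4$. Concretely, after base change to the atlas $U_4$, the map $[\bP(E)/\bG_m]\to\cU_4$ becomes
\[
\{(\ell_1,\ell_2,\ell_3)\in(\bP^1)^3:\ell_1\neq\ell_2\}\to U_4,\qquad (\ell_1,\ell_2,\ell_3)\mapsto(\ell_1\ell_2\ell_3)^2 .
\]
This is a scheme finite over $U_4$ with the Cartier divisor $\{\ell_1=\ell_2\}$ removed, hence affine, which is all the argument needs.

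It is not proper: over $\ell^4m^2$ the point $(\ell,\ell,m)$ of the finite cover is missing. So ``finite'' is too strong, both in your Step~2 and as literally stated in the paper.

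Without this model, or some other proof of affineness, your proposal does not reach the contradiction. Once Step~1 is corrected, building the curve family directly does have the small advantage of landing in $\overline{\cM}^{\rm irr}_2$ itself rather than in $\cU_4$.
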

\begin{proof}
     We can argue in the same way as in \Cref{theo:classify-opens-hyp}. First, notice that we have a finite $\mu_2$-gerbe $\varphi$:
     \[
     \overline{\cM}^{\rm irr}_2 = \cH_{2}^{3,\circ} \simeq [U_4/(\GL_2/\mu_3)] \xrightarrow{\varphi} \cU_4 \subset [\bP(\Sym^{6}E)/\PGL_2].
     \]
    We can therefore consider the finite map constructed in the proof of \Cref{theo:classify-opens-hyp}:
    \begin{align*}
        \phi \colon [\bP(E)/\Gm] &\to \cU_4 \subset [\bP(\Sym^6 E)/\PGL_2] \\
        f &\mapsto (x_0x_1)^2 f^2
    \end{align*}
    where $E$ is the dual of the standard representation of $\GL_2$.
    Therefore, $\overline{\cM}^{\rm irr}_2$ does not have a good moduli space because it has a finite map from a stack that does not admit a good moduli space.
\end{proof}

 We move to the case $n=1$. We introduce the following stratification of $\cH_{g,1}^{r,\circ}$: denote by $\cH_{g,w}^{r,\circ} \subset \cH_{g,1}^{r,\circ}$ the closed substack parametrizing pairs $(C,p,\sigma)$ such that $p$ is a (smooth) Weierstrass point, i.e., a point fixed by the involution. We denote by $\cH_{g,g_1^2}^{r,\circ}$ its open complement. Notice that $\cH_{g,g_1^2}^{r,\circ}$ can be identified with the moduli stack parametrizing $A_r$-stable honestly hyperelliptic curves of genus $g$ with two (smooth and distinct) markings which are exchanged by the involution. We end the section with the description of these substacks. This is Proposition 3.9 of \cite{AlpSmyVdW}.

\begin{proposition}\label{prop:hyp-global-descr}
    If $r\geq 2g$, the moduli stack $\cH_{g,w}^{r,\circ}$ is isomorphic to the quotient $[\bA^{2g-1}/\bG_m]$, where $\bG_m$ acts on $\bA^{2g-1}$ with positive weights. Moreover, if $r\geq 2g+1$, the moduli stack $\cH_{g,g_1^2}^{r,\circ}$ is isomorphic to the quotient $[\bA^{2g+1}/\bG_m]$, where $\bG_m$ acts on $\bA^{2g+1}$ with positive weights.
\end{proposition}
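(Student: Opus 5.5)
The plan is to realize both stacks as stacks of weighted plane models $y^2=f(x)$ with the marking at infinity and the global coordinate normalized, leaving only a residual $\bG_m$ of rescalings. For $\cH_{g,w}^{r,\circ}$, an honestly hyperelliptic curve $(C,p,\sigma)$ with $p$ a smooth Weierstrass point has quotient $Z=C/\sigma\simeq\bP^1$. Let $\infty\in Z$ be the image of $p$; since $p$ is smooth and fixed, $\infty$ is a simple branch point. The branch divisor then has degree $2g+2$ with $\infty$ appearing with multiplicity one. The affine part $C\setminus\{p\}$ is therefore $y^2=f(x)$ with $f$ monic of degree $2g+1$. Let $\bG_a\subset\Aut(\bP^1,\infty)$ act by translations; using it, I will kill the coefficient of $x^{2g}$ (Tschirnhaus normalization, fine in characteristic $0$). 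This leaves
\[
f(x)=x^{2g+1}+a_2x^{2g-1}+\dots+a_{2g+1},
\]
so $2g$ coefficients, which is the wrong count. So I will set up the family carefully: the relevant data are the hyperelliptic curve together with the involution and the marking, and a $\mu_2$ acting on $y$ is absorbed into the $\bG_m$ acting by $(x,y)\mapsto(\lambda^2x,\lambda^{2g+1}y)$. The $a_i$ then have weights $2i>0$.

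I first need to pin down the count, and I expect the stated dimension $2g-1$ to come from the marking being at $\infty$ together with normalization. Concretely, the equation $y^2=x^{2g+1}+a_2x^{2g-1}+\dots$ has $2g$ free coefficients, and the automorphism group also contains the hyperelliptic involution $y\mapsto -y$. This is $\lambda=-1$, so the stack is $[\bA^{2g}/\bG_m]$, and I will double-check against the dimension of $\cH_{g,1}^r$, which is $2g$. Since $\cH_{g,w}$ has codimension one in $\cH_{g,1}$, this gives $2g-1$. So, to get the paper's count, I would compute $\dim \cH_{g,w}=2g-1-1+\dots$ by comparing carefully to the case $[\bA^{2g-1}/\bG_m]$, and adjust the normalization if an extra parameter must be fixed.

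The key steps, in order, are the following.
\begin{enumerate}
\item Construct the family over $\bA^N$ (weighted projective compactification in $\bP(1,g+1,1)$), check that all fibers have only $A$-singularities, and check that there are no restrictions when $r\ge 2g$, since a root of multiplicity at most $2g+1$ gives $A_{\le 2g}$.
\item Prove the induced map $[\bA^N/\bG_m]\to\cH_{g,w}^{r,\circ}$ is an isomorphism. I will do this by showing it is an equivalence on groupoids over any base, locally constructing $x,y$ from $\pi_*\cO(kp)$ and using that normalized forms are unique up to $\bG_m$.
\end{enumerate}
The $g_1^2$ case is similar. The two conjugate points are the preimage of a non-branch point $\infty$, so $f$ has degree $2g+2$ with leading coefficient fixed to $1$ by scaling and with the $x^{2g+1}$ term removed. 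One also needs the choice of which root of $y$ over $\infty$ is $p_1$, which is fixed by $y\sim x^{g+1}$ at $p_1$. The condition $r\ge 2g+1$ allows the root of multiplicity $2g+2$.

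The main obstacle is the family version of step 2: showing the map is representable and fully faithful in families. This requires the relative construction of the coordinate $x$ via sections of $\cO(2p)$, resp. $\cO(p_1+p_2)$, trivializing over the base after accounting for the $\bG_m$-torsor. The weight positivity is immediate from the normalization.
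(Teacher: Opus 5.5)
Your normal-form strategy (marking at $\infty$, monic $f$, a Tschirnhaus translation to kill the subleading coefficient, and a residual $\bG_m$ of rescalings) is the standard one. The paper gives no argument of its own and simply refers to Proposition~3.9 of Alper--Smyth--van der Wyck. For the conjugate-pair stack your set-up is right. The normal form $y^2=x^{2g+2}+a_2x^{2g}+\dots+a_{2g+2}$, with the $\mu_2$ removed by requiring $y/x^{g+1}\to 1$ at $p_1$, leaves $2g+1$ coefficients. Under $(x,y)\mapsto(\lambda x,\lambda^{g+1}y)$ they have weights $2,3,\dots,2g+2$. The hypothesis $r\geq 2g+1$ is exactly what admits $f=x^{2g+2}$, the $2$-pointed odd atom.

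The problem is in the Weierstrass case. It lies not in your computation but in what you do with it afterwards.

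\begin{itemize}
\item Your normal form $y^2=x^{2g+1}+a_2x^{2g-1}+\dots+a_{2g+1}$ has $2g$ free coefficients. Under $(x,y)\mapsto(\lambda^2x,\lambda^{2g+1}y)$ they have weights $4,6,\dots,4g+2$.
\item The element $\lambda=-1$ acts as the hyperelliptic involution, hence trivially on $\bA^{2g}$. This is as it must be, since $\sigma$ fixes $p$.
\item Your own dimension check confirms the count: $\dim[\bA^{2g}/\bG_m]=2g-1=\dim\cH_{g,w}$.
\end{itemize}

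You then conflate the rank of the affine space with the dimension of the quotient stack, and propose to fix an extra parameter to land on $\bA^{2g-1}$. That step cannot be carried out. Once the marking, the leading coefficient and the $x^{2g}$-coefficient are normalized, the only residual symmetry is this $\bG_m$. Any further condition on the $a_i$ would miss curves, so essential surjectivity would fail. Moreover, $[\bA^{2g-1}/\bG_m]$ has dimension $2g-2$, which is too small.

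Two checks confirm this.
\begin{itemize}
\item For $g=1$ you get Weierstrass cubics $y^2=x^3+ax+b$, which form $[\bA^2/\bG_m]$ with weights $4,6$, not $[\bA^1/\bG_m]$.
\item In general, the tangent space at the origin of $[\bA^N/\bG_m]$ is $N$-dimensional. \Cref{prop:def-hyp-rat-sing} identifies this tangent space with $T^1$ of an $A_{2g}$-singularity, which has dimension $2g$.
\end{itemize}

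So the exponent $2g-1$ in the statement is an off-by-one slip; the paper repeats it in \Cref{cor:cyc-cover-with-pos-dim} and \Cref{lem:iso-deg-hyp-tails}. What is true, and what your argument proves, is $\cH^{r,\circ}_{g,w}\simeq[\bA^{2g}/\bG_m]$ with weights $4,6,\dots,4g+2$, for $r\geq 2g$ since a root of multiplicity $2g+1$ gives an $A_{2g}$-singularity. You should conclude that, rather than try to bend the normalization.

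The family-level step is routine, and your sketch of it is adequate. It consists of building $x$ and $y$ relatively from $\pi_*\cO(2p)$ and $\pi_*\cO((2g+1)p)$, respectively from $\pi_*\cO(p_1+p_2)$ and its multiples, and checking the equivalence on groupoids over an arbitrary base.
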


\begin{corollary}
    If $r\geq 2g$, then the morphism $\cH_{g,w}^{r,\circ} \rightarrow \spec \kappa $ is a good moduli space morphism. Moreover, if $r\geq 2g+1$, the morphism $\cH_{g,g_1^2}^{r,\circ} \to \spec \kappa$ is a good moduli space morphism.
\end{corollary}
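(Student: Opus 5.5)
The plan is to reduce the corollary to the quotient descriptions in \Cref{prop:hyp-global-descr}. That proposition gives isomorphisms $\cH_{g,w}^{r,\circ}\simeq[\bA^{2g-1}/\bG_m]$ for $r\geq 2g$ and $\cH_{g,g_1^2}^{r,\circ}\simeq[\bA^{2g+1}/\bG_m]$ for $r\geq 2g+1$, where in both cases $\bG_m$ acts linearly with strictly positive weights. Since being a good moduli space is intrinsic to the stack, it suffices to show that for any linear $\bG_m$-action with strictly positive weights on $V=\bA^N$, the structure map $[V/\bG_m]\to\Spec\kappa$ is a good moduli space.

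For this we use the standard fact recalled at the start of \Cref{sub:gms}: for $G$ linearly reductive acting on an affine scheme $\Spec A$, the morphism $[\Spec A/G]\to\Spec A^G$ is a good moduli space. Here $G=\bG_m$ is linearly reductive over our characteristic zero field, so we obtain a good moduli space $[V/\bG_m]\to\Spec \kappa[V]^{\bG_m}$. The work is then to compute the invariant ring. Write $\kappa[V]=\kappa[x_1,\dots,x_N]$ with $\bG_m$ acting on $x_i$ with weight $-w_i$ (the dual weights), where all $w_i>0$. A monomial $x^{a}$ is then homogeneous of weight $-\sum a_iw_i$, and since each $w_i$ is positive this vanishes only when $a=0$. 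Because $\bG_m$-invariants of a polynomial ring are spanned by the invariant monomials, this gives $\kappa[V]^{\bG_m}=\kappa$, and hence the good moduli space is $\Spec\kappa$.

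Conceptually this matches the geometry: every orbit has the origin in its closure, so the origin is the unique closed point. Both statements of the corollary follow by applying this argument with $N=2g-1$ and $N=2g+1$ respectively. The only point needing care is the sign convention, namely that ``positive weights'' means all weights share the same nonzero sign. This is the case here, and it is all that the argument uses, so the sign convention does not actually cause a problem.
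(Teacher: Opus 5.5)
Your proposal is correct and is essentially the paper's argument. The paper states this corollary without proof as an immediate consequence of \Cref{prop:hyp-global-descr}; the only content is your observation that a $\bG_m$-representation with strictly positive weights has invariant ring $\kappa$, so $[V/\bG_m]\to\Spec\kappa$ is the good moduli space.
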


\begin{remark}\label{rem:1-pointed-hyp-no-gms}
    The stack $\cH_{g,1}^{2g+1,\circ}$ does not admit a separated good moduli space. Namely, it fails to be $\Theta$-complete. Indeed, we have the stratification $\cH_{g,w}^{2g+1,\circ}\subset \cH_{g,1}^{2g+1,\circ}$ where the open stratum $\cH_{g,g_1^2}^{2g+1,\circ}$ has a unique closed point, and every other point of the open stratum admits an isotrivial degeneration to the closed point. The same is true for the closed stratum. Because the closed stratum is contained inside the closure of the open stratum, $\Theta$-completeness would imply that the closed point of the closed stratum should be contained inside the closure of the closed point of the open stratum. In the second paper of the series, we will show that this is not possible, which yields a contradiction.
\end{remark}

We conclude with the following corollary, which explains why we are interested mainly in the cases above.
\begin{corollary}\label{cor:cyc-cover-with-pos-dim}
    Let $(C,p_1,\dots,p_n,\sigma) \in \cH_{g,n}^{r,\circ}$ and suppose $n\geq 1$ and $g\geq 1$. If $\Aut(C,p_1,\dots,p_n)$ has positive dimension, then either
    \begin{itemize}
        \item[(1)] $n=2$, $r\geq 2g+1$, the two points are exchanged by the involution, and $C$ is the curve corresponding to the $0$-section of $[\bA^{2g}/\bG_m] \simeq \cH_{g,g_1^2}^{r,\circ}$;

        \item[(2)] $n=1$, $r \geq 2g+1$, $p_1$ is not a Weierstrass point, and $C$ is the curve corresponding to the $0$-section of $[\bA^{2g}/\bG_m] \simeq \cH_{g,g_1^2}^{r,\circ}$, where we are identifying $\cH_{g,g_1^2}^{r,\circ}$ with the open subset of $\cH_{g,1}^{r,\circ}$ where the marking is not Weierstrass;
        \item[(3)]  $n=1$, $r \geq 2g$, $p_1$ is a Weierstrass point, and $C$ is the curve corresponding to the $0$-section of $[\bA^{2g-1}/\bG_m] \simeq \cH_{g,w}^{r,\circ}$;
    \end{itemize}
\end{corollary}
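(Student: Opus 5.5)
The plan is to reduce everything to the geometry of the branch divisor on $Z=C/\sigma\simeq\bP^1$. First I would show that the identity component $\Aut^{\circ}(C,p_1,\dots,p_n)$ commutes with $\sigma$. Since $\sigma$ is canonical (unique hyperelliptic involution for honestly hyperelliptic curves of genus $g\geq 1$; alternatively, conjugation by a connected group preserves the finite set of involutions with genus-$0$ quotient), the connected group centralizes it. Hence every element of $\Aut^{\circ}$ descends to an automorphism of $Z\simeq\bP^1$ preserving the branch divisor $D$ of degree $2g+2$ and the images of the markings. Conversely, the kernel of $\Aut(C)\to\Aut(Z)$ is $\langle\sigma\rangle$, which is finite. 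So $\Aut^{\circ}(C,p_i)$ is positive-dimensional if and only if the stabilizer in $\PGL_2$ of $(D,\pi(p_1),\dots,\pi(p_n))$ is positive-dimensional.

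Next I would classify the subgroups of $\PGL_2$ of positive dimension together with the configurations they preserve. A positive-dimensional subgroup contains either a torus, with exactly two fixed points $0,\infty$, or a $\Ga$, with one fixed point $\infty$. A finite invariant set must be contained in the fixed locus, so $\operatorname{supp}D\cup\{\pi(p_i)\}\subset\{0,\infty\}$. Since $\pi(p_i)$ is a smooth image point, two cases arise. If $\pi(p_i)\notin\operatorname{supp}D$, then the $p_i$ lie over a non-branch point. If $\pi(p_i)\in\operatorname{supp}D$, then it is a branch point of multiplicity $1$, so $p_i$ is a smooth Weierstrass point.

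Then I would do the case analysis. Since $\deg D=2g+2\geq 4$, the divisor $D$ cannot consist of a few simple points alone. The unipotent case forces $D=(2g+2)\infty$, which gives a non-reduced cover and is excluded. So there is a torus, and $D=a\cdot 0+b\cdot\infty$ with $a+b=2g+2$.

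\begin{itemize}
\item Suppose a marking lies over a non-branch point. Then, say, $D=(2g+2)\cdot 0$ has one root of multiplicity $2g+2$, giving an $A_{2g+1}$-singularity, so $r\geq 2g+1$. The markings lie over $\infty$, and the fiber there consists of two points swapped by $\sigma$. This gives $n=1$ (one of them, non-Weierstrass) or $n=2$ (both, exchanged).
\item Suppose a marking is Weierstrass. Then $D=0+(2g+1)\infty$ with $p$ over $0$, giving $n=1$ and an $A_{2g}$-singularity, so $r\geq 2g$.
\end{itemize}

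Having both kinds of markings is impossible, because it would need three fixed points. I also need to rule out markings over the branch points of high multiplicity: those points are singular, while markings must be smooth.

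Finally, I would identify these curves with the $0$-sections of the quotients in \Cref{prop:hyp-global-descr}. There $\Gm$ acts with positive weights, and the unique $\Gm$-fixed point is the one whose branch divisor is as concentrated as possible. The curves found above are $\Gm$-fixed, so they are that point. The main obstacle I expect is the first step: justifying that $\Aut^{\circ}$ commutes with $\sigma$ in the singular setting. I would argue that the set of hyperelliptic involutions is discrete, since $\cH\to\cM$ is a closed embedding and in particular unramified, so the connected group must fix $\sigma$ under conjugation.
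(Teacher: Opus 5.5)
Your strategy is sound in outline and more explicit than the paper's, but two steps are wrong as justified.

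\textbf{Comparison with the paper.} The paper uses fixed points on $\bP^1$ only to force $n\le 2$ and to dispose of two non-exchanged markings. It then reads the answer off \Cref{prop:hyp-global-descr}: $\Gm$ acts with positive weights, so the origin is the only point with positive-dimensional stabilizer. The bounds on $r$ stay implicit there: for smaller $r$ the origin lies outside the open substack. You instead classify the configurations $(D,\pi(p_i))$ invariant under a $\Gm$ or a $\Ga\subset\PGL_2$. This identifies the curves concretely: the odd atom with markings over the unbranched fixed point, and the even atom marked at its smooth Weierstrass point (cf.\ \Cref{lem:odd-atom,lem:even-atom}). It also derives $r\ge 2g+1$ resp.\ $r\ge 2g$ from the singularity type, and needs \Cref{prop:hyp-global-descr} only to name the curve as the $0$-section.

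\textbf{The unipotent case.} $D=(2g+2)\infty$ is not a non-reduced cover. The equation $y^2=\ell^{2g+2}=(y-\ell^{g+1})(y+\ell^{g+1})$ describes two copies of $\bP^1$ meeting in an $A_{2g+1}$-point. That is the unpointed odd atom, precisely the point of $\cU_{2g+2}\setminus\cU_{2g+1}$ with stabilizer $\Gm\ltimes\Ga$; only $f=0$ is non-reduced. The unipotent case is excluded for a different reason, namely $n\ge 1$. A marking would have to lie over the unique $\Ga$-fixed point, which is a branch point of multiplicity $2g+2\ge 4$ and hence singular. This is exactly your own closing remark, so the fix is already in your text.

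\textbf{Your first step at $g=1$.} The step fails here. A genus-one curve carries a one-parameter family of involutions with rational quotient ($u\mapsto \beta-u$ on the group of smooth points). So $\cH_{1,n}\to\cM_{1,n}$ is not a monomorphism, and the closed-embedding theorem is really a $g\ge 2$ statement (its proof passes through $\cH_g^r\to\cM_g^r$).

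Read literally, the corollary is even false for $g=1$. Take the cuspidal cubic, with the cusp at $u=\infty$ on the normalization, $p$ at $u=0$, and $\sigma(u)=\beta-u$ with $\beta\neq 0$. Then $(C,p,\sigma)\in\cH^{2,\circ}_{1,1}$ and $p$ is not Weierstrass. Yet $u\mapsto\lambda u$ descends to the cusp and gives $\Gm\subset\Aut(C,p)$.

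So $\Aut$ has to be read as the stabilizer of $(C,p_i,\sigma)$ in $\cH^{r,\circ}_{g,n}$. This is what the paper's appeal to \Cref{prop:hyp-global-descr} computes. With that reading your first step is unnecessary, since automorphisms commuting with $\sigma$ descend to $Z$ by definition. For $g\ge 2$ your closed-embedding argument is fine, and it even shows that all of $\Aut(C)$ commutes with $\sigma$.
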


\begin{proof}
    Notice that if $n\geq 3$, then a $2:1$-cover of $\bP^1$ with at least $3$ fixed points has finite automorphism group. Indeed, they will correspond to at least two points on $\bP^1$, which means the automorphism group is a subgroup of $\bG_m$. Moreover, since the automorphism would also have to fix the ramification of the $2:1$ cover, it will clearly be finite. Thus, we can assume $n\leq 2$.

    If $n=2$, and the two points are not exchanged by the hyperelliptic involution, we are in the same situation as above. Thus, we end up in case $(1)$ thanks to \Cref{prop:hyp-global-descr}.

    If $n=1$, we can use the stratification of $\cH_{g,1}^{r,\circ}$ by the closed substack $\cH_{g,w}^{r,\circ}$ and its open complement $\cH_{g,g_1^2}^{r,\circ}$. Then again, we can use \Cref{prop:hyp-global-descr} to conclude.
\end{proof}

\begin{remark}
    Cases (1) and (3) of \Cref{cor:cyc-cover-with-pos-dim} are the building blocks for everything that will appear from now on. We will refer to these curves as \emph{atoms}, c.f. \Cref{def:even-atoms,def:odd-atoms}.
\end{remark}
\section{Families of curves over \texorpdfstring{$[\bA^1/\Gm]$}{[A1/Gm]}}\label{sec:pos-dim-stab}

In this section, we describe all possible $n$-pointed $A_r$-stable curves with positive-dimensional stabilizers, extending the classification carried out in \cite{hassett2013log} and \cite{Viviani} for $r \geq 3$. We start by understanding the \emph{atomic} cases, treating the even and odd cases separately. These cases are explained in \Cref{sub:atoms}. Subsequently, in \Cref{sub:geometric-points}, we characterize the subcurves that contribute non-trivially to the dimension of the automorphism group of the whole curve, and we describe the connected component of the identity of the automorphism group of $A_r$-stable curves. Furthermore, we study how the automorphism group acts on the deformation space of a curve, specifically on the deformation space of $A$-singularities and crimping spaces, which is essential for understanding isotrivial degenerations. This is done in \Cref{sub:def-theory}. Finally, we prove that the local description of the isotrivial degenerations given by deformation theory can be globalized. This is explained in detail in \Cref{sub:describing-attractors}.

\begin{definition}\phantomsection\label{def:hyp-even-sing}
    An $A_m$-singularity $p$ of an $n$-pointed $A_r$-stable curve $(C,p_1,\dots,p_n)$ with $m\geq 2$ is called \emph{hyperelliptic} if there exists a cocharacter $\Gm \rightarrow \Aut^{\circ}(C,p_1,\dots,p_n)$ such that $\Gm$ acts non-trivially on $T_pC$.
\end{definition}

The reason we call these singularities \emph{hyperelliptic} is explained in \Cref{lem:even-atom} and \Cref{lem:odd-atom}.

Before starting our analysis of isotrivial degenerations of $A_r$-stable curves, we encourage the reader to read \Cref{sub:equivariant-normalization}, where we discuss when a cocharacter of the automorphism group of a curve is preserved when creating an $A$-singularity. Moreover, take the time to recall the ``propagation of the action'' phenomenon: if $\Gamma$ is an irreducible component with a nontrivial $\Gm$-action induced by a cocharacter of the automorphism group of the whole curve $C$, then any other component $\Gamma'$ intersecting $\Gamma$ in a worse-than-nodal singularity will have a nontrivial $\Gm$-action (induced by the same cocharacter). This will turn out to be fundamental in understanding which curves have positive-dimensional stabilizers.

\subsection{Atoms}\label{sub:atoms}

We start by finding examples of curves with positive-dimensional stabilizers. These curves are crucial to our analysis of the valuative criterion for the existence of a separated good moduli space, since every non-degenerate isotrivial degeneration of curves has a special fiber with positive-dimensional stabilizers.

In this subsection, we assume that every curve is defined over an algebraically closed base field. We study the even case first.

\begin{remark}\label{rem:even-atoms}
    Consider the $1$-pointed curve $(\bP^1,\infty)$ and suppose we want to crimp $\infty$ into an $A_{2h}$-singularity, with $h\geq 1$. By \Cref{cor:diff-crimping-same-curve-even}, we know that the moduli stack parametrizing curves $(C,p)$ whose (pointed) partial normalization at $p$ is $(\bP^1,\infty)$ can be described as $[V_{h,2}/\Gm\ltimes \Ga]$, where the $\Gm \ltimes \Ga$-action can be described explicitly using the morphism $\Aut(\bP^1,\infty) \rightarrow G_{2h}$. A straightforward computation shows that $\Ga$ acts freely on $V_{h,2}$, thus the moduli stack is isomorphic to the quotient $[(V_{h,2}/\Ga) / \Gm]$, where $V_{h,2}/\Ga$ is a $\Gm$-representation with strictly positive weights (see for instance the proof of \Cref{lem:hyp-crimping-datum-even}). Therefore, there exists a unique curve obtained by crimping $\bP^1$ at $\infty$ whose $A_{2h}$-singularity is hyperelliptic. It is easy to see that the curve $C$ obtained is $A_r$-stable if and only if $h\geq 2$ (see \Cref{rem:genus-count}).

    The same construction can be done starting with $(\bP^1,0,\infty)$, crimping at $\infty$ and remembering the marking $0$: it will give rise to a $1$-pointed $A_r$-stable curve (for $h\geq 1$). The automorphism group of both these curves is a one-dimensional torus.
\end{remark}

\begin{definition}[Even atom]
    \label{def:even-atoms}\index{even atom}
    The curve constructed in \Cref{rem:even-atoms} will be called the \emph{even atom} of genus $h$. If it contains a marking, we will call it the $1$-pointed even atom.
\end{definition}

\begin{figure}[H]
        \centering
        \includegraphics[width=0.4\textwidth]{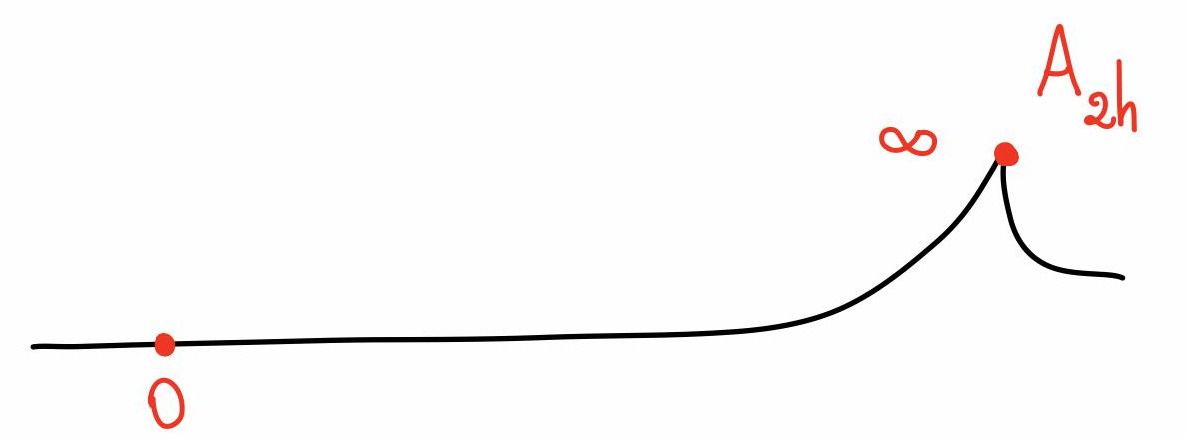}
        \caption{Even atom}
        \label{fig:even-atom}
\end{figure}

\begin{remark}
    The term \emph{atom} was first used in \cite{AlpFedSmyWyck}. These are the two building blocks (even and odd) of the $A_r$-stable curves with positive-dimensional stabilizers. As pointed out in \Cref{rem:motivation-hyp-sing}, it is clear that if an \(A_r\)-stable curve $C$ has positive-dimensional automorphisms, it must contain some components of geometric genus $0$ with fewer than $3$ special points, in contrast to what is required for nodal stable curves. This is why we will study how to construct $A$-prestable curves using projective lines, as they are the only possible components that contribute to the positive-dimensional automorphisms.
\end{remark}

We now explain why we used the term \emph{hyperelliptic} for the singularities. Namely, the even atom of genus $g$ is actually the only $A_r$-stable curve with an $A_{2g}$-singularity that is hyperelliptic, and it is in fact honestly hyperelliptic. Before showing this, let us introduce the following notation.

\begin{definition}\label{def:stack with at least one A_h singularity}
    For any \(0\leq h\leq r\), let \(\cD^h\subset \cM_{g,n}^r\) be the substack parametrizing curves with at least one singularity of type \(A_h\) in each geometric fiber.
\end{definition}

It follows from \cite[Section 4]{Per1} that \(\cD^h\) is the set-theoretic image of the natural morphism \(\cA_h\to  \cM_{g,n}^r\) coming from the description of \(\cA_h\) as a substack of the universal curve. Moreover, by \textit{loc.\ cit.}, the inclusion \(\cD^h\subset \cM_{g,n}^r\) is a locally closed immersion, and it is closed exactly when $h=r$.

\begin{lemma}\label{lem:even-atom}
    Suppose $n\leq 1$ and $r\geq 2g$. The intersection $\cD^{2g} \cap \cH_{g,n}^r\subset \cM_{g,n}^r$, parametrizing hyperelliptic curves with at least one singularity of type \(A_{2g}\), coincides with the $1$-pointed even atom of genus $g$. Moreover, if $n=1$, the marking must be a Weierstrass point.
\end{lemma}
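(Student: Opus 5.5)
Let $(C,\sigma)$, possibly with a marking $p_1$, be a hyperelliptic $A_r$-stable curve of genus $g$ carrying an $A_{2g}$-singularity $p$. The plan is to show first that $C$ is irreducible and rational, then that it is honestly hyperelliptic with branch divisor $2g+1$ times one point plus one further point, and finally to invoke the uniqueness of \Cref{rem:even-atoms}.

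\textbf{Step 1 (the normalization).} Let $\nu\colon\widetilde C\to C$ be the partial normalization at $p$. Since $A_{2g}$ is unibranch, $\widetilde C$ is connected, and \Cref{rem:genus-count} gives $g(\widetilde C)=g(C)-g=0$. Hence $\widetilde C$ is a connected curve of arithmetic genus $0$. By \Cref{rem:genus-count} again, any further singularity would contribute positive genus, so $\widetilde C$ is a tree of smooth rational curves joined by nodes. Because every node of $C$ lies on $\widetilde C$, ampleness of $\omega_C(p_1+\dots+p_n)$ is easy to test on each component $\Gamma\subset\widetilde C$:
\begin{itemize}
\item if $\Gamma$ does not contain the preimage $q$ of $p$, then by Noether's formula $\deg\omega_C|_\Gamma=-2+\#\{\text{nodes on }\Gamma\}$;
\item on the component containing $q$, the conductor contributes $2g$.
\end{itemize}
With $n\le 1$, a leaf of the tree not containing $q$ has at most one node and at most one marking, so its degree is $\le 0$, contradicting ampleness. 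Hence the tree is a single $\bP^1$, and $(\widetilde C,\nu^{-1}(p_1))$ is $(\bP^1,\infty)$ or $(\bP^1,0,\infty)$ with $q=\infty$.

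\textbf{Step 2 (the hyperelliptic structure).} Since $C$ is irreducible, $C/\sigma$ is irreducible, so $C$ is honestly hyperelliptic. Under the branch-divisor description of $\cH_g^{r,\circ}$, an $A_{2g}$-singularity corresponds to a point of multiplicity $2g+1$ in a divisor of degree $2g+2$. The divisor is therefore $(2g+1)a+b$ with $a\neq b$, and $\PGL_2$ acts transitively on such divisors, so the curve $C$ is unique.

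\textbf{Step 3 (identification with the atom).} This unique curve has a $\Gm$ in its automorphism group, namely the stabilizer of $a$ and $b$, acting nontrivially at $p$. So $p$ is hyperelliptic, and by \Cref{rem:even-atoms} and \Cref{lem:hyp-crimping-datum-even}, which give the unique $\Gm$-equivariant crimping, $C$ is the even atom. Concretely, $C$ is $y^2=x$ compactified with the $A_{2g}$ singularity at infinity.

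\textbf{Step 4 (the marked case).} For $n=1$, the marking $p_1$ is a smooth point of $C$. By Step 1 the stability of the normalization $(\bP^1,0,\infty)$ is automatic. It remains to see that $p_1$ is Weierstrass. The main obstacle here is to justify that $(C,p_1)$, rather than just $C$, lies in $\cD^{2g}\cap\cH^r_{g,1}$ only for Weierstrass $p_1$. The plan is to note that:
\begin{itemize}
\item the smooth locus of $C$ is $\bA^1$, with $\sigma$ acting by $y\mapsto -y$;
\item its only smooth fixed point is the ramification point over $b$;
\item the locus of non-Weierstrass markings is a single $\Gm$-orbit.
\end{itemize}
I would then argue that a curve in $\cH_{g,1}^r$ must have $p_1$ lying over the closed stratum. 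I expect this to require either a closure argument or the description in \Cref{prop:hyp-global-descr}, which identifies $\cH_{g,w}^{r,\circ}$ with $[\bA^{2g-1}/\Gm]$ having a unique $A_{2g}$ point at the origin. That origin is exactly the $1$-pointed even atom with Weierstrass marking.
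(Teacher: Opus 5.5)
Your Steps 1--3 handle $n=0$ correctly and follow essentially the paper's route: irreducibility, honest hyperellipticity with branch divisor $(2g+1)a+b$, then identification with the atom via \Cref{rem:even-atoms}. Your ampleness check on the tree of rational curves is a more careful substitute for the paper's one-line appeal to \Cref{rem:genus-count}, and $\PGL_2$-transitivity gives uniqueness directly.

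The genuine gap is Step 4. It cannot be closed as you plan, because the assertion you want there is false as stated. Your reduction ``a curve in $\cH^r_{g,1}$ must have $p_1$ over the closed stratum'' fails. By definition $\cH_{g,g_1^2}^{r,\circ}$ is the nonempty open complement of $\cH_{g,w}^{r,\circ}$, and nothing in the definition of $\cH^r_{g,n}$ forces markings to be Weierstrass.

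Your own third bullet produces a counterexample. For $g\geq 2$, let $C$ be the even atom and $p_1$ a smooth point with $y(p_1)\neq 0$. Then $(C,p_1)$ is $A_r$-stable, since $\omega_C(p_1)$ has degree $2g-1$ on the irreducible curve $C$. It is hyperelliptic via $\sigma$ and has an $A_{2g}$-singularity, so it lies in $\cD^{2g}\cap\cH^r_{g,1}$. Yet it is not the $1$-pointed even atom. Indeed $\Aut(C)\simeq\Gm$, acting by $t\cdot(x,y)=(t^2x,ty)$: an affine map $y\mapsto ay+b$ with $b\neq 0$ sends $w^2$ (where $w=1/y$) to a series with nonzero $w^3$-term, so it does not preserve $\widehat{\cO}_{C,p}=k[[w^2,w^{2g+1}]]$. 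Hence $\Aut(C,p_1)$ is trivial, and $(C,p_1)$ is not isomorphic to the Weierstrass-marked curve. Moreover, letting $t\to 0$ gives an isotrivial degeneration of $(C,p_1)$ to the Weierstrass-marked atom, so no closure argument can exclude it. For $n=1$ the intersection therefore has two points. The paper does not prove $n=1$ either; it leaves it to the reader.

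What is true is the statement restricted to points of $\cD^{2g}\cap\cH^r_{g,1}$ with $\dim\Aut(C,p_1)>0$, equivalently with $p$ hyperelliptic in the sense of \Cref{def:hyp-even-sing}. This is also what the paper's uniqueness step actually uses: \Cref{rem:even-atoms} singles out the curve with positive-dimensional automorphism group, which is automatic when $n=0$. With that hypothesis, Step 4 is immediate from your own description. $\CAut(C,p_1)$ is the stabilizer of $p_1$ for the weight-one action on the smooth locus $\bA^1_y$. It is positive-dimensional exactly when $y(p_1)=0$, the unique smooth Weierstrass point. The resulting curve is the $\Gm$-equivariant crimping of $(\bP^1,0,\infty)$ at $\infty$, i.e.\ the $1$-pointed even atom, in agreement with case (3) of \Cref{cor:cyc-cover-with-pos-dim}.
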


\begin{proof}
     We start by studying the case $n=0$. Suppose that $C \in \cH_g^{r}\cap \cD^{2g}$. First of all, because of the absence of markings, the curve $C$ must be irreducible (see \Cref{rem:genus-count}); thus, it lies in $\cH_{g}^{r,\circ}$, namely it is a cyclic cover of $\bP^1$ of degree $2$. The ramification divisor of the morphism $C\rightarrow \bP^1$ has length $2g+2$, and \cite[Corollary~1.4]{Per2} shows that it has length $2g+1$ on the $A_{2g}$-singularity. This implies that the branching divisor is (set-theoretically) supported at two distinct points of $\bP^1$. Using the description of $\cH_g^{r,\circ}$ as cyclic covers of degree $2$ of $\bP^1$ (see \cite{ArVis} or \Cref{sub:hyper-A-stable}), one can prove that $\Aut(C)\simeq \bG_m$ when the branching divisor is supported at two distinct points, namely because $\Aut(\bP^1,0,\infty)\simeq \bG_m$. Notice that \Cref{rem:even-atoms} implies that there exists a unique curve such that
     \begin{itemize}
         \item it has an $A_{2g}$-singularity,
         \item the partial normalization at the singularity is $\bP^1$,
         \item it has positive-dimensional automorphism group;
     \end{itemize}
     and thus the statement follows. We leave it to the interested reader to prove the case $n= 1$.
\end{proof}

We now deal with the odd case. From now on, for convenience, we will consider $A_{2h+1}$-singularities (instead of $A_{2h-1}$); thus, the careful reader will encounter a shift in the numerical conditions on $h$.

\begin{remark}\label{rem:odd-atoms}
      Let $h\geq 1$. Consider the curve $(\widetilde{C},q_1,q_2)$ defined as the disjoint union of two copies of $(\bP^1,\infty)$. By \Cref{cor:diff-crimping-same-curve-odd} or \Cref{cor:quotient-stack-description}, we know that the moduli stack parametrizing curves $(C,p)$ such that
      \begin{itemize}
          \item $p$ is an $A_{2h+1}$-singularity,
          \item the (pointed) partial normalization of $C$ at $p$ is $(\widetilde{C},q_1,q_2)$
      \end{itemize}
    is isomorphic to the quotient $[(U_{h+1}/\Ga) /\Gm\ltimes \Ga]$, where $U_{h+1}/\Ga$ is a $\Gm$-representation with strictly positive weights. Therefore, there exists a unique curve whose $A_{2h+1}$-singularity is hyperelliptic (see also \Cref{lem:hyp-crimping-datum-odd}). It is easy to see that the curve $C$ is $A_r$-stable if and only if $h\geq 2$ (see \Cref{rem:genus-count}). Moreover, one can show easily that $\Aut^{\circ}(C)\simeq \Gm\ltimes \Ga$.

     The same construction can be carried out considering $0$ as a marking for one (or both) projective lines: it will give rise to a $1$-pointed (or $2$-pointed) $A_r$-stable curve for $h\geq 2$ (for $h\geq 1$). There is a (non-unique choice of) $\Gm$-equivariant crimping datum, thanks to \Cref{lem:hyp-crimping-datum-odd}. Moreover, any choice of such a $\Gm$-equivariant crimping datum gives rise to isomorphic curves. The (connected component of the identity of the) automorphism group of the pointed curves is a one-dimensional torus.
\end{remark}

\begin{definition}[Odd atom]\index{Odd atom}
    \label{def:odd-atoms}
    The curve constructed in \Cref{rem:odd-atoms} will be called the \emph{odd atom} of genus $h$. If it contains $n$ markings with $n\leq 2$, we will denote it as the $n$-pointed odd atom.
\end{definition}

\begin{figure}[H]
        \caption{}
        \centering
        \includegraphics[width=0.4\textwidth]{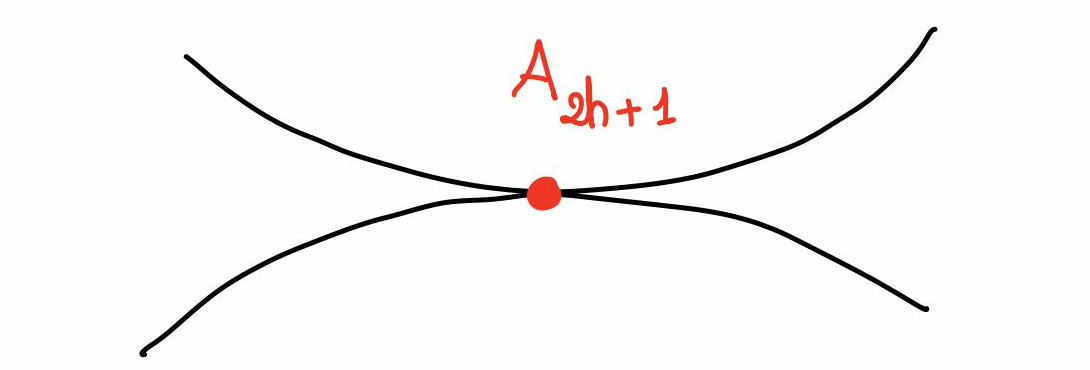}
        \label{fig:odd-atom}
    \end{figure}

As in the even case, the odd atom of genus $g$ is actually the only $A_r$-stable curve with an $A_{2g+1}$-singularity that is hyperelliptic, and it is in fact honestly hyperelliptic.

\begin{lemma}\label{lem:odd-atom}
    Suppose $n\leq 2$ and $r\geq 2g+1$. The intersection $\cD^{2g+1} \cap \cH_{g,n}^r\subset \cM_{g,n}^r$ is the $n$-pointed odd atom. Moreover, if $n=1$, the marking is not a Weierstrass point, and if $n=2$, the two markings are exchanged by the hyperelliptic involution.
\end{lemma}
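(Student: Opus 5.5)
Following the template of \Cref{lem:even-atom}, we first treat the case $n=0$ and then add markings. Let $C \in \cD^{2g+1}\cap \cH_{g}^r$ and let $p$ be an $A_{2g+1}$-singularity. By \Cref{rem:genus-count}, normalizing $p$ uses up all of the genus. If $p$ were non-separating, the partial normalization $\widetilde{C}$ would have genus $g-g-1<0$, which is impossible. Hence $p$ is separating, and $\widetilde{C}$ is a disjoint union of two curves of arithmetic genus $0$ and total genus $0$. Each piece is therefore a tree of smooth rational curves meeting in nodes. Stability of $C$ (ampleness of $\omega_C(\sum p_i)$ with $n\le 2$), together with the conductor formula $b^*\omega_C\simeq\omega_{\widetilde C}(J_b^\vee)$, forces each piece to be a single $\bP^1$. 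Indeed, a leaf component of the tree that does not contain the preimage of $p$ and carries at most one marking would have negative or zero degree. A slightly more careful count handles the component containing $q_i$ and the markings. So $\widetilde{C}\simeq(\bP^1,\infty)\sqcup(\bP^1,\infty)$, and when $n\ge 1$ the markings are distributed on the two lines.

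Next I use the hyperelliptic structure. Since $C$ is irreducible-quotient (the quotient $C/\sigma$ is genus $0$ nodal, and $C$ has two components glued at one point fixed by $\sigma$), I argue that $\sigma$ must exchange the two branches at $p$. If it fixed each component, $\sigma$ would induce involutions of each $\bP^1$ and the quotient would have two components meeting at a non-nodal point, or $\sigma$ would fix a curve. The cleaner route is via the branch divisor, using \cite[Corollary~1.4]{Per2} as in the even case. The $A_{2g+1}$-singularity contributes length $2g+2$ to the branch divisor of $C\to C/\sigma\simeq \bP^1$, so the branch divisor is supported at a single point. Hence $C\in\cH_g^{r,\circ}$ corresponds to the binary form $\ell^{2g+2}$ twisted appropriately; in the $n=0$ case this is the unique non-reduced-root configuration allowed in $\cU_{2g+2}\setminus\cU_{2g+1}$. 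Its stabilizer is $\Gm\ltimes\Ga$, which matches $\Aut^\circ$ of the odd atom in \Cref{rem:odd-atoms}. By the uniqueness statement in \Cref{rem:odd-atoms}, namely that there is a unique crimping of $(\bP^1,\infty)^{\sqcup 2}$ whose $A_{2g+1}$-singularity is hyperelliptic, $C$ is the odd atom. Conversely, the odd atom is honestly hyperelliptic: the involution swapping the two $\bP^1$'s via $z\mapsto z$ is compatible with the $\Gm$-equivariant crimping datum, by uniqueness in \Cref{lem:hyp-crimping-datum-odd}.

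For the marked cases, the markings are smooth points on the two lines, and a point of $\cH_{g,n}^r$ forgets to $\cH_g^r$ after stabilization, so the underlying curve is still built from the odd atom. For $n=1$, a Weierstrass point must be $\sigma$-fixed. Since $\sigma$ swaps the two components, its only fixed point is $p$, so the marking is not Weierstrass. For $n=2$, I argue that both markings cannot lie on the same $\bP^1$: otherwise the other component $(\bP^1,\infty)$ would be unstable, since its degree count fails for $A_{2g+1}$ with small $g$, or its hyperelliptic image would not be $\sigma$-invariant. Then $\sigma$-invariance of the marking set forces the two markings to be swapped. Using $\Aut(\bP^1,0,\infty)=\Gm$ and the $\Gm$-equivariant uniqueness of the crimping (the final assertion of \Cref{lem:hyp-crimping-datum-odd}), the markings can be moved to $0$ on each line, giving the pointed odd atom.

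The main obstacle is proving rigorously that $\sigma$ exchanges the two branches at $p$, equivalently that the branch divisor is concentrated at one point. I would first try to invoke \cite[Corollary~1.4]{Per2} for the local length computation of the ramification at an odd $A$-singularity. If that fails, I would compute directly on the local model $y^2=x^{2g+2}$, where an involution with finite fixed locus acting on two branches must be $y\mapsto -y$.
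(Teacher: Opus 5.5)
Your route is the paper's. The genus count makes $p$ separating with rational sides. The involution must swap the two sides (the paper takes this from \cite[Proposition~1.3]{Per1}), so $C$ is honestly hyperelliptic with branch divisor supported at one point, and \Cref{rem:odd-atoms} identifies $C$ with the odd atom. Two steps, however, fail as written.

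\textbf{Stability step.} Stability alone does not make each side a single $\bP^1$ when $n=2$. For $g\geq 2$ the curve $\bP^1\cup_{A_{2g+1}}F\cup_{A_1}E$, with both markings on $E$, is $A_r$-stable: the degrees of $\omega_C(p_1+p_2)$ on the three components are $g-1$, $g$, $1$. So no ``more careful count'' excludes it. It has to be excluded by the involution instead. $\sigma$ fixes $p$ and the node, hence every component, and the computation below then gives a contradiction.

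\textbf{Exchange step.} The step you single out as the main obstacle is not settled by the tools you propose. The route through \cite[Corollary~1.4]{Per2} is circular: it concerns the double cover $C\to C/\sigma\simeq\bP^1$, so it presupposes that the quotient is irreducible, which is what you want to show. Your local fallback is false: $(x,y)\mapsto(-x,\pm y)$ preserves both branches of $y^2=x^{2g+2}$ and has an isolated fixed point. What actually rules out a branch-preserving $\sigma$ is nodality of $C/\sigma$. Since $\sigma$ has finite fixed locus, it is nontrivial on each branch. So you can choose a coordinate $\xi$ on the branches with $\sigma(\xi)=-\xi$ on both and crimping $\xi_1\equiv\xi_2 \bmod \xi^{g+1}$. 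The invariants are then the pairs $(F_1(\xi^2),F_2(\xi^2))$ with $F_1(\xi^2)\equiv F_2(\xi^2)\bmod \xi^{g+1}$. This is an $A_{2m-1}$-singularity with $m=\lceil (g+1)/2\rceil$, which is not a node precisely when $g\geq 2$.

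Your first sketch (``two components meeting at a non-nodal point'') is the right idea; make it into this computation. For $g=1$ such an involution does have nodal quotient, so treat $g=1$ separately. There only $n=2$ is stable, and the tacnodal union of two lines with one marking on each is unique up to isomorphism. Once $\sigma$ swaps the sides, its only fixed point is $p$. The branch divisor is then $(2g+2)$ times a point directly, with no appeal to \cite{Per2}.

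\textbf{Markings for $n=2$.} Your exclusion of two configurations uses that $\sigma$ preserves the set of markings: both markings on one line (stable for $g\geq 2$), and $p_2\neq\sigma(p_1)$. The definition of $\cH_{g,n}^r$ does not impose this, since $\sigma$ is only required to be an element of $\Aut(C)$. Without it, these curves do lie in $\cD^{2g+1}\cap\cH_{g,2}^r$. State the assumption explicitly; the paper's ``the same argument works for $n=1,2$'' is silent on it as well. With it, the markings are moved to the points $0$ by the diagonal translations in $\CAut(C)\simeq\Gm\ltimes\Ga$, which commute with $\sigma$.
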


\begin{proof}
    First, notice that $\cD^{2g+1}\subset \cM_{g,n}^r$ parametrizes curves with \emph{separating} $A_{2g+1}$-singularities, because of \Cref{rem:genus-count}.

    Let us deal with the case $n=0$, the same argument works in the cases \(n=1,2\). Suppose that $C\in \cH_g^{r} \cap \cD^{2g+1}$. The quotient $C/\sigma$ must be irreducible. Indeed, $C$ is the union of two irreducible rational smooth curves attached at an $A_{2g+1}$-singularity (see \Cref{rem:genus-count}). By \cite[Proposition~1.3]{Per1}, the involution must exchange the two irreducible components, thus giving $C\in \cH_g^{r,\circ}$.

    The ramification divisor of the morphism $C\rightarrow \bP^1$ has length $2g+2$, and \cite[Corollary~1.4]{Per2} shows that it must be concentrated at the $A_{2g+1}$-singularity. This implies that the branching divisor is (set-theoretically) supported at a single point of $\bP^1$. Using the description of $\cH_g^{r,\circ}$ as cyclic covers of degree $2$ of $\bP^1$ (see \cite{ArVis} or \Cref{sub:hyper-A-stable}), one can prove that $\Aut^{\circ}(C)\simeq \bG_m\ltimes \bG_a$, namely because $\Aut(\bP^1,0)\simeq \bG_m\ltimes \bG_a$. Therefore, by \Cref{rem:odd-atoms}, we get that it coincides with the unique point having $\bG_m\ltimes \bG_a$ stabilizers in $\cD^{2g+1}$.
    \end{proof}

\begin{remark}
    Notice that having markings on the atoms allows us to attach them to other curves using nodes or higher odd \(A\)-singularities, with the goal of constructing non-atomic curves with positive-dimensional stabilizers.
\end{remark}

\subsection{Curves with positive-dimensional stabilizers}\label{sub:geometric-points}

In this subsection, we classify all the subcurves that contribute to the dimension of the automorphism group over an algebraically closed field $k$. Before doing that, we start by introducing the following definitions and notation, which are (almost all) very classical.

\begin{definition}[Subcurve]\index{subcurve}
    Let $C$ be a curve over $k$. A \emph{subcurve} $\Gamma$ of $C$ is a proper, reduced, one-dimensional subscheme of $C$, i.e., the union of some irreducible components of $C$ with the reduced scheme structure. Let $(C,p_1,\dots,p_n)$ be an $n$-pointed $A$-prestable curve over an algebraically closed field $k$, and let $\Gamma\subset C$ be a subcurve. We denote by $C-\Gamma$ the subcurve of $C$ induced by the closure of $C\setminus \Gamma$ in $C$. Moreover, we denote by $I_{\Gamma}$ the set of points $\Gamma \cap (C-\Gamma)$ of $\Gamma$, and by $P_{\Gamma}$ the subset of the markings $\{p_1\dots,p_n\}$ that lie in the subcurve $\Gamma$. Notice that every point in $I_{\Gamma}$ is smooth in $\Gamma$ because there are only $A$-singularities.
\end{definition}

\begin{definition}[Tails and bridges]\index{tails}\index{bridges}
     Let $(C,p_1,\dots,p_n)$ be an $n$-pointed $A$-prestable curve over an algebraically closed field $k$, and let $\Gamma\subset C$ be a connected subcurve. We say that $(\Gamma, I_{\Gamma}\cup P_{\Gamma})$ is a \emph{tail} if $\vert I_{\Gamma}\cup P_{\Gamma} \vert =1$, whereas we say that it is a \emph{bridge} if $\vert I_{\Gamma}\cup P_{\Gamma} \vert =2$. We say that a bridge $\Gamma$ is separating if the curve $C-\Gamma$ is disconnected; otherwise, we say it is non-separating.
\end{definition}

Notice that in the previous definition of subcurve, we do not require $\Gamma$ to be properly contained in $C$. Therefore, given a $1$-pointed curve $(C,p)$, we have that $(C,p)$ itself is a tail. The same is true for bridges and $2$-pointed curves.

\begin{definition}\index{Outer singularity} \index{Inner singularity} \index{Separating singularity} \index{Lonely singularity}
    We say that an $A$-singularity $p$ of $C$ is:
    \begin{itemize}
        \item[(a)] \emph{outer} if it lies in the intersection of (exactly) two irreducible components of $C$;
        \item[(b)] \emph{lonely} if it is outer and it is the unique (set-theoretic) intersection of the two irreducible components;
        \item[(c)] \emph{separating} if the partial normalization at $p$ is disconnected;
        \item[(d)] \emph{inner} if it is not outer.
    \end{itemize}
    Clearly, $(c)\implies (b)\implies (a)$, and $p$ must be an odd $A$-singularity, i.e., an $A_{2i+1}$-singularity for some $i\geq 0$.
\end{definition}

It will be useful to study degenerations of $A$-prestable curves, specifically when they are generically outer. The following proposition will play a fundamental role in this paper; thus, we recall it here.

\begin{proposition}[{\cite[Proposition~2.10]{AlpFedSmyWyck}}]\label{prop:alp-fund}
Let $C\rightarrow \spec R$ be a family of $A$-prestable curves over a dvr $R$, and let $p$ be a section of the family. Suppose that the (geometric) generic fiber $p_{Q}$ is an outer $A_{2k_1+1}$-singularity. Then
\begin{itemize}
    \item the (geometric) special fiber $p_k$ is an outer $A_{2m+1}$-singularity;
    \item each singularity of the (geometric) generic curve $C_Q$ that approaches $p_k$ must be outer as well and must lie on the same two irreducible components of $C_Q$ as $p_Q$;
    \item the collection of such (generic) singularities approaching $p_k$ is necessarily of the form\newline $\{A_{2k_1+1},\dots, A_{2k_s+1}\}$, where $m=s-1+\sum_{i=1}^s k_i$.
\end{itemize}
\end{proposition}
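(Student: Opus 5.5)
The plan is to reduce the statement to a local computation at the special point $p_k$, using a normal form for the family near $p_k$, and then to read off all three bullet points from a single factorization. Throughout I pass to a finite extension of $R$ and to completions whenever convenient; this is harmless because all the conclusions concern geometric fibers.

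First, the local normal form. Let $p_k$ be an $A_\ell$-singularity of the special fiber. In characteristic $0$ the miniversal deformation of $y^2=x^{\ell+1}$ has the form
\[
y^2 = x^{\ell+1}+a_{\ell-1}x^{\ell-1}+\dots+a_0 .
\]
Pulling it back along $\widehat R\to$ (base of the miniversal deformation), and applying Weierstrass preparation, I expect to write the complete local ring of the total space at $p_k$ as
\[
\widehat{\cO}_{C,p_k}\simeq \widehat R[[x,y]]/(y^2-f(x)),
\]
where $f\in \widehat R[x]$ is monic of degree $\ell+1$ and $f\equiv x^{\ell+1}$ modulo the maximal ideal. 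In these coordinates the section $p$ becomes a point of the generic fiber with $y=0$ and $x=\alpha$, where $\alpha$ is a root of $f$.

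Second, the key step: turning the hypothesis ``outer'' into algebra. The hypothesis says that $p_Q$ lies on two distinct irreducible components of $C_Q$. Each of these components meets the formal neighbourhood of $p_k$, so over $Q$ the germ $y^2-f$ is reducible there. Since $f$ is monic, $y^2-f$ is reducible over $\overline{Q}[x]$ exactly when $f$ is a square. I therefore expect $f=g^2$ with $g\in\overline{Q}[x]$ monic. Because $R$ is integrally closed and $f$ is monic, Gauss's lemma, applied after a finite extension of $R$, should give $g\in R[x]$. Reducing modulo the uniformizer gives $x^{\ell+1}=g_k^2$. Hence $\ell+1$ is even, say $\ell=2m+1$, and $g_k=x^{m+1}$.

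The special fiber is then locally the union of the two branches $y=\pm x^{m+1}$. These branches are the specializations of the two branches $y=\pm g$ of the total space, which are the local pieces of the closures of the two generic components. This shows that $p_k$ is an outer $A_{2m+1}$-singularity. I expect the main obstacle to be the global part of this claim: one must rule out that the limits of the two generic components share the irreducible component of the special fiber through $p_k$. The local picture already shows that the two branches at $p_k$ are distinct. To get that they lie on different components, I would argue that the two local components $y=\pm g$ of $\operatorname{Spec}\widehat{\cO}_{C,p_k}$ come from distinct global components of the total space. Then, if necessary, I would use that being outer at $p_k$ is local on branches in the sense used in the definition.

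Finally, the counting. The singularities of $C_Q$ specializing to $p_k$ are the points $y=0$, $x=\beta$, where $\beta$ runs over the distinct roots of $g$. Let these roots be $\beta_1,\dots,\beta_s$ with multiplicities $e_1,\dots,e_s$. Locally $y^2=(x-\beta_i)^{2e_i}\cdot(\text{unit})$, so the point over $\beta_i$ is an $A_{2e_i-1}$-singularity lying on the two branches $y=\pm g$. This proves the second bullet. Writing $e_i=k_i+1$ gives singularities of type $A_{2k_i+1}$ with $\sum_{i=1}^s (k_i+1)=\deg g=m+1$, that is, $m=s-1+\sum_{i=1}^s k_i$. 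This proves the third bullet.
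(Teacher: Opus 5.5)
The paper does not prove this statement; it imports it from \cite{AlpFedSmyWyck}. Your local-equation strategy is the natural one, and the normal form $y^2=f(x)$, the factorization $f=g^2$ and the count $\sum_i(k_i+1)=\deg g=m+1$ are all correct. There is, however, a genuine gap at exactly the step you flagged as the main obstacle, and your proposed patch does not close it. Being outer is not ``local on branches''. It asks that the two branches $y=\pm x^{m+1}$ of $C_k$ at $p_k$ lie on \emph{different irreducible components of $C_k$}, and no computation in $\widehat{\cO}_{C,p_k}$ alone can detect this. Knowing that the branches come from two distinct components $\overline{Z}_1\neq\overline{Z}_2$ of the total space is not enough either: over $k[[t]]$, the two components $y=\pm tx$ of $y^2=t^2x^2$ specialize to the same line.

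The missing ingredient is that the special fiber is \emph{reduced}. Let $\xi$ be the generic point of an irreducible component $D$ of $C_k$. Then $\cO_{C_k,\xi}$ is a field, so the maximal ideal of $\cO_{C,\xi}$ is generated by the non-zero-divisor $\pi$. Hence $\cO_{C,\xi}$ is a dvr, in particular a domain, so only one irreducible component of $C$ contains $D$. Thus $(\overline{Z}_1)_k$ and $(\overline{Z}_2)_k$ share no component. Since the branch $y=x^{m+1}$ lies in $\overline{Z}_1$ and $y=-x^{m+1}$ lies in $\overline{Z}_2$, the components of $C_k$ carrying them are distinct.

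A second step is not justified as written: how you obtain $f=g^2$. The inference ``the germ is reducible over $Q$, hence $y^2-f$ is reducible in $\overline{Q}[x,y]$'' does not follow. The affine plane curve $y^2=f(x)$ over $\overline{Q}$ has no a priori relation to $C_Q$, and the reducibility you actually have lives in $\widehat R[[x,y]]$. Argue instead as follows.
\begin{itemize}
\item The closures $\overline{Z}_1,\overline{Z}_2$ give two minimal primes $P_1,P_2$ of $A=\cO_{C,p_k}$.
\item Since $A\to\widehat A$ is faithfully flat, every minimal prime of $A$ is contracted from a minimal prime of $\widehat A$. So $\widehat A=\widehat R[[x,y]]/(y^2-f)$ is not a domain.
\item Since $y^2-f$ is a Weierstrass polynomial in $y$ over $\widehat R[[x]]$, Weierstrass preparation and its uniqueness give $y^2-f=(y-g)(y+g)$ with $g\in\widehat R[x]$ monic and $g\equiv x^{m+1}$ modulo $\pi$. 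No Gauss lemma or field extension is needed.
\end{itemize}
The same argument shows that $(y-g)$ and $(y+g)$ are the only minimal primes of $\widehat A$, and that they contract bijectively onto $P_1,P_2$. Hence $P_1\widehat A\subseteq(y-g)$ and $P_2\widehat A\subseteq(y+g)$. This is the precise content of ``$y=\pm g$ are the local pieces of $\overline{Z}_1,\overline{Z}_2$''. It is what you need for the branch statement above, and also for the second bullet: the point $(x,y)=(\beta_i,0)$ satisfies $y=g=-g$, so it lies in $V(P_1)\cap V(P_2)$, i.e.\ on $Z_1\cap Z_2$. With these two repairs your argument is complete.
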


\begin{remark}
    With the notation of \Cref{prop:alp-fund}, if $p_Q$ is lonely (respectively separating), then $p_k$ will also be lonely (respectively separating).
\end{remark}

\begin{definition}[Gluing morphism]
    \index{Gluing morphism}
    Let \((C,\{p_i\}_{i\in I})\) and \((D,\{q_j\}_{j\in J})\) be (not necessarily connected) pointed curves and \(\gamma:C\to D\) a finite morphism. It is called a \emph{gluing morphism} if $\gamma$ restricts to an open immersion $C\setminus \{p_i \mid i \in I\} \to D \setminus \{q_j \mid j \in J\}$ and the image of $p_i$ for $i \in I$ is either a marked point or a singularity.
\end{definition}

We are ready to introduce the ``molecules'' we can construct using our atoms and $A$-singularities. These will be called \emph{rosaries}, and they are a straightforward generalization of \cite[Definition~6.1]{hassett2013log}.

\begin{definition}[(Attached) Rosaries]
    \label{def:attached-rosaries}\index{(attached) Rosaries}
    Let $k$ be an algebraically closed field over $\kappa$. We say that $(R,r_1,r_2)$ is a \emph{2-pointed rosary of length $\ell$} if there exists a surjective gluing morphism
    \[
    \gamma \colon \coprod_{i=1}^{\ell} (R_i,p_{2i-1},p_{2i}) \longrightarrow (R,r_1,r_2)
    \]
    and a sequence of positive integers $k_1, \dots, k_{\ell-1}$ satisfying:
\begin{enumerate}
  \item $(R_i,p_{2i-1},p_{2i})$ is a $2$-pointed smooth rational curve for $i=1,\dots,\ell$;
  \item $\gamma(p_{2i})=\gamma(p_{2i+1})$ is an $A_{2k_i+1}$-type singularity of $R$ for $i=1,\dots,\ell-1$;
  \item $\gamma(p_1)=r_1$ and $\gamma(p_{2\ell})=r_2$.
\end{enumerate}
Moreover, we say that $(R, r_1)$ (resp.\ $R$) is a \emph{$1$-pointed rosary} (resp.\ \emph{0-pointed rosary}, or simply rosary).

We say that $(C,\{p_i\}_{i=1}^n)$ has an \emph{$A_{d_1}/A_{d_2}$-attached rosary of length $\ell$} if there exists a gluing morphism
\[
\gamma \colon (R,r_1,r_2) \longrightarrow (C,\{p_i\}_{i=1}^n)
\]
such that
\begin{enumerate}
  \item $(R,r_1,r_2)$ is a rosary of length $\ell$;
  \item for $j=1,2$, $\gamma(r_j)$ is an $A_{d_j}$-singularity of $C$, or if $d_j=0$, we allow $\gamma(r_j)$ to be a marked point of $(C,\{p_i\}_{i=1}^n)$.
\end{enumerate}

Similarly, we say that $(C,\{p_i\}_{i=1}^n)$ has an \emph{$A_d$-attached rosary of length $\ell$} if there exists a gluing morphism
\[
\gamma \colon (R,r_1) \longrightarrow (C,\{p_i\}_{i=1}^n)
\]
such that
\begin{enumerate}
  \item $(R,r_1)$ is a rosary of length $\ell$;
  \item $\gamma(r_1)$ is an $A_{d}$-singularity of $C$, or if $d=0$, we allow $\gamma(r_1)$ to be a marked point of $(C,\{p_i\}_{i=1}^n)$.
\end{enumerate}
We say that $C$ is a \emph{closed rosary of length $\ell$} if $C$ has an $A_{2d+1}/A_{2d+1}$-attached rosary
\[
\gamma \colon (R,r_1,r_2) \longrightarrow C
\]
of length $\ell$ such that $\gamma(r_1)=\gamma(r_2)$ is an $A_{2d+1}$-singularity of $C$.
\end{definition}

\begin{remark}
    A $2$-pointed rosary is always $A_r$-stable. With the notation of \Cref{def:attached-rosaries}, the $1$-pointed rosary $(R, r_1)$ is stable if and only if $k_{\ell-1} \geq 2$, and the $0$-pointed rosary is stable if and only if $k_1 \geq 2$ and $k_{\ell-1} \geq 2$. Finally, notice that crimping an even singularity to a marked point of a rosary creates a curve that has an attached rosary whose gluing morphism is surjective.
\end{remark}

\begin{figure}[H]
    \centering
    \includegraphics[width=0.7\textwidth]{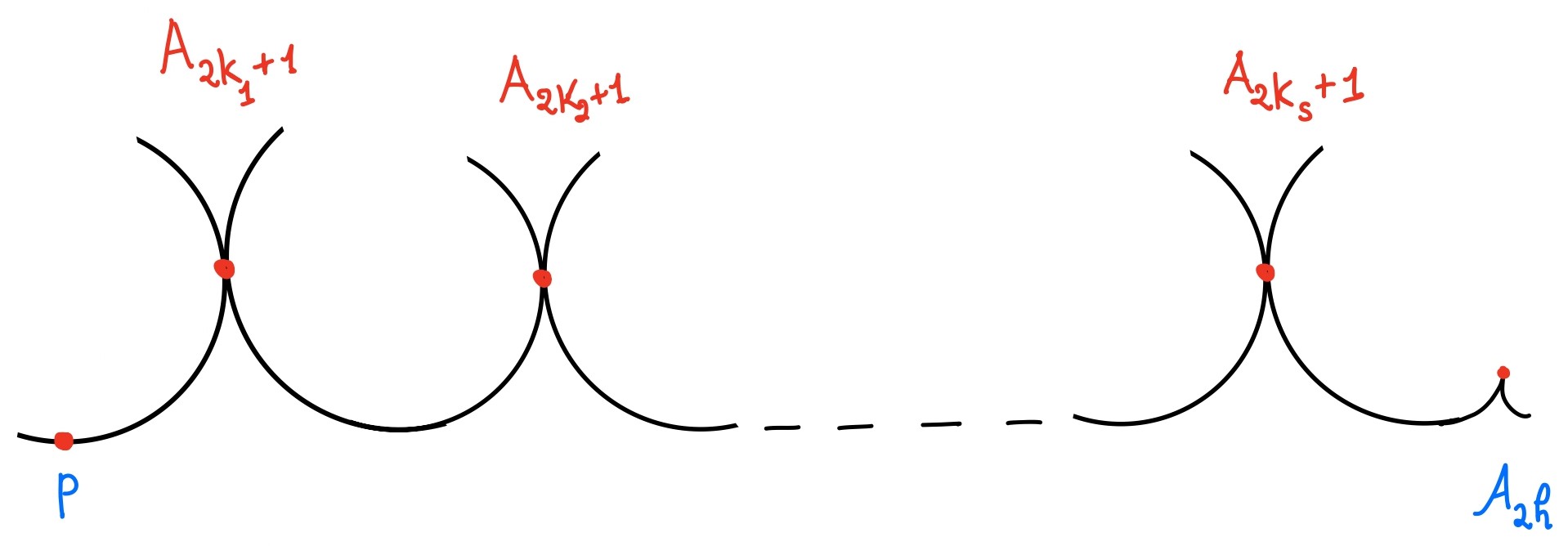}
    \label{fig:attached-rosary}
    \caption{Example of a curve that is not a rosary but has an attached $2$-pointed rosary.}
\end{figure}

\begin{remark}
    An $n$-pointed odd atom is an $n$-pointed rosary of length $2$ (with $n\leq 2$), whereas a $0$- or $1$-pointed even atom has respectively a $0$- or $1$-pointed attached rosary of length $1$. When \( (C,\{p_i\}_{i=1}^n)\) has an attached rosary \((R,r_1,r_2)\) or \((R,r_1)\) that is a \(2\)- or \(1\)-pointed atom as in \Cref{def:odd-atoms} or \Cref{def:even-atoms}, we will say that it has an \emph{attached} atom.
\end{remark}

We aim now to compute the automorphism group of rosaries; we will then prove that attached rosaries are the parts of curves that cause $\Aut^{\circ}$ not to be trivial. An explicit description of rosaries with $A_3$-singularities is carried out in \cite[Remark~3.4]{Viviani}; in this case, $2$-pointed rosaries always have a nontrivial action of $\Gm$ on the curve. This is not always the case for higher $A$-type singularities, as we will see.

\begin{proposition}
    \label{prop:aut0-rosaries}
    Let $n\leq 2$ and $\ell \geq 1$. Suppose that $(R, r_i)$ is an $n$-pointed rosary of length $\ell$ such that $\Aut^{\circ}(R, r_i)$ is not trivial. If $(R,r_i)$ is $A_r$-stable, then we have the following:
    \begin{itemize}
        \item if $n+\ell \geq 3$, then $\Aut^{\circ}(R,r_i)\simeq \bG_m$;
        \item if $n=0$ and $\ell=2$, then $\Aut^{\circ}(R)$ is either $\Ga$ or $\Gm\ltimes \Ga$.
    \end{itemize}
    Furthermore, a closed rosary $R$ such that $\Aut^{\circ}(R)$ is not trivial necessarily has even length.

    If $(R,r_i)$ is only $A_r$-prestable, we get that the other possible groups are $\Gm \ltimes \Ga^2$ and $\PGL_2$. Finally, all the possibilities for $\Aut^{\circ}$ listed above can occur.
\end{proposition}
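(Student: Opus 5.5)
The plan is to compute $\Aut^{\circ}$ of a rosary by lifting to the pointed normalization. By \Cref{rem:motivation-hyp-sing} there is an inclusion
\[
\Aut^{\circ}(R,r_i)\hookrightarrow \prod_{j=1}^{\ell}\Aut^{\circ}(R_j,\nu^{-1}(\text{special points})),
\]
where the factor for $R_j$ fixes the (at most two) special points on it. An interior component fixes two points, so its factor is $\Gm$. An end component carrying a marking also fixes two points and contributes $\Gm$. An end component without a marking fixes only one point and contributes $\Gm\ltimes\Ga\simeq\Aut(\bP^1,\infty)$. The image of this inclusion is then cut out by the descent conditions at each $A_{2k_i+1}$-singularity, namely by the existence of an equivariant crimping datum (\Cref{rem:diff-crimping-same-curve-odd}).

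For the torus part I would use \Cref{lem:hyp-crimping-datum-odd} and \Cref{rem:at-most-one-Gm}. A cocharacter descends only if the weights $n_{2i}$ and $n_{2i+1}$ on the two tangent spaces glued at each singularity agree. On a component $\bP^1$ with fixed points $0,\infty$, the tangent weights at the two points are opposite. Hence the weights alternate $w,-w,w,-w,\dots$ along the chain. In particular the torus part is at most one-dimensional. If it is nontrivial, \Cref{rem:at-most-one-Gm} and \Cref{lem:hyp-crimping-datum-odd} identify it with the diagonal $\Gm$, and the crimping datum is equivariant.

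For a closed rosary the same alternation runs around a cycle. Gluing $p_{2\ell}$ to $p_1$ requires $n_{2\ell}=n_1$, while the alternation forces $n_{2\ell}=(-1)^{\ell-1}n_1$. So $\ell$ is even unless $w=0$. If $w=0$, every component of the closed rosary fixes its two points with trivial weight, and descent of a unipotent element is impossible because interior components have no $\Ga$. This gives the parity claim.

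For the unipotent part, a $\Ga$ can only come from an unmarked end component. If $n\ge1$ or $\ell\ge3$, I would show that any $\Ga$ on an end component $(R_1,\infty)$ fails to descend. The key input is the explicit embedding $\Gm\ltimes\Ga\subset G_h$, $(\lambda,t)\mapsto(\lambda,\lambda t,\lambda t^2,\dots)$, from the proof of \Cref{cor:quotient-stack-description}. A nontrivial $t$ acts nontrivially on the $(h-1)$-jet at $\infty$. To descend, this action must be matched by an automorphism of the jet of the neighbouring component, which is acted on only by a torus unless that component is itself an unmarked end. This happens only when $n=0$ and $\ell=2$. In that case both ends carry $\Gm\ltimes\Ga$, and the matching is computed on $[G_h/(\Gm\ltimes\Ga)^2]$. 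The diagonal-type subgroups surviving there are either $\Gm\ltimes\Ga$, when the datum is the hyperelliptic one (the odd atom, \Cref{rem:odd-atoms}), or only a $\Ga$ for other data. Combining with the torus analysis gives $\Gm$ when $n+\ell\ge3$, and $\Ga$ or $\Gm\ltimes\Ga$ when $n=0$ and $\ell=2$.

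For the prestable extension, I drop ampleness and allow $k_i=0$ at the ends and $\ell=1$. The case $\ell=1$, $n=0$ gives $\PGL_2$. The case $\ell=1$, $n=1$ gives $\Aut(\bP^1,\infty)$, and I would check whether the list really intends $\Gm\ltimes\Ga^2$ to arise through a node, where, by \Cref{rem:aut-and-normalization-along-nodes}, both end groups survive. Realizability of every listed group is by exhibiting each example: the $n$-pointed odd atom for $\Gm\ltimes\Ga$ and for $\Gm$, a non-hyperelliptic crimping for $\Ga$, and $\bP^1$ for $\PGL_2$.

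I expect the main obstacle to be the unipotent-descent computation. It requires explicitly tracking the $\Ga$-orbits on $G_h$ in \Cref{cor:diff-crimping-same-curve-odd}, especially in the $n=0$, $\ell=2$ dichotomy. I would attack it by working in the jet coordinates $\xi\mapsto \lambda\xi/(1-\xi t)$ and computing stabilizers directly.
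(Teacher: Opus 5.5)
Your stable-case argument is the paper's argument, arranged differently. The paper first treats the $2$-pointed case by showing that the kernels $N_i$ of the restriction maps $\CAut(R,r_1,r_2)\to\CAut(R_i,p_{2i-1},p_{2i})$ all coincide. It then peels off the unmarked end of a $1$-pointed rosary and uses that $U_h/\Ga$ has strictly positive weights (\Cref{cor:quotient-stack-description}). You instead bound the torus rank globally by alternation of weights and kill the unipotent part directly. This works, and it also covers $n=0$, $\ell\geq 3$, which the paper does not spell out. Three points need fixing.

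(a) The non-descent step must be about unipotent elements, not about ``a nontrivial $t$''. If the crimping datum is changed by $\eta(u)$ with $u\in\Ga$, the torus coming from the two-pointed neighbour becomes a conjugate torus inside $\eta(\Gm\ltimes\Ga)$. Its nontrivial elements have $t\neq 0$, and they do descend. What fails is matching a nontrivial unipotent element with an element of the neighbour's group $\simeq\Gm$. This step also needs injectivity of $(\lambda,t)\mapsto(\lambda,\lambda t,\dots,\lambda t^{h-2})$, which holds only for $h\geq 3$, i.e.\ $k\geq 2$ at an unmarked end. That is exactly where stability enters, and you should say so.

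(b) For $n=0$, $\ell=2$ you must exclude a pure $\Gm$. Tori of $\Gm\ltimes\Ga$ are $\Ga$-conjugate, so if a torus survives, the datum is $\Ga$-equivalent to a $\Gm$-fixed point of $U_h/\Ga$. By positivity of the weights that point is the origin, i.e.\ the odd atom, whose $\CAut$ is all of $\Gm\ltimes\Ga$.

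(c) There is a sign slip in the closed-rosary computation. Weights are opposite on each component and equal at each singularity, so $n_{2i}=(-1)^i n_1$ and hence $n_{2\ell}=(-1)^{\ell}n_1$. Your $(-1)^{\ell-1}$ would force odd length.

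The genuine gap is the prestable part. Rosaries have all $k_i\geq 1$, so nodes never occur, and two unmarked lines glued at a node would give $(\Gm\ltimes\Ga)^2$ in any case. What dropping ampleness actually allows, besides $\ell=1$, is $k=1$, i.e.\ a tacnode with $h=2$, at an unmarked end. There $G_2\simeq\Gm$ and $\Gm\ltimes\Ga\to G_2$ is $(\lambda,t)\mapsto\lambda$, so the end $\Ga$ acts trivially on the $1$-jet and descends for free. Two unmarked lines glued at a tacnode therefore have $\CAut\simeq\Gm\ltimes\Ga^2$: the two torus factors must agree and both $\Ga$'s survive. This is the example you are missing, and it is the same failure of injectivity that (a) must rule out.

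If you carry this analysis through, the prestable list in the statement is not exhaustive. Take a length-$4$ rosary with tacnodes at both unmarked ends and a non-$\Gm$-equivariant datum at the middle $A_{2k+1}$, $k\geq 2$. A non-toral element of $G_h$ conjugates the standard torus to one meeting it in a finite group, so the torus dies. Both end $\Ga$'s survive, giving $\CAut\simeq\Ga^2$.

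Finally, realizing $\Ga$ needs a specific datum, e.g.\ $h=4$. There the images of the two copies of $\Gm\ltimes\Ga$ are $2$-dimensional subgroups of the $3$-dimensional $G_4$, so they meet in positive dimension and every non-atom datum gives exactly $\Ga$. For $h\geq 5$ a generic datum gives trivial $\CAut$.
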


\begin{proof}
    Assume first that the rosary is $2$-pointed. By \Cref{rem:motivation-hyp-sing}, the action of $\Aut^{\circ}(R, r_i)$ lifts to the pointed normalization
    \[
    \coprod_{i=1}^{\ell} (R_i,p_{2i-1},p_{2i}) \longrightarrow (R,r_1,r_2).
    \]
    Let $N_i \lhd \Aut^{\circ}(R, r_1, r_2)$ be the kernel of the map $\Aut^{\circ}(R,r_1,r_2) \to \Aut^{\circ}(R_i,p_{2i-1},p_{2i})$. Thanks to \Cref{lem:hyp-crimping-datum-odd} (see also \Cref{rem:at-most-one-Gm}), we have $N_i = N_{i+1}$ since by definition $(R_i,p_{2i-1},p_{2i})\simeq (\bP^1,0,\infty)$ and $\Aut(\bP^1,0,\infty)\simeq \bG_m$. Therefore, the injectivity of the map
    \[
    \Aut^{\circ}(R, r_1, r_2) \to \Aut^{\circ}(\coprod_{i=1}^{\ell} (R_i,p_{2i-1},p_{2i}))
    \]
    forces $N_i$ to be trivial for any $i = 1, \dots, \ell$, and therefore $\Aut^{\circ}(R, r_1, r_2) \simeq \Gm$. Since \Cref{lem:hyp-crimping-datum-odd} also allows us to construct a $\Gm$-equivariant crimping datum, we get the statement for the case $n=2$.

    Notice that the $2$-pointed case automatically lands us in the first case since $\ell \geq 1$. If $(R,r)$ is $1$-pointed and $\ell \geq 2$, then we know that there is an irreducible smooth rational component $\Gamma$ such that $P_{\Gamma}=\emptyset$ and $I_{\Gamma}=\{p\}$, where $p$ is an odd $A_h$-singularity with $h\geq 5$ (since $R$ is $A_r$-stable). Therefore, $(R-\Gamma,r,p)$ is a $2$-pointed $A_h/A_0$-attached rosary of length $\ell-1\geq 1$. Since $\Aut^{\circ}(R-\Gamma, r, p) \simeq \Gm$ because it is $2$-pointed, \Cref{cor:quotient-stack-description} shows that $(R,r)$ can be identified with an object of $[(U_h/\Ga)/\bG_m]$, where $(U_h/\Ga)$ is a $\bG_m$-representation with strictly positive weights. Thus, $\Aut^{\circ}(R,r)$ being nontrivial implies $\Aut^{\circ}(R,r)\simeq \bG_m$.

    Notice that the case $n=1$ and $\ell=1$ is not stable (it is a $1$-pointed smooth rational curve); thus, it remains to study the case $\ell = 2$ and $n = 0$. This follows again by \Cref{cor:quotient-stack-description}. We avoid writing down the classification of the non-stable cases, as they are not relevant for the current paper, even though it can be easily deduced using the same ideas.

    In the case of a closed rosary $R$, we have $\Aut^{\circ}(R) \subset \Gm$ using the same strategy. We can then observe that, on any connected component $(\bP^1, 0, \infty)$ of the pointed normalization, the action of $\Gm \simeq \Aut^{\circ}(\bP^1, 0, \infty)$ on $T_0 \bP^1$ and $T_{\infty} \bP^1$ has opposite weights: the claim then follows from \Cref{lem:hyp-crimping-datum-odd}.
\end{proof}

The following is a consequence of \Cref{lem:hyp-crimping-datum-odd} and \Cref{cor:quotient-stack-description}.
\begin{remark}
    Given a string of positive integers $k_1, \dots, k_{\ell-1}$ with at least one $k_j > 1$, it is possible to construct a $0$-pointed rosary $R$ as in \Cref{def:attached-rosaries} such that $\Aut^{\circ}(R)$ is trivial. In particular, for any choice of $d$, we can construct a closed rosary $R'$ such that $\Aut^{\circ}(R')$ is trivial.
\end{remark}

Moreover, the following is a consequence of \Cref{lem:hyp-crimping-datum-odd}.
\begin{remark}
    Let $(R, r_1, r_2)$ be a $2$-pointed rosary of even length such that $\Aut^{\circ}(R, r_1, r_2) \simeq \Gm$. Then, for any $d \geq 1$, we can construct a closed rosary $R'$ by crimping an $A_{2d+1}$-singularity at $r_1, r_2$ such that $\Aut^{\circ}(R') \simeq \Gm$. This procedure is not unique; it is therefore possible to construct a family of such closed rosaries over $\Gm$ whose geometric fibers are pairwise not isomorphic (see also \Cref{rem:at-most-one-Gm}).
\end{remark}

Finally, we prove the fundamental proposition, which explains why these are the building blocks for $A_r$-stable curves with positive-dimensional stabilizers.

\begin{proposition}\label{prop:stab-no-out}
    Let $(C,p_1,\dots,p_n)$ be an $n$-pointed $A_r$-stable curve
    \index{$A_r$-stable curve}
    of genus $g$ with \textbf{no outer nodes} and $r\leq 2g$. Then $\CAut(C,p_1,\dots,p_n)$ is either trivial or $\bG_m$. Moreover, if $\CAut(C,p_1,\dots,p_n)\simeq \bG_m$, then $n\leq 2$ and either
    \begin{itemize}
        \item[i)] $(C,p_1,\dots,p_n)$ has an attached rosary and the gluing morphism is surjective;
        \item[ii)] $n=0$ and $C$ is a closed rosary of even length;
        \item[iii)] $n=0$ and $C$ is irreducible of geometric genus $0$ with either one or two hyperelliptic even $A$-singularities.
    \end{itemize}
\end{proposition}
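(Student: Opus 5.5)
We will work on the pointed normalization $\nu\colon \coprod_j (\Gamma_j, Q_j)\to (C,p_1,\dots,p_n)$, where $Q_j$ collects the preimages of markings and singularities on $\Gamma_j$. By \Cref{rem:motivation-hyp-sing}, $\CAut(C,p_1,\dots,p_n)$ embeds into $\prod_j \CAut(\Gamma_j,Q_j)$. It acts trivially on every $\Gamma_j$ that is not $\bP^1$, and also on every $\bP^1$ with at least three special points. Call a component \emph{active} if the group acts nontrivially on it. Active components are therefore copies of $\bP^1$ with at most two special points. We first show that $C$ has an active component if the group is nontrivial. Then we propagate activity: by the corollary after \Cref{lem:hyp-crimping-datum-odd}, any component meeting an active one in a worse-than-nodal odd singularity is active. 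Since $C$ has no outer nodes, every outer singularity is such a singularity, so activity spreads along all outer singularities. Hence either every component is active, or the active locus is a union of components glued to the rest only at markings. In the second case, connectedness of $C$ again forces all components to be active.

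So every component is a $\bP^1$ with at most two special points in its normalization. We count special points of $\nu^{-1}$: a marking contributes $1$, an even singularity contributes $1$ to one component, and an odd singularity contributes $2$. Build the incidence graph whose vertices are components and whose edges are outer singularities. Each vertex has degree at most $2$ once inner singularities and markings are included, so the graph is a chain or a cycle. In a cycle, every component uses both special points on outer odd singularities, so there are no markings and no inner singularities, and $C$ is a closed rosary. \Cref{prop:aut0-rosaries} then gives even length and $\CAut\simeq\Gm$; this is case ii). In a chain, the two end components have one free special point each (the middle ones have none). A free special point is either a marking, the unique branch of an inner even singularity on that component, or one branch of an inner odd singularity; in the last case both branches lie on the same component, which forces a single component. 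If the chain has length $\geq 2$, the ends carry markings or inner even singularities. After normalizing those even singularities, we obtain a $2$-, $1$- or $0$-pointed rosary with surjective gluing morphism, giving case i) and $n\le 2$. If there is a single component, it is $\bP^1$ with at most two special points. The possibilities are: markings, giving case i) as a rosary of length $1$; an odd inner singularity, which is a self-crimping of $(\bP^1,0,\infty)$ and hence a closed rosary of length $1$, excluded by the parity statement; or one or two even singularities, which are hyperelliptic by \Cref{lem:hyp-crimping-datum-even}, giving case iii).

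To show $\CAut$ is either trivial or $\Gm$, we again use \Cref{rem:at-most-one-Gm}. For each component, the restriction $\CAut(C)\to\CAut(\Gamma_j,Q_j)$ has kernel that is constant along the chain or cycle, and the total map is injective, so the group embeds in a single $\CAut(\bP^1,Q_j)$. When $|Q_j|=2$ this is $\Gm$. The hypothesis $r\le 2g$ is used to exclude the $\Gm\ltimes\Ga$ cases: these are the $0$-pointed length-$2$ rosary, the odd atom, which carries a separating $A_{2g+1}$-singularity, and single components with one special point. By \Cref{prop:aut0-rosaries} and \Cref{rem:genus-count}, these cases need an $A_{2g+1}$-singularity or fail stability. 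A single even inner singularity on $(\bP^1,q)$ is the even atom, whose stabilizer is $\Gm$ by \Cref{rem:even-atoms}.

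The main obstacle is the first step. We must check that a nontrivial connected group really forces an active component; this holds because $\CAut$ embeds into the product. The harder part is making the propagation rigorous when the group is not a torus, for instance a unipotent $\Ga$. Here I would first reduce to showing that the group contains no $\Ga$: a $\Ga$ acting nontrivially needs a component with a single special point, and the same genus count shows this forces an $A_{2g}$- or $A_{2g+1}$-singularity. Only the case $A_{2g}$ is allowed, and it is the even atom, where the group is $\Gm$. Once $\Ga$ is excluded, the group is a torus and we can apply the corollary to each cocharacter.
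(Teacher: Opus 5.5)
Your global strategy (every component active, dual graph a chain or cycle, then read off the cases) is a genuinely different route from the paper's induction on the genus, and the combinatorial part is fine. However, the step you yourself flag as the main obstacle is not closed by the argument you propose.

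You exclude a $\Ga\subset\CAut(C,p_1,\dots,p_n)$ by saying that a component with a single special point forces, by the genus count, an $A_{2g}$- or $A_{2g+1}$-singularity. This holds only when that special point is an even singularity, since then $C$ is irreducible. If it is one branch of an outer $A_{2k+1}$-point, that point is separating, and \Cref{rem:genus-count} only gives $g=g(C-\Gamma)+k$. For example, take a smooth curve of genus $g-2\geq 1$ with a $\bP^1$ attached at a single $A_5$-point. It is $A_r$-stable for $5\le r\le 2g$ and has no outer nodes. It has a component with one special point, yet no $A_{2g}$- or $A_{2g+1}$-singularity.

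What actually prevents a $\Ga$ from acting on such a tail is crimping, not numerics. For $k\geq 2$ the map $\Aut(\bP^1,\infty)\to G_{k+1}$ is injective; this is the formula $(\lambda,t)\mapsto(\lambda,\lambda t,\lambda t^2,\dots)$ in the proof of \Cref{cor:quotient-stack-description}. So the action on the tail is determined by the action on the other branch. Your propagation step, and with it the chain/cycle classification, was made conditional on this torus reduction, so the argument has a hole exactly there.

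The fix is to propagate with this jet argument for an arbitrary connected subgroup $H\subset\CAut(C,p_1,\dots,p_n)$, with no torus reduction. Suppose $\Gamma,\Gamma'$ meet at an outer $A_{2k+1}$-point with branches $q_1\in\Gamma$, $q_2\in\Gamma'$, and $H$ acts trivially on $\Gamma'$. Then $H$ acts trivially on the $k$-jet at $q_2$, hence on the $k$-jet at $q_1$ by the crimping identification.
\begin{itemize}
\item For $k\geq 2$ this forces $H$ to act trivially on $\Gamma$.
\item For $k=1$ the image of $H$ lies in the translation subgroup of $\Aut(\bP^1,q_1)$, which fixes no other point. So $q_1$ is the only special point of $\Gamma$, and $\deg\omega_C|_{\Gamma}=0$, contradicting stability.
\end{itemize}
The same argument shows that the kernels of $\CAut(C,p_1,\dots,p_n)\to\Aut(\Gamma_j,Q_j)$ are constant along the chain or cycle. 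Hence the group embeds into $\CAut(\bP^1,Q_j)\simeq\Gm$ for any component with two special points. The only configurations with no such component are the even atom, handled by \Cref{rem:even-atoms}, and two copies of $\bP^1$ meeting at one $A_{2g+1}$-point, excluded by $r\le 2g$.

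With this repair your proof is a valid, non-inductive alternative to the paper's. The paper instead cuts at one outer $A_{2k+1}$-singularity at a time, uses the sprouting descriptions of the partial-normalization stacks to see that the pieces again have no outer nodes, applies induction, and uses \Cref{cor:quotient-stack-description} for the genus-$0$ piece. Your version trades that inductive bookkeeping for a direct analysis of the dual graph.
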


Notice that case iii) is actually a special case of i) for length $1$ rosaries.

\begin{proof}
    We proceed by induction on the genus $g$. The case $g=0$ is trivial, because of \Cref{rem:genus-count}. Suppose that $(C,p_1,\dots,p_n)$ is an integral $n$-pointed $A_r$-stable curve of genus $g$ such that the group $\CAut(C,p_1,\dots,p_n)$ is not trivial. It follows easily that $n\leq 1$ and $C$ has geometric genus $0$. If $n=1$, looking at the normalization we can deduce that $C$ is a $1$-pointed even atom (see for instance \Cref{rem:even-atoms}). If $n=0$, $C$ cannot have inner odd $A$-singularities; otherwise, $\CAut(C)$ would be trivial by \Cref{lem:hyp-crimping-datum-odd}. Therefore, $C$ is irreducible of geometric genus $0$ with either one or two even $A$-singularities.

    Suppose now that $C$ is not irreducible. Since there are no outer nodes, there exists at least one outer $A_{2k+1}$-singularity with $k\geq 1$. We denote by $q \in C$ such an $A_{2k+1}$-singularity, by $\widetilde{C}\rightarrow C$ the partial normalization at $q$, and by $q_1,q_2$ the two closed points of $\widetilde{C}$ that lie over $q$. Moreover, by abuse of notation, we denote by $p_i$ the preimage of $p_i$ in $\widetilde{C}$. Clearly, $\CAut(\widetilde{C},p_1,\dots,p_n,q_1,q_2)$ is not trivial, otherwise the same would be true for $\CAut(C,p_1,\dots,p_n)$.

    Suppose that $q$ is a non-separating singularity, i.e., $\widetilde{C}$ is connected. Notice that the arithmetic genus of $\widetilde{C}$ cannot be zero; otherwise, $C$ would either be irreducible or there would be outer nodes in $C$. This implies that $k\leq g-1$ and $(\widetilde{C},p_1,p_2,\dots,p_n,q_1,q_2)$ is an object of
    $\cM_{g-h,n,2[k]}^r$.
    Because there are no outer nodes, \Cref{rem:sprout-proj-line} shows that $(\widetilde{C},p_1,p_2,\dots,p_n,q_1,q_2)$ lives inside the open $\cM_{g-h,n+2}^r\subset  \cM_{g-h,n+2}^r \times [\bA^1/\bG_m] \times [\bA^1/\bG_m]$. By the induction hypothesis, this implies that $n=0$, $(\widetilde{C},q_1,q_2)$ is a $2$-pointed rosary of hyperelliptic $A$-singularities, and $\Aut^{\circ}(\widetilde{C},q_1,q_2)\simeq \bG_m$. By construction, $C$ is a closed rosary, and \Cref{prop:aut0-rosaries} gives that $C$ has even length and $\Aut^{\circ}(C)\simeq \bG_m$.

     Finally, suppose that $q$ is a separating (odd) $A$-singularity, and let $C_1$ and $C_2$ be the two connected components of $\widetilde{C}$. We denote by $P_i$ the subset of $\{p_1,\dots,p_n\}$ that lies in $C_i$ and by $n_i$ the cardinality of $P_i$ for $i=1,2$. Suppose that $C_1$ and $C_2$ have positive arithmetic genus, then
     $$ (C_i,P_i,q_i) \in \cM_{g_i, n_i, [k]} \simeq \cM_{g_i, n_i+1} \times [\bA^1/\bG_m] $$
     where $g_i$ is the arithmetic genus of $C_i$ for $i=1,2$. This follows from \Cref{prop:descr-an-disp-i}, while the isomorphism is described in \Cref{prop:descr-base-crimping-even}. Because $C$ does not have outer nodes, again we have
     $$ (C_i,P_i,q_i) \in  \cM_{g_i, n_i+1} \subset \cM_{g_i, n_i+1} \times [\bA^1/\bG_m] $$
     for $i=1,2$. By induction, we get that $n_i\leq 1$, and both $(C_1,P_1,q_1)$ and $(C_2,P_2,q_2)$ are rosaries of hyperelliptic $A$-singularities. Therefore, $n=n_1+n_2\leq 2$, and $C$ is the gluing of the two rosaries of hyperelliptic $A$-singularities along an $A_{2k+1}$-singularity that is also hyperelliptic; otherwise, $\Aut^{\circ}(C,p_1,\dots,p_n)$ would be trivial. Thus, we get case (i).

     Notice that if both $C_1$ and $C_2$ have arithmetic genus zero, then $k=g$ and therefore $r=2g+1$. Thus, the only remaining case is where only one curve, say $C_1$, has genus $0$. This is straightforward, and it consists of adding a rational component to one of the extremities of the rosary of hyperelliptic $A$-singularities $C_2$. By \Cref{cor:quotient-stack-description}, we have the statement.
\end{proof}

\Cref{prop:stab-no-out} motivates the following definition:

\begin{definition}[(Attached) $\Gm$-Rosaries]\label{def:Gm-rosaries}\index{(Attached) $\Gm$-Rosaries}
    Let $k$ be an algebraically closed field over $\kappa$. A (potentially marked) curve $R$ over $k$ is a \emph{$\Gm$-rosary} if $R$ is a (potentially marked) rosary and there exists an injective map $\Gm \to \Aut(R)$.

    We say that a (potentially marked) curve $C$ over $k$ has an \emph{attached $\Gm$-rosary} if there exists a (potentially marked) $\Gm$-rosary $R$ and a gluing morphism $\gamma \colon R \to C$.

    We say that $C$ has an \emph{attached rosary of hyperelliptic A-singularities} if it has an attached rosary $\gamma \colon R \to C$ that contains in its image a hyperelliptic $A$-singularity of odd type.

    Similarly, we say that $C$ is a \emph{closed $\Gm$-rosary} \index{closed $\Gm$-rosaries} if it is a closed rosary and there exists an injective map $\Gm \to \Aut(R)$.
\end{definition}

\begin{remark}
    Any attached rosary of hyperelliptic $A$-singularities is an attached $\Gm$-rosary. The converse is not always true: automorphisms of rosaries do not necessarily lift to the whole curve.
\end{remark}

In \Cref{prop:stab-no-out}, we require that the inequality $r\leq 2g$ holds. When $r > 2g$, the only case is $r=2g+1$, which is covered in the following corollary.

\begin{corollary}\label{cor:gm-decom-aut}
    Let $r$, $n$, and $g$ be three nonnegative integers. Let $(C,p_1,\dots,p_n)$ be an $A_r$-stable $n$-pointed curve of genus $g$ over an algebraically closed field, then $\CAut(C,p_1,\dots,p_n)$ is exactly one of the following:
    \begin{itemize}
        \item[i)] the trivial group,
        \item[ii)] a split torus, i.e., a direct product of copies of $\bG_m$,
        \item[iii)] $\Ga$ or $\bG_m \ltimes \Ga$;
    \end{itemize}
    moreover, a non-reductive automorphism group appears only if $r \geq 2g+1$, $n=0$, and $C$ is an $A$-stable curve with an $A_{2g+1}$-singularity (not necessarily an odd atom).
\end{corollary}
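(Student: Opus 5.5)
The plan is to reduce to \Cref{prop:stab-no-out} by normalizing along the outer nodes. Let $\nu\colon \widetilde{C}\to C$ be the partial pointed normalization along all outer nodes of $C$. The preimages of these nodes become extra markings, so $\widetilde{C}$ is a disjoint union of connected pointed curves $(C_j,P_j)$, each with no outer nodes. By \Cref{rem:aut-and-normalization-along-nodes}, the map $\iota$ is an isomorphism here, which gives
$$\CAut(C,p_1,\dots,p_n)\simeq \prod_j \CAut(C_j,P_j).$$
Each $(C_j,P_j)$ is $A_r$-prestable and has no outer nodes. It may fail to be stable, but only in the ways described in \Cref{rem:sprout-proj-line}: it can be a smooth rational component with at most two special points. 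These unstable pieces are $(\bP^1,0,\infty)$, with connected automorphism group $\Gm$, or $1$-pointed $\bP^1$'s. A $1$-pointed $\bP^1$ cannot occur, since its attaching point is a node and $C$ would then fail to be stable.

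For each stable factor $(C_j,P_j)$ of genus $g_j$, I would apply \Cref{prop:stab-no-out} when $r\le 2g_j$, which gives either the trivial group or $\Gm$. If instead $r\ge 2g_j+1$, the factor has an $A_{2g_j+1}$-singularity, or possibly an $A_{2g_j}$ one. I would handle it directly with \Cref{rem:genus-count} and \Cref{prop:aut0-rosaries}. The singularity must be separating, and $C_j$ is two smooth rational curves glued at it, so $C_j$ is a $0$-, $1$- or $2$-pointed rosary of length $2$. By \Cref{prop:aut0-rosaries} its automorphism group is then trivial, $\Gm$, $\Ga$, or $\Gm\ltimes\Ga$, and the last two occur only when $P_j=\emptyset$. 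An even $A_{2g_j}$ singularity instead gives an irreducible component, which is covered by the integral case of \Cref{prop:stab-no-out}.

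The main step, and the one I expect to be the real obstacle, is to show that a non-reductive factor forces $C_j=C$. A factor with $P_j=\emptyset$ has no preimages of outer nodes, so it is a whole connected component of $\widetilde{C}$ that was never attached to anything; since $C$ is connected, $C_j=C$. Then $n=0$ and $g=g_j$, so $r\ge 2g+1$ and $C$ carries an $A_{2g+1}$-singularity, which is the final assertion of the statement. In every other case all factors are trivial or $\Gm$, so the product is a split torus or trivial. I would also check that $\Ga$ alone really arises from a non-atom rosary, which justifies the parenthetical ``not necessarily an odd atom''. Finally, I would verify the exact genus bookkeeping in the case $r\ge 2g_j+1$, namely that the separating singularity is forced, using \Cref{rem:genus-count}.
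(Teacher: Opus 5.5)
This is the paper's route. You normalize along all outer nodes, identify $\CAut(C,p_1,\dots,p_n)$ with $\prod_j\CAut(C_j,P_j)$ via \Cref{rem:aut-and-normalization-along-nodes}, apply \Cref{prop:stab-no-out} to the pieces, and use \Cref{prop:aut0-rosaries} for the $A_{2g+1}$-case. Your observation that a piece with $P_j=\emptyset$ must be all of $C$ is exactly what isolates the non-reductive case. Working piece by piece is in fact slightly more careful than the paper's one-line appeal to \Cref{prop:stab-no-out}. A piece such as a nodally attached $2$-pointed odd atom has an $A_{2g_j+1}$-singularity, so it violates the hypothesis $r\le 2g_j$ of that proposition and needs separate treatment, which your plan attempts to give.

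Three of your statements need repair, however.

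First, there are no unstable pieces. At a node the conductor formula of \Cref{rem:genus-count} gives $\nu^*\omega_C=\omega_{\widetilde C}(q_1+q_2)$. Hence $\nu^*\bigl(\omega_C(\textstyle\sum p_i)\bigr)$ is the log-canonical bundle of $(\widetilde C,\bigcup_j P_j)$, and it is ample, so every $(C_j,P_j)$ is $A_r$-stable. \Cref{rem:sprout-proj-line} concerns worse-than-nodal singularities only. A $(\bP^1,0,\infty)$ piece is excluded by the same degree count you used for $1$-pointed $\bP^1$'s. This error is harmless, since extra $\Gm$ factors would still give a torus.

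Second, the claim ``if $r\ge 2g_j+1$ then $C_j$ has an $A_{2g_j+1}$- or $A_{2g_j}$-singularity'' is false (take $C_j$ smooth). As written, your case analysis therefore misses such factors. Split instead according to whether $C_j$ has an $A_{2g_j+1}$-singularity. If it does not, \Cref{rem:genus-count} shows all its singularities are of type $A_{\le 2g_j}$. Then $(C_j,P_j)$ is $A_{2g_j}$-stable and \Cref{prop:stab-no-out} applies with $r$ replaced by $\min(r,2g_j)$.

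Third, if $C_j$ does have such a singularity, the genus count makes it two smooth $\bP^1$'s glued at a separating $A_{2g_j+1}$-singularity. It is a rosary in the sense of \Cref{def:attached-rosaries} only when each component carries at most one point of $P_j$. Otherwise one component has at least three special points and hence no connected automorphisms. The crimping condition then gives $\CAut(C_j,P_j)=1$: weights must match as in \Cref{lem:hyp-crimping-datum-odd}, and when the other component is $1$-pointed, $\Gm\ltimes\Ga$ acts faithfully on jets as in \Cref{cor:quotient-stack-description}.

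With these fixes every factor with $P_j\neq\emptyset$ has $\CAut$ trivial or $\Gm$, and the rest of your argument goes through.
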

\begin{proof}
If $r = 2g+1$, $n=0$, and $C$ is an $A$-stable curve with an $A_{2g+1}$-singularity, then the claim follows from \Cref{prop:aut0-rosaries}. If this is not the case, let $(\widetilde{C}, q_j) \to (C, p_i)$ be the pointed normalization at every outer node. Thanks to \Cref{rem:motivation-hyp-sing}, we have an inclusion $\Aut^{\circ}(C, p_i) \hookrightarrow \Aut^{\circ}(\widetilde{C}, q_i)$ (it is indeed an isomorphism). The target is then a torus thanks to \Cref{prop:stab-no-out}; hence the claim.
\end{proof}

We end this section by giving an estimate of the dimension of the stabilizer group of an $A_r$-stable curve.

\begin{lemma}\label{lem:dim-stab-genus}
    Let $(g,n)$ be two nonnegative integers such that $3g-3+n>0$, and let $(C,p_1,\dots,p_n)$ be an $n$-pointed $A_r$-stable curve of genus $g$. Then $\dim \Aut(C,p_1,\dots,p_n) \leq g $.
\end{lemma}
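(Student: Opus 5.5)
We reduce to the connected component of the identity, since $\dim \Aut = \dim \CAut$. By \Cref{cor:gm-decom-aut}, $\CAut(C,p_1,\dots,p_n)$ is trivial, a split torus $\bG_m^t$, or one of $\Ga$, $\bG_m\ltimes\Ga$. The non-reductive cases have dimension at most $2$ and occur only when $C$ has an $A_{2g+1}$-singularity and $n=0$. Then $3g-3>0$ forces $g\geq 2$, so the bound holds there. The real content is therefore the torus case: we must show $t\leq g$.

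The plan is to bound $t$ by counting independent $\bG_m$-factors in terms of genus contributions. Let $\nu\colon \widetilde{C}\to C$ be the pointed normalization at all outer nodes. By \Cref{rem:aut-and-normalization-along-nodes}, $\CAut(C,p_i)\simeq \CAut(\widetilde{C},q_j)$, and the latter is the product of $\CAut$ of the connected components $(\widetilde{C}_\alpha, q_j)$ of $\widetilde{C}$. Each such component has no outer nodes. We would like to apply \Cref{prop:stab-no-out} to each component, which says the group is trivial or $\bG_m$, so $t$ is at most the number of components $\widetilde{C}_\alpha$ with nontrivial $\CAut$.

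We then show that each component with nontrivial $\CAut$ has arithmetic genus $g_\alpha\geq 1$. For this we use the description in \Cref{prop:stab-no-out}: such a component is an attached rosary with surjective gluing, a closed rosary of even length, or an irreducible rational curve with hyperelliptic even singularities. In every case it contains an $A_m$-singularity with $m\geq 2$. The rosary cases contain at least one odd $A_{2k+1}$ with $k\geq1$ (length $\geq 2$) or an even singularity, and each such singularity contributes positive genus by \Cref{rem:genus-count}. The only exception is a length-one $2$-pointed rosary, that is, a $2$-pointed smooth $\bP^1$ bridge. We handle this by noting that in an $A_r$-stable curve, a $\bP^1$ with exactly two special points is unstable, since $\omega$ has degree $0$ there. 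Hence every $\bG_m$-contributing component has $g_\alpha\geq1$.

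Finally, since we normalized only nodes, $g=\sum_\alpha g_\alpha + b_1(\Gamma)\geq \sum_\alpha g_\alpha$, where $\Gamma$ is the graph of outer nodes. This gives $t\leq \#\{\alpha: g_\alpha\geq1\}\leq g$.

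The main obstacle is that \Cref{prop:stab-no-out} assumes $r\leq 2g_\alpha$ and $A_r$-stability of the component, which may fail for a component of $\widetilde{C}$. I would handle $r>2g_\alpha$ by replacing $r$ with $2g_\alpha+1$ via \Cref{rem: max-sing}, and treat the case $r=2g_\alpha+1$ using \Cref{prop:aut0-rosaries}. That result gives dimension $\leq 2$ only for an unmarked component, which would have to be all of $C$ and is already handled. Stability of $(\widetilde{C}_\alpha,q_j)$ follows because normalizing nodes preserves ampleness of the log-canonical bundle, with the preimages of the nodes marked.
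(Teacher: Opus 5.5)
Your proof is correct, but it is organized differently from the paper's. The paper argues by induction on $g$. When $\CAut(C,p_1,\dots,p_n)$ is a nontrivial torus, it picks one hyperelliptic $A_h$-singularity with $h\geq 2$ and passes to the stabilization of the pointed partial normalization there. Via \Cref{lem:hyp-crimping-datum-even} and \Cref{rem:at-most-one-Gm}, it asserts that this lowers the dimension of the automorphism group by one and the genus by at least one, and then applies the inductive hypothesis. The non-reductive case is disposed of exactly as you do.

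You instead make one global count. After normalizing all outer nodes, \Cref{prop:stab-no-out} bounds each component's contribution by one. Stability forces each contributing component to have positive genus. Additivity of genus under nodal normalization then finishes. Your route gives the more transparent intermediate bound $\dim\CAut(C,p_1,\dots,p_n)\leq \#\{\alpha : \CAut(\widetilde{C}_\alpha)\neq 1\}\leq \#\{\alpha : g_\alpha\geq 1\}$. It also avoids having to justify the paper's claim that a single normalize-and-stabilize step drops the dimension by exactly one. The price is that you use the full classification in \Cref{prop:stab-no-out}, and you must separately handle the components where its hypothesis $r\leq 2g_\alpha$ fails.

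Two small points to tidy.

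First, the right dichotomy is whether a component contains an $A_{2g_\alpha+1}$-singularity. If it does not, it is $A_{2g_\alpha}$-stable and \Cref{prop:stab-no-out} applies directly. If it does, it is two smooth rational curves glued at a separating $A_{2g_\alpha+1}$-singularity, and \Cref{prop:aut0-rosaries} only covers at most two special points. With three or more special points, one of the two lines carries at least three special points, so the torus acts with weight $0$ on that branch. By \Cref{lem:hyp-crimping-datum-odd} it then acts with weight $0$ on the other branch too. A torus in $\Aut(\bP^1,\infty)$ acting trivially on $T_\infty\bP^1$ is trivial, so this component contributes nothing.

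Second, positivity of $g_\alpha$ follows more quickly than from the case list. A connected genus-$0$ $A$-prestable curve without outer nodes is a smooth $\bP^1$, and stability gives it at least three special points, hence trivial $\CAut$.
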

\begin{proof}
    The result follows by a simple inductive argument on the genus $g$. If the genus is $0$, then the moduli stack is a scheme, so there is nothing to prove. Suppose $g>0$; thus, we can assume $\dim \Aut(C,p_1,\dots,p_n)>0$. Suppose we are not in case $(iii)$ of \Cref{cor:gm-decom-aut}. This implies there exists a hyperelliptic $A_h$-singularity $p\in C$ with $h\geq 2$. By \Cref{lem:hyp-crimping-datum-even} and \Cref{rem:at-most-one-Gm}, it is clear that the dimension of the automorphism group of the stabilization of the pointed partial normalization is $\dim \Aut(C,p_1,\dots,p_n) - 1$. By induction, we get that $\dim \Aut(C,p_1,\dots,p_n) - 1 \leq g'$, where $g'$ is the genus of the partial normalization at $p$. Since $h\geq 2$, we have $g'\leq g-1$; thus, we obtain the statement. If we are in case $(iii)$ of \Cref{cor:gm-decom-aut}, then by stability $g\geq 2\geq \dim \Aut(C)$, and we are done.
\end{proof}
\subsection{Deformation theory}\label{sub:def-theory}

In this subsection, we study the deformation theory of the hyperelliptic $A$-singularities we introduced in the previous subsection. This will play a key role in understanding the possible isotrivial degenerations in $\cM_{g,n}^r$.

\begin{remark}
Recall that given a curve $C$ over an algebraically closed field $k$ and a singularity $q$, we can always consider the surjective morphism of vector spaces
$$\eta: T^1\Def_C \rightarrow T^1_q$$
where $T^1_q$ is the deformation space of the singularity $q$.
The kernel of $\eta$ classifies first-order deformations of $C$ which preserve the singularity $q$. If $q$ is the only singularity in $C$, the exact sequence
$$
\begin{tikzcd}
&                                           & T^1\Def_C \arrow[d, Rightarrow, no head] \arrow[r] & T^1_q \arrow[d, Rightarrow,  no head]        &   \\
0 \arrow[r] & {\oH^1(\cHom(\Omega_C,\cO_C))} \arrow[r] & {\Ext^1(\Omega_C,\cO_C)} \arrow[r]                 & {\oH^0(\cExt^1(\Omega_C,\cO_C))} \arrow[r] & 0
\end{tikzcd}
$$
gives us the description of $\ker \eta$.
\end{remark}

Recall that we have a closed embedding
$$\cH_{g}^r\hookrightarrow \cM_g^r$$
which gives us an injective morphism of tangent spaces
$$ T_{[C]}\cH_g^r \hookrightarrow T_{[C]}\cM_g^r$$
where the right-hand side can be identified with $T^1\Def_C$. For the rest of the subsection, we will consider $r\geq 2g+1$, so that we can work in the stack $\cM^r_{g,n} = \cM_{g,n}^{2g+1}$ (see \Cref{rem: max-sing}).

\begin{proposition}\label{prop:def-atom}
    Let $C$ be an atom of genus $g$. Then there exists a section of
    $$\eta: T^1\Def_C \rightarrow T^1_q$$
    which identifies $T^1_q$ with $T_{[C]}\cH_g^r$, the tangent space of the hyperelliptic locus.
\end{proposition}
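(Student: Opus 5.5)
The plan is to prove that the composite $T_{[C]}\cH_g^r \hookrightarrow T^1\Def_C \xrightarrow{\eta} T^1_q$ is an isomorphism. Its inverse, followed by the inclusion of the hyperelliptic tangent space, is then the required section of $\eta$, and it identifies $T^1_q$ with $T_{[C]}\cH_g^r$. Since $C$ is an atom, its only singularity is the $A_{2g}$-singularity (even atom) or the $A_{2g+1}$-singularity (odd atom) $q$. By \Cref{lem:even-atom} and \Cref{lem:odd-atom}, $C$ is honestly hyperelliptic, so we may work inside the open $\cH_g^{r,\circ}\simeq [U_{r+1}/(\GL_2/\mu_{g+1})]$, where $r=2g+1$ and $U_{r+1}=\Sym^{2g+2}E\setminus 0$.

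\textbf{Step 1: dimensions.} The deformation space of an $A_m$-singularity $y^2=x^{m+1}$ is the Jacobian algebra $k[x]/(x^m)$, so $\dim T^1_q=m$. This is $2g$ for the even atom and $2g+1$ for the odd atom. Next, $\dim T_{[C]}\cH_g^r=(2g+3)-\dim \GL_2+\dim \CAut(C)=2g-1+\dim\CAut(C)$, computed as a quotient stack with the stabilizer correction. For the even atom $\CAut(C)\simeq\Gm$, giving $2g$. For the odd atom $\CAut(C)\simeq\Gm\ltimes\Ga$, giving $2g+1$. So the two spaces have the same dimension in both cases, and it suffices to show that $\eta$ restricted to $T_{[C]}\cH_g^r$ is injective, or equivalently surjective.

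\textbf{Step 2: explicit model.} Write $C$ as the double cover $y^2=f(x_0,x_1)$ with branch form $f=x_0x_1^{2g+1}$ (even atom) or $f=x_1^{2g+2}$ (odd atom), so that $q$ lies over $x_1=0$. In the affine chart $x_0=1$ the curve is $y^2=x^{2g+1}$, respectively $y^2=x^{2g+2}$. A tangent vector to $\cH_g^r$ is represented by a first-order perturbation $f+\epsilon u$ with $u\in\Sym^{2g+2}E$, taken modulo the $\mathfrak{gl}_2$-orbit directions. Its image under $\eta$ is the class of $u(1,x)$ in the Jacobian algebra $k[x]/(f'(x))=k[x]/(x^{m})$. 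The map $u\mapsto u(1,x)\bmod x^m$ is surjective from binary forms of degree $2g+2\geq m$, because the monomials $x^0,\dots,x^{m-1}$ are available. Hence the composite is surjective, and by Step 1 it is an isomorphism.

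\textbf{Expected obstacle.} The main difficulty is justifying that the stacky tangent-space dimension count and the passage through the orbit directions are compatible. Concretely, one must check that the $\mathfrak{gl}_2$-directions, which form the kernel of $\Sym^{2g+2}E\to T_{[C]}\cH_g^r$ up to the stabilizer, are exactly the ones killed in $k[x]/(x^m)$. This reconciles the surjectivity in Step 2 with the equality of dimensions in Step 1. If the stabilizer bookkeeping becomes messy for the non-reductive odd case, I would instead argue $\Gm$-equivariantly. Both sides decompose into weight spaces for the $\Gm\subset\CAut(C)$, and the weights on $T^1_q=\langle 1,x,\dots,x^{m-1}\rangle$ are distinct. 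One then checks weight by weight that each monomial $x^i$ is realized by a hyperelliptic perturbation.
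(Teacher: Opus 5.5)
Your argument is correct, but it goes in the opposite direction from the paper's. The paper identifies $T_{[C]}\cH_g^r$ with the first-order deformations $T^1\Def_f$ of the double cover $f\colon C\to\bP^1$. It realizes $T^1\Def_f$ as the kernel of $\gamma\colon\Ext^1(\Omega_C,\cO_C)\to\Ext^1(f^*\Omega_{\bP^1},\cO_C)$ and compares the two local-to-global sequences with the snake lemma, which gives \emph{injectivity} of $T^1\Def_f\to T^1_q$. The equality of dimensions is then only asserted.

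You prove \emph{surjectivity} instead, directly in the Arsie--Vistoli chart: a perturbation $f+\epsilon u$ maps to the class of $u(1,x)$ in the Tjurina algebra $k[x]/(x^m)$. You also make the dimension count explicit as $2g-1+\dim\CAut(C)$, which equals $m=2g$ for the even atom and $m=2g+1$ for the odd atom.

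The two routes buy different things. The paper's is coordinate-free and stays inside the $\Ext$-framework that it then builds on for the pointed atoms in \Cref{prop:def-hyp-rat-sing}. Yours is elementary and self-contained. It does not need the input from \cite{Per2} that $T^1\Def_f\to\Ext^1(\Omega_C,\cO_C)$ is injective, since this now follows. It also does not need the injectivity of $\theta$ on $\oH^1$. That injectivity takes more than the vanishing of $\oH^1(\cHom(\Omega_{C/\bP^1},\cO_C))$: since $T_C\to f^*T_{\bP^1}$ is injective with torsion cokernel $\mathcal{Q}$, one needs $\oH^0(f^*T_{\bP^1})\to\oH^0(\mathcal{Q})$ to be surjective. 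This holds for the atoms because the branch locus has at most two points, but it fails for smooth hyperelliptic curves.

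The ``expected obstacle'' you flag is not a real one, and that paragraph can be dropped. The map $\Sym^{2g+2}E\to T^1_q$ from the tangent space of the atlas factors through $T_{[C]}\cH_g^r$, so the $\mathfrak{gl}_2$-directions are killed automatically. Surjectivity of the composite then gives surjectivity of $T_{[C]}\cH_g^r\to T^1_q$, and the dimension equality finishes the proof. No weight-by-weight check is required.

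What you should state explicitly is the following:
\begin{itemize}
\item $\ker(\mathfrak{gl}_2\to \Sym^{2g+2}E)$ is the Lie algebra of the stabilizer of $f$ in $\GL_2/\mu_{g+1}$. That stabilizer is $\Aut(C)$, via the closed embedding $\cH_g^r\hookrightarrow\cM_g^r$, and in characteristic $0$ its Lie algebra has dimension $\dim\Aut(C)$.
\item $q$ is the only singular point of $C$, so $T^1_q=\oH^0(\cExt^1(\Omega_C,\cO_C))$ is exactly the Tjurina algebra at $q$.
\end{itemize}
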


\begin{proof}
    Notice that $C$ is a cyclic cover of degree $2$ of $\bP^1$; therefore, the tangent space $T_{[C]}\cH_g$ coincides with the first-order deformations $T^1\Def_{f}$ of the finite flat morphism $f:C\rightarrow \bP^1$ of degree $2$.
    By \cite[tag~0E3V]{stacks-project}, we have an exact sequence
    $$
    \begin{tikzcd}
    0 \arrow[r] & T^1\Def_f \arrow[r] & {\Ext^1(\Omega_C,\cO_C)} \arrow[r, "\gamma"] & {\Ext^1(f^*\Omega_{\bP^1},\cO_C)}
    \end{tikzcd}
    $$
    where the injectivity on the left follows from \cite[Section~3]{Per2}. Moreover, the exact sequence
    $$
    \begin{tikzcd}
    0 \arrow[r] & {\oH^1(\cHom(\Omega_C,\cO_C))} \arrow[r]  &  {\Ext^1(\Omega_C,\cO_C)} \arrow[r]                & {\oH^0(\cExt^1(\Omega_C,\cO_C))} \arrow[r]& 0
    \end{tikzcd}
    $$
    comes from the local-to-global spectral sequence; thus, it is functorial in its entries. Therefore, we can consider the morphisms of short exact sequences
    $$
    \begin{tikzcd}
    0 \arrow[r] & {\oH^1(\cHom(\Omega_C,\cO_C))} \arrow[r]  \arrow[d, "\theta"] &  {\Ext^1(\Omega_C,\cO_C)} \arrow[r]      \arrow[d, "\gamma"]            & {\oH^0(\cExt^1(\Omega_C,\cO_C))} \arrow[r] \arrow[d, ]& 0 \\
    0 \arrow[r] & {\oH^1(\cHom(f^*\Omega_{\bP^1},\cO_C))} \arrow[r] &  {\Ext^1(f^*\Omega_{\bP^1},\cO_C)} \arrow[r]                 & {\oH^0(\cExt^1(f^*\Omega_{\bP^1},\cO_C))} \arrow[r] &  0
    \end{tikzcd}
    $$
    induced by the morphism $f^*\Omega_{\bP^1} \rightarrow \Omega_{C}$. Since the support of $\Omega_{C/\bP^1}$ is $0$-dimensional, we have that the cohomology group $\oH^1(\cHom(\Omega_{C/\bP^1},\cO_C))$ vanishes, which implies the injectivity of $\theta$. Thus the induced morphism
    $$ T^1\Def_f \simeq \ker \gamma \longrightarrow \oH^0(\cExt^1(\Omega_C,\cO_C))\simeq T^1_q$$
    is injective as well by the snake lemma. Finally, a straightforward computation in deformation theory shows that the two vector spaces have the same dimension, which concludes the proof.
\end{proof}

The same ideas work in the case of pointed atoms.

\begin{proposition}\label{prop:def-hyp-rat-sing}
    Let $(C,p_1,p_2)$ be a $2$-pointed odd atom of genus $g$. Then there exists a section of
    $$\alpha: T^1\Def_{(C,p_1,p_2)} \rightarrow  T^1\Def_C \rightarrow T^1_q$$
    which identifies $T^1_q$ with $T_{[C,p_1,p_2]}\cH_{g,g_1^2}^r$. Similarly, let $(C,p)$ be a $1$-pointed even atom of genus $g$. Then there exists a section of
    $$\beta: T^1\Def_{(C,p)} \rightarrow T^1\Def_C \rightarrow T^1_q$$
    which identifies $T^1_q$ with $T_{[C,p]}\cH_{g,w}^r$.
\end{proposition}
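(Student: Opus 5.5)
We mimic the proof of \Cref{prop:def-atom}, now for pointed curves and the relevant hyperelliptic strata. Treat the $2$-pointed odd atom first; the $1$-pointed even atom is analogous.

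The key observation is that $\cH_{g,g_1^2}^{r,\circ}$ is the stack of degree $2$ covers $f\colon C\to\bP^1$ together with a marked point $x\in\bP^1$ outside the branch divisor, since the pair $\{p_1,p_2\}=f^{-1}(x)$ is an ordered fibre. Similarly, $\cH_{g,w}^{r,\circ}$ is the stack of covers with a marked smooth branch point. Hence $T_{[C,p_1,p_2]}\cH_{g,g_1^2}^r$ is identified with the first-order deformations $T^1\Def_{(f,x)}$ of the pair. We then compare $T^1\Def_{(C,p_1,p_2)}=\Ext^1(\Omega_C,\cO_C(-p_1-p_2))$ with the analogous group for $f^*\Omega_{\bP^1}$. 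The tool is the pointed version of the sequence from \cite[tag~0E3V]{stacks-project}, in which all $\cHom$ and $\Ext$ sheaves are twisted by $\cO_C(-p_1-p_2)$ (respectively by $\cO_C(-p)$). Because the markings are smooth points, twisting does not change the local $\cExt^1$ terms at $q$. So the local-to-global sequences again read
\[
0\to \oH^1(\cHom(\Omega_C,\cO_C(-D)))\to \Ext^1(\Omega_C,\cO_C(-D))\to T^1_q\to 0 .
\]
Here $D=p_1+p_2$ or $D=p$, and the right-hand map is exactly $\alpha$ (respectively $\beta$).

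The argument then runs in three steps. First, write the commutative diagram of short exact sequences induced by $f^*\Omega_{\bP^1}\to\Omega_C$, twisted by $-D$. Second, show that the left vertical map $\theta$ is injective, because $\Omega_{C/\bP^1}$ has $0$-dimensional support and so the relevant $\oH^1$ of the cokernel/kernel term vanishes. Third, identify $\ker\gamma$ with $T^1\Def_{(f,x)}$ and apply the snake lemma. This gives injectivity of $T_{[C,p_1,p_2]}\cH_{g,g_1^2}^r\to T^1_q$, and the image of this injection is the desired section.

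The main obstacle is the dimension count needed to upgrade injectivity to an isomorphism. For the odd atom, $\dim T^1_q=2g+1$ for an $A_{2g+1}$-singularity. By \Cref{prop:hyp-global-descr}, $\cH_{g,g_1^2}^{r,\circ}\simeq[\bA^{2g+1}/\Gm]$; since the tangent space of such a quotient at the origin is $\bA^{2g+1}$ modulo the zero orbit direction, it has dimension $2g+1$, so the two match. For the even atom, $\dim T^1_q=2g$ for an $A_{2g}$-singularity. Here $\cH_{g,w}^{r,\circ}\simeq[\bA^{2g-1}/\Gm]$, which gives only $2g-1$, so I expect a discrepancy of one. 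I would therefore carefully recheck the tangent space at the $\Gm$-fixed point, including the nontrivial $\Ext^0$ contributions from infinitesimal automorphisms, using the explicit $\Gm$-weights from \Cref{lem:hyp-crimping-datum-even}. If the discrepancy persists, the intended statement must involve the versal deformation of $q$ restricted to the hyperelliptic stratum, and I would adjust the identification accordingly.
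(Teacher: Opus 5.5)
There is a genuine gap in the even case, which you call analogous. With the twist by $\cO_C(-p)$ on both rows, the left vertical map $\theta\colon \oH^1(\cHom(\Omega_C,\cO_C(-p)))\to \oH^1(\cHom(f^*\Omega_{\bP^1},\cO_C(-p)))$ is not injective.

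\textbf{Why injectivity fails.} Let $Q$ be the cokernel of $\cHom(\Omega_C,\cO_C(-p))\hookrightarrow\cHom(f^*\Omega_{\bP^1},\cO_C(-p))$. Finite support of $Q$ only gives $\oH^1(Q)=0$, which is surjectivity of $\theta$, not injectivity. For $g\geq 2$:
\begin{itemize}
\item The source has dimension $(3g-1)-2g=g-1$.
\item Since $p$ is a ramification point, $f_*\cO_C(-p)\simeq\cO(-1)\oplus\cO(-g-1)$, so the target has dimension $h^1(\cO(1)\oplus\cO(1-g))=g-2$.
\end{itemize}
The kernel is the line of deformations that fix $C$ and move $p$. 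It is nonzero in $\operatorname{Ext}^1(\Omega_C,\cO_C(-p))$ because $\CAut(C)\simeq\Gm$ fixes $p$. It dies under $\gamma$ because $f^*\Omega_{\bP^1}\to\Omega_C$ vanishes at $p$. So $\ker\gamma$ is $(2g+1)$-dimensional, it is not $T_{[C,p]}\cH_{g,w}$, and $\ker\gamma\to T^1_q$ is not injective. You still have to show that the hyperelliptic tangent space avoids the marking-moving direction.

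\textbf{What the paper does instead.} That missing step is precisely the content of the paper's proof, which does not redo the double-cover comparison. It reduces to \Cref{prop:def-atom} via $\Hom(\Omega_C,\cO_{p_1}\oplus\cO_{p_2})\xrightarrow{\phi}\operatorname{Ext}^1(\Omega_C,\cO_C(-p_1-p_2))\xrightarrow{\psi}\operatorname{Ext}^1(\Omega_C,\cO_C)$ and notes $T\cH\subset\psi^{-1}(T^1_q)$. It then shows $\im\phi\cap T\cH=\phi\bigl(\Hom(\Omega_C,\cO_{p_1}\oplus\cO_{p_2})^{\rm inv}\bigr)$. This vanishes because invariant directions come from infinitesimal automorphisms; at a Weierstrass point, $\sigma$ acts by $-1$ on $T_pC$, so there are none.

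\textbf{A repair within your framework.} Twist the $\bP^1$-row by the pullback $f^*x=2p$, i.e.\ use $f^*(\Omega_{\bP^1}(x))$. Then $\theta$ is a surjection between $(g-1)$-dimensional spaces, hence an isomorphism. You would still need a pointed analogue of tag 0E3V to put $T_{[C,p]}\cH_{g,w}$ inside $\ker\gamma$. Even in the odd case, your stated reason gives only surjectivity of $\theta$. Injectivity does hold there, but only because both sides have dimension $g-1$, using $f^*T_{\bP^1}(-p_1-p_2)\simeq f^*\cO(1)$.

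\textbf{The dimension discrepancy.} The discrepancy that leaves your even case open is not real: the $\bA^{2g-1}$ in \Cref{prop:hyp-global-descr} is a misprint. Put the marked Weierstrass point at infinity and normalize. This gives $y^2=x^{2g+1}+a_{2g-1}x^{2g-1}+\dots+a_0$, which is exactly the miniversal deformation of the $A_{2g}$-singularity. Hence $\cH_{g,w}^{r,\circ}\simeq[\bA^{2g}/\Gm]$, as it must be since $\dim\cH_{g,w}=2g-1$, and $\dim T_{[C,p]}\cH_{g,w}=2g=\dim T^1_q$. No adjustment of the statement is needed, but as written your proposal does not establish the even case.
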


\begin{proof}
    We provide the proof for the first case. The same proof can be easily adapted to the second one.

    Consider the exact sequence
    $$
    \begin{tikzcd}
    & {T^1\Def_{(C,p_1,p_2)}} \arrow[r] \arrow[d,  Rightarrow, no head] & T^1\Def_C \arrow[d, Rightarrow, no head] \\
{\Hom(\Omega_C,\cO_{p_1}\oplus \cO_{p_2})} \arrow[r, "\phi"] & {\Ext^1(\Omega_C,\cO_C(-p_1-p_2))}      \arrow[r,"\psi"]                         & {\Ext^1(\Omega_C,\cO_C)}
\end{tikzcd}
$$
induced by the short exact sequence
$$
    \begin{tikzcd}
    0 \arrow[r] & \cO_C(-p_1-p_2) \arrow[r] & \cO_C \arrow[r] & \cO_{p_1} \oplus \cO_{p_2} \arrow[r] & 0.
\end{tikzcd}
$$
We have that $\psi^{-1}(T_q^1)\subset T^1\Def_{(C,p_1,p_2)}$ classifies deformations of $(C,p_1,p_2)$ such that $C$ is hyperelliptic, because of \Cref{prop:def-atom}. Therefore, we have that $T_{[C,p_1,p_2]}\cH_{g,g_1^2}$ is contained inside $\psi^{-1}(T_q^1)$; if we prove that the induced morphism
$$ T_{[C,p_1,p_2]}\cH_{g,g_1^2} \longrightarrow T_q^1$$
is bijective, the statement follows. A straightforward computation shows that the two vector spaces have the same dimension; therefore, it is enough to prove that the morphism above is injective, or equivalently, that
$$\im \phi \cap T_{[C,p_1,p_2]}\cH_{g,g_1^2} = \{0\}$$
inside $T^1\Def_{(C,p_1,p_2)}$.

The vector subspace $\im \phi \subset T^1\Def_{(C,p_1,p_2)}$ classifies deformations $(C_{\epsilon},p_1^{\epsilon},p_2^{\epsilon})$ such that $C_{\epsilon}$ is the trivial deformation of $C$; thus,
$$ \im \phi \cap T_{[C,p_1,p_2]}\cH_{g,g_1^2} = \phi\big( \Hom(\Omega_C,\cO_{p_1}\oplus \cO_{p_2})^{\rm inv}\big) $$
where $ \Hom(\Omega_C,\cO_{p_1}\oplus \cO_{p_2})^{\rm inv} \subset \Hom(\Omega_C,\cO_{p_1}\oplus \cO_{p_2})$ is the vector subspace of elements invariant under the hyperelliptic involution. Deformation theory helps us describe the kernel of  $\phi$ using the exact sequence
$$
\begin{tikzcd}
{\Hom(\Omega_C,\cO_C)} \arrow[r, "\varphi"] & {\Hom(\Omega_C,\cO_{p_1}\oplus \cO_{p_2})} \arrow[r, "\phi"] & {\Ext^1(\Omega_C,\cO_C(-p_1-p_2))};
\end{tikzcd}
$$
we will prove that $$\Hom(\Omega_C,\cO_{p_1}\oplus \cO_{p_2})^{\rm inv} \subset \im \varphi,$$ which implies the statement. Recall that $\Hom(\Omega_C,\cO_C)$ classifies the infinitesimal automorphisms of $C$, namely the tangent space of the automorphism group of $C$,  whereas $\varphi$ restricts the automorphisms to the points. A simple verification using the explicit description of $\varphi$ shows what we need, since any infinitesimal automorphism of $C$ commutes with the hyperelliptic involution (because of \cite[Section~3]{Per2}).
\end{proof}

\begin{remark}\label{rem:non-weierstrass-case}
    Notice that \Cref{prop:def-hyp-rat-sing} also holds for the case of a $1$-pointed odd atom $(C,p_1)$. Using \Cref{lem:odd-atom}, we deduce that there exists an involution $\sigma$ exchanging the two irreducible components, and $p_1$ is not a Weierstrass point. Therefore, we can reduce to the $2$-pointed odd atom case considering $(C,p_1,\sigma(p_1))$. Notice that this also implies that any deformation that deforms the singularity $q$ will give rise to a $1$-pointed hyperelliptic curve where the marking is not a Weierstrass point.
\end{remark}

The following is the key remark for understanding how the combinatorics of curves with $A$-singularities changes through isotrivial degenerations.

\begin{remark}\label{rem:decom-tang}
    Let $(C,p_1,p_2)$ be a $2$-pointed atom of genus $g$ and $q$ be the singularity. The kernel of the morphism
    $$ \alpha: T^1\Def_{(C,p_1,p_2)} \longrightarrow T_q^1$$
    considered in \Cref{prop:def-hyp-rat-sing} has a very explicit description. In fact, $\ker \alpha$ is the subspace of first-order deformations of $(C,p_1,p_2)$ in which the $A_{2g+1}$-singularity $q$ is preserved (possibly changing the crimping data). Therefore, it is the tangent space of the closed substack $\cD^{2g+1} \subset \cM_{g,2}$ at the point $[C,p_1,p_2]$, and to ease the notation, we will denote this by ${\rm Cr}_q$, and we will call it the \emph{crimping space} of the singularity $q$. \Cref{prop:def-hyp-rat-sing} implies that we have a decomposition
    $$ T_{[C,p_1,p_2]}\cM_{g,2}=T_{[C,p_1,p_2]}\cH_{g,g_1^2} \oplus {\rm Cr}_q$$
    of tangent spaces, namely that $\cD^{2g+1}$ and $\cH_{g,g_1^2}$ intersect transversely in $\cM_{g,2}$ at the closed point $(C,p_1,p_2)$. More generally, if $C$ is a curve and $q$ is a singularity, we have a decomposition
    $$ \Ext^1(\Omega_C,\cO_C)\simeq T_q^1\oplus {\rm Cr}_q\oplus R$$
    where
    \begin{itemize}
        \item  ${\rm Cr}_q\oplus R \simeq \oH^1(\cHom(\Omega_C,\cO_C))$;
        \item  $R$ can be identified with first-order deformations of the pointed partial normalization of $C$ at $q$.
    \end{itemize}
    A similar statement holds for the even atom.
\end{remark}

The sign decomposition of the $\bG_m$-action on the deformation spaces of the atoms coincides with the transversal decomposition given by the hyperelliptic locus and the crimping data.
\begin{corollary}\label{cor:gm-decom-tang}
    Let $(C,p_1,p_2)$ be a $2$-pointed odd atom of genus $g$ and fix an isomorphism $\bG_m \simeq \Aut(C,p_1,p_2)$. Then the decomposition introduced in \Cref{rem:decom-tang} coincides with the sign decomposition for the $\bG_m$-action, i.e., $$(T_{[C,p_1,p_2]}\cM_{g,2})^>=T_{[C,p_1,p_2]}\cH_{g,g_1^2}$$ and $$(T_{[C,p_1,p_2]}\cM_{g,2})^<=T_{[C,p_1,p_2]}\cD^{2g+1}$$ or vice versa. The same is true for the even case.
\end{corollary}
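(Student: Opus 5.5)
The plan is to show that the two $\bG_m$-stable summands of the transversal decomposition from \Cref{rem:decom-tang}, namely $T_{[C,p_1,p_2]}\cH_{g,g_1^2}$ and ${\rm Cr}_q=T_{[C,p_1,p_2]}\cD^{2g+1}$, carry weights of constant and opposite sign. Both $\cH_{g,g_1^2}$ and $\cD^{2g+1}$ are closed substacks stable under $\Aut(C,p_1,p_2)\simeq\bG_m$. Hence both tangent spaces are subrepresentations of $T_{[C,p_1,p_2]}\cM_{g,2}$, and \Cref{prop:def-hyp-rat-sing} already gives $T=T\cH_{g,g_1^2}\oplus {\rm Cr}_q$ as $\bG_m$-representations. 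It therefore suffices to show two things: every weight on $T\cH_{g,g_1^2}$ is strictly of one sign, and every weight on ${\rm Cr}_q$ is strictly of the other sign. In particular no weight is zero.

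For the hyperelliptic summand, I would use \Cref{prop:hyp-global-descr}. It gives $\cH_{g,g_1^2}^{r,\circ}\simeq[\bA^{2g+1}/\bG_m]$ with strictly positive weights, and the atom $(C,p_1,p_2)$ is the origin by \Cref{cor:cyc-cover-with-pos-dim}. The tangent space at the origin is then $\bA^{2g+1}$ with the same weights, so all of them are positive. One detail has to be checked: the isomorphism of $\bG_m$ with the stabilizer of the origin in this quotient must be compatible with the fixed identification $\bG_m\simeq\Aut(C,p_1,p_2)$. Changing that identification by inversion flips every sign at once, which is exactly why the statement includes the words ``or vice versa''.

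For the crimping summand, I would use the local description of $\cD^{2g+1}$ near $C$ given by \Cref{cor:diff-crimping-same-curve-odd} together with \Cref{prop:descr-base-crimping-odd}. The pointed normalization is two copies of $(\bP^1,0,\infty)$, with the markings at $0$ and $q_1,q_2$ at $\infty$. This curve is rigid as a pointed curve, so $\cD^{2g+1}$ is locally $[G_{g+1}/\bG_m^2]\simeq[U_{g+1}/\bG_m]$, as in the proof of \Cref{lem:hyp-crimping-datum-odd}. There the diagonal $\bG_m$ acts on $U_{g+1}$ with weights $(i-1)n$ for $i=2,\dots,g$, where $n$ is the weight on $T_\infty\bP^1$; these have the sign of $n$. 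The weight of the same cocharacter on $T_0\bP^1$ is $-n$. For the hyperelliptic direction, the positive weights of \Cref{prop:hyp-global-descr} come from moving the branch points away from the image of $q$. Under the double cover, the image of $q$ corresponds to $\infty$, the branch divisor concentrated there, and the markings lie over $0$. So I would verify, by comparing the $\bG_m$-action on $\bP^1$ through the double cover, that these weights have the sign of the weight at $0$, i.e.\ of $-n$. The signs are then opposite.

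The main obstacle is this last consistency check: pinning down concretely which weights arise in the Arsie--Vistoli-type presentation relative to the weight $n$ at the singular point. I would do it explicitly with the local equation $y^2=x^{2g+2}$ near $q$ and the coordinate on $\bP^1$. Deformations $y^2=x^{2g+2}+\sum_j a_jx^j$ preserving the pointed structure have weights $(2g+2-j)$ times a fixed unit, which are of one sign. The crimping parameters scale like positive powers of the tangent weight at $\infty$, which is of the opposite sign. The even case follows verbatim from \Cref{lem:hyp-crimping-datum-even} and the $\cH_{g,w}$ part of \Cref{prop:hyp-global-descr}.
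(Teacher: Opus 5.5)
Your proof is correct, but it takes a different route from the paper. The paper settles this in one line: it identifies the two summands of \Cref{rem:decom-tang} with the spaces whose $\Gm$-weights are computed explicitly in Propositions~6.2 and~6.3 of \cite{AlpSmyVdW}, and reads off the signs. You instead rebuild the statement from the paper's own ingredients, in three steps:
\begin{itemize}
\item the $\Gm$-stable splitting of \Cref{prop:def-hyp-rat-sing};
\item the observation that each summand has weights of a single strict sign, because both $\cH_{g,g_1^2}^{r,\circ}\simeq[\bA^{2g+1}/\Gm]$ (\Cref{prop:hyp-global-descr}) and the crimping stack $[U_{g+1}/\Gm]$ of \Cref{lem:hyp-crimping-datum-odd} do;
\item a single comparison of relative sign.
\end{itemize}
This gains transparency: it isolates the only thing to check (one vector from each summand) and shows why no zero weights occur. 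The cost is that you still import \Cref{prop:hyp-global-descr}, which is itself Proposition~3.9 of \cite{AlpSmyVdW}, and you must do the sign bookkeeping yourself.

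That bookkeeping needs care. The formula $u_i\mapsto\lambda^{in_1-n_2}u_i$ in \Cref{lem:hyp-crimping-datum-odd} does not specify whether $n$ is a tangent or a cotangent weight. Take the most natural convention: an automorphism $s\mapsto\lambda^w s$ near the branches of $q$, acting on parameters by transport of structure. Then the parameters $a_j$ of $y^2=s^{2g+2}+\sum_j a_js^j$ get weights $(2g+2-j)w$, while the crimping parameters $u_k$ get $(1-k)w$. So your intermediate labels (crimping follows the weight at $\infty$, hyperelliptic the weight at $0$) can come out reversed; only their relative sign is independent of convention.

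A clean way to settle it within one convention is to use test families. Put the markings over $x=0$ and the singularity over $x=\infty$, with $s=1/x$.
\begin{itemize}
\item The family $y^2=1+\sum_{i\ge2}a_it^ix^i$ is equivariant for $(x,y,t)\mapsto(\mu^{-1}x,y,\mu t)$.
\item The crimping family $\phi_t(s)=s+\sum_{k\ge2}u_kt^{k-1}s^k$ is equivariant over $t\mapsto\mu t$ only when $\Gm$ acts on both components by $s\mapsto\mu^{-1}s$, i.e.\ $x\mapsto\mu x$.
\end{itemize}
The two summands are therefore attracted to the atom by inverse cocharacters. Combined with the fact that each summand has weights of one sign, this is exactly the corollary. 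The even case is the same check.
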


\begin{proof}
    After unraveling definitions, this is a direct consequence of Proposition 6.2 and Proposition 6.3 of \cite{AlpSmyVdW}.
\end{proof}

Now that we have dealt with the case of the atoms, we can focus on understanding the deformation theory of the curves with positive-dimensional automorphism group, specifically the subcurves that contribute to its dimension (see \Cref{prop:stab-no-out}). Let us start with an example to get the idea of how the $\bG_m$-action decomposes the deformation space.
\begin{example}\label{ex:deformation-alter}
    Let $C_0$ be an $n$-pointed rosary of hyperelliptic $A$-singularities of length $3$, and denote by $q_1,q_2$ the two singularities of type $A_{2h_1+1}$ and $A_{2h_2+1}$, respectively. Moreover, suppose there is an isotrivial degeneration $C_{\Theta}\rightarrow \Theta$ whose special fiber is isomorphic to $C_0$. Then the combinatorial structure of the generic fiber $C_1$ is controlled by the one of $C_0$.

    In fact, we have a decomposition
    $$\Def_C = T^1_{q_1}\oplus T^1_{q_2}\oplus {\rm Cr}_{q_1}\oplus {\rm Cr}_{q_2}$$
    of the deformation space of $C$ thanks to \Cref{rem:decom-tang}. Furthermore, the isotrivial degeneration induces a cocharacter $\bG_m \rightarrow \Aut(C_0)\simeq \bG_m$ which also induces a sign decomposition of $\Def_C$. Because of the explicit description of the action of $\Aut^{\circ}(C_0)$ on $C_0$, we have that the $\bG_m$-action on the tangent space of $q_1$ has the opposite sign compared to the one on the tangent space of $q_2$. Therefore, \Cref{cor:gm-decom-tang} implies that
    $$ (\Def_C)^> =T^1_{q_1}\oplus{\rm Cr}_{q_2} $$
    and
    $$ (\Def_C)^<=T^1_{q_2}\oplus {\rm Cr}_{q_1}$$
    or vice versa. Therefore, \Cref{lem:Theta-and-BGmR} ensures that if the isotrivial degeneration deforms one singularity, then the other one needs to be preserved (with a possibly different crimping datum) in the generic fiber $C_1$. Moreover, by \Cref{prop:def-hyp-rat-sing} and \Cref{rem:non-weierstrass-case}, the deformation of the odd atom of genus $h_2$ in the chain will be an honestly hyperelliptic curve of genus $h_2$ attached at a non-Weierstrass point.
    \begin{figure}[H]
        \caption{}
        \centering
        \includegraphics[width=0.8\textwidth]{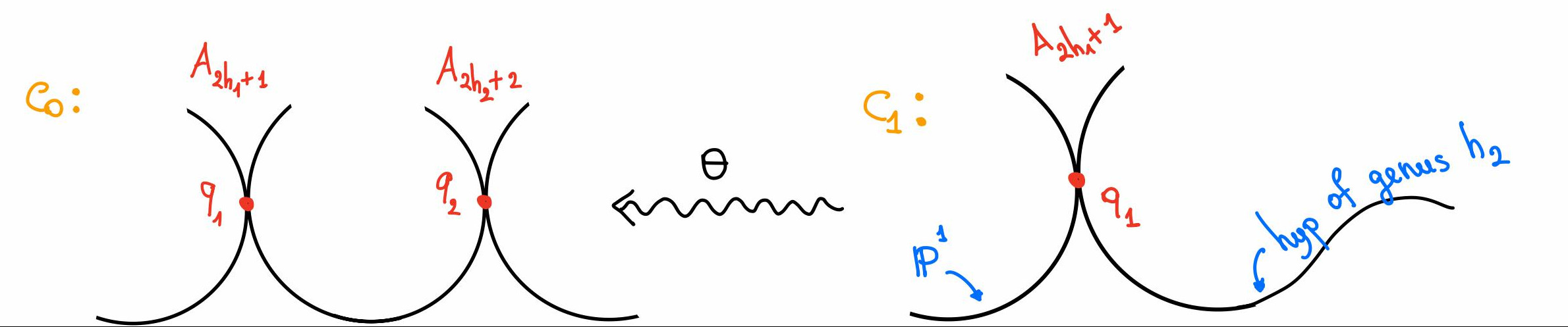}
        \label{fig:deg-hyp-chain}
    \end{figure}

    The same reasoning clearly works if the length of the chain is greater than $3$. Suppose that $C_0$ is a rosary of hyperelliptic $A$-singularities $q_1,\dots,q_n$ where $q_i$ and $q_{i+1}$ share one irreducible component for every $i=1,\dots, n-1$. Moreover, suppose that the isotrivial degeneration deforms $q_{i_0}$ where $i_0$ is odd. Then the singularities $q_2,q_4,\dots,q_{2k}$ for $k=\lfloor n/2 \rfloor$ will also appear in the generic fiber $C_1$ of the isotrivial degeneration (possibly with a different crimping datum), while the odd atom of genus $h_{2i+1}$ associated to the singularity $q_{2i+1}$ will deform to an honestly hyperelliptic curve of genus $h_{2i+1}$ where the two attaching points are exchanged by the involution.

    Similar examples can be constructed starting with a $1$-pointed odd atom of genus $h$ with singularity $q$ and pinching the marking $p$ into an $A_{2k}$-singularity using the unique $\Gm$-equivariant crimping datum.

    The automorphism group will be a one-dimensional torus; see \Cref{prop:stab-no-out}. We have a decomposition
    $$\Def_C = T^1_{q}\oplus T^1_{p}\oplus {\rm Cr_q} \oplus {\rm Cr}_p$$
    and again, an isotrivial degeneration can either deform the singularity $q$ while preserving $p$ (but not necessarily the crimping datum) or vice versa.
    \begin{figure}[H]
        \caption{}
        \centering
        \includegraphics[width=0.7\textwidth]{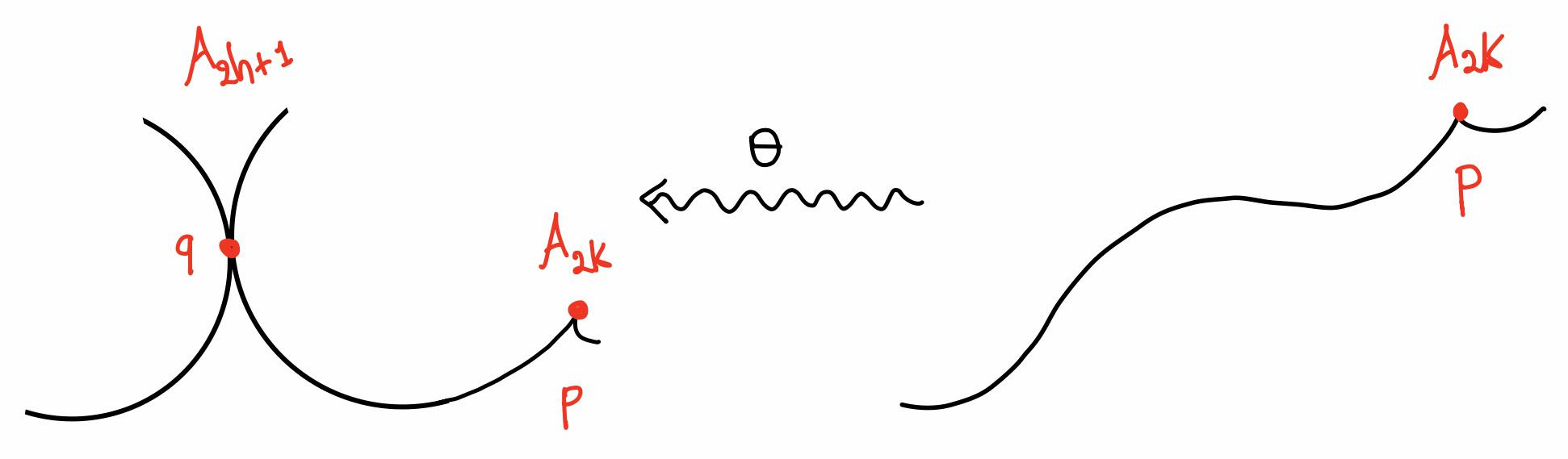}
        \label{fig:example-3.25}
    \end{figure}
\end{example}

Before going further, it is crucial to point out that the following remark is a key point in our study of isotrivial degenerations of $A$-stable curves. Indeed, it gives a significant constraint to the geometry of isotrivial degenerations, and it will play a fundamental role in all that follows.

\begin{remark}
\phantomsection
\label{rem:alternating-deformations}
    \Cref{ex:deformation-alter} determines a big constraint in the geometry of isotrivial degenerations: whenever we have a rosary of hyperelliptic $A$-singularities as above, no isotrivial degeneration can deform two adjacent $A$-singularities. If one is deformed, then the ones adjacent to it have to be preserved, and the curve between them is hyperelliptic. This is true also for nodes, with the extra hypothesis that the $\Gm$-action is trivial on one of the two components meeting at the node. Namely, consider a curve $C_0$ with a subcurve $\Gamma\subset C_0$ such that $(\Gamma,I_{\Gamma})$ is an $A_1$-attached $1$-pointed even atom (or, equivalently, an $A_1/A_{2h}$-attached rosary of length $1$). Suppose we have an isotrivial degeneration to $C_0$ such that the cocharacter $\Gm \rightarrow \Aut(C_0)$ acts trivially on the component $\Gamma'\subset C_0-\Gamma$ that intersects $\Gamma$ at the node. Then, if the singularity of the atom is deformed, the node has to be preserved, and thanks to \Cref{prop:def-hyp-rat-sing}, $\Gamma$ deforms to an honestly hyperelliptic tail attached to the rest of the curve via a node.
    \end{remark}

The previous example motivates the following definitions.

\begin{definition}[Tails and bridges]\label{def:tails and bridges}
Let $(C,p_1,\dots,p_n)$ be an $n$-pointed $A$-prestable curve over an algebraically closed field $k$ and let $\Gamma\subset C$ be a connected subcurve.
    \begin{itemize}
        \item we say that $\Gamma$ is an $A_{2k+1}$-attached hyperelliptic tail if $\Gamma$ is a hyperelliptic curve of positive genus, $P_{\Gamma}=\emptyset$, and $I_{\Gamma}$ consists of a single $A_{2k+1}$-singularity on $C$ which corresponds to a smooth Weierstrass point on $\Gamma$ (i.e., a point fixed by the hyperelliptic involution of \(\Gamma\)); we say that it is $A_0$-attached if $I_{\Gamma}=\emptyset$ (thus $\Gamma=C$) and $P_{\Gamma}$ consists of a single point (thus $n=1$);
        \item  we say that $\Gamma$ is an $A_{2h+1}/A_{2k+1}$-attached hyperelliptic bridge if $\Gamma$ is a hyperelliptic curve of positive genus, $P_{\Gamma}=\emptyset$, and $I_{\Gamma}$ consists of an $A_{2h+1}$ singularity and an $A_{2k+1}$ singularity on $C$ which correspond to two smooth points on $\Gamma$ exchanged by the involution.
        \item we say that $\Gamma$ is an $A_{2k+1}$-attached hyperelliptic dangling bridge if $\Gamma$ is a hyperelliptic curve of positive genus, $P_{\Gamma}=\emptyset$, and $I_{\Gamma}$ consists of a single $A_{2k+1}$-singularity on $C$ which corresponds to a smooth non-Weierstrass point on $\Gamma$; we say that it is $A_0$-attached if $I_{\Gamma}=\emptyset$ (thus $\Gamma=C$) and $P_{\Gamma}$ consists of a single point (thus $n=1$);
    \end{itemize}
If $\Gamma$ is honestly hyperelliptic, we say that it is an $A_{2k+1}$-attached honestly hyperelliptic tail. The same notation applies to bridges.
\end{definition}
\begin{definition}[(Attached) Hyperelliptic Chain]\label{def:attached-hyp-chain}\index{(Attached) Hyperelliptic Chain}
    Let $k$ be an algebraically closed field over $\kappa$. We say that $(\Gamma,r_1,r_2)$ over $k$ is a \emph{$2$-pointed hyperelliptic chain of genus bounded by $g$ of length $\ell$} if there exists a surjective gluing morphism
    \[
    \gamma \colon \coprod_{i=1}^{\ell} (\Gamma_i,q_{2i-1},q_{2i}) \longrightarrow (\Gamma,r_1,r_2)
    \]
    and a sequence of positive integers $k_1, \dots, k_{\ell-1}$ satisfying:
    \begin{enumerate}
      \item $\Gamma_i$ is a hyperelliptic curve of genus $g_i$ with $1 \leq g_i < g$, for $i=1,\dots,\ell$;
      \item the marked points $q_{2i-1},q_{2i}$ are exchanged by the hyperelliptic involution of $\Gamma_i$, for $i=1,\dots,\ell$;
      \item $\gamma(q_{2i})=\gamma(q_{2i+1})$ is an $A_{2k_i+1}$-singularity of $\Gamma$ for $i=1,\dots,\ell-1$;
      \item $\gamma(q_1)=r_1$ and $\gamma(q_{2\ell})=r_2$.
    \end{enumerate}
    If the curves $\Gamma_i$ are honestly hyperelliptic, we say that $(\Gamma,p_1,p_2)$ is a chain of honestly hyperelliptic curves. We define $1$-pointed and $0$-pointed chains of honestly hyperelliptic curves in the obvious way.
    We say that $(C,\{p_i\}_{i=1}^n)$ has an \emph{$A_{d_1}/A_{d_2}$-attached hyperelliptic chain of genus bounded by $g$ of length $\ell$} if there exists a gluing morphism
    \[
    \gamma \colon (\Gamma,r_1,r_2) \longrightarrow (C,\{p_i\}_{i=1}^n)
    \]
    such that
    \begin{enumerate}
      \item $(\Gamma,r_1,r_2)$ is a $2$-pointed hyperelliptic chain of genus bounded by $g$ of length $\ell$;
      \item for $j=1,2$, $\gamma(r_j)$ is an $A_{d_j}$-singularity of $C$, or if $d_j=0$ we allow $\gamma(r_j)$ to be a marked point of $(C,\{p_i\}_{i=1}^n)$.
    \end{enumerate}
    We define an $A_d$-attached hyperelliptic chain in the obvious way. We say that an attached hyperelliptic chain $(\Gamma,r_1,r_2)$ is separating if the curve $C - \gamma(\Gamma)$ is disconnected; otherwise, we say it is non-separating.
\end{definition}

We are ready to describe how isotrivial degenerations change the combinatorics of the components that contribute to the dimension of the automorphism group. We start with the case of $2$-pointed rosaries of hyperelliptic $A$-singularities, as it is the most frequent case we need to deal with in the upcoming papers. Recall that the atomic case was already treated earlier.

\begin{proposition}\label{prop:defor-rational-chain}
    Let $(C_0,p_1^0,p_2^0)$ be a $2$-pointed rosary of hyperelliptic (odd) $A$-singularities and $(C_1,p_1^1,p_2^1)$ be a $2$-pointed $A_r$-stable curve. If $(C_1,p_1^1,p_2^1)$ admits an isotrivial degeneration to the curve $(C_0,p_1^0,p_2^0)$, then it is one of the curves described below, depending on the parity of the length of the rational chain. Suppose that the length of the rosary is even; then $(C_1,p_1^1,p_2^1)$ is either
    \begin{itemize}
        \item[e1)] a $2$-pointed chain of honestly hyperelliptic curves;
        \item[e2)] an $A_{2h+1}/A_{2k+1}$-attached chain of honestly hyperelliptic curves, which is attached to two rational $1$-pointed smooth curves;
    \end{itemize}
    if instead the length is odd, we have that
    \begin{itemize}
        \item[o)] $(C_1,p_1^1,p_2^1)$ is a $2$-pointed chain of honestly hyperelliptic curves attached to a $2$-pointed rational smooth curve with an odd $A$-singularity.
    \end{itemize}
\end{proposition}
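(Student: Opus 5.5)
The approach is to reduce the isotrivial degeneration to the $\Gm$-equivariant deformation space of $(C_0,p_1^0,p_2^0)$ at the closed point, and then to read off the combinatorics of the generic fiber from the sign decomposition. Write the rosary as $R_1\cup\dots\cup R_\ell$ with hyperelliptic odd singularities $q_1,\dots,q_{\ell-1}$, where $q_i=R_i\cap R_{i+1}$, and with $p_1^0\in R_1$ and $p_2^0\in R_\ell$. By \Cref{prop:aut0-rosaries}, $\CAut(C_0,p_1^0,p_2^0)\simeq\Gm$. Any isotrivial degeneration therefore induces a cocharacter which is, up to sign, the identity of this $\Gm$. Via \Cref{lem:lifting-complete} and \Cref{lem:Theta-and-BGmR}, the family over $\Theta_k$ factors through $[T^>/\Gm]$, where $T=T_{[C_0]}\cM_{g,2}^r$. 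So $C_1$ is the curve attached to a general point of some $\Gm$-stable locus inside $T^>$, pushed along the formal chart.

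The first key step is to decompose $T$. Iterating \Cref{rem:decom-tang} over the singularities gives
\[
T\simeq \bigoplus_{i=1}^{\ell-1}\bigl(T^1_{q_i}\oplus {\rm Cr}_{q_i}\bigr),
\]
up to contributions from moving the markings. Those contributions vanish or are absorbed, because each $(R_i,p_{2i-1},p_{2i})\simeq(\bP^1,0,\infty)$ is rigid. I would verify this equality by a dimension count.

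Next, I would apply \Cref{cor:gm-decom-tang} locally at each $q_i$, viewing the neighborhood of $q_i$ through its odd atom $R_i\cup R_{i+1}$ with the markings given by the neighbouring special points. This shows that $T^1_{q_i}$ and ${\rm Cr}_{q_i}$ have opposite signs. Since $\Gm$ acts with opposite weights at $0$ and $\infty$ of each $R_i$, the sign attached to $T^1_{q_i}$ alternates in $i$, exactly as in \Cref{ex:deformation-alter}. Hence, up to the global sign choice, $T^>$ consists of $T^1_{q_i}$ for $i$ of one parity and ${\rm Cr}_{q_j}$ for $j$ of the other parity. This is the precise content of \Cref{rem:alternating-deformations}.

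The second step is to interpret a general point of $T^>$ geometrically. Every $q_j$ whose $T^1$ is not in $T^>$ survives as an odd singularity of the same type in $C_1$, possibly with a new crimping datum. For each $q_i$ whose $T^1$ lies in $T^>$, \Cref{prop:def-hyp-rat-sing} and \Cref{rem:non-weierstrass-case} show that the odd atom $R_i\cup R_{i+1}$ smooths to an honestly hyperelliptic curve of genus $h_i$. The two attaching points, which come from the neighbouring surviving singularities or markings, are exchanged by the involution.

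The final step is parity bookkeeping. The deformed singularities pair up adjacent components $R_i,R_{i+1}$. If $\ell$ is even, either all the odd-indexed $q$'s deform, which pairs every component and gives e1), or all the even-indexed $q$'s deform, which leaves $R_1$ and $R_\ell$ unpaired. In the latter case these are $1$-pointed smooth rational curves attached by the surviving $q_1,q_{\ell-1}$, giving e2). If $\ell$ is odd, either choice of parity leaves exactly one end component unpaired, say $R_\ell$. That component carries $p_2^0$ and the surviving singularity $q_{\ell-1}$, so it is a $2$-pointed rational curve attached via an odd $A$-singularity, which is case o).

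The main obstacle I expect is the justification of the decomposition of $T$ and of the signs for interior singularities. \Cref{cor:gm-decom-tang} is stated only for atoms, while an interior $q_i$ sits inside a longer chain. I would first try to handle this with the local-to-global sequence and the functoriality used in \Cref{prop:def-atom}: restrict to the pointed partial normalization at all other $q_j$, where the action near $q_i$ coincides with that on the $2$-pointed odd atom. A second difficulty is showing that a general point of $T^>$ actually realizes a honestly hyperelliptic component, rather than merely a tangent direction. Here I would use the transversality in \Cref{rem:decom-tang}, so that the $\cH_{g,g_1^2}$-direction is smooth at the atom.
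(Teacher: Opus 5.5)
Your proposal follows the paper's own proof essentially step by step: decompose the deformation space as $\bigoplus_i (T^1_{q_i}\oplus {\rm Cr}_{q_i})$, get alternating signs from \Cref{cor:gm-decom-tang} exactly as in \Cref{ex:deformation-alter}, use \Cref{lem:Theta-and-BGmR} to confine the degeneration to $T^>$, and read off the honestly hyperelliptic pieces from \Cref{prop:def-hyp-rat-sing} and \Cref{rem:non-weierstrass-case}, with the parity bookkeeping the paper leaves implicit. Two small imprecisions are harmless. The induced cocharacter may be any power of the identity; the trivial one gives $C_1\simeq C_0$, which is already of the listed form, and otherwise only the sign matters. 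Also, $C_1$ comes from an arbitrary point of $T^>$ rather than a general one, which does not matter because an unsmoothed odd atom is itself honestly hyperelliptic.
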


\begin{proof}
    It follows directly from \Cref{prop:def-hyp-rat-sing} and \Cref{rem:alternating-deformations}. Indeed, suppose the length $s+1$ of the rosary is odd, i.e., $s=2t$. Then we can decompose the deformation space of $(C_0,p_1^0,p_2^0)$ as follows:
    $$ \Def_{(C_0,p_1^0,p_2^0)}\simeq \bigoplus_{i=1}^s (T_{q_i}^1\oplus {\rm Cr}_{q_i})$$
    where $q_i$ is the $i$-th (odd) $A$-singularity. The same computations carried out in \Cref{ex:deformation-alter} give us that
    $$ (\Def_{(C_0,p_1^0,p_2^0)})^>=\bigoplus_{i=1}^{t} (T_{q_{2i-1}}^1\oplus {\rm Cr}_{q_{2i}})$$
    and
    $$ (\Def_{(C_0,p_1^0,p_2^0)})^<=\bigoplus_{i=1}^{t} (T_{q_{2i}}^1\oplus {\rm Cr}_{q_{2i-1}})$$
    or vice versa. \Cref{lem:Theta-and-BGmR} implies that any isotrivial degeneration to $(C_0,p_1^0,p_2^0)$ will be contained in one of the two spaces above. The statement follows from \Cref{prop:def-hyp-rat-sing}. The computations for the even case are identical.
\end{proof}

\begin{remark}
    Notice that a similar result holds for a $1$-pointed (or $0$-pointed) rosary of hyperelliptic (odd) $A$-singularities. We leave it to the reader to adapt the previous statement to these cases.
\end{remark}
Finally, we note that the same statement works for closed rosaries.

\begin{definition}
    Let $C$ be an $A$-prestable curve over an algebraically closed field $k$. We say that $C$ is a \emph{closed chain of hyperelliptic curves} of genus bounded by $g$ if there exists an $A_{2k+1}$-singularity with $k\geq 1$ such that the (pointed) normalization $(\widetilde{C},p_1,p_2)$ is a $2$-pointed chain of hyperelliptic curves of genus bounded by $g$.
\end{definition}
We leave the proof of the next proposition to the interested reader. It directly follows from \Cref{prop:defor-rational-chain}.

\begin{proposition}\label{prop:iso-deg-rosary}
    Let $(C_0,p_1^0,p_2^0)$ be a closed rosary of hyperelliptic $A$-singularities (of even length) and $(C_1,p_1^1,p_2^1)$ be an $A_r$-stable curve which admits an isotrivial degeneration to $C_0$. Then $(C_1,p_1^1,p_2^1)$ is a closed chain of hyperelliptic curves.
\end{proposition}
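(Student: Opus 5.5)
The plan is to reduce to the $2$-pointed case, \Cref{prop:defor-rational-chain}, by normalizing the closing singularity. Let $q$ be the $A_{2d+1}$-singularity of $C_0$ at which the rosary closes up, i.e.\ $q=\gamma(r_1)=\gamma(r_2)$. By \Cref{prop:aut0-rosaries} the length $\ell$ of $C_0$ is even and $\Aut^{\circ}(C_0)\simeq\Gm$. All singularities $q=q_0,q_1,\dots,q_{\ell-1}$ are hyperelliptic odd $A$-singularities arranged cyclically, with $q_i$ and $q_{i+1}$ (indices mod $\ell$) sharing a component $(\bP^1,0,\infty)$. On such a component $\Gm$ acts with opposite weights at $0$ and $\infty$. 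By \Cref{lem:hyp-crimping-datum-odd}, the weights at the two branches of each $q_i$ agree. Hence the sign of the $\Gm$-action on the branches at $q_i$ alternates with $i$, and this is consistent around the cycle exactly because $\ell$ is even.

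First I decompose the deformation space as in \Cref{rem:decom-tang} and \Cref{ex:deformation-alter}:
\[
\Def_{C_0}\simeq\bigoplus_{i=0}^{\ell-1}\bigl(T^1_{q_i}\oplus {\rm Cr}_{q_i}\bigr).
\]
There is no extra summand $R$, since every component of the normalization is a $2$-pointed $\bP^1$ and so has no moduli. Applying \Cref{cor:gm-decom-tang} to each odd atom given by a pair of consecutive components around $q_i$, I get that $T^1_{q_i}$ and ${\rm Cr}_{q_i}$ have opposite signs, and that the sign of $T^1_{q_i}$ alternates in $i$. Therefore
\[
(\Def_{C_0})^{>}=\bigoplus_{i\ \mathrm{even}}T^1_{q_i}\oplus\bigoplus_{i\ \mathrm{odd}}{\rm Cr}_{q_i},
\]
or the version with the roles of even and odd $i$ exchanged.

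Next, given an isotrivial degeneration $\Theta\to\cM_{g,n}^r$ with special fiber $C_0$, I use \Cref{lem:lifting-complete} to lift the map to $[T_{C_0}\cM/\Gm]$ and \Cref{lem:Theta-and-BGmR} to factor it through $[(\Def_{C_0})^{>}/\Gm]$ for the induced cocharacter. After possibly reindexing, the generic fiber $C_1$ is therefore obtained by deforming only the even-indexed singularities $q_0,q_2,\dots,q_{\ell-2}$, while each odd-indexed $q_{2j+1}$ persists, possibly with a new crimping datum. In particular $q_1$ persists as an $A_{2k+1}$-singularity with $k\geq1$. Take the partial normalization of $C_1$ at $q_1$. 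Geometrically this corresponds to normalizing $C_0$ at $q_1$, which produces a $2$-pointed rosary of even length $\ell$ in which the two branch points of $q_1$ become the markings. The degeneration restricted near these branches (equivalently, the equivariant deformation with the $q_1$-summands frozen) is then an isotrivial degeneration of $2$-pointed curves to this rosary, and case e1) of \Cref{prop:defor-rational-chain} applies. Here e2) is excluded because the end singularities, which are the branches of $q_1$, are not deformed, and the end components are joined to deformed singularities. So the partial normalization of $C_1$ at $q_1$ is a $2$-pointed chain of honestly hyperelliptic curves, each atom $q_{2j}$ having deformed into a hyperelliptic bridge with its two attaching points exchanged by the involution, as in \Cref{rem:non-weierstrass-case}. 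Regluing at $q_1$ shows that $C_1$ is a closed chain of hyperelliptic curves.

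The main obstacle is making this cut-and-reglue step rigorous, i.e.\ checking that a $\Gm$-equivariant family over $\Theta$ in which $q_1$ is equisingular can be partially normalized along the section of $q_1$ compatibly with the $\Gm$-action. I would do this using the crimping description $\cI^{\rm ns}_{2k+1}\to\cA^{\rm ns}_{2k+1}$ of \Cref{prop:descr-an-disp-ns}, lifting the map $\Theta\to\cA_{2k+1}$ along this étale double cover. The cover trivializes over $\Theta$, which is simply connected, so the lift exists. A secondary check is that the decomposition of $\Def_{C_0}$ really is a direct sum over the cyclically arranged $q_i$ with no global correction term. This follows because $\oH^1(\cHom(\Omega,\cO))$ of the normalization vanishes on the rational components.
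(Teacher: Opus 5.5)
Your proposal is correct and follows essentially the paper's route. The paper simply asserts that the statement follows directly from \Cref{prop:defor-rational-chain}, i.e.\ from the alternating sign pattern of $T^1_{q_i}$ and ${\rm Cr}_{q_i}$ around the cycle; your reduction (cut at a persisting odd singularity, lift along the \'etale double cover $\cI^{\rm ns}\to\cA^{\rm ns}$, then apply the $2$-pointed case) is a careful way of carrying this out.

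Two harmless corrections. First, there \emph{is} a global term in $\Def_{C_0}$: a closed rosary of even length has a one-dimensional weight-zero modulus. This is the $\Gm$-family of pairwise non-isomorphic closed rosaries noted after \Cref{prop:aut0-rosaries}, and it comes from the leading coefficients of the crimping data around the cycle. It lies in $(\Def_{C_0})^0$, so your description of $(\Def_{C_0})^{>}$ is unaffected. Second, when excluding e2) you must allow $q_0$ or $q_2$ to remain undeformed. In that case the pair of components joined at it stays an odd atom, because its crimping directions are non-positive, and an odd atom is still an honestly hyperelliptic bridge, so the conclusion is unchanged.
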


\subsection{Isotrivial degenerations}\label{sub:describing-attractors}

We end the section by globalizing some of the results appearing in \Cref{sub:def-theory}. To be more specific, in the previous section we described all the possible isotrivial degenerations to an atom and, more generally, to a rosary, giving necessary conditions for a curve to belong to a basin of attraction of \(\cM_{g,n}^r\). In this section, we are going to show that the necessary conditions are actually sufficient, completely classifying the possible isotrivial degenerations for $A_r$-stable curves.

We start with the atomic case. For the sake of simplicity, we deal with the case $n=0$, but the same proof holds also in the more general setting. Specifically, the next lemma shows us that every curve with an even $A$-singularity has an isotrivial degeneration to the one glued nodally to the respective even atom.

\begin{lemma}\label{lem:even-atom-attraction}
    Let $(C,q) \in \cA_{2h}$ be an $A_r$-stable curve of genus $g$ with an $A_{2h}$-singularity $q$ over a field and denote by $(\widetilde{C},\widetilde{q})$ the stabilization of the (pointed) partial normalization of $C$ at $q$. Moreover, let $C_0$ be the $A_r$-stable curve obtained by gluing nodally $(\widetilde{C},\widetilde{q})$ with the $1$-pointed even atom $(\Gamma,\widetilde{q})$ of genus $h$ at $\widetilde{q}$. Then there exists an isotrivial degeneration $C_{\Theta}\rightarrow \Theta$ such that
    \begin{itemize}
        \item the generic fiber of the degeneration is isomorphic to $C$;
        \item the special fiber of the degeneration is isomorphic to $C_0$.
    \end{itemize}
\end{lemma}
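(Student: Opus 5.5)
The isotrivial degeneration is a family over \(\Theta\), so I will build it over \(\bA^1\) with a compatible \(\Gm\)-action and take the quotient. The construction is the classical ``sprouting'' of a rational tail from the weighted blowup at \(q\), made \(\Gm\)-equivariant. Concretely, I work in the crimping description of \Cref{prop:descr-an-even}. The curve \(C\) corresponds to a point of \(\cA_{2h}\), which is an affine bundle over \(\cM^r_{g-h,n,[2h]}\). By \Cref{prop:descr-base-crimping-even} the base is \(\cM^r_{g-h,n+1}\times[\bA^1/\Gm]\).

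The pointed partial normalization \((\widetilde C,\widetilde q)\) gives a point of \(\cM^r_{g-h,n+1}\), and \(C\) lies over the open point of \([\bA^1/\Gm]\). The first step is the following \(\Gm\)-equivariant family of pointed normalizations over \(\bA^1\). Take \(\widetilde C\times\bA^1\), blow up at \((\widetilde q,0)\), and keep the proper transform of the section \(\widetilde q\times\bA^1\) as the new marked section. With the scaling action on \(\bA^1\), the special fiber is \(\widetilde C\cup_{\widetilde q}\bP^1\), glued nodally and marked at \(\infty\in\bP^1\); call the node \(0\in\bP^1\). This family is \(\Gm\)-equivariant, and \(\Gm\) acts on the exceptional \(\bP^1\) by the standard weight fixing \(0\) and \(\infty\). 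It therefore gives a map \(\Theta\to\cM^r_{g-h,n,[2h]}\), namely the identity on the \([\bA^1/\Gm]\) factor and constant on the other factor.

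The second step is to lift this map along the affine bundle \(\cA_{2h}\to\cM^r_{g-h,n,[2h]}\), with the generic fiber being the given crimping datum of \(C\). Pulling back along \(\Theta\) gives an affine bundle of rank \(h-1\) over \(\Theta\), equivalently \([W/\Gm]\to[\bA^1/\Gm]\). Here \(W\to\bA^1\) is a \(\Gm\)-equivariant affine bundle, trivializable since \(\bA^1\) is affine. The fiber of \(W\) over \(0\) is the crimping space \(V_{h,2}\) of \((\bP^1,\infty)\), taken modulo nothing, since the node is fixed. By \Cref{lem:hyp-crimping-datum-even}, \(\Gm\) acts on it with strictly positive weights and with a unique fixed point, the hyperelliptic datum.

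We need a \(\Gm\)-equivariant section \(\bA^1\to W\) whose value at \(1\) is the crimping datum of \(C\). The orbit map \(t\mapsto t\cdot c\) gives such a section over \(\Gm\), and the task is to show it extends over \(0\), i.e.\ that the limit \(\lim_{t\to0}t\cdot c\) exists in \(W\). The limit point is then \(\Gm\)-fixed, so it must be the unique \(\Gm\)-equivariant crimping datum. Crimping the special fiber \(\widetilde C\cup\bP^1\) with that datum yields exactly \(C_0\): a nodal union of \(\widetilde C\) with the \(1\)-pointed even atom of genus \(h\), by \Cref{rem:even-atoms}. Finally, stability of the central fiber holds since the atom is stable for \(h\ge1\) as a \(1\)-pointed curve.

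I expect the main obstacle to be this existence of the limit. It requires computing the weights of \(\Gm\) on all of \(W\), not just on the central fiber. I would try an explicit choice of a local coordinate \(x\) at \(\widetilde q\). On the blowup the coordinate on the exceptional chart is \(x/t\), and crimping data \(x\mapsto x^2\phi(x)\) with \(\phi=1+a_1x+a_2x^3+\dots\) transform into \(a_i\mapsto t^{2i-1}a_i\)-type rescalings (cf.\ \Cref{rem:crim-mor-even}). This shows all weights are positive, so the orbit closure reaches the origin. Any discrepancy in the higher-order terms coming from the choice of coordinate is handled by the \(G_{2h}\)-action, which is already quotiented out in \(V_{h,2}\).
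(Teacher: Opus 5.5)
Your argument is the paper's argument, made explicit. The paper notes that the stack $\cA_{2h}(\widetilde C,\widetilde q)$ of curves in $\cA_{2h}$ whose stabilized partial normalization is $(\widetilde C,\widetilde q)$ is $[V_{h,2}\times\bA^1/\Gm]$ with all weights strictly positive, so every point flows to the origin, which is $C_0$. Your blowup of $\widetilde C\times\bA^1$ realizes the $\bA^1$-factor. Your substitution $x=tu$ turns the constant datum into the section $t\mapsto(ta_1,t^3a_2,\dots,t^{2h-3}a_{h-1})$ in the trivialization of $W$ given by the relative coordinate $u=x/t$. That substitution is exactly the ``unraveling'' that shows the weights are $1$ on the base and $1,3,\dots,2h-3$ on $V_{h,2}$.

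One case is missing, though. You assert that the pointed partial normalization of $C$ at $q$ already lies in $\cM^r_{g-h,n+1}$, i.e.\ that $C$ lies over the open point of $[\bA^1/\Gm]$. The lemma only says that $(\widetilde C,\widetilde q)$ is the \emph{stabilization} of that normalization, and the distinction matters. $C$ may be $\widetilde C\cup_{\widetilde q}D$, with $D$ a rational tail of arithmetic genus $h$ carrying $q$ with a crimping datum that is not $\Gm$-equivariant; for $h\geq 2$ such tails exist and are not atoms. Then the partial normalization contains the unstable component $(\bP^1,0,\infty)$, and $C$ lies over the closed point $\cB\Gm\subset[\bA^1/\Gm]$. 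No $\Gm$-equivariant section of $W$ with prescribed value at $1\in\bA^1$ then produces $C$, since every point of $W$ over $1$ has stable partial normalization.

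The fix uses your own setup. $C$ corresponds to a point $c\in W_0\simeq V_{h,2}$. Since $\Gm=\Aut(\bP^1,0,\infty)$ acts on $V_{h,2}$ with strictly positive weights, the orbit map $\lambda\mapsto\lambda\cdot c$ extends over $0$ with limit the origin. This gives $\Theta\to[W_0/\Gm]\to\cA_{2h}$ from $C$ to $C_0$. Equivalently, run your limit argument for an arbitrary point of $W$ rather than only for points over $1$; this is how the paper's uniform description covers both strata.

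There is also a small inaccuracy. $V_{h,2}$ is the quotient of $E_{h,2}$ by $G_h$, not by $G_{2h}$. In any case no higher-order discrepancy needs to be absorbed: $u=x/t$ is an honest relative coordinate along the section of the blown-up family, so your computation of the section is exact.
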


\begin{proof}
    Consider the stabilization morphism (see \Cref{def:stabilization})
    $$ \cM_{g-h,[h]}^r \simeq \cM_{g-h,1}^r \times [\bA^1/\bG_m] \longrightarrow \cM_{g-h,1}^r;$$
    the fiber over $[(\widetilde{C},\widetilde{q})] \in \cM_{g-h,1}^r$ is isomorphic to $[\bA^1/\bG_m]$. More specifically, the generic point of $[\bA^1/\bG_m]$ corresponds to the case when the (pointed) partial normalization of $C$ at $q$ is already stable, whereas the special point can be interpreted as the curve obtained by gluing a projective line $(\bP^1,0,\infty)$ with $(\widetilde{C},\widetilde{q})$ using the identification $0\equiv\widetilde{q}$. The group $\bG_m$ corresponds to the automorphisms of $(\bP^1,0,\infty)$.

    In the notation of \Cref{rem:crim-mor-even}, we have that the moduli stack $\cA_{2h}(\widetilde{C},\widetilde{q})$ parametrizing curves in $\cA_{2h}$ whose stabilization of the partial normalization at the even $A$-singularity is $(\widetilde{C},\widetilde{q})$ is isomorphic to $[V_{h,2}\times \bA^1/\bG_m]$. By definition, $(C,q)$ belongs to $\cA_{2h}(\widetilde{C},\widetilde{q})$. Unraveling the constructions, it is easy to see that $V_{h,2}\times \bA^1$ is a $\bG_m$-representation with strictly positive (or negative) weights; thus, every curve in $\cA_{2h}(\widetilde{C},\widetilde{q})$ admits an isotrivial degeneration to the $0$-section of $V_{h,2}\times \bA^1$, which is represented by the curve $C_0$.
\end{proof}

We proceed by doing the same for the other possible basins of attraction of the even atom, namely curves with hyperelliptic tails.

\begin{lemma}\label{lem:iso-deg-hyp-tails-atom-1}
    Suppose $r$ is an even positive integer. Let $C$ be an $A_r$-stable curve of genus $g$ over some field $k$ and $\Gamma\subset C$ be an $A_1$-attached honestly hyperelliptic tail of genus $h:=r/2$. We denote by $\widetilde{C}$ the subcurve $C-\Gamma$ and by $\widetilde{q}$ the intersection $\Gamma\cap (C-\Gamma)$. Moreover, let $C_0$ be the $A_{r}$-stable curve obtained by gluing nodally $(\widetilde{C},\widetilde{q})$ with the $1$-pointed even atom $(\Gamma,\widetilde{q})$ of genus $h$ at $\widetilde{q}$. Then there exists an isotrivial degeneration $C_{\Theta}\rightarrow \Theta$ such that
    \begin{itemize}
        \item the generic fiber of the degeneration is isomorphic to $C$;
        \item the special fiber of the degeneration is isomorphic to $C_0$.
    \end{itemize}
\end{lemma}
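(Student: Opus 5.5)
The plan is to reduce the statement to the geometry of the stack $\cH_{h,w}^{r,\circ}$ of honestly hyperelliptic genus $h$ curves with a marked Weierstrass point. By \Cref{prop:hyp-global-descr} (with $r=2h$), this stack is the quotient $[\bA^{2h-1}/\bG_m]$ with strictly positive weights. The key steps are as follows.

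First, we regard $(\Gamma,\widetilde{q})$ as a point of $\cH_{h,w}^{r,\circ}$. By \Cref{def:tails and bridges}, the attaching point $\widetilde{q}$ is a smooth Weierstrass point of $\Gamma$, and $\Gamma$ is honestly hyperelliptic. Note that the $1$-pointed tail $(\Gamma,\widetilde{q})$ is $A_r$-stable with $r=2h$: ampleness of $\omega_\Gamma(\widetilde{q})$ holds since $h\geq 1$ and $\Gamma$ is irreducible.

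Second, because the weights are strictly positive, every point of $[\bA^{2h-1}/\bG_m]$ admits an isotrivial degeneration to the image of the origin. Concretely, the orbit map $\bA^1\to\bA^{2h-1}$, $t\mapsto t\cdot v$, extends equivariantly over $0$. This gives a morphism $\Theta\to\cH_{h,w}^{r,\circ}$ with generic fiber $(\Gamma,\widetilde{q})$ and special fiber the origin. By \Cref{lem:even-atom} (equivalently case (3) of \Cref{cor:cyc-cover-with-pos-dim}), the origin is the $1$-pointed even atom of genus $h$, since it is the unique point with $\bG_m$-stabilizer and it carries an $A_{2h}$-singularity. Pulling back the universal family yields a family $(\Gamma_\Theta,\sigma)\to\Theta$ of $1$-pointed $A_r$-stable curves, with $\sigma$ a smooth section.

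Third, we glue nodally. We take the constant family $\widetilde{C}\times\Theta$ with the constant section $\widetilde{q}$, and glue it to $\Gamma_\Theta$ along the two sections via the nodal gluing (clutching) morphism. This morphism exists in families over any base, in particular over the stack $\Theta$, because crimping a node needs no extra datum (\Cref{rem:crim-mor-odd} with $h=1$, where $\cB G_1$ is trivial). The glued family has generic fiber $C$ and special fiber $C_0$. Stability of the fibers follows from stability of the two pieces, and each fiber is $A_r$-prestable since the singularities of $\Gamma_\Theta$ are at worst $A_{2h}=A_r$.

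The main obstacle I expect is equivariance over $\Theta$. The gluing must be performed on $\bG_m$-equivariant families over $\bA^1$ in which $\bG_m$ acts trivially on the $\widetilde{C}$ side, compatibly with the section $\sigma$. One way to handle this is to work over the atlas $\bA^1$ with the trivial action on $\widetilde{C}\times\bA^1$ and the given linearization on $\Gamma_{\bA^1}$. The section $\sigma$ is $\bG_m$-fixed, since the marked Weierstrass point is part of the moduli data. Hence the pushout of the two families along their sections is $\bG_m$-equivariant and descends to $\Theta$.

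Alternatively, one can phrase the whole argument at the level of stacks, via the clutching morphism $\cM^r_{g-h,1}\times\cM^r_{h,1}\to\cM^r_{g}$ composed with the map $\Theta\to\{\widetilde{C}\}\times\cH_{h,w}^{r,\circ}$. This is the cleaner route, and it is the one I would write down.
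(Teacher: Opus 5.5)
Your proposal is correct and is essentially the paper's argument. The paper likewise composes the nodal clutching map $\cH_{h,w}^{r}\times\Spec k\to\cM_{g,n}^r\times\Spec k$, $(H,\widetilde q)\mapsto H\cup_{\widetilde q}\widetilde C$, with the isotrivial degeneration to the origin of $[\bA^{2h-1}/\bG_m]$ given by \Cref{prop:hyp-global-descr}; your additional checks (stability of the glued fibers, identifying the origin with the $1$-pointed even atom, descent of the gluing to $\Theta$) are details the paper leaves implicit.
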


\begin{proof}
    Notice that there is a morphism
    $$\cH_{h,w}^r \times \spec k \rightarrow \cM_{g,n}^r \times \spec k$$
    that sends every curve $(H,\widetilde{q})\in \cH_{h,w}^r \times \spec k$ to the curve $H\cup_{\widetilde{q}}\widetilde{C}$. The result follows using \Cref{prop:hyp-global-descr}.
\end{proof}

The exact same proof gives us the following lemma, which takes care of the case of dangling bridges.
\begin{lemma}\label{lem:iso-deg-hyp-tails-atom-2}
    Suppose $r$ is an odd positive integer such that $r\geq 5$. Let $C$ be an $A_r$-stable curve of genus $g$ over some field $k$ and $\Gamma\subset C$ be an $A_1$-attached honestly hyperelliptic dangling bridge of genus $h:=(r-1)/2$. We denote by $\widetilde{C}$ the subcurve $C-\Gamma$ and by $\widetilde{q}$ the intersection $\Gamma\cap (C-\Gamma)$. Moreover, let $C_0$ be the $A_{r}$-stable curve obtained by gluing nodally $(\widetilde{C},\widetilde{q})$ with the $1$-pointed odd atom $(\Gamma,\widetilde{q})$ of genus $h$ at $\widetilde{q}$. Then there exists an isotrivial degeneration $C_{\Theta}\rightarrow \Theta$ such that
    \begin{itemize}
        \item the generic fiber of the degeneration is isomorphic to $C$;
        \item the special fiber of the degeneration is isomorphic to $C_0$.
    \end{itemize}
\end{lemma}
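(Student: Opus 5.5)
Following the proof of \Cref{lem:iso-deg-hyp-tails-atom-1}, I will build a morphism from the moduli of pointed honestly hyperelliptic curves into \(\cM_{g,n}^r\) by gluing, and then read off the degeneration from \Cref{prop:hyp-global-descr}. Since \(\Gamma\) is an \(A_1\)-attached honestly hyperelliptic dangling bridge of genus \(h=(r-1)/2\), the pair \((\Gamma,\widetilde{q})\) is an honestly hyperelliptic curve of genus \(h\) with a smooth marking that is not a Weierstrass point. Hence it defines a \(k\)-point of \(\cH_{h,g_1^2}^{r,\circ}\), viewed as in \Cref{cor:cyc-cover-with-pos-dim} as the open substack of \(\cH_{h,1}^{r,\circ}\) where the marking is not Weierstrass. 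Here \(r=2h+1\geq 2h+1\), so \Cref{prop:hyp-global-descr} applies and gives \(\cH_{h,g_1^2}^{r,\circ}\simeq[\bA^{N}/\bG_m]\) with \(\bG_m\) acting with strictly positive weights.

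Next I define the gluing morphism \(\cH_{h,g_1^2}^{r,\circ}\times\Spec k\to\cM_{g,n}^r\times\Spec k\), sending \((H,s)\) to \(H\cup_{s\sim\widetilde{q}}\widetilde{C}\), where the gluing is nodal. In families this is just nodal clutching along sections, so it is a well-defined morphism. I then check that the output is \(A_r\)-stable of genus \(g\). The genus is \(g(\widetilde C)+h\) by \Cref{rem:genus-count}, which matches that of \(C\). Ampleness of \(\omega(\sum p_i)\) holds on \(\widetilde C\) because it already held in \(C\) and the attaching data is unchanged. On \(H\) it holds because \(H\) has positive genus. In the degenerate case \(H\) is the \(1\)-pointed odd atom, whose two rational components each carry the \(A_r\)-singularity, plus either the node or nothing. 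Here I use \(h\geq 2\), which is exactly where \(r\geq 5\) enters: by \Cref{rem:odd-atoms} the pointed odd atom is \(A_r\)-stable, with \(A_{2h+1}\)-singularity of index at least \(5\), so the unmarked rational component is stable.

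Finally, every point of \([\bA^N/\bG_m]\) admits an isotrivial degeneration to the origin: take the map \(\Theta\to[\bA^N/\bG_m]\) induced by \(t\mapsto t\cdot v\), which extends over \(0\) because the weights are positive. By \Cref{cor:cyc-cover-with-pos-dim}(2) and \Cref{lem:odd-atom}, the origin is the \(1$-pointed odd atom of genus \(h\) with non-Weierstrass marking. Composing with the gluing morphism gives \(C_\Theta\to\Theta\), with generic fiber \(\Gamma\cup_{\widetilde q}\widetilde C\cong C\) and special fiber \(C_0\).

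The main obstacle is the identification of the origin and the applicability of \Cref{prop:hyp-global-descr} in the \(1\)-pointed non-Weierstrass setting. That proposition is phrased for two markings exchanged by the involution. I will reduce to it, as in \Cref{rem:non-weierstrass-case}, by adding the conjugate point \(\sigma(\widetilde q)\), which gives an isomorphism with the \(1\)-pointed non-Weierstrass locus. I also need that \(A_r\)-stability of the glued curve is automatic, which is handled above.
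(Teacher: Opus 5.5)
Your proposal is correct and is essentially the paper's proof, which just says that the argument of \Cref{lem:iso-deg-hyp-tails-atom-1} carries over: glue via the morphism $\cH_{h,g_1^2}^{r,\circ}\times\Spec k\to\cM_{g,n}^r\times\Spec k$, and use the positive-weight quotient description $[\bA^{2h+1}/\bG_m]$ from \Cref{prop:hyp-global-descr}, whose origin is the $1$-pointed odd atom. Two minor remarks: ampleness on $H$ holds because $(H,s)$ is $A_r$-stable by definition of the stack and $\omega|_H\cong\omega_H(s)$ after nodal gluing (positive genus alone would not suffice for reducible $H$), and no reduction to two markings is needed, since the paper defines $\cH_{h,g_1^2}^{r,\circ}$ as the non-Weierstrass open substack of $\cH_{h,1}^{r,\circ}$.
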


Again, the case of $2$-pointed bridges can be proved exactly as in \Cref{lem:even-atom-attraction}. Thus, we omit the proof of the next statement.

\begin{lemma}\label{lem:odd-atom-attraction}
    Let $(C,q) \in \cA_{2h+1}$ be an $A_r$-stable curve of genus $g$ with an $A_{2h+1}$-singularity $q$ over a field and denote by $(\widetilde{C},\widetilde{q}_1,\widetilde{q}_2)$ the stabilization of the pointed partial normalization of $C$ at $q$. Moreover, let $C_0$ be the $A_r$-stable curve obtained by gluing nodally $(\widetilde{C},\widetilde{q}_1,\widetilde{q}_2)$ with the $2$-pointed odd atom $(\Gamma,\widetilde{q}_1,\widetilde{q}_2)$ of genus $h$ at $\widetilde{q}_1,\widetilde{q}_2$. Then there exists an isotrivial degeneration $C_{\Theta}\rightarrow \Theta$ such that
    \begin{itemize}
        \item the generic fiber of the degeneration is isomorphic to $C$;
        \item the special fiber of the degeneration is isomorphic to $C_0$.
    \end{itemize}
\end{lemma}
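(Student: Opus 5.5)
We follow the proof of \Cref{lem:even-atom-attraction} verbatim, replacing the even crimping description by the odd one. The plan is to work in the stack parametrizing curves with an $A_{2h+1}$-singularity whose stabilized pointed partial normalization is the fixed $(\widetilde{C},\widetilde{q}_1,\widetilde{q}_2)$. We will show that this stack is a quotient of an affine space by $\bG_m$ with weights of one sign. Then $C$ degenerates isotrivially to the origin, and we will identify the origin with $C_0$.

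First, the base. Suppose $q$ is non-separating. By \Cref{prop:descr-base-crimping-odd} we have
\[
\cM_{g-h-1,0,2[h+1]}^r\simeq \cM_{g-h-1,2}^r\times[\bA^1/\bG_m]\times[\bA^1/\bG_m],
\]
so the fiber of the stabilization morphism over $[(\widetilde{C},\widetilde{q}_1,\widetilde{q}_2)]$ is $[\bA^2/\bG_m^2]$. Its generic point is the stable normalization. Its closed point is the curve with a copy of $(\bP^1,0,\infty)$ sprouted at each $\widetilde{q}_i$ (glued by $0\equiv\widetilde{q}_i$), and $\bG_m^2$ acts on the two sprouted lines. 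In the separating case we use \Cref{prop:descr-an-disp-i} and \Cref{prop:descr-base-crimping-even} on each connected component, which gives the same fiber $[\bA^1/\bG_m]^2$.

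Next, the crimping fiber. By \Cref{cor:diff-crimping-same-curve-odd} and \Cref{rem:descr-an-disp-i} (with \Cref{prop:descr-an-disp-ns} in the non-separating case, passing to the degree-$2$ cover $\cI$ if needed), the stack $\cA_{2h+1}(\widetilde{C},\widetilde{q}_1,\widetilde{q}_2)$ is the pullback of $\cB G_{h+1}\to \cB G_{h+1}\times \cB G_{h+1}$ to $[\bA^2/\bG_m^2]$. We now restrict to the diagonal cocharacter $\bG_m\subset\bG_m^2$, which scales both sprouted lines with equal weight at the two points $\infty$. The proof of \Cref{lem:hyp-crimping-datum-odd} identifies the pullback over $\cB\bG_m^2$ with $[G_{h+1}/\bG_m^2]\simeq[U_{h+1}/\bG_m]$, where the residual diagonal $\bG_m$ acts with weights of one sign since $n_1=n_2\neq 0$. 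We also orient $\bG_m$ so that the $\bA^2$ coordinates have weights of that same sign. Gluing these descriptions, $\cA_{2h+1}(\widetilde{C},\widetilde{q}_1,\widetilde{q}_2)\simeq[(\bA^2\times U_{h+1})/\bG_m]$ with all weights of one sign, up to the finite étale ambiguity.

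Finally, pick the point of $\bA^2\times U_{h+1}$ representing $C$. The orbit map $t\mapsto t\cdot x$ extends over $0$, which gives a map $\Theta\to\cM_{g}^r$ with generic fiber $C$. Its special fiber is the origin: the two sprouted lines crimped at $\infty$ with the unique $\bG_m$-equivariant datum, i.e.\ the $2$-pointed odd atom, glued nodally at $\widetilde{q}_1,\widetilde{q}_2$. That curve is exactly $C_0$, using \Cref{rem:odd-atoms} and the uniqueness statement in \Cref{lem:hyp-crimping-datum-odd}.

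The main obstacle is the sign check: verifying that the $\bG_m^2$-torsor over $[\bA^2/\bG_m^2]$ and the crimping action combine into a single $\bG_m$ acting with same-sign weights. This means tracking how the sprouting torus acts on the infinitesimal neighbourhoods of $\infty$, as in the proof of \Cref{cor:quotient-stack-description}. A secondary point is the degree-$2$ cover in the non-separating and symmetric cases, which only affects labelling of the branches and not the existence of the degeneration.
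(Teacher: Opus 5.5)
Your route is the one the paper intends. The paper omits the proof, saying it goes exactly as in \Cref{lem:even-atom-attraction}, and your fiber $[(\bA^2\times U_{h+1})/\bG_m]$ is the odd counterpart of $[(V_{h,2}\times\bA^1)/\bG_m]$ there. Your identification of the origin with $C_0$ via \Cref{rem:odd-atoms} and the uniqueness clause of \Cref{lem:hyp-crimping-datum-odd} is also correct.

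Your ``restrict to the diagonal cocharacter'' should be read as a slice, and it holds over all of $\bA^2$, not only over $\cB\bG_m^2$. The anti-diagonal torus acts simply transitively on the linear coefficient of the jet isomorphism, so $[\bA^2\times G_{h+1}/\bG_m^2]\simeq[\bA^2\times U_{h+1}/\Delta]$, with $\Delta\subset\bG_m^2$ the diagonal.

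The one genuine gap is the sentence ``we also orient $\bG_m$ so that the $\bA^2$ coordinates have weights of that same sign''. Changing the orientation of $\bG_m$ flips all weights at once, so it cannot align the weights on $\bA^2$ with those on $U_{h+1}$. Their relative sign is forced, and it is exactly what the lemma needs: with mixed signs, a generic point of $\bA^2\times U_{h+1}$ would have no limit along its orbit. You flag this as the main obstacle but leave it open.

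It is settled by the alternating rule of \Cref{rem:alternating-deformations}, which is how the paper argues in \Cref{lem:iso-deg-hyp-tails}: each $\bA^1$-coordinate is the smoothing parameter of the node joining a sprouted line to $\widetilde{C}$. Explicitly:
\begin{itemize}
\item Blow up $\widetilde{C}\times\bA^1_a$ at $(\widetilde{q}_i,0)$, with $x$ a local coordinate at $\widetilde{q}_i$. The node is $xs=a$ with $s=a/x$, and the section has local coordinate $t=x/a$.
\item If $\lambda$ scales $a$ with weight $1$, then $t$ scales with weight $-1$. So $T_\infty$ of the sprouted line has weight $n=-1$, while $a$ (and $T_0$) has weight $-n$.
\item Write the crimping as $\xi_2=\xi_1+\sum_{j\geq 2}u_j\xi_1^j$ in these section coordinates. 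Transporting by $\xi_i\mapsto\lambda^{n}\xi_i$ gives $u_j\mapsto\lambda^{-(j-1)n}u_j$.
\end{itemize}
Hence every weight on $\bA^2\times U_{h+1}$ has the sign of $-n$. This is opposite to the weights on $T^1_q$, as \Cref{cor:gm-decom-tang} predicts. With this check in place, the rest of your argument goes through.
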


We also describe one of the basins of attraction for an $A_1/A_{2h}$-attached rosary of hyperelliptic $A$-singularities (see the leftmost curve in \Cref{fig:example-3.25}). We do it for chains of length less than or equal to two, but the case of a longer rosary can be proven in the same way.

\begin{lemma}\label{lem:iso-deg-hyp-tails}
    Suppose $r$ is an even positive integer and $k\geq 1$. Let $C$ be an $A_r$-stable curve of genus $g$ over an algebraically closed field and $\Gamma\subset C$ be an $A_{2k+1}$-attached honestly hyperelliptic tail of genus $h:=r/2$ for $h\geq 1$. We denote by $\widetilde{C}$ the stabilization of the subcurve $C-\Gamma$ and by $\widetilde{q}$ the intersection $\Gamma\cap (C-\Gamma)$. Moreover, let $C_0$ be the $A_{r}$-stable curve obtained by gluing $(\widetilde{C},\widetilde{q})$ nodally with the $1$-pointed curve $(D,\widetilde{q})$ of genus $h+k$ obtained by pinching the $2$-pointed odd atom $(\widetilde{D}, \widetilde{q},q_2)$ of genus $k$ at $q_2$ such that $q_2$ is a hyperelliptic singularity (see \Cref{fig:example-3.25}).

    Then there exists an isotrivial degeneration $C_{\Theta}\rightarrow \Theta$ such that
    \begin{itemize}
        \item the generic fiber of the degeneration is isomorphic to $C$;
        \item the special fiber of the degeneration is isomorphic to $C_0$.
    \end{itemize}
\end{lemma}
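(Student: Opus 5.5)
We build the degeneration in two stages and then compose them. The model is \Cref{lem:iso-deg-hyp-tails-atom-1}, where a universal family of hyperelliptic tails is glued to a fixed curve and the existence of an isotrivial degeneration is read off from a positively weighted quotient description.

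\emph{First stage: degenerate the tail.} Write $C=\widetilde{C}'\cup_{q}\Gamma$, where $\widetilde{C}'=C-\Gamma$ and $q$ is the $A_{2k+1}$-singularity. By \Cref{cor:diff-crimping-same-curve-odd} and \Cref{rem:descr-an-disp-i}, the curves with partial normalization $(\widetilde{C}',\widetilde q)\sqcup(H,w)$ and $(H,w)\in\cH^{r,\circ}_{h,w}$ are parametrized by a $G_{k+1}$-torsor (the crimping datum) over $\cH^{r,\circ}_{h,w}\cong[\bA^{2h-1}/\Gm]$ (\Cref{prop:hyp-global-descr}), fibered with the fixed pointed curve $(\widetilde{C}',\widetilde q)$. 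I would restrict to the $\Gm$ acting on the tail. Using the explicit $\Gm$-action on $G_{k+1}$ from the proof of \Cref{lem:hyp-crimping-datum-odd}, the fiber becomes a quotient $[W/\Gm]$. Here $W$ is a $\Gm$-space whose weights on $\bA^{2h-1}$ are positive, and whose weights on the unipotent crimping directions are non-negative (they come from $n_1=n_2$ after twisting). So the limit as $t\to0$ exists. It is obtained by degenerating $\Gamma$ to the $1$-pointed even atom of genus $h$ attached at $q$ via the same $A_{2k+1}$-singularity, with a $\Gm$-equivariant crimping datum. This yields $C\rightsquigarrow C'$, where $C'$ is $\widetilde{C}'$ glued along an $A_{2k+1}$-singularity to an even atom.

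\emph{Second stage: degenerate the odd singularity.} Now apply \Cref{lem:odd-atom-attraction}, in its separating version, to the $A_{2k+1}$-singularity $q$ of $C'$. The stabilized partial normalization of $C'$ at $q$ is $(\widetilde C,\widetilde q)\sqcup(\text{even atom},w)$. Replacing $q$ by a nodally attached $2$-pointed odd atom of genus $k$ gives $C'\rightsquigarrow C_0$. On one side the odd atom is glued to the even atom at $q_2$. Since the even atom is $\bP^1$ crimped at $\infty$, gluing it at $q_2$ is the same as pinching $q_2$ into an $A_{2h}$-singularity, with the unique $\Gm$-equivariant datum of \Cref{lem:hyp-crimping-datum-even}. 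This recovers the curve $D$ described in the statement. Two further checks are needed here. First, a node joining $\bP^1$ with $0\equiv$ Weierstrass point to the even atom must contract under stabilization, so that the result is exactly the pinching. Second, genus additivity must hold: $h+k$ is the genus of $D$ by \Cref{rem:genus-count}.

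\emph{Composition and main obstacle.} Specializations to closed orbits compose, but two $\Theta$-maps do not compose directly. Instead, I would produce a single cocharacter. Work in the $\Gm^2$-equivariant local picture at $C_0$. By \Cref{prop:stab-no-out} and \Cref{lem:hyp-crimping-datum-odd}, only one $\Gm$ survives in $\Aut^\circ(C_0)$. Via \Cref{rem:alternating-deformations}, its positive part contains $T^1_{q_2}$ (smoothing the pinch into the honestly hyperelliptic tail) and ${\rm Cr}$ of the odd singularity, and deforms the node; this is exactly the data of $C$. Then \Cref{prop:local-Theta-complete}/\Cref{lem:lifting-complete}, as in \Cref{lem:even-atom-attraction}, produce the family over $\Theta$. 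The main obstacle is this last step. I must show that the sign of the $\Gm$-weights pairs smoothing $q_2$ with preserving the $A_{2k+1}$-singularity and smoothing the node, rather than the opposite combination. Then I must globalize from the formal neighbourhood to $C$ itself, not just to some curve in its basin. I would first try to globalize via an explicit $[\mathbb{A}^N/\Gm]$ chart, as in \Cref{lem:iso-deg-hyp-tails-atom-1}, with all weights positive.
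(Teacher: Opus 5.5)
There is a genuine gap, and it is in both of your stages, not only in composing them. In Stage~1, with $(\widetilde C',\widetilde q)$ held fixed, the torus of $\cH^{r,\circ}_{h,w}\cong[\bA^{2h-1}/\Gm]$ acts on the tangent line at the Weierstrass point with weight $n_2\neq 0$, but on $T_{\widetilde q}\widetilde C'$ with weight $n_1=0$. By \Cref{lem:hyp-crimping-datum-odd} there is therefore no $\Gm$-equivariant crimping datum. Concretely, the torus rescales the linear coefficient $u_1\in\Gm$ of the crimping datum in $G_{k+1}$ with nonzero weight. So your chart $[\bA^{2h-1}\times G_{k+1}/\Gm]$ has only finite stabilizers, and the orbit of $C$ has no limit in it; there is no second torus on the $\widetilde C'$ side to twist with. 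Hence $C'=\widetilde C'\cup_{A_{2k+1}}E$, with $E$ the $1$-pointed even atom, has no cocharacter acting nontrivially on $E$. By \Cref{lem:Theta-and-BGmR}, no $\Theta$-family ending at $C'$ can smooth the cusp of $E$ into $\Gamma$.

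In Stage~2, \Cref{lem:odd-atom-attraction} applied at $q\in C'$ inserts the odd atom $R_1\cup_{A_{2k+1}}R_2$ nodally at \emph{both} ends, giving $\widetilde C\cup_{\mathrm{node}}R_1\cup_{A_{2k+1}}R_2\cup_{\mathrm{node}}E$. Attaching $E$ nodally at $q_2$ is not the same as pinching $q_2$. Indeed $\omega$ has degree $-2+(k+1)+1=k\geq 1$ on $R_2$, so nothing contracts under stabilization. This curve has a $2$-dimensional torus and is a further degeneration of $C_0$ (apply \Cref{lem:even-atom-attraction} to the cusp of $C_0$), not $C_0$ itself. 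Note that $C_0=\widetilde C\cup_{\mathrm{node}}R_1\cup_{A_{2k+1}}E$: the pinched $R_2$ \emph{is} the even atom. What distinguishes $C_0$ from $C'$ is the rational bridge $R_1$.

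That bridge is the missing idea. It is the closed point of the $[\bA^1/\Gm]$ factor in $\cM^r_{g-k-1,[k+1]}\simeq\cM^r_{g-k-1,1}\times[\bA^1/\Gm]$ (\Cref{prop:descr-base-crimping-even}, \Cref{rem:sprout-proj-line}), and the $\bA^1$ coordinate is the smoothing parameter of the node at $\widetilde q$. The paper pulls back the crimping description of $\cA^h_{2k+1}$ along $\bigl(\cB\Aut(\widetilde C,\widetilde q)\times[\bA^1/\Gm]\bigr)\times\cH^{r,\circ}_{h,w}$. The torus of $R_1$ and the torus of the tail together absorb the $\Gm$-factor of $G_{k+1}$. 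This leaves a single global chart $\cP\simeq[\bA^1\times\bA^{2h-1}\times C_{k+1}/\Gm]$ that contains $C$ and has origin $C_0$.

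Your local sign analysis at $C_0$ is exactly the check that remains. \Cref{cor:gm-decom-tang} puts $\bA^{2h-1}$ and the crimping directions on the same side, and \Cref{rem:alternating-deformations} puts the node there too. So all weights have one sign, and $C$ flows to $C_0$ in a single step. Because the chart is global, this also settles your globalization worry: you need neither a composition of $\Theta$-families nor a formal-neighbourhood argument via \Cref{prop:local-Theta-complete}. Your closing suggestion of an $[\bA^N/\Gm]$ chart points the right way, but it only works once the node coordinate is included, and your two-stage plan never introduces it.
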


\begin{proof}
    Consider the following diagram
    $$\begin{tikzcd}
        \cP \arrow[d] \arrow[r] &\cA_{2k+1}^h \arrow[d] \arrow[r]   &  \cM_{g,n}^r  \\
        \left(\cB\Aut(\widetilde{C},\widetilde{q}) \times [\bA^1/\bG_m]\right)\times \cH_{h,w}^{r,\circ} \arrow[r] & \left({\cM_{g-k-1,1}^r}\times [\bA^1/\bG_m]\right)\times\cM_{h,[k]}^r
    \end{tikzcd}$$
    where the left-hand square is cartesian and the morphism $\cB\Aut(\widetilde{C},\widetilde{q}) \rightarrow \cM_{g-k-1,1}^r$ is induced by $(\widetilde{C},\widetilde{q})$. Notice that $\cH_{h,w}^{r,\circ}$ is a locally closed substack of $\cM_{h,[k]}^r\simeq \cM_{h,1}^r\times [\bA^1/\bG_m]$.

    By construction, $\cP$ parametrizes curves over $k$ constructed by attaching an honestly hyperelliptic tail of genus $h$ to $\widetilde{C}$ with an $A_{2k+1}$-singularity.
    The factor $[\bA^1/\bG_m]$ accounts for the possibility of gluing the hyperelliptic tail directly at $\widetilde{q}$ or to an unstable projective line passing through $\widetilde{q}$, and it can be identified with the (stacky) deformation space of the node at $\widetilde{q}$ when the unstable $\bP^1$ appears. For a more detailed explanation, see \Cref{rem:sprout-proj-line}. Since $r=2h$, \Cref{prop:hyp-global-descr} gives that $\cH_{h,w}^{r,\circ}$ is the global quotient $[\bA^{2h-1}/\bG_m]$ and it can be identified with the deformation space of the (hyperelliptic) $A_{2h}$-singularity, by \Cref{prop:def-hyp-rat-sing}. Thus, \Cref{rem:descr-an-disp-i} implies that $\cP$ is the quotient stack $[\bA^1\times \bA^{2h-1} \times C_{k+1}/\bG_m]$, where $C_{k+1}:=(U_{k+1}/\Ga)$ is a $\bG_m$-representation of dimension $k$ which accounts for the crimping data of the $A_{2k+1}$-singularity (see also \Cref{rem:odd-atoms}). The zero of the representation $\bA^1\times \bA^{2h-1} \times C_{k+1}$ corresponds to the curve $(D,\widetilde{q})$ described in the statement of the lemma. Thus, the statement follows if we show that $\bG_m$ acts with strictly positive (or negative) weights on the whole representation. Notice that the three representations $\bA^1$, $\bA^{2h-1}$, and $C_{k+1}$ either have strictly positive or strictly negative weights.

    By \Cref{cor:gm-decom-tang}, it is clear that the weights of $\bA^{2h-1}$ and $C_{k+1}$ have the same signs: namely, if we set the convention that $\bG_m$ acts positively on the $A_{2h}$-singularity, then they both have strictly positive weights. Since $\bA^1$ can be identified with the deformation of the node created by gluing $(\widetilde{C},\widetilde{q})$ and $(D,\widetilde{q})$, the weight of the $\bG_m$-representation $\bA^1$ has the same sign as the weights of the other two $\bG_m$-representations (see, for instance, \Cref{ex:deformation-alter} or \Cref{rem:alternating-deformations}), finally proving the result.
\end{proof}

Finally, we describe the basins of attraction in the case of a $2$-pointed rosary of hyperelliptic $A$-singularities. This is the inverse of \Cref{prop:defor-rational-chain}. We avoid writing down the proof as it is not particularly enlightening, but it follows by applying the same strategy adopted for \Cref{lem:iso-deg-hyp-tails}.

\begin{proposition}\label{prop:iso-deg-chain}
    Let  $(C_1,p_1^1,p_2^1)$ be a $2$-pointed $A_r$-stable curve. Then $(C_1,p_1^1,p_2^1)$ admits an isotrivial degeneration to a $2$-pointed rosary of hyperelliptic $A$-singularities if it is one of the curves described below, depending on the parity of the length of the rosary. Suppose that the length of the rosary is even; then $(C_1,p_1^1,p_2^1)$ is either
    \begin{itemize}
        \item[e1)] a $2$-pointed chain of honestly hyperelliptic curves;
        \item[e2)] an $A_{2h+1}/A_{2k+1}$-attached chain of honestly hyperelliptic curves, which is attached to two rational $1$-pointed smooth curves (see the picture below);
    \end{itemize}
    if instead the length is odd, we have that
    \begin{itemize}
        \item[o)] $(C_1,p_1^1,p_2^1)$ is a $2$-pointed chain of honestly hyperelliptic curves attached by an odd $A$-singularity to a $2$-pointed rational smooth curve (see the picture below).
    \end{itemize}
    Moreover, if $(C_1,p_1^1,p_2^1)$ is a $2$-pointed chain of honestly hyperelliptic curves of length $1$, then the rosary has length $2$, thus it is a $2$-pointed odd atom.
\end{proposition}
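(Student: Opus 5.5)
We prove the necessity direction: if $(C_1,p_1^1,p_2^1)$ admits an isotrivial degeneration to a $2$-pointed rosary $(C_0,p_1^0,p_2^0)$ of hyperelliptic odd $A$-singularities, then $C_1$ has the shape e1), e2) or o) according to the parity of the length. We also prove the final length-$2$ assertion. The plan is to reduce, as in \Cref{prop:defor-rational-chain}, to the $\Gm$-equivariant deformation space of $C_0$. We then read off the combinatorics of $C_1$ from the sign decomposition of that space.

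\textbf{Step 1 (reduction to the tangent space).} Let $\Theta_k\to\cM_{g,2}^r$ be the isotrivial degeneration, with special fiber $C_0$ and cocharacter $\Gm\to\Aut^{\circ}(C_0,p_1^0,p_2^0)\simeq\Gm$; this isomorphism holds by \Cref{prop:aut0-rosaries}. Since $\cM_{g,2}^r$ is smooth (\Cref{theo:descr-quot}) and $\Theta_k$ is coherently complete along $\cB\Gm$, \Cref{lem:lifting-complete} (applied with $G_x^0$) lifts the map to $\Theta_k\to[T_{[C_0]}\cM_{g,2}^r/\Gm]$. Then \Cref{lem:Theta-and-BGmR} shows that the image of the open point is a vector $v$ lying in $(\Def_{(C_0,p_1^0,p_2^0)})^{>}$. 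Label the singularities $q_1,\dots,q_s$ of $C_0$ in order along the chain, so the length is $s+1$. By \Cref{rem:decom-tang} and the computation in \Cref{ex:deformation-alter}, which uses \Cref{cor:gm-decom-tang} and the alternating signs of the weights at $0$ and $\infty$ on each $(\bP^1,0,\infty)$, we have, up to swapping the roles,
\[
(\Def_{C_0})^{>}=\bigoplus_{i\ \mathrm{odd}}T^1_{q_i}\oplus\bigoplus_{i\ \mathrm{even}}{\rm Cr}_{q_i}.
\]
Hence, in $C_1$, the singularities $q_i$ of one parity class are preserved, possibly with a different crimping datum. Those of the other parity class are deformed.

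\textbf{Step 2 (identifying the generic fiber).} Fix a deformed singularity $q_i$. The two rational components adjacent to it form a $2$-pointed odd atom, with attaching points given by its neighbours or by the markings. By \Cref{prop:def-hyp-rat-sing} and \Cref{rem:non-weierstrass-case}, the $T^1_{q_i}$-component deforms this atom inside $\cH^{r,\circ}_{h_i,g_1^2}$. It therefore becomes a honestly hyperelliptic curve whose two attaching points are exchanged by the involution. It may also stay singular, or even stay an atom, when the corresponding coordinate of $v$ vanishes, but it is still honestly hyperelliptic. The preserved $q_j$ glue consecutive such pieces by odd $A$-singularities.

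\textbf{Step 3 (parity count).} Since deformed and preserved singularities alternate, the component count depends only on the length. If the length $s+1$ is even, then $s$ is odd, and there are two cases. If $q_1$ and $q_s$ are deformed, every rational component is absorbed into an atom, which gives e1). If $q_1$ and $q_s$ are preserved, the two end components $(\bP^1,p_j^0,q)$ remain $1$-pointed smooth rational curves, attached through $A_{2h+1}$ and $A_{2k+1}$, which gives e2). If the length is odd, exactly one end component is left unabsorbed; together with its preserved singularity and its marking, it gives o).

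\textbf{The length-$2$ clause.} If $C_1$ is a chain of length $1$, it has no preserved outer singularity. By Step 3 this forces the shape e1) with every $q_i$ deformed. Alternation then forbids two adjacent singularities, so $s=1$ and $C_0$ is a $2$-pointed odd atom. The main obstacle is Step 2: we must justify that the generic fiber is determined locally, atom by atom, even though $v$ mixes the $T^1$ and ${\rm Cr}$ summands. I would handle this by using the product structure of $\Def_{C_0}$ over the singularities, as in \Cref{rem:decom-tang}, together with the fact that the crimping summands change only the gluing data and not the components.
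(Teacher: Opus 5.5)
Your proposal proves the converse of the statement. The proposition says that $(C_1,p_1^1,p_2^1)$ \emph{admits} an isotrivial degeneration to a $2$-pointed rosary of hyperelliptic $A$-singularities \emph{if} it has shape e1), e2) or o). The paper presents it as the inverse of \Cref{prop:defor-rational-chain}, in the subsection whose stated aim is to show that the necessary conditions of \Cref{sub:def-theory} are also sufficient. It is used in exactly this sense in the proof of \Cref{thm:closed_points_Mgnr}, where a non-atomic honestly hyperelliptic bridge has to be degenerated to the $2$-pointed odd atom. Your Steps 1--3 start from a given map $\Theta_k\to\cM^r_{g,2}$ and constrain its generic fibre. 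That is essentially the paper's proof of \Cref{prop:defor-rational-chain}: the sign decomposition of \Cref{ex:deformation-alter}, then \Cref{lem:Theta-and-BGmR} and \Cref{prop:def-hyp-rat-sing}. However, \Cref{lem:Theta-and-BGmR} is only an obstruction, and nothing in your argument produces a $\Theta$-family starting from a given curve of the prescribed shape. The same holds for the final clause. You show that if a length-one chain degenerates to a rosary, then that rosary has length $2$, but not that such a degeneration exists.

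What is missing is a global construction in the style of \Cref{lem:iso-deg-hyp-tails}, which is what the paper refers to. Fix the discrete data of $C_1$: the genera $g_i$ of the hyperelliptic pieces, the types $A_{2k_j+1}$ of the gluing singularities, and the rational end components in e2) and o). Let $\cP$ be the stack of all curves with this combinatorics, assembled from the crimping diagrams:
\begin{itemize}
\item each honestly hyperelliptic bridge contributes $\cH^{r,\circ}_{g_i,g_1^2}\simeq[\bA^{2g_i+1}/\Gm]$, a representation with positive weights whose origin is the $2$-pointed odd atom (\Cref{prop:hyp-global-descr}; this needs $r\geq 2g_i+1$, which the target rosary forces);
\item each rational end $(\bP^1,0,\infty)$ contributes a $\cB\Gm$;
\item each gluing $A_{2k+1}$-singularity contributes a crimping factor (\Cref{rem:descr-an-disp-i}, \Cref{lem:hyp-crimping-datum-odd}) whose invertible part absorbs one torus factor.
\end{itemize}
One should obtain $\cP\simeq[V/\Gm]$, where $V$ is the sum of the $T^1$'s of the rosary singularities that are smoothed and the crimping spaces of those that are kept, and the origin is the rosary. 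Next, use \Cref{cor:gm-decom-tang} and the opposite weights at $0$ and $\infty$ on each $(\bP^1,0,\infty)$ to check that all weights of $V$ have the same sign for the surviving cocharacter, as at the end of the proof of \Cref{lem:iso-deg-hyp-tails}. The limit $t\to 0$ of the point $[C_1]\in\cP$ is then the required degeneration. For a chain of length one this is just \Cref{prop:hyp-global-descr}, which also gives the final clause. This input is genuinely global: deformation theory at $C_0$ only controls curves in a formal neighbourhood of $C_0$. It cannot show that an arbitrary curve of the given shape lies in the basin of attraction.
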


\begin{figure}[H]
        \caption{}
        \centering
        \includegraphics[width=0.6\textwidth]{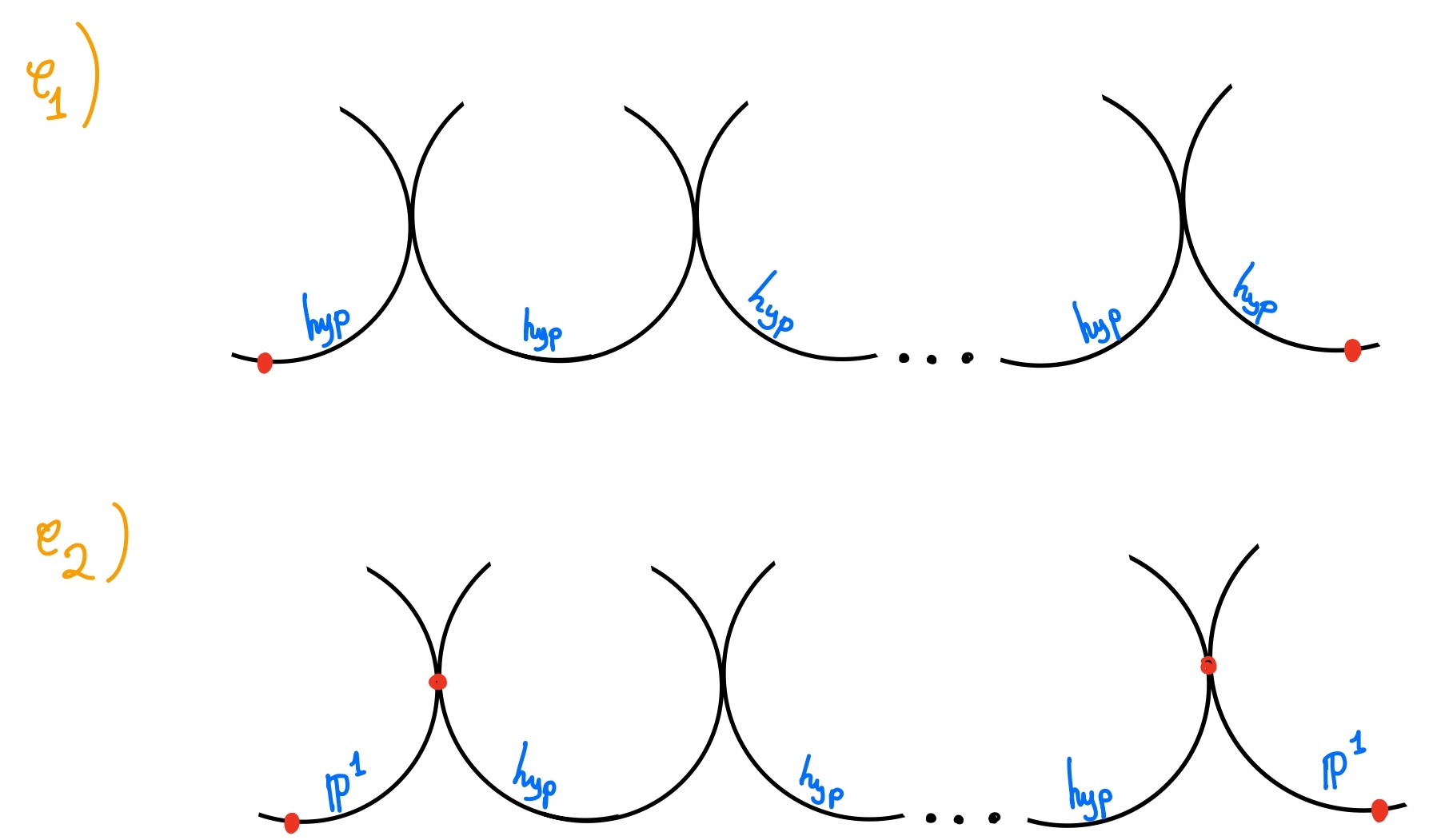}
    \end{figure}
\begin{figure}[H]
        \caption{}
        \centering
        \includegraphics[width=0.6\textwidth]{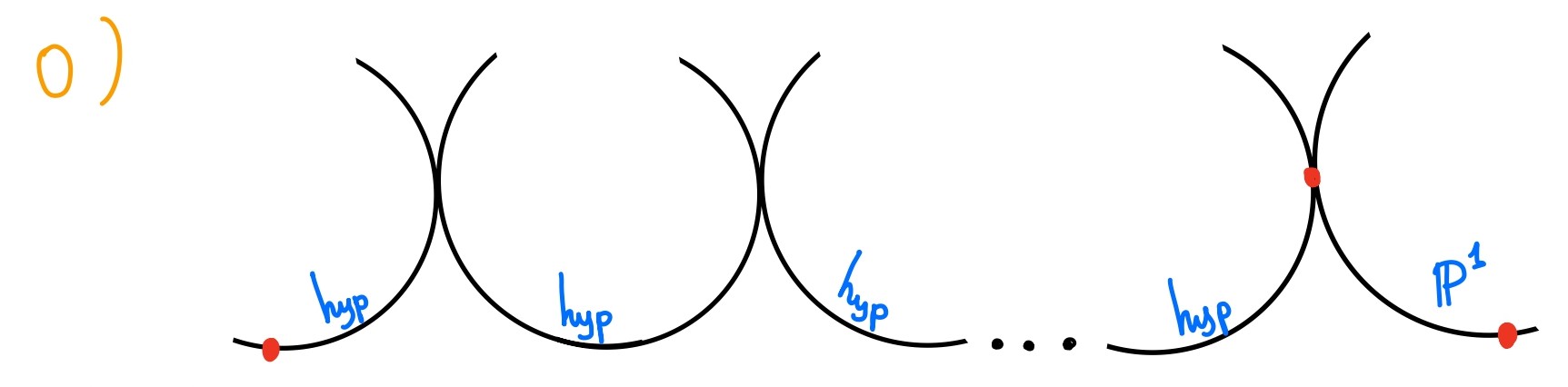}
    \end{figure}

\begin{remark}
    Notice that a rosary of hyperelliptic $A$-singularities of even length is a chain of hyperelliptic curves.
\end{remark}

We now introduce the notion of special curves, which we will prove are exactly the points of the stack $\cM_{g,n}^r$ without non-degenerate isotrivial degenerations: in other words, any family over \(\Theta_k\) with a special curve as the general fiber has a special fiber isomorphic to the same special curve. We will use this to characterize closed points of \(\cM_{g,n}^r\) below.

\begin{definition}\label{def:special-curve}
    We say that an $n$-pointed $A_r$-stable curve of genus $g$ over an algebraically closed field is \emph{special} if the following conditions hold:
    \begin{enumerate}
        \item every \(A_h\)-singularity, with \(h\geq 2\), is \emph{atomic}, that is, it lies in an \(A_i\)- or \(A_i/A_j\)-attached atom where \(i,j\in \{0,1\}\),
        \item every honestly hyperelliptic tail of genus \(\leq r/2\) and every honestly hyperelliptic bridge of genus \(\leq(r-1)/2\) is an atom (including dangling bridges).
        \item if \(n=0\), \(r\geq 2g\), and \(C\) is an honestly hyperelliptic curve, then $C$ is an atom, which is even if \(r=2g\) and odd otherwise.
    \end{enumerate}
\end{definition}
\begin{lemma}\label{lem:sing-preserved}
Let $k$ be an algebraically closed field, and let \(C \to \Theta_k\) be an isotrivial degeneration from a special curve \(C_1\) to a special curve \(C_0\). Let $q_0 \in C_0$ be an atomic $A$-singularity that lies on an atom $\Gamma \subset C_0$. If $q_0$ extends to an equisingular section of the family, then the same is true for any point in $\Gamma \cap {(C_0 - \Gamma)}$. Moreover, $\Gamma \cap {(C_0-\Gamma)}$ consists only of nodal singularities.
\end{lemma}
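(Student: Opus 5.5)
We argue locally on the deformation space of $C_0$, using the $\Gm$-equivariant description from \Cref{lem:Theta-and-BGmR}. The isotrivial degeneration gives a cocharacter $\Gm \to \Aut(C_0)$. Completing at $C_0$ via \Cref{lem:lifting-complete}, the family corresponds to a $\Gm$-equivariant map $\bA^1 \to T_{[C_0]}\cM_{g,n}^r$ whose image lies in $(T_{[C_0]}\cM_{g,n}^r)^{>}$ for the induced cocharacter.

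First we settle the second claim: $\Gamma \cap (C_0-\Gamma)$ consists of nodes. This follows from the definition of special curve. An atomic singularity lies on an $A_i$- or $A_i/A_j$-attached atom with $i,j\in\{0,1\}$, so the attaching points are nodes or markings. Markings are not in $C_0-\Gamma$, so the points of $\Gamma\cap(C_0-\Gamma)$ are nodes.

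Next we decompose the tangent space. As in \Cref{rem:decom-tang}, we split
$$T_{[C_0]}\cM_{g,n}^r \simeq T^1_{q_0}\oplus {\rm Cr}_{q_0}\oplus \bigoplus_{x} T^1_x \oplus R',$$
where $x$ runs over the nodes in $\Gamma\cap(C_0-\Gamma)$. The first two summands come from the atom $\Gamma$, with signs given by \Cref{cor:gm-decom-tang}. The deformation $T^1_x$ of a node $x$ is the tensor product of the tangent lines of its two branches, so its weight is the sum of the weights on $T_x\Gamma$ and $T_x(C_0-\Gamma)$.

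The key step is a weight comparison. We must show that $T^1_x$ has the same sign as $T^1_{q_0}$ whenever the cocharacter acts nontrivially on $\Gamma$; this is the same comparison used in the proof of \Cref{lem:iso-deg-hyp-tails} and in \Cref{rem:alternating-deformations}.

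The argument then runs as follows. Suppose $q_0$ extends to an equisingular section. Then the degeneration does not move in the $T^1_{q_0}$ direction, so, generically, the $T^1_{q_0}$-weight must be nonpositive, or the cocharacter is trivial on $\Gamma$. If the cocharacter is trivial on $\Gamma$, then by \Cref{rem:motivation-hyp-sing} it is trivial on $T_{q_0}C_0$. A $\Gm$-fixed neighbourhood of $\Gamma$ then deforms trivially, and both the node and the singularity persist.

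Otherwise, write $w$ for the weight on $T_x\Gamma$ and $w'$ for the weight on the other branch. We need $w+w'$ to have the same sign as the weight on $T^1_{q_0}$, so that $T^1_x$ also lies in the nonpositive part and the node is preserved. The weight on $T^1_{q_0}$ is a positive multiple of $w$ by the explicit action on the atom (\Cref{rem:even-atoms}, \Cref{rem:odd-atoms}). If $w'=0$ we are done. If $w'\neq 0$, the other side carries a $\Gm$-action, so it contains a rosary. This case is the main obstacle.

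For it we plan to use that $C_1$ is special. A nodal smoothing at $x$ would merge $\Gamma$, or its hyperelliptic deformation, with the adjacent rosary. By \Cref{prop:defor-rational-chain} and \Cref{rem:alternating-deformations}, the generic fiber would then contain either a non-atomic $A_h$-singularity or a non-atom hyperelliptic tail or bridge of small genus. Both contradict conditions (1)–(2) of \Cref{def:special-curve} for $C_1$.

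If this case analysis becomes unwieldy, we would instead compare weights directly. The atom's singularity and its attaching node share the branch $T_x\Gamma$, so if $w'$ had opposite sign the node's weight would force a smoothing of $x$. By \Cref{prop:iso-deg-chain}, such a smoothing produces a curve with a non-atomic odd singularity, which is not special.
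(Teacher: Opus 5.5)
Your treatment of the second claim is fine: it is just condition (1) of \Cref{def:special-curve} applied to $C_0$. The argument for the first claim, however, has a genuine gap, and its key weight comparison is backwards.

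On an atom, the cocharacter acts on each normalized component $(\bP^1,0,\infty)$ with opposite weights at the two ends. $T^1_{q_0}$ carries the sign of the branch weight at the singular point, while the attaching node $x$ sits at the other end. So when $w'=0$, $T^1_x$ and $T^1_{q_0}$ have \emph{opposite} signs, and the weights on $T^1_{q_0}$ are negative, not positive, multiples of $w$. This is exactly the alternation of \Cref{ex:deformation-alter} and \Cref{rem:alternating-deformations}, which you cite: if the atom's singularity is deformed, the node is preserved. Independently, the step ``$q_0$ is not deformed, hence the $T^1_{q_0}$-weight is nonpositive'' is a non sequitur. A $\Gm$-equivariant map $\bA^1\to T^{>}$ can simply have zero component along a positive-weight summand.

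More importantly, no local weight count can prove the claim, even with the signs corrected. Suppose $T^1_{q_0}$ lies in the non-deformable part. Then the hypothesis that $q_0$ extends equisingularly holds automatically, while $T^1_x$ lies in the deformable part.

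\Cref{lem:even-atom-attraction} realizes exactly this situation. It degenerates a curve with a non-atomic $A_{2h}$-singularity to $\widetilde C$ glued nodally to an even atom, with trivial action on $\widetilde C$, so $w'=0$. The $A_{2h}$-singularity is preserved along an equisingular section while the attaching node is smoothed. Hence ``if $w'=0$ we are done'' is false. Your case of trivial action on $\Gamma$ has the same problem when $w'\neq 0$.

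What rules such families out is only that $C_1$ is special, and this has to be used in every case, not as a fallback. This is what the paper does, globally:
\begin{enumerate}
\item The generic point $q_1\in C_1$ of the equisingular section is atomic, so it lies on an attached atom $D\subset C_1$ whose attaching points are nodes or markings.
\item By properness these extend to sections of the family, which are nodal sections or markings by \Cref{prop:alp-fund}.
\item By stability of $C_0$, their limits are exactly the points of $\Gamma\cap(C_0-\Gamma)$.
\end{enumerate}
Your remark that a smoothing of $x$ would create a non-atomic singularity in $C_1$ points in this direction. It still has to be made precise by tracking the atom of $C_1$ through $q_1$ in this manner.
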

\begin{proof}
Let $q_1 \in C_1$ be the point corresponding to the equisingular section passing through $q_0$. By condition~(1) in \Cref{def:special-curve}, there exists an attached atom $(D, r_i) \to C_1$ (where the atom has either one or two markings) such that the points $r_i$ are either lonely nodes or marked points. By properness of the family, these extend to sections $\overline{r}_i$ of the total family $C$, which are either nodal sections or markings by \Cref{prop:alp-fund}. Restricting to $C_0$, they yield precisely the points in $\Gamma \cap (C_0 - \Gamma)$ (since $C_0$ is stable), which therefore are nodal singularities.
\end{proof}

\begin{lemma}\label{lem:iso-triv-special-degen-keeps-node}
Let $k$ be an algebraically closed field, and let \(C \to \Theta_k\) be an isotrivial degeneration from a special nodal stable curve \(C_1\) to a special $A_r$-stable curve \(C_0\). Then \(C_1 \simeq C_0\).
\end{lemma}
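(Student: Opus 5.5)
The plan is to show that $C_0$ has no singularities worse than nodes. Once this is known, the family $C\to\Theta_k$ is a family of Deligne--Mumford stable nodal curves. The stack of such curves is a closed substack of $\cM_{g,n}^r$, namely the complement of all $\cD^h$ with $h\geq 2$, and it is separated Deligne--Mumford. In particular every point of it is closed and has finite stabilizer. Any morphism $\Theta_k\to\overline{\cM}_{g,n}$ is therefore degenerate: since $\overline{\cM}_{g,n}$ has a coarse space, the map factors through the residual gerbe of the image of the open point. Hence $C_1\simeq C_0$.

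So suppose, for contradiction, that $C_0$ has an $A_h$-singularity $q_0$ with $h\geq 2$. By condition (1) of \Cref{def:special-curve}, $q_0$ lies on an $A_i$- or $A_i/A_j$-attached atom $\Gamma\subset C_0$ with $i,j\in\{0,1\}$. Since $C_1$ is nodal, $q_0$ does not extend to an equisingular section: the singularity is smoothed in the generic fiber. We pass to the local picture: by \Cref{lem:lifting-complete} the family factors through $[T_{C_0}\cM_{g,n}^r/\Gm]$ via the cocharacter induced by the degeneration, and by \Cref{lem:Theta-and-BGmR} it lands in $(T_{C_0})^{>}$.

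The nontrivial $\Gm$-action lives on the atom $\Gamma$. By \Cref{cor:gm-decom-tang} and \Cref{rem:alternating-deformations}, this has two consequences, first in the tail or dangling case (one attaching point) and then in the bridge case (two attaching points):
\begin{itemize}
\item In the tail or dangling case, deforming $q_0$ (the $T^1_{q_0}$ direction) forces the attaching node to be preserved. This is the case when the $\Gm$-action on the adjacent component is trivial, which is automatic because the action is supported on the atom. By \Cref{prop:def-hyp-rat-sing} (and \Cref{rem:non-weierstrass-case}), $\Gamma$ then deforms to an honestly hyperelliptic tail or dangling bridge of genus $h'$, with $h'=h/2$ or $(h-1)/2$ respectively. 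We must also check that the preserved nodes at $I_\Gamma$ stay attached to the same rest of the curve; this is compatible with \Cref{lem:sing-preserved}.
\item In the bridge case, an odd atom attached at two nodes deforms to an honestly hyperelliptic bridge of genus $(h-1)/2$.
\end{itemize}
If instead $\Gamma=C_0$ is an unattached atom, so that $i=j=0$ and there are no markings, then the deformation makes $C_1$ an honestly hyperelliptic curve with $r\geq 2g$.

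In all cases $C_1$ contains an honestly hyperelliptic tail or bridge of genus $\leq r/2$ (respectively $\leq(r-1)/2$), or is itself honestly hyperelliptic with $r\geq 2g$. This uses $h\leq r$. By conditions (2) and (3) of \Cref{def:special-curve}, that tail, bridge or curve must be an atom. But an atom has an $A_{2h'}$- or $A_{2h'+1}$-singularity with $h'\geq 1$, contradicting the nodality of $C_1$. The only exception would be a smooth honestly hyperelliptic piece, and special curves exclude this since they force such pieces to be atoms.

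The main obstacle is justifying that the local sign analysis around $\Gamma$ really forces the deformed atom to become a \emph{honestly hyperelliptic} tail or bridge of the right genus sitting inside $C_1$, rather than merging with other parts of the curve. This is where the decomposition of \Cref{rem:decom-tang}, the transversality of \Cref{cor:gm-decom-tang}, and the preservation of the attaching nodes from \Cref{rem:alternating-deformations} and \Cref{lem:sing-preserved} must be combined carefully. Each atom should be handled independently, since the $\Gm$-weights on different atoms' directions decouple after passing to the normalization at the outer nodes (\Cref{rem:aut-and-normalization-along-nodes}).
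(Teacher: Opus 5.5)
Your overall line matches the paper's. You show by contradiction that $C_0$ has no atoms, use that every atomic singularity must be smoothed, let the alternating signs of \Cref{rem:alternating-deformations} force the attaching nodes to survive, and then contradict conditions (2)--(3) of \Cref{def:special-curve} for $C_1$, since atoms are never nodal. However, your justification for node preservation is wrong. It is not automatic that the cocharacter acts trivially on the component adjacent to $\Gamma$: a node $p_0$ of $C_0$ may join two atoms $\Gamma,\Gamma'$. The automorphism groups do split after normalizing at outer nodes, but the smoothing direction $T^1_{p_0}$ does not. Its weight is the sum of the two branch weights, so it sees both atoms. This is exactly the configuration the paper isolates (``distinct atoms cannot intersect''). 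The step can be repaired by a sign count. Since $C_1$ is nodal, every atomic singularity $q$ is smoothed, so every $T^1_q$ lies in the positive part. On each atom, the branch weight at an attaching point has sign opposite to that of $T^1_q$. On components not lying on an atom the action is trivial, because $C_0$ is special. Hence every attaching node has negative smoothing weight and is preserved.

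The step you flag as the main obstacle is left open, and the lemma you cite for it does not apply: \Cref{lem:sing-preserved} assumes that $q_0$ \emph{does} extend to an equisingular section, which is exactly what fails here. The sign decomposition at $C_0$ only tells you which first-order directions the family uses. It does not identify the piece of $C_1$ into which $\Gamma$ deforms.

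The missing device is the paper's reduction. The preserved nodes extend to equisingular nodal sections of $C\to\Theta_k$. Normalizing the total family along all of them splits it into families over $\Theta_k$, one of which has special fiber the single pointed atom $\Gamma$, with markings over the attaching nodes. For that family, \Cref{prop:def-atom}, \Cref{prop:def-hyp-rat-sing} and \Cref{rem:non-weierstrass-case}, together with \Cref{cor:gm-decom-tang}, show that the general fiber is honestly hyperelliptic with Weierstrass (respectively conjugate) markings. Regluing gives an honestly hyperelliptic tail, dangling bridge or bridge of $C_1$ (or all of $C_1$ if $n=0$), and only then does specialness of $C_1$ give the contradiction.

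A minor slip: the nodal locus is an \emph{open}, not closed, substack of $\cM^r_{g,n}$. Your coarse-space argument for $C_1\simeq C_0$ still works, since both points of $\Theta_k$ map into it.
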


\begin{proof}
We first reduce to the case where $C$ does not admit any nodal equisingular sections. Indeed, if such sections exist, we may perform the partial normalization along them; it then suffices to prove the statement on each of the resulting connected components.

It is enough to show that $C_0$ does not have worse-than-nodal singularities, i.e., that it contains no atoms. Suppose, for contradiction, that $\Gamma \subset C_0$ is an atom, and let $p_0 \in \Gamma \cap (C_0 - \Gamma)$ be a node. Assume that $p_0$ is the intersection of two nodally attached atoms, which we denote by $\Gamma$ and $\Gamma'$ (by abuse of notation). Since $p_0$ deforms nontrivially, we have $\deg_{\mathbb{G}_m} T^1_{p_0} > 0$, which implies that at least one of the two atomic singularities must be preserved (again by \Cref{rem:alternating-deformations}). Applying \Cref{lem:sing-preserved}, we obtain a contradiction. Hence, distinct atoms cannot intersect.

It follows that $C_0$ is itself a (pointed) atom. Indeed, since the generic fiber is nodal, every worse-than-nodal singularity must deform. Therefore, \Cref{rem:alternating-deformations} implies that any nodes in $\Gamma \cap (C_0 - \Gamma)$ do not deform along the degeneration. However, by our reduction step, there are no equisingular nodal sections, hence $C_0 = \Gamma$.

Finally, we treat the case where $C_0$ is a pointed atom. This follows from \Cref{prop:def-atom} and \Cref{rem:non-weierstrass-case}, together with the assumption that $C_1$ is special.
\end{proof}

\begin{theorem}
\label{thm:closed_points_Mgnr}
Let \(k\) be an algebraically closed field and let \(C \in \cM_{g,n}^r(k)\). If \(C\) is a closed point of \(\cM_{g,n}^r \times_{\kappa} \Spec k\), then \(C\) is special. Moreover, if either \(n \geq 1\) or \(r \leq 2g\), the converse also holds.
\end{theorem}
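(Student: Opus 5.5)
The proof has two directions. The forward direction works by contrapositive: a non-special curve is not closed. The converse reduces to showing that a special curve admits no non-degenerate isotrivial degeneration.

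\emph{Closed implies special.} Let \(C\) be non-special; we exhibit a non-degenerate isotrivial degeneration \(\Theta_k\to \cM_{g,n}^r\) with generic fiber \(C\). Its special fiber \(C_0\) is then a point in the closure of \(C\) that is not isomorphic to \(C\), so \(C\) is not closed. Each way condition (1)--(3) of \Cref{def:special-curve} can fail is covered by one of the basin-of-attraction results.
\begin{itemize}
\item Suppose (1) fails. Then some \(A_h\)-singularity \(q\) with \(h\geq 2\) is not atomic.
\begin{itemize}
\item If \(h\) is even, \Cref{lem:even-atom-attraction} degenerates \(C\) to the curve obtained from the stabilized partial normalization with a nodally attached even atom.
\item If \(h\) is odd, \Cref{lem:odd-atom-attraction} does the same with a \(2\)-pointed odd atom.
\end{itemize}
In both cases \(C_0\not\simeq C\). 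The reason is that in \(C_0\) the singularity lies on an \(A_1\)- or \(A_1/A_1\)-attached atom, whereas in \(C\) it does not; this combinatorial difference is detected by \Cref{prop:alp-fund}, or simply by counting components.
\item Suppose (2) fails. Then \(C\) has a non-atomic honestly hyperelliptic tail, bridge, or dangling bridge of the relevant genus. Use \Cref{lem:iso-deg-hyp-tails-atom-1} and \Cref{lem:iso-deg-hyp-tails-atom-2} for the nodally attached cases, \Cref{lem:iso-deg-hyp-tails} for tails attached at higher odd singularities, and \Cref{prop:iso-deg-chain} for bridges and chains. These degenerate the subcurve to an atom or rosary while the rest of the curve is held fixed. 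The special fiber has an extra rational component, so it differs from \(C\).
\item Suppose (3) fails. Then \(C\) is honestly hyperelliptic with \(n=0\) and \(r\geq 2g\). The branch divisor can be degenerated inside \([\Sym^{2g+2}E/\GL_2]\) by a one-parameter subgroup of the diagonal torus to a form with two roots (even atom) or one root (odd atom). This stays within \(\cU_{r+1}\) and so realizes the degeneration.
\end{itemize}

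\emph{Special implies closed, when \(n\geq1\) or \(r\leq 2g\).} Suppose \(C_1\) is special and \(C_1\rightsquigarrow C_0\) is a non-degenerate isotrivial degeneration. Since \(\cM_{g,n}^r\) is of finite type, we may replace \(C_0\) by a closed point in its closure, which exists by quasi-compactness. By the first part this closed point is special. Because closed points have reductive stabilizers, \Cref{cor:gm-decom-aut} shows that the hypotheses exclude the non-reductive case, and this is exactly where \(n\geq1\) or \(r\leq2g\) is used. Composing isotrivial degenerations via the local structure at \(C_0\), and invoking \Cref{lem:Theta-and-BGmR}, we reduce to a single \(\Theta_k\)-family from \(C_1\) to the special curve \(C_0\), with a cocharacter \(\Gm\to\Aut(C_0)\).

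Now analyse this family atom by atom. Partially normalize along the nodal equisingular sections, as in the proof of \Cref{lem:iso-triv-special-degen-keeps-node}. On each remaining piece, the atoms of \(C_0\) on which \(\Gm\) acts nontrivially give the sign decomposition of \Cref{cor:gm-decom-tang}.
\begin{itemize}
\item If an atomic singularity is preserved, \Cref{lem:sing-preserved} propagates this to its nodal attaching points.
\item If an atomic singularity is deformed, \Cref{rem:alternating-deformations} together with \Cref{prop:def-hyp-rat-sing} and \Cref{rem:non-weierstrass-case} shows that the generic fiber contains an honestly hyperelliptic tail or bridge of the same genus, attached at a node.
\end{itemize}
By condition (2) for \(C_1\), such a tail or bridge would have to be an atom, which contradicts its being smooth of positive genus. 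Hence no singularity deforms, and then \(C_1\simeq C_0\), arguing as in \Cref{lem:iso-triv-special-degen-keeps-node}. The \(n=0\) irreducible hyperelliptic case is settled by condition (3).

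\emph{Main obstacle.} The hard part is the converse, because two special curves might a priori differ while still being related by a degeneration in which some atoms deform and others are preserved simultaneously. Controlling this requires the propagation argument of \Cref{lem:sing-preserved} to be applied consistently across several interacting atoms. It also requires that the reduction to a degeneration into a closed special point respects the cocharacter.
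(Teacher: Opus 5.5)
Your first direction is the paper's own argument. The branch where condition (3) fails with $r=2g$ does not go through, and the paper's proof makes the same claim.

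\textbf{The gap in the (3)-branch.} After a change of coordinates, a one-parameter subgroup of $\GL_2$ sends a binary form $f$ to the monomial $x_0^{2g+2-m}x_1^{m}$, where $m$ is the multiplicity of $f$ at one of its two fixed points. So the limit is the even atom $x_0^{2g+1}x_1$ only when $f$ has a simple root. In general, ``a form with two roots'' is not the even atom.

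When $f$ has no simple root, the step cannot be repaired, because the statement itself fails. Take $g=2$, $n=0$, $r=4$, and let $C$ be the double cover branched along $x_0^4x_1^2$. It consists of two copies of $\bP^1$ meeting in a tacnode and a node. It is honestly hyperelliptic and not an atom, so it violates (3).

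Nevertheless $C$ is closed in $\cM_2^4$:
\begin{enumerate}
\item Since $r\le 2g$, all stabilizers are linearly reductive (\Cref{cor:gm-decom-aut}).
\item Suppose $C$ specialized to a closed point $y\neq C$. In an étale chart $[\Spec A/G_y]$ at $y$, the lift of $C$ still has $\Gm$ in its stabilizer. Luna's criterion for this torus, together with Hilbert--Mumford for its centralizer, realizes the specialization by a cocharacter commuting with that $\Gm$. Hence $\Aut(y)$ contains a $2$-dimensional torus.
\item Normalizing at outer nodes and using \Cref{lem:dim-stab-genus}, one checks that the only such curve in $\cM_2^4$ is two cuspidal rational curves joined at a node.
\item All singularities of that curve have $\delta$-invariant $1$. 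By semicontinuity of $\delta$, the tacnode of $C$ (with $\delta=2$) cannot specialize into it.
\end{enumerate}
Thus $C$ is closed but not special. For $r=2g$, condition (3) has to be weakened, and no argument along your lines (or the paper's) can close that case as stated.

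\textbf{The converse.} Outside this case your route matches the paper's, except that you organise the converse from the atoms of $C_0$ rather than those of $C_1$. Three points need attention.
\begin{enumerate}
\item ``Closed points have reductive stabilizers'' is not a valid general principle. Composing isotrivial degenerations via \Cref{lem:Theta-and-BGmR} also does not produce a single $\Theta_k$-family. What you actually need is that, for $n\ge 1$ or $r\le 2g$, every stabilizer is linearly reductive, so that \cite[Lemma~3.24]{ExistenceOfModuli} realizes a specialization to a closed point by one $\Theta_k$-map.
\item ``No singularity deforms, hence $C_1\simeq C_0$'' overlooks that a preserved atomic singularity $q$ can change its crimping datum. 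The summand ${\rm Cr}_q$ has the sign opposite to $T^1_q$, so when $T^1_q$ lies in the non-attracting part, ${\rm Cr}_q$ lies in the attracting one. This must be excluded using condition (1) on $C_1$. That is why the paper follows the atoms of $C_1$ through \Cref{prop:alp-fund} and \Cref{lem:sing-preserved}.
\item Your converse uses the first part to know that the closed limit is special. So in the case $r=2g$, $n=0$, it inherits the gap above and needs a separate argument.
\end{enumerate}
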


\begin{proof}
We first show that the conditions in \Cref{def:special-curve} are necessary. In particular, we show that if they are not satisfied, then there exist nontrivial isotrivial degenerations from \(C\) to another curve defined over the same base, and therefore the point is not closed.
Suppose \(C\) has a non-atomic \(A_h\)-singularity \(q\), with \(h \geq 2\). Then there exists a non-degenerate isotrivial degeneration from \(C\) to a curve \(C_0\), where \(C_0\) is constructed as in \Cref{lem:even-atom-attraction} or \Cref{lem:odd-atom-attraction}, according to whether \(h\) is even or odd. Similarly, suppose \(\Gamma \subset C\) is an \(A_1\)- or \(A_0\)-attached honestly hyperelliptic tail of genus \(\leq r/2\) which is not an atom. Then, by \Cref{lem:iso-deg-hyp-tails}, there exists a non-degenerate isotrivial degeneration from \(C\) to a curve \(C_0\). If \(\Gamma \subset C\) is an \(A_i/A_j\)-attached honestly hyperelliptic bridge of genus \(\leq (r-1)/2\), where \(i,j \in \{0,1\}\), then the pointed partial normalization of \(C\) at the attaching nodes \(p_1, p_2\) has a connected component \((\Gamma, p_1, p_2)\). By \Cref{prop:iso-deg-chain}, this component admits a non-degenerate isotrivial degeneration to an atom. Gluing back nodally, we obtain a nontrivial isotrivial degeneration of \(C\). If \(n=0\) and \(C\) is honestly hyperelliptic with \(r \geq 2g+1\), then one can use the quotient stack description in terms of \(2\!:\!1\) covers of \(\mathbb{P}^1\) to construct an isotrivial degeneration from \(C\) to the odd atom (see, for instance, the proof of \Cref{theo:classify-opens-hyp}). Using the same description, when \(r=2g\) and \(n=0\), any honestly hyperelliptic curve can be isotrivially degenerated to the even atom.

For the converse, it suffices to show that there are no non-degenerate isotrivial degenerations from a special curve \(C\) to a closed point \(C_0\) in \(\cM_{g,n}^r \times_{\kappa} \Spec k\). Indeed, this implies that \(C\) corresponds to a closed point, since every degeneration of \(k\)-points to a closed point can be realized as an isotrivial degeneration by \cite[Lemma~3.24]{ExistenceOfModuli} in \(\cM_{g,n}^r\) (which is locally reductive if \(r \leq 2g\) or \(n \geq 1\)).

Suppose we are given an isotrivial degeneration of \(C\) with special fiber \(C_0\), which is closed. By the first part, \(C_0\) is special.

We first reduce to the case where \(C\) has no \(A_k\)-singularities for \(k \geq 2\). Since \(C\) is special, any \(A_{2k+1}\)-singularity on \(C\) is lonely and lies on an atom \(\Gamma \subset C\). By \Cref{prop:alp-fund}, it extends to an equisingular section of the family, and by \Cref{lem:sing-preserved}, its limit lies on an atom of \(C_0\), whose nodal attaching points extend to nodal sections of the family. After normalizing along these equisingular nodal sections, the claim reduces to the connected components that are not atomic. Indeed, if $n\geq 1$, atoms can only degenerate to themselves, while if $n=0$ and $r\geq 2g+1$, condition (3) of \Cref{def:special-curve} deals with the atomic case. Hence, we may assume that \(C\) has no \(A_{2k+1}\)-singularities. The even case is similar: any \(A_{2h}\)-singularity of \(C\) lies on a nodally attached tail. Applying \Cref{prop:alp-fund} to the attaching nodes, we reduce to the connected components of the normalization along the corresponding nodal equisingular sections. Finally, if \(C\) has no \(A_k\)-singularities for \(k > 1\), the claim follows from \Cref{lem:iso-triv-special-degen-keeps-node}.
\end{proof}

We do not know a priori if the converse in \Cref{thm:closed_points_Mgnr} holds also in the case $n=0$ and $r=2g+1$, due to the fact that for stacks with non-reductive stabilizers (at closed points) it is not always true that degenerations of $k$-points can be described using degenerations over $\Theta_k$: see for instance \([\bP^1/\bG_a]\) where the action is by translation.
\begin{remark}
     For \(r\geq 2g+1\), the stack $\cM^r_{g} \setminus \cM_g^{2g}$ is equal to $\cA_{2g+1}$, which is isomorphic to
     $$[\bA^{g-1}/(\Gm \ltimes \Ga)].$$ The ring of invariants is actually the base field, but there exists a unique closed point which has a non-reductive automorphism group.
\end{remark}

\begin{corollary}\label{cor:closed-smooth-curves}
    If \(C\in \cM_{g,n}^r(k)\) is a smooth curve, then the corresponding point in \(\cM_{g,n}^r\times_{\kappa}\Spec k\) is closed unless \(r\geq 2g\), \(n\leq 2\), and \(C\) is hyperelliptic. Moreover, if \(r=2g\) and \(C\) is not closed, then \(n\leq 1\).
\end{corollary}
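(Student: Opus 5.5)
The plan is to deduce the statement from \Cref{thm:closed_points_Mgnr} by checking, for a smooth curve $C$, which of the conditions of \Cref{def:special-curve} can fail. A smooth curve has no singularities, so condition (1) holds vacuously. Moreover, $C$ is irreducible, so it has no proper subcurves. Hence any honestly hyperelliptic tail or bridge $\Gamma\subset C$ in condition (2) must be all of $C$, with $I_\Gamma=\emptyset$. By \Cref{def:tails and bridges}, this forces $\Gamma=C$ to be an $A_0$-attached tail (so $n=1$ and the marking is Weierstrass), an $A_0$-attached dangling bridge (so $n=1$ and the marking is not Weierstrass), or an $A_0/A_0$-attached bridge (so $n=2$ and the markings are exchanged by the involution). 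Condition (3) only concerns $n=0$ and $r\ge 2g$ with $C$ honestly hyperelliptic. Since a smooth curve is never an atom, (3) fails exactly when $C$ is hyperelliptic, $n=0$ and $r\geq 2g$.

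Next I compare these situations with the genus bounds in (2). A tail of genus $g$ violates (2) only if $g\le r/2$, i.e.\ $r\ge 2g$. A bridge (dangling or not) violates (2) only if $g\le (r-1)/2$, i.e.\ $r\ge 2g+1$. So if $C$ is not special, then $C$ is hyperelliptic (honestly, being smooth), $n\le 2$ and $r\ge 2g$. When $r=2g$ exactly, only the tail case and the $n=0$ case of (3) survive, so $n\le 1$.

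To finish, I invoke the first half of \Cref{thm:closed_points_Mgnr}, which holds without extra hypotheses: if $C$ is closed then it is special. Contrapositively, if the exceptional conditions fail, then $C$ is special. I then need ``special $\Rightarrow$ closed'', which the theorem provides when $n\ge1$ or $r\le 2g$. The remaining case $n=0$, $r\ge 2g+1$ needs $C$ non-hyperelliptic, since otherwise we are in the exceptional case. The main obstacle is closedness of such a smooth non-hyperelliptic $C$ when $n=0$ and $r=2g+1$, where the converse of \Cref{thm:closed_points_Mgnr} is unknown.

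For this case I would argue directly. Any point in the closure of $[C]$ that is not in $\cM_g^{2g}$ lies in $\cA_{2g+1}$, a closed point with stabilizer $\Gm\ltimes\Ga$ that is hyperelliptic (an odd atom). Since $\cH_g^r$ is closed and $C\notin\cH_g^r$, a specialization from $C$ to this point is still conceivable, so I would handle it with a dimension or openness argument. Concretely, $\cM_g^{2g}$ is open and the closure of $[C]$ in it is just $[C]$, by the theorem applied with $r=2g$. So the closure of $[C]$ in $\cM_g^{2g+1}$ is contained in $\{C\}\cup\cA_{2g+1}$. I would then show this set is not closed unless it equals $\{C\}$: the closure of a point must be irreducible, and $\{C\}\cup\{\text{atom}\}$ is a two-point constructible set. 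A specialization $C\rightsquigarrow$ atom would realize $C$ in the basin of the non-reductive point, and deformations of the odd atom are hyperelliptic in the direction smoothing the singularity by \Cref{prop:def-atom}. That would force $C$ hyperelliptic, a contradiction. This final step is the delicate one; I would try first to make it rigorous via the local structure at the odd atom.

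Conversely, to show that the exceptional cases are genuinely non-closed, I would use the first half of the proof of \Cref{thm:closed_points_Mgnr}, which is already the statement that non-special implies non-closed.
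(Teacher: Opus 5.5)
Your unwinding of \Cref{def:special-curve} for a smooth curve is correct. It is exactly the paper's proof: unwind the definition and cite \Cref{thm:closed_points_Mgnr}. This settles every case in which the converse of that theorem is available ($n\geq 1$ or $r\leq 2g$), including the ``moreover'' clause.

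You rightly notice that this leaves the case $n=0$, $r\geq 2g+1$, $C$ non-hyperelliptic (relevant for $g\geq 3$). The paper's one-line proof does not single this case out. Your argument for it, however, has a genuine gap.

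The reduction survives after a correction. $\cA_{2g+1}\simeq[\bA^{g-1}/(\Gm\ltimes\Ga)]$ is not a point, but, as the paper notes after \Cref{thm:closed_points_Mgnr}, it has a unique closed point $x_0$ (the odd atom). Since $[C]$ is closed in the open $\cM_g^{2g}$, it suffices to show $x_0\notin\overline{\{[C]\}}$. Beyond that, nothing in your sketch works:
\begin{itemize}
\item The closure of a point is always irreducible, whether or not it contains $x_0$, so irreducibility yields no contradiction.
\item \Cref{prop:def-atom} only identifies one particular complement $T^1_q$ of ${\rm Cr}_q$ with the tangent space of $\cH_g^r$. It does not say that deformations smoothing $q$ are hyperelliptic, and in general they are not: $\cM_g^{2g+1}$ is irreducible, so its generic point, a non-hyperelliptic curve, specializes to $x_0$. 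What must be excluded is a family with \emph{constant} generic fiber $C$, which first-order data cannot detect.
\item Basins of attraction only record specializations realized by maps from $\Theta$. Both that realization and the local structure theorem need linearly reductive stabilizers, which is exactly what fails at $x_0$ (the $[\bP^1/\Ga]$ phenomenon).
\end{itemize}

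One way to close the gap is stable reduction. Suppose $x_0\in\overline{\{[C]\}}$. Then there is a complete dvr $R$ and a family $\mathcal{C}\to\Spec R$ whose generic fiber becomes isomorphic to $C_Q$ after a finite extension, and whose special fiber is the odd atom $\bP^1\cup_q\bP^1$, with $q$ a separating $A_{2g+1}$-singularity.
\begin{itemize}
\item By versality, the formal germ of $\mathcal{C}$ at $q$ is $R[[x,y]]/(y^2-F)$ with $F=x^{2g+2}+\sum_{i\leq 2g}a_ix^i$ and $a_i\in\mathfrak{m}_R$.
\item This is also the germ at $q$ of the double cover $\mathcal{H}\colon y^2=F$ of $\bP^1_R$. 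Its special fiber is again the odd atom, and its generic fiber is a smooth hyperelliptic curve (smooth because $\mathcal{C}_Q$ is and all roots of $F$ specialize to $q$).
\item Stable reduction is local at $q$, and in both families the strict transforms of the two rational components become rational tails that are contracted. Hence $\mathcal{C}_Q$ and $\mathcal{H}_Q$ have the same stable limit, which lies in the closed substack $\cH_g^1\subset\cM_g^1$. This is the semistable replacement of odd $A$-singularities recalled in the introduction.
\item The constant family is a stable model of $C_Q$, so separatedness of $\cM_g^1$ gives $C\in\cH_g^1$. Thus the smooth curve $C$ is hyperelliptic, a contradiction.
\end{itemize}

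Two minor points. First, the implication you actually use is ``not exceptional $\Rightarrow$ special'', which comes from the unwinding, not from the contrapositive of ``closed $\Rightarrow$ special''. Second, your last paragraph is not needed and is false as stated: the corollary is one-directional, and a hyperelliptic curve with two markings not exchanged by the involution is special, hence closed.
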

\begin{proof}
    This follows from \Cref{thm:closed_points_Mgnr} unwinding the conditions in \Cref{def:special-curve} for a smooth curve.
\end{proof}
The following is a direct consequence of \Cref{cor:closed-smooth-curves}.
\begin{corollary}\label{cor:stable-gms}
    If either \(r<2g\) or $n>2$, the open embedding \(\cM_{g,n}\subset \cM_{g,n}^r\) is $\Theta$-surjective. In particular, if $\cM_{g,n} \subset \cU \subset \cM_{g,n}^r$ is a sequence of open embeddings such that \(\cU\) admits a good moduli space \(\operatorname{U}\), then \(\cM_{g,n}\subset \cU\) is saturated with respect to \(\pi\), and the induced map \(\operatorname{M}_{g,n}\to \operatorname{U}\) is an open embedding.
\end{corollary}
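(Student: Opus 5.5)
We prove the first assertion, $\Theta$-surjectivity, and then deduce saturation from it. Recall that an open embedding $\cU\subset\cX$ is $\Theta$-surjective when every morphism $\Theta_k\to\cX$ whose generic point lands in $\cU$ actually factors through $\cU$. Equivalently, every isotrivial degeneration of a point of $\cU$ inside $\cX$ has its special fiber in $\cU$.

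First, we unwind this for $\cM_{g,n}\subset\cM_{g,n}^r$. Take a smooth curve $C\in\cM_{g,n}(k)$ and an isotrivial degeneration $C_\Theta\to\Theta_k$ with generic fiber $C$.
\begin{itemize}
\item If $r<2g$ or $n>2$, \Cref{cor:closed-smooth-curves} shows that $C$ is a closed point of $\cM_{g,n}^r\times_\kappa\Spec k$.
\item The special fiber $C_0$ lies in the closure of $C$, so $C_0\simeq C$. The degeneration is therefore degenerate in the sense of \Cref{sec:linearization}, and in particular $C_0\in\cM_{g,n}$.
\item For geometric points over non-closed fields, we base change to an algebraic closure. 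Both $\Theta$-surjectivity and the smooth locus are compatible with field extension, so this costs nothing.
\end{itemize}
This proves the first claim.

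Second, we deduce saturation. Let $\pi:\cU\to\operatorname{U}$ be the good moduli space. We must show that $\pi^{-1}(\pi(\cM_{g,n}))=\cM_{g,n}$.
\begin{itemize}
\item Let $x\in\cU$ with $\pi(x)=\pi(y)$ for some $y\in\cM_{g,n}$. The closures $\overline{\{x\}}$ and $\overline{\{y\}}$ in $\cU$ then share the unique closed point $z$ of the fiber.
\item Since $y$ is already closed in $\cM_{g,n}^r$ (again by \Cref{cor:closed-smooth-curves}), it is closed in $\cU$. Hence $z=y$, so $y\in\overline{\{x\}}$.
\item By the standard description of closed points in a stack with a good moduli space, for instance \cite[Lemma~3.24]{ExistenceOfModuli}, the specialization $x\rightsquigarrow y$ can be realized by an isotrivial degeneration $\Theta_k\to\cU$ with generic point $x$ and special point $y$. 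All stabilizers of $\cU$ at closed points are linearly reductive, so this lemma applies.
\end{itemize}

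We now need to conclude that $x$ itself lies in $\cM_{g,n}$, and here the degeneration points the wrong way: $\Theta$-surjectivity controls special fibers of degenerations starting in $\cM_{g,n}$, not generic fibers of degenerations ending there. Instead we use openness. $\cM_{g,n}$ is open in $\cU$ and contains the special point $y$, and every point specializing to a point of an open set lies in that open set. Hence $x\in\cM_{g,n}$, which gives saturation.

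Finally, open saturated substacks descend. By \cite{Alp}, $\pi(\cM_{g,n})$ is open in $\operatorname{U}$, and $\cM_{g,n}\to\pi(\cM_{g,n})$ is a good moduli space. It is also a coarse space, since stabilizers are finite and every point is closed, so the induced map $\operatorname{M}_{g,n}\to\pi(\cM_{g,n})$ is an isomorphism onto an open subset of $\operatorname{U}$.

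The main subtlety is the saturation step: one must use that smooth curves are closed in the whole of $\cM_{g,n}^r$, not merely in $\cM_{g,n}$. That is exactly what \Cref{cor:closed-smooth-curves} supplies under the hypotheses $r<2g$ or $n>2$.
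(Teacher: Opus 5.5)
Your proposal is correct and follows essentially the paper's route. The paper records this corollary as a direct consequence of \Cref{cor:closed-smooth-curves}: when $r<2g$ or $n>2$, every smooth curve is a closed point of $\cM_{g,n}^r\times_\kappa\Spec k$, and that fact drives both your $\Theta$-surjectivity argument and your saturation argument. (In the saturation step the appeal to \cite[Lemma~3.24]{ExistenceOfModuli} is superfluous. Once $y\in\overline{\{x\}}$ with $y$ in the open $\cM_{g,n}$, you are done, after the same base change to an algebraically closed field that you already use in the first part.)
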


\bibliographystyle{alpha}
\bibliography{r_leq_5/References.bib}

@article{StructureOfInstability,
  title={On the structure of instability in moduli theory},
  author={Daniel Halpern-Leistner},
  journal={arXiv: Algebraic Geometry},
  year={2014},
  url={https://api.semanticscholar.org/CorpusID:117058823}
}

@article{LunaetaleSliceStacks,
author = {Jarod Alper and Jack Hall and David Rydh},
title = {{A Luna étale slice theorem for algebraic stacks}},
volume = {191},
journal = {Annals of Mathematics},
number = {3},
publisher = {Department of Mathematics of Princeton University},
pages = {675 -- 738},
year = {2020},
doi = {10.4007/annals.2020.191.3.1},
URL = {https://doi.org/10.4007/annals.2020.191.3.1}
}

@phdthesis{VanDerWyck,
    author={van der Wyck, Frederick D. W.},
    year={2010},
    title={Moduli of singular curves and crimping},
    journal={ProQuest Dissertations and Theses},
    pages={144},
    isbn={978-1-124-09171-6},
    school={Harvard University},
    url={https://www.proquest.com/dissertations-theses/moduli-singular-curves-crimping/docview/613377231/se-2},
}

@article{Viviani,
    title={ON THE FIRST STEPS OF THE MINIMAL MODEL PROGRAM FOR THE MODULI SPACE OF STABLE POINTED CURVES},
    DOI={10.1017/S1474748021000116},
    journal={Journal of the Institute of Mathematics of Jussieu},
    publisher={Cambridge University Press},
    author={Codogni, Giulio and Tasin, Luca and Viviani, Filippo},
    year={2023},
    pages={145–211}
}

@article{hassett2013log,
  title={Log minimal model program for the moduli space of stable curves: the first flip},
  author={Hassett, Brendan and Hyeon, Donghoon},
  journal={Annals of Mathematics},
  pages={911--968},
  year={2013},
  publisher={JSTOR}
}

@misc{stacks-project,
  author       = {The {Stacks project authors}},
  title        = {The Stacks project},
  howpublished = {\url{https://stacks.math.columbia.edu}},
  year         = {2023},
}

@article{AlpFedSmyWyck,

title={Second flip in the Hassett-Keel program: A local description }, 
volume={153},  
journal={Compositio Mathematica}, 
publisher={London Mathematical Society}, 
author={Alper, J. and Fedorchuck, M. and Smyth, D.I. and van der Wyck, F.}, 
year={2017}, 
pages={1547-1583}}

@article {ExistenceOfModuli,
    AUTHOR = {Alper, Jarod and Halpern-Leistner, Daniel and Heinloth,
              Jochen},
     TITLE = {Existence of moduli spaces for algebraic stacks},
   JOURNAL = {Invent. Math.},
  FJOURNAL = {Inventiones Mathematicae},
    VOLUME = {234},
      year = {2023},
    NUMBER = {3},
     PAGES = {949--1038},
      ISSN = {0020-9910,1432-1297},
   MRCLASS = {14D23 (14A20)},
  MRNUMBER = {4665776},
       DOI = {10.1007/s00222-023-01214-4},
       URL = {https://doi.org/10.1007/s00222-023-01214-4}}

@Article{Per1,
  author  = {Pernice, M.},
  title   = {{The moduli stack of $A_r$-stable curves}},
  journal = {Preprint on arXiv},
  year    = {2023},
  url     = {https://arxiv.org/abs/2302.10877},
}

@Article{AlpHalRyd,
      title={The \'etale local structure of algebraic stacks}, 
      author={Alper, J. and Hall, J. and Rydh, D.},
      year={2025},
      eprint={1912.06162},
      archivePrefix={arXiv},
      primaryClass={math.AG},
      url={https://arxiv.org/abs/1912.06162}, 
      journal={arXiv}
}

@Article{Per2,
  author  = {Pernice, M.},
  title   = {{Hyperelliptic $A_r$-stable curves (and their moduli stack)}},
  journal = {Transactions of the American Mathematical Society},
  year    = {2024},
  volume = {377},
  pages = {4133-4169}
}

@article{Cat,
author={Catanese, F.},
title={{Pluricanonical Gorenstein Curves}},
journal={Enumerative Geometry and Classical Algebraic Geometry},
year={1982},
publisher={Birkh{\"a}user Boston},
address={Boston, MA},
pages={51--95},
isbn={978-1-4684-6726-0},
doi={10.1007/978-1-4684-6726-0_4},
url={https://doi.org/10.1007/978-1-4684-6726-0_4}
}

@article{ArVis,
	Author={Arsie, A. and Vistoli, A.},
	Title={{Stacks of cyclic covers of projective spaces}},
	Year={2004},
	Journal={Compos. Math. 140, no.3, 647-666}	,
}

@misc{Vicnote,
    Author = {Victoria Hoskins},
    Title = {{Moduli problems and Geometric Invariant Theory}},
    Year = {2015},
    note = {available at: \url{https://userpage.fu-berlin.de/hoskins/M15_Lecture_notes.pdf}}
}

@misc{AlpSmyVdW,
 author = {Alper, Jarod and Smyth, David Ishii and van der Wyck, Frederick},
 title = {Weakly proper moduli stacks of curves},
 year = {2010},
 howpublished = {Preprint, {arXiv}:1012.0538 [math.{AG}] (2010)},
 url = {https://arxiv.org/abs/1012.0538},
 arXiv = {arXiv:1012.0538}
}

@article{CasmarLaza,
 ISSN = {00029947},
 URL = {http://www.jstor.org/stable/23513483},
 author = {Casalaina Martin, S. and Laza, R.},
 journal = {Transactions of the American Mathematical Society},
 number = {5},
 pages = {2271--2295},
 publisher = {American Mathematical Society},
 title =  {{Simultaneous semi-stable reduction for curves with ADE singularities}},
 volume = {365},
 year = {2013}
}

@article{Fedor,
  title = {{Moduli spaces of hyperelliptic curves with A and D singularities}},
  volume = {276},
  ISSN = {1432-1823},
  url = {http://dx.doi.org/10.1007/s00209-013-1201-6},
  DOI = {10.1007/s00209-013-1201-6},
  number = {1–2},
  journal = {Mathematische Zeitschrift},
  publisher = {Springer Science and Business Media LLC},
  author = {Fedorchuk,  M.},
  year = {2013},
  pages = {299–328}
}

@article{AlpHalHalLeiRyd, 
title={{Artin algebraization for pairs with applications to the local structure of stacks and Ferrand pushouts}},
volume={12}, 
DOI={10.1017/fms.2023.60}, 
journal={Forum of Mathematics, Sigma},
author={Alper, Jarod and Hall, Jack and Halpern-Leistner, Daniel and Rydh, David}, year={2024}
}

@article{DeligneMumford,
 author = {Deligne, Pierre and Mumford, D.},
 title = {The irreducibility of the space of curves of a given genus},
 fjournal = {Publications Math{\'e}matiques},
 journal = {Publ. Math., Inst. Hautes {\'E}tud. Sci.},
 issn = {0073-8301},
 volume = {36},
 pages = {75--109},
 year = {1969},
 language = {English},
 doi = {10.1007/BF02684599},
 url = {https://eudml.org/doc/103899},
 zbMATH = {3289080},
 Zbl = {0181.48803}
}

@article{PseudostableSchubert,
 author = {Schubert, David},
 title = {A new compactification of the moduli space of curves},
 fjournal = {Compositio Mathematica},
 journal = {Compos. Math.},
 issn = {0010-437X},
 volume = {78},
 number = {3},
 pages = {297--313},
 year = {1991},
 language = {English},
 url = {https://eudml.org/doc/90093},
 zbMATH = {9343},
 Zbl = {0735.14022}
}

@article{AlpFedSmyExistence,
 author = {Alper, Jarod and Fedorchuk, Maksym and Smyth, David Ishii},
 title = {Second flip in the {Hassett}-{Keel} program: existence of good moduli spaces},
 fjournal = {Compositio Mathematica},
 journal = {Compos. Math.},
 issn = {0010-437X},
 volume = {153},
 number = {8},
 pages = {1584--1609},
 year = {2017},
 language = {English},
 doi = {10.1112/S0010437X16008289},
 zbMATH = {6764719},
 Zbl = {1403.14038}
}

@article{AlpFedSmyProjectivity,
 author = {Alper, Jarod and Fedorchuk, Maksym and Smyth, David Ishii},
 title = {Second flip in the {Hassett}-{Keel} program: projectivity},
 fjournal = {IMRN. International Mathematics Research Notices},
 journal = {Int. Math. Res. Not.},
 issn = {1073-7928},
 volume = {2017},
 number = {24},
 pages = {7375--7419},
 year = {2017},
 language = {English},
 doi = {10.1093/imrn/rnw216},
 zbMATH = {7004507},
 Zbl = {1405.14063}
}

@article{Alp,
      title={Good moduli spaces for Artin stacks}, 
      author={Alper, J.},
      year={2013},
      journal = {Annales de l'Institut Fourier},
     volume = {63},
    number = {6},
    pages = {2349--2402},
}

@article{CoherentTannaka,
 author = {Hall, Jack and Rydh, David},
 title = {Coherent {Tannaka} duality and algebraicity of {Hom}-stacks},
 fjournal = {Algebra \& Number Theory},
 journal = {Algebra Number Theory},
 issn = {1937-0652},
 volume = {13},
 number = {7},
 pages = {1633--1675},
 year = {2019},
 language = {English},
 doi = {10.2140/ant.2019.13.1633},
 url = {hdl.handle.net/10150/634911},
 zbMATH = {7110518},
 Zbl = {1423.14010}
}

@article{hassett2009log,
  title={Log canonical models for the moduli space of curves: the first divisorial contraction},
  author={Hassett, Brendan and Hyeon, Donghoon},
  journal={Transactions of the American Mathematical Society},
  volume={361},
  number={8},
  pages={4471--4489},
  year={2009}
}

@article{codogni2021some,
  title={On some modular contractions of the moduli space of stable pointed curves},
  author={Codogni, Giulio and Tasin, Luca and Viviani, Filippo},
  journal={Algebra \& Number Theory},
  volume={15},
  number={5},
  pages={1245--1281},
  year={2021},
  publisher={Mathematical Sciences Publishers}
}

@phdthesis{AltCompClustAlg, 
author = {Gori, Davide},
title = {Alternative Compactifications of $\operatorname{M_{g,n}}$ via Cluster Algebras and their Birational Geometry},
year = {2026},
url = {https://iris.uniroma1.it/handle/11573/1759004},
note = {\url{https://iris.uniroma1.it/handle/11573/1759004}},
school = {Universit\`a La Sapienza, Roma},
language = {English}
}
\end{document}